\RequirePackage[l2tabu, orthodox]{nag}


\documentclass[oneside,article,a4paper]{memoir}



  \usepackage{amsmath}

  \usepackage{amsthm,thmtools}

  \usepackage{enumitem}


  \usepackage{tikz}


  \usepackage[colorlinks,linkcolor=blue,citecolor=blue,draft=false]{hyperref}

  \usepackage[capitalize,nosort]{cleveref}

    
  \usepackage[nobysame,shortalphabetic,initials]{amsrefs}



  \usepackage{mathtools}
  \usepackage{stmaryrd}

  \usepackage{relsize}
  \usepackage{stackengine}


  \usepackage[charter,cal=cmcal]{mathdesign}

  \usepackage[mathscr]{eucal}


  \usepackage{indentfirst}

  \usepackage{xspace}


  \usepackage[inline,nomargin]{fixme}
    \fxsetup{targetlayout=color}



  \isopage[12]
  \checkandfixthelayout

  \setsecnumdepth{subsection}
  \counterwithout{section}{chapter}

\thanksmarkseries{alph}

  

  \newcommand{\from}{\colon}

  \newcommand{\fto}{\twoheadrightarrow}

  \newcommand{\ito}{\hookrightarrow}

  \newcommand{\sto}{\rightarrowtriangle}

  \newcommand{\weto}{\mathrel{\ensurestackMath{\stackon[-2pt]{\xrightarrow{\makebox[.8em]{}}}{\mathsmaller{\mathsmaller\weq}}}}}

  \newcommand{\cto}{\rightarrowtail}

  \newcommand{\ato}{\mathrel{\ensurestackMath{\stackon[-5pt]{\cto}{\shortmid}}}}

  \newcommand{\tfto}{\mathrel{\ensurestackMath{\stackinset{l}{0.6pt}{c}{1pt}{\shortmid}{\to\mathrel{\mkern-16mu}\to}}}}


  \renewcommand{\iff}{if and only if}
  \newcommand{\st}{such that}
  
  \newcommand{\wrt}{with respect to}

  \newcommand{\llp}{left lifting property}
  \newcommand{\rlp}{right lifting property}
  
  \newcommand{\wfs}{weak factorisation system}

  \newcommand{\he}{homotopy equivalence}
  \newcommand{\fhe}{fiberwise homotopy equivalence}
  \newcommand{\whe}{weak homotopy equivalence}

  \newcommand{\He}{Homotopy equivalence}
  \newcommand{\Fhe}{Fiberwise homotopy equivalence}
  \newcommand{\Whe}{Weak homotopy equivalence}
  \newcommand{\Wfs}{Weak factorisation system}


  \newcommand{\cat}[1]{\mathcal{#1}}

  \newcommand{\ncat}[1]{\mathsf{#1}}



  \newcommand{\Pos}{\ncat{Pos}}

  \newcommand{\Set}{\ncat{Set}}

  \newcommand{\sSet}{\ncat{sSet}}


  \newcommand{\Simp}{\Delta}

  \newcommand{\cof}{\mathsf{cof}}


  \renewcommand{\emptyset}{{\mathord\varnothing}}

  \newcommand{\nat}{{\mathord\mathbb{N}}}

  \newcommand{\set}[2]{\left\{#1\mathrel{}\middle|\mathrel{}#2\right\}}

  \newcommand{\inter}{\cap}

  \newcommand{\union}{\cup}



  \newcommand{\weq}{\mathrel\sim}

  \newcommand{\heq}{\mathrel\simeq}

  \newcommand{\iso}{\mathrel{\cong}}

  \newcommand{\htp}{\mathrel\sim}

  \newcommand{\adj}{\dashv}


  \newcommand{\colim}{\operatorname{colim}}

  \let\bigcoprod\coprod

  \renewcommand{\coprod}{\sqcup}

  \newcommand{\tensor}{\odot}

  \newcommand{\cotensor}{\mathop\pitchfork}

  \newcommand{\pull}{\times}

  \newcommand{\push}{\sqcup}

  \newcommand{\etimes}{\mathop{\ensurestackMath{\stackunder[-3pt]{\times}{-}}}}

  \newcommand{\coend}{\int}

  \newcommand{\diag}{\operatorname{diag}}

  \newcommand{\op}{{\mathord\mathrm{op}}}

  \newcommand{\slice}{\mathbin\downarrow}

  \newcommand{\fslice}{\mathbin\twoheaddownarrow}

  \newcommand{\ob}{\operatorname{ob}}
  \newcommand{\mor}{\operatorname{mor}}


  \newcommand{\face}{\delta}

  \newcommand{\dgn}{\sigma}

  \newcommand{\bd}{\partial}

  \newcommand{\simp}[1]{\mathord\Delta[#1]}

  \newcommand{\bdsimp}[1]{\mathord{\bd\simp{#1}}}

  \makeatletter
  \def\horn#1{\expandafter\horn@i#1,,\@nil}
  \def\horn@i#1,#2,#3\@nil{\mathord\Lambda^{#2}[#1]}
  \makeatother

  \newcommand{\ssimp}[1]{\mathord\Delta_\sharp[#1]}

  \newcommand{\bdssimp}[1]{\mathord{\bd\ssimp{#1}}}

  \newcommand{\nerve}{\operatorname{N}}

  \newcommand{\sd}{\operatorname{sd}}
  \newcommand{\Sd}{\operatorname{Sd}}

  \newcommand{\Ex}{\operatorname{Ex}}

  \newcommand{\Sk}{\operatorname{Sk}}


  \newcommand{\id}{\operatorname{id}}

  
  \DeclareMathOperator{\cod}{cod}

  \newcommand{\uvar}{\mathord{\relbar}}

  \renewcommand{\epsilon}{\varepsilon}

  \renewcommand{\phi}{\varphi}

  \renewcommand{\bar}{\widebar}

  \renewcommand{\hat}{\widehat}

  \newcommand{\hatbin}[1]{\mathbin{\hat{#1}}}

  \renewcommand{\tilde}{\widetilde}


\DeclarePairedDelimiter\braces\lbrace\rbrace



\declaretheorem[style=plain,within=subsection]{corollary}
\declaretheorem[style=plain,numberlike=corollary]{lemma}
\declaretheorem[style=plain,numberlike=corollary]{proposition}
\declaretheorem[style=plain,numberlike=corollary]{theorem}
\declaretheorem[style=definition,numberlike=corollary]{remark}

\declaretheorem[style=plain,name=Corollary,within=section]{corollary-s}
\declaretheorem[style=plain,name=Lemma,numberlike=corollary-s]{lemma-s}
\declaretheorem[style=plain,name=Proposition,numberlike=corollary-s]{proposition-s}
\declaretheorem[style=definition,name=Remark,numberlike=corollary-s]{remark-s}

\Crefname{corollary}{Corollary}{Corollaries}
\Crefname{lemma}{Lemma}{Lemmas}
\Crefname{proposition}{Proposition}{Propositions}
\Crefname{theorem}{Theorem}{Theorems}
\Crefname{remark}{Remark}{Remarks}

\Crefname{corollary-s}{Corollary}{Corollaries}
\Crefname{lemma-s}{Lemma}{Lemmas}
\Crefname{proposition-s}{Proposition}{Propositions}
\Crefname{remark-s}{Remark}{Remarks}


  \usetikzlibrary{matrix,arrows,arrows.meta,positioning,decorations.markings,patterns,calc}

  \newenvironment{tikzeq}[1]
  {
    \begingroup
    \begin{equation}\label{#1}
    \begin{tikzpicture}[baseline=(current bounding box.center)]
  }
  {
    \end{tikzpicture}
    \end{equation}
    \endgroup
    \ignorespacesafterend
  }

  \newenvironment{tikzeq*}
  {
    \begingroup
    \begin{equation*}
    \begin{tikzpicture}[baseline=(current bounding box.center)]
  }
  {
    \end{tikzpicture}
    \end{equation*}
    \endgroup
    \ignorespacesafterend
  }

  \tikzset
  {
    diagram/.style=
    {
      matrix of math nodes,
      column sep={4.3em,between origins},
      row sep={4em,between origins},
      text height=1.5ex,
      text depth=.25ex
    },
    over/.style={preaction={draw=white,-,line width=6pt}},
    every to/.style={font=\footnotesize},
    inj/.style={right hook->},
    surj/.style={-{Latex[open]}},
    cof/.style={>->},
    fib/.style={->>},
    strike/.style={decoration={markings,mark=at position 0.5 with {\arrow{|}}},postaction={decorate}},
    ano/.style={cof,strike},
    tfib/.style={fib,strike},
    weq/.style={->},
  }

  \newcommand{\pb}[2]{\node at ($(#1)!0.25!(#2)$) {\tikz{\draw (3mm,0)--++(-90:3mm)--++(180:3mm)}}}



  \newcommand{\pbdr}[2]{\node at ($(#1)!0.25!(#2)$) {\tikz{\draw (3mm,180)--++(90:3mm)--++(0:3mm)}}}


  \Crefname{subsection}{Subsection}{Subsections}

  \setlist[enumerate]{label=(\roman*),itemsep=0ex}
  \Crefname{enumi}{Part}{Parts}
  \crefname{enumi}{part}{parts}

  \newlist{conditions}{enumerate}{1}
  \setlist[conditions]{label=(\roman*),itemsep=0ex}
  \Crefname{conditionsi}{Condition}{Conditions}
  \crefname{conditionsi}{condition}{Conditions}

  \newcommand{\axm}[1]{(#1\arabic*)}
  \newlist{axioms}{enumerate}{1}
  \Crefname{axiomsi}{Axiom}{Axioms}
  \crefname{axiomsi}{axiom}{axioms}

  \newlist{fibcat-axioms}{enumerate}{1}
  \setlist[fibcat-axioms]{label=\axm{F},resume}
  \Crefname{fibcat-axiomsi}{Axiom}{Axioms}
  \crefname{fibcat-axiomsi}{axiom}{axioms}

  \makeatletter
  \newcommand{\customlabel}[2]{#2\def\@currentlabel{#2}\label{#1}}
  \makeatother


  \DeclareFontFamily{U}{mathx}{\hyphenchar\font45}

  \DeclareFontShape{U}{mathx}{m}{n}{
    <5> <6> <7> <8> <9> <10>
    <10.95> <12> <14.4> <17.28> <20.74> <24.88>
    mathx10}{}

  \DeclareSymbolFont{mathx}{U}{mathx}{m}{n}

  \DeclareMathAccent{\widebar}{0}{mathx}{"73}

  \DeclareFontFamily{U}{MnSymbolA}{}

  \DeclareFontShape{U}{MnSymbolA}{m}{n}{
    <-6> MnSymbolA5
    <6-7> MnSymbolA6
    <7-8> MnSymbolA7
    <8-9> MnSymbolA8
    <9-10> MnSymbolA9
    <10-12> MnSymbolA10
    <12-> MnSymbolA12}{}

  \DeclareSymbolFont{MnSyA}{U}{MnSymbolA}{m}{n}

  \DeclareMathSymbol{\twoheaddownarrow}{\mathrel}{MnSyA}{27}


\makeatletter
\newcommand{\DeclareAbbrevation}[2]{\newcommand{#1}{\@ifnextchar{.}{#2}{#2.\@\xspace}}}
\makeatother

\DeclareAbbrevation{\ie}{i.e}
\DeclareAbbrevation{\eg}{e.g}
\DeclareAbbrevation{\cf}{cf}
\DeclareAbbrevation{\etc}{etc}
\DeclareAbbrevation{\resp}{resp}
\DeclareAbbrevation{\etal}{et al}
\DeclareAbbrevation{\ibid}{ibid}
\DeclareAbbrevation{\ca}{ca}
\DeclareAbbrevation{\vs}{vs}

  \author{Nicola Gambino\thanks{School of Mathematics, University of Leeds. Email: n.gambino@leeds.ac.uk} \and 
  Christian Sattler\thanks{Department of Computer Science and Engineering, Chalmers University of Technology. Email: sattler@chalmers.se} \and
  Karol Szumi{\l}o\thanks{School of Mathematics, University of Leeds. Email:  k.szumilo@leeds.ac.uk}}

  \title{The Constructive Kan--Quillen Model Structure: Two New Proofs}
  \date{\today}

\begin{document}

  \maketitle

  \begin{abstract}
    We present two new proofs of Simon Henry's result that
    the category of simplicial sets admits a constructive counterpart of
    the classical Kan--Quillen model structure.
    Our proofs are entirely self-contained and
    avoid complex combinatorial arguments on anodyne extensions.
    We also give new constructive proofs of
    the left and right properness of the model structure.
  \end{abstract}

\section*{Introduction}

The Kan--Quillen model structure on simplicial sets, \ie,
the model structure in which the fibrant objects are the Kan complexes and
the cofibrations are the monomorphisms~\cite{Quillen},
has long been recognised as the cornerstone of
modern simplicial homotopy theory~\cite{Goerss-Jardine}.
Over the past decade, however,
this model structure has become of great interest also in mathematical logic and
theoretical computer science, since it provides inspiration for
Voevodsky's Univalent Foundations programme~\cite{Voevodsky-MSCS} and
Homotopy Type Theory~\cite{HoTT-book}.
In particular, it plays an essential role in
the simplicial model of Univalent Foundations~\cites{Kapulkin}.

While there are several proofs of the existence of
the Kan--Quillen model structure~\cites{Goerss-Jardine,jt,Cisinski,Moss,Sattler},
all of them use non-constructive principles, \ie,
the law of excluded middle (EM) and the axiom of choice (AC).
Since these principles are not generally valid in
the internal logic of a Grothendieck topos,
the construction of an analogue of the Kan--Quillen model structure on
simplicial sheaves is very subtle~\cites{Jardine,Joyal}.
This situation is also an obstacle to the definition of
a constructive version of the simplicial model of Univalent Foundations, which
is still an open problem.
Furthermore, the results in~\cite{bcp} show that
some results on Kan fibrations are simply not provable constructively.

Recently, Simon Henry obtained a breakthrough result by
establishing a constructive counterpart of
the Kan--Quillen model structure~\cite{Henry-qms}, namely
a model structure whose existence can be proved using constructive methods, but
coincides with the usual model structure once EM and AC are assumed.
A key aspect of this model structure is that, in contrast to the classical case,
not all objects are cofibrant, but
only those in which degeneracy of simplices is decidable.
The existence of this model structure has already been applied to
provide a partial solution to the problem of
giving a constructive simplicial model of
Univalent Foundations~\cite{Gambino-Henry} and suggests
the possibility of defining a new model structure on simplicial sheaves.
Indeed, the results in this paper have led to the construction of a new model structure on
categories of simplicial objects in countably lextensive categories, obtained in collaboration with Simon
Henry~\cite{Gambino-Henry-Sattler-Szumilo}.

The main goal of this paper is to give two new proofs of the existence of
the constructive Kan--Quillen model structure.
We believe these proofs to be simpler than Henry's proof~\cite{Henry-qms},
formulated in clear category-theoretic terms and essentially self-contained.
In contrast, Henry's proof uses subtle combinatorial arguments on anodyne maps,
including results that do not seem to have been known even in a classical setting,
and relies on his earlier work on weak model structures~\cite{Henry-wms}.
Because of our category-theoretic approach, our arguments are
potentially easier to generalise so as to obtain
a new model structure on simplicial sheaves,
a task that we leave for future research.
We also provide two new proofs of the left and right properness of
the constructive Kan--Quillen model structure, which
were also already proved in~\cite{Henry-qms}.

Overall, this paper establishes
all the results of constructive simplicial homotopy theory needed for~\cite{Gambino-Henry}.
Indeed, the desire to give self-contained and streamlined proofs of
these results was one of the motivations for this paper, which
can then be seen as contributing to the effort to define
a constructive simplicial model of Univalent Foundations. We also give new proofs of
some results in~\cite{Gambino-Henry} and establish constructive versions of
well-known theorems, such as Quillen's Theorem~A.

Cofibrancy considerations play a key role in both of our proofs.
On the one hand, we had to check carefully that
the decidability assumptions encapsulated in the notion of cofibrancy allow us
to carry over some classical arguments.
This is sometimes subtle, for example when extending results about
the $\Ex$ functor to the $\Ex^\infty$ functor for our first proof and
when proving a version of the equivalence extension property for
our second proof.
On the other hand, we also had to develop new arguments, necessary to
extend results from the full subcategory of cofibrant simplicial sets to
the category of all simplicial sets, which do not have counterparts in the classical setting.
Furthermore, this situation requires us to work with more notions of
\whe{} than in the classical setting and then check that
they are mutually consistent.

We should mention that we had to refine the assumptions of the arguments in~\cite{Sattler} before we could apply them in the cofibrant fragment.
For example, the category of cofibrant simplicial sets cannot constructively be shown locally cartesian closed.
However, the exponentials appearing in the proof of the equivalence extension property nevertheless preserve cofibrancy, and this verification is rather subtle.
For these reasons,  we hope that the methods developed in this paper are
valuable not only for obtaining a new model structure on simplicial sheaves but
also for defining other model structures in which not all objects are cofibrant.
We will comment in more detail on the differences between our proofs and Simon Henry's proof
in \cref{thm:first-diff-henry,thm:second-diff-henry,thm:properness-diff-henry},
after concluding our proofs of the existence of the model structure and of
its properness.

We regret that this paper is longer than we originally intended, but we hope that kind readers will appreciate that the proofs are given in some detail, hopefully making our results more widely accessible.

\smallskip

\noindent
\textbf{Outline of the paper.}
Our development begins in \cref{sec:preliminaries} with
some material useful for both proofs.
We begin in \cref{sec:dec-inc} with some remarks on decidable inclusions and split surjections
in the category of sets. These are used in \cref{sec:ssets}  to define
 the \wfs{}s on simplicial sets  of
cofibrations and trivial fibrations and of trivial cofibrations and fibrations.
The pushout product properties for these weak factorisation systems are proved in \cref{sec:two-wfss}.
We then identify the weak factorisation system of cofibrations and trivial fibrations as the
Reedy weak factorisation system on simplicial sets induced by the \wfs{} of decidable inclusions
and split surjections on sets in \cref{sec:cofibrations}.
We conclude the section by introducing weak homotopy
equivalences in \cref{sec:htpy-and-whe} and
proving that, for a cofibrant simplicial set $B$,
the full subcategory of the slice $\sSet \slice B$ spanned by
fibrations with cofibrant domain is a fibration category in \cref{fibration-category}.

\Cref{sec:first-proof} presents our first proof of the existence of
the Kan--Quillen model structure, which is inspired by
classical ideas of simplicial homotopy theory, in particular~\cite{ltw}.
The proof is organised in five steps, each presented in a subsection.
In \cref{sec:the-cofibration-category-of-cofibrant-ssets}, we show that
the full subcategory of $\sSet$ spanned by cofibrant simplicial sets admits
the structure of a cofibration category.
In \cref{sec:diagonals-of-ssets}, we obtain a constructive proof of
the so-called diagonal lemma, asserting that if
a map between cofibrant bisimiplicial sets is
pointwise a weak homotopy equivalence, then so is its diagonal.
In \cref{sec:subdivision-and-ex}, we prove counterparts of
standard facts on Kan's $\Ex^\infty$ functor on cofibrant simplicial sets.
In \cref{sec:explicit-cofibrant-replacement}, we prove
a version of Quillen's Theorem A and use it to introduce
a cofibrant replacement functor with good properties.
Finally, in \cref{sec:the-model-structure-first-proof}, we combine these results
to present our first proof of the existence of the model structure.

\Cref{sec:second-proof} presents our second proof, which is based on
the ideas in~\cites{Gambino-Sattler,Sattler}.
The proof is organised in three subsections.
In \cref{sec:frobenius-property}, we establish a restricted version of
the Frobenius property, showing that
trivial cofibrations are closed under pullback along
fibrations with cofibrant domain.
In \cref{sec:equivalence-extension-property}, we prove
the so-called equivalence extension property in
the full subcategory of simplicial sets spanned by cofibrant objects.
In \cref{sec:model-structure-second-proof}, we combine these results to establish the restriction of the model structure to cofibrant objects.
We then extend this model structure to all simplicial sets.

Our two new proofs of the left and right properness of the model structure are
presented in \cref{sec:properness}.
Here, it should be noted that, in contrast with the classical setting,
left properness is not immediate since not every object is cofibrant.
For right properness, one proof uses $\Ex^\infty$ functor, while
the other uses the Frobenius property.
For left properness, both proofs use an argument dual to that
for right properness using $\Ex^\infty$.

\smallskip

\noindent
\textbf{Remarks on constructivity.} To fix ideas, we shall work in Constructive Zermelo--Fraenkel set theory (CZF), a set theory based on constructive logic \cite{Aczel}. See~\cite{Aczel-Rathjen} for more information on CZF.
Readers who are unfamiliar with constructive set theory may think of our category of sets as being an arbitrary Grothendieck topos. By a finite set, we always mean
a set with a bijection to a set of the form $\braces{1, \ldots, n}$ for some $n \in \mathbb{N}$. Equality on such sets is always decidable, which is why
they are sometimes referred to also as finite decidable sets.
To simplify our presentation, we adopt an abuse of language and say ``for all $\ldots$ there exists $\ldots$'' to mean that we have a function giving witnesses for existential quantifiers. In particular, when we speak of a map having a right (or left) lifting property with respect to a given class of maps
and in particular in the notion of a \wfs{} and a model structure, we mean that the map is equipped with a  function providing diagonal fillers for the appropriate class of diagrams.
Here, a class of maps means a class together with a forgetful function to the class of maps.
However, after we have established the model structure, one can also derive a variation with subclasses, where for example the fibrations are the subclass of maps for which there exists a function providing diagonal fillers.

\smallskip

\noindent
\textbf{Notation.}  We will use distinct notations for various types of morphisms, which
are summarised in \cref{tab:arrows} for the convenience of the readers.

\begin{table}[htb]
\begin{tikzeq*}
\matrix[diagram]
{
  |(weq)| \text{Acyclic map (weak equivalence)}  &[4em] |(weql)|  & |(weqr)| \\[-4ex]
  |(f)| \text{Fibration}                         &[4em] |(fl)|    & |(fr)|   &[7em]
  |(c)| \text{Cofibration}                       &[4em] |(cl)|    & |(cr)|   \\[-4ex]
  |(tf)| \text{Trivial fibration}                &      |(tfl)|   & |(tfr)|  &
  |(tc)| \text{Trivial cofibration}              &      |(tcl)|   & |(tcr)|  \\[-4ex]
  |(af)| \text{Acyclic fibration}                &      |(afl)|   & |(afr)|  &
  |(ac)| \text{Acyclic cofibration}              &      |(acl)|   & |(acr)|  \\[-4ex]
  |(m)| \text{Monomorphism / Face operator}      &      |(ml)|    & |(mr)|   &
  |(e)| \text{Epimorphism / Degeneracy operator} &      |(el)|    & |(er)|   \\[-4ex]
};

\draw[weq] (weql) to node[above] {$\weq$} (weqr);
\draw[fib]  (fl)  to (fr);
\draw[cof]  (cl)  to (cr);
\draw[tfib] (tfl) to (tfr);
\draw[ano]  (tcl) to (tcr);

\draw[fib] (afl) to node[above] {$\weq$} (afr);
\draw[cof] (acl) to node[above] {$\weq$} (acr);

\draw[inj]  (ml) to (mr);
\draw[surj] (el) to (er);
\end{tikzeq*}
\caption{Notation for different classes of morphisms.}
\label{tab:arrows}
\end{table}

\smallskip

\noindent
\textbf{Acknowledgements.}
Nicola Gambino gratefully acknowledges
the support of the US Air Force Office for Scientific Research
under agreements FA8655-13-1-3038 and FA9550-21-1-0007
and of EPSRC under
grant EP/M01729X/1, as well as the hospitality
of the University of Manchester during a study leave from the University of Leeds.
Christian Sattler was supported by Swedish Research Council grant 2019-03765.
We are very grateful to Simon Henry for helpful discussions,
sharing early drafts of his work and for pointing out a mistake in a previous version of this paper.
We also thank Steve Awodey, Thierry Coquand, Andr\'e Joyal and Raffael Stenzel for
useful conversations and the referee for comments which helped to improve the exposition.

  \section{Preliminaries}
  \label{sec:preliminaries}

\subsection{Decidable inclusions}
\label{sec:dec-inc}

We begin by verifying some basic properties of the category of sets in our constructive setting. In particular, we will show that it admits a \wfs{} consisting of
decidable inclusions and split surjections. This will be useful to construct \wfs{}s on simplicial sets in~\cref{sec:ssets}. Recall that a map of sets $i \from A \to B$ is a \emph{decidable inclusion}
if there is a map $j \from C \to B$ \st{}
$i$ and $j$ exhibit $B$ as a coproduct of $A$ and $C$.
The map $j$ (or just the set $C$ by abuse of language) is called
the \emph{complement} of $i$ (or $A$).
A \emph{split surjection} is a function that admits a section.
Note that, assuming EM, every injective function is a decidable inclusion and,
assuming AC, every surjection is split.

We recall the notion of a van Kampen colimit (sometimes also called descent for colimits) in a category $\cat{C}$ with pullbacks.
A colimit in $\cat{C}$ is said to be \emph{van Kampen} if it is preserved by the bifunctor $\cat{C} \slice \uvar$ from $\cat{C}^\op$ to categories whose functorial action is given by reindexing.
If $\cat{C}$ is cocomplete, this concretely means the following for a colimit $\colim A$: given a natural transformation $u \from X \to A$ that is cartesian, \ie, whose naturality squares are pullbacks, a cocone under $X$ with summit $\overline{X}$, and a map $\overline{X} \to \colim A$ cohering with $X \to A$, the cocone with summit $\overline{X}$ is colimiting if and only if the square from $X_s \to A_s$ to $\overline{X} \to \colim A$ is a pullback for each index $s$.
If pullback functors preserve colimits, such as in $\Set$, then the reverse direction of this equivalence holds.

Certain classes of van Kampen colimits have special names.
We call $\cat{C}$:
\begin{itemize}
\item \emph{extensive} if coproducts exist and are van Kampen;
\item \emph{adhesive} if pushouts along monomorphisms exist and are van Kampen;
\item $\omega$-\emph{exhaustive}~\cite{nlab-exhaustive} if sequences of monomorphisms (forming $\omega$-indexed diagrams in $\cat{C}$) have colimits that are van Kampen.
\end{itemize}

The next lemma is well-known classically and essentially straightforward also constructively.

\begin{lemma}\label{Set-van-Kampen}
  The category of sets is 
  \begin{enumerate}
  \item \label{item:extensive} extensive;
  \item \label{item:adhesive} adhesive;
  \item \label{item:exhaustive} $\omega$-exhaustive.
  \end{enumerate}
\end{lemma}

\begin{proof}
For~\cref{item:extensive}, the claim is that, given functions $X_i \to A_i$ for $i \in I$, each square
\begin{tikzeq*}
\matrix[diagram]
{
  |(Xi)| X_i & |(X)| \bigcoprod_i X_i \\
  |(Ai)| A_i & |(A)| \bigcoprod_i A_i \\
};

\draw[->] (Xi) to (Ai);
\draw[->] (X)  to (A);
\draw[->] (Xi) to (X);
\draw[->] (Ai) to (A);
\end{tikzeq*}
is a pullback.
This is immediate.

The remaining parts are consequences of effectivity of quotients of equivalence relations.
For \cref{item:adhesive}, the claim reduces by \cite{garner-lack-adhesive}*{Theorem~A} to pushouts along monomorphisms being pullback squares.
Consider a pushout square
\begin{tikzeq*}
\matrix[diagram]
{
  |(A)| A & |(C)| C \\
  |(B)| B & |(P)| P \\
};

\draw[inj] (A) to node[left]  {$f$} (B);
\draw[->]  (A) to node[above] {$g$} (C);
\draw[->]  (B) to (P);
\draw[->]  (C) to (P);
\end{tikzeq*}
with $f \colon A \ito B$ a monomorphism. The pushout $P$ is the quotient of the equivalence relation on $B \coprod C$ generated by identifying the images of $f(a)$ and $g(a)$ for $a \in A$.
Given $b \in B$ and $c \in C$ mapping to the same element of $P$, effectivity of the quotient implies that $b$ and $c$ are related in $B \coprod C$ by the equivalence relation.
A direct calculation using that $f$ is a monomorphism shows that there exists a unique $a \in A$ such that $f(a) = b$ and $g(a) = c$.
This makes the square a pullback.

For \cref{item:exhaustive}, consider a sequence of monomorphisms $(A_i \ito A_{i+1})_{i \in \omega}$.
The colimit $A_\omega$ is the quotient of the equivalence relation on $\bigcoprod_i A_i$ that identifies $a_j \in A_j$ and $a_k \in A_k$ if they coincide in $A_{\max(j, k)}$.
Effectivity of the quotient implies that the coprojections $A_i \to A_\omega$ are monomorphisms.
Consider now another sequence $(X_i \to X_{i+1})_{i \in \omega}$ with a cartesian natural transformation $f \from X \to A$.
In particular, each map $X_i \to X_{i+1}$ is a monomorphism, and so we may construct the colimit $X_\omega$ as above.
Given $i \in \omega$, the claim is that the induced square
\begin{tikzeq*}
\matrix[diagram]
{
  |(Xi)| X_i & |(X)| X_\omega \\
  |(Ai)| A_i & |(A)| A_\omega \\
};
\draw[->]  (Xi) to node[left]  {$f_i$}     (Ai);
\draw[->]  (X)  to node[right] {$f_\omega$} (A);
\draw[inj] (Xi) to (X);
\draw[inj] (Ai) to (A);
\end{tikzeq*}
is a pullback.
Given $x_\omega \in X_\omega$ such that $f_\omega(x_\omega)$ lifts to $A_i$, we have to show that $x_\omega$ lifts to $X_i$.
By construction of $X_\omega$ as a quotient of $\bigcoprod_j X_j$, there exists $j \geq i$ such that $x_\omega$ lifts to $x_j \in X_j$.
Since the naturality square of $f$ at $i \to j$ is cartesian, $x_j$ lifts to $X_i$.
\end{proof}

\begin{remark}
Categorically, \cref{item:adhesive,item:exhaustive} of \cref{Set-van-Kampen} say that a Barr-exact category is adhesive if it is finitely extensive and $\omega$-exhaustive if it is countably extensive.
Although this seems folklore, we were unable to locate precise references.
For adhesivity, see~\cite{Johnstone}*{Lemma A.2.4.3} for a related argument in the context of topos theory (using a subobject classifier).
For exhaustivity, see~\cite{nlab-exhaustive} for a criterion (mirroring that of \cite{garner-lack-adhesive}*{Theorem~A} for adhesivity) that we have essentially followed in our proof.
\end{remark}

\begin{remark}
Although we liberally use colimits of sets in this paper, all appearing colimits can be built from countable coproducts.
In particular, pushouts along monomorphisms and sequential colimits of monomorphisms will be considered only for \emph{decidable inclusions}.
This makes the development independent from the availability of quotients.
\end{remark}

The results in the rest of this subsection depend only on the fact that the category of sets is extensive.
The other van Kampen properties established by \cref{Set-van-Kampen} will be used in \cref{fibration-extension}.

\begin{lemma}\label{decidable-properties}
  In the category of sets:
  \begin{enumerate}
  \item \label{decidable-monomorphism}
    decidable inclusions are monomorphisms;
  \item \label{decidable-pullback} decidable inclusions are
    closed under pullback along arbitrary functions;
  \item \label{decidable-retract}
    decidable inclusions are closed under retracts.
  \end{enumerate}
\end{lemma}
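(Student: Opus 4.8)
The plan is to prove the three parts in order, using only extensivity, with the earlier parts feeding the later ones. For the first part, let $i \from A \to B$ be a decidable inclusion with complement $j \from C \to B$, so that $i$ and $j$ exhibit $B$ as $A \coprod C$. I would apply extensivity to the function $i$ itself, relative to the coproduct decomposition $B = A \coprod C$ of its codomain: as candidate pullbacks of $i$ along $i$ and along $j$, take $A$ (with both legs the identity) and $\emptyset$ (with the unique maps); both resulting squares commute, and $A \coprod \emptyset \iso A$ exhibits $A$ as the coproduct of these candidates with the expected coprojections, so extensivity forces both squares to be pullbacks. The first of them says precisely that the pullback of $i$ along itself is $A$ with identity legs, \ie{} that $i$ is a monomorphism. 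For the second part, given a decidable inclusion $i \from A \to B$ with complement $j \from C \to B$ and an arbitrary $f \from B' \to B$, form the pullbacks $A' \coloneqq B' \pull_B A$ and $C' \coloneqq B' \pull_B C$ of $i$ and $j$ along $f$; the two resulting squares lie over the decomposition $B = A \coprod C$, so the ``pullbacks imply coproduct'' direction of extensivity yields $B' \iso A' \coprod C'$ with the two projections to $B'$ as coprojections. Hence the projection $A' \to B'$, which is the pullback of $i$ along $f$, is a decidable inclusion with complement $C'$.

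For the third part, suppose $i' \from A' \to B'$ is a retract of a decidable inclusion $i \from A \to B$, witnessed by a commuting diagram whose rows $A' \xrightarrow{\alpha} A \xrightarrow{\beta} A'$ and $B' \xrightarrow{\gamma} B \xrightarrow{\delta} B'$ compose to the respective identities, with $i\alpha = \gamma i'$ and $i'\beta = \delta i$. By the first part, $i$ is a monomorphism, hence so is $i'$. The key point is that the left-hand square of this diagram---with top $\alpha$, bottom $\gamma$, and verticals $i'$ and $i$---is itself a pullback: given $f \from T \to B'$ and $g \from T \to A$ with $\gamma f = i g$, the map $\beta g \from T \to A'$ satisfies $i'(\beta g) = \delta i g = \delta\gamma f = f$, while from $i(\alpha\beta g) = \gamma i'\beta g = \gamma\delta i g = \gamma\delta\gamma f = \gamma f = i g$ and $i$ monic one gets $\alpha(\beta g) = g$; uniqueness of the factorisation follows from $\beta\alpha = \id$. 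Thus $i'$ is, up to isomorphism over $B'$, the pullback of $i$ along $\gamma$, so the second part applies and $i'$ is a decidable inclusion, with complement $B' \pull_B C$.

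I expect the third part to be the only one requiring genuine thought. The naive candidate for the complement of $i'$---for example the composite $C \xrightarrow{j} B \xrightarrow{\delta} B'$---does not work, because the retraction $\delta$ need not respect the coproduct decomposition of $B$; the clean route is instead to realise $i'$ as a pullback of $i$ and invoke the second part, which is possible only because we already know $i$ is a monomorphism.
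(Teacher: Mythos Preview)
Your proof is correct and follows essentially the same strategy as the paper's: parts~(i) and~(ii) are derived directly from extensivity, and part~(iii) is reduced to part~(ii) by observing that a retract of a decidable inclusion is automatically a pullback of it (the paper states this reduction without the verification, which you supply, correctly using part~(i) to cancel $i$). The paper delegates the arguments for~(i) and~(ii) to \cite{clw-extensive}*{Proposition~2.6}, whereas you unfold them explicitly; your unfolding is accurate.
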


\begin{proof}
  All these are consequences of extensivity.
  Parts \ref{decidable-monomorphism} and~\ref{decidable-pullback} follow from
  \cite{clw-extensive}*{Proposition~2.6}.
  Moreover, if $i$ is a retract of a decidable inclusion $j$, then
  it is also a pullback of $j$, and thus \cref{decidable-retract} is
  a consequence of \cref{decidable-pullback}.
\end{proof}

\begin{lemma}\label{pullback-coproduct}
  In the category of sets, pullbacks commute with coproducts.
\end{lemma}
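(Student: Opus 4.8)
The plan is to reduce the statement to two applications of extensivity together with the pasting lemma for pullback squares. Unwound, the claim is that for any family of cospans $A_i \xrightarrow{f_i} B_i \xleftarrow{g_i} C_i$ in the category of sets, indexed by $i \in I$, the canonical comparison map
\[
  \bigcoprod_i (A_i \pull_{B_i} C_i) \longrightarrow \Bigl(\bigcoprod_i A_i\Bigr) \pull_{\bigcoprod_i B_i} \Bigl(\bigcoprod_i C_i\Bigr)
\]
is an isomorphism. I write $P$ for the pullback on the right, $p \from P \to \bigcoprod_i A_i$ and $q \from P \to \bigcoprod_i C_i$ for its projections, $\beta_i \from B_i \to \bigcoprod_j B_j$ for the coproduct injections, and $g \from \bigcoprod_j C_j \to \bigcoprod_j B_j$ for the map induced by the $g_j$.

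First I would decompose the target. Since $\bigcoprod_j A_j$ is the coproduct of the $A_j$, extensivity applied to $p$ --- that is, pulling the coproduct decomposition of $\bigcoprod_j A_j$ back along $p$ --- exhibits $P$ as the coproduct of the sets $P_i = P \pull_{\bigcoprod_j A_j} A_i$, the injections being the evident projections $P_i \to P$.

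I would then identify each summand. By the pasting lemma, $P_i$ is the pullback of the cospan $A_i \xrightarrow{\beta_i f_i} \bigcoprod_j B_j \xleftarrow{g} \bigcoprod_j C_j$; since its left leg factors through $\beta_i$, a second application of pasting gives $P_i \iso A_i \pull_{B_i} \bigl( B_i \pull_{\bigcoprod_j B_j} \bigcoprod_j C_j \bigr)$. Finally I would recognise the inner pullback as $C_i$: applying extensivity in the other direction --- to the map $g$ and the coproduct decomposition of $\bigcoprod_j B_j$, using that $\bigcoprod_j C_j$ is already presented as the coproduct of the $C_j$ --- forces each square with vertical legs $g_j$ to be a pullback, whence $B_i \pull_{\bigcoprod_j B_j} \bigcoprod_j C_j \iso C_i$. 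Thus $P_i \iso A_i \pull_{B_i} C_i$, and unwinding the identifications shows these isomorphisms are the restrictions of the comparison map, which is therefore invertible.

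I do not expect a genuine obstacle: extensivity is available by hypothesis and the pasting lemma is purely formal and constructively valid, so the only care needed is the bookkeeping, in particular checking that the coproduct decomposition of $P$ produced by extensivity is compatible with the canonical comparison map above. Alternatively one could first record the one-sided statement that coproducts are stable under pullback, $\bigl(\bigcoprod_i X_i\bigr) \pull_B Y \iso \bigcoprod_i (X_i \pull_B Y)$ --- the same argument with the middle and right legs of the cospans trivial --- and derive the symmetric form from it, but this is no shorter.
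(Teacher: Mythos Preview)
Your proof is correct and essentially the same as the paper's: both use extensivity twice (once in each direction) together with pullback pasting. The paper packages the argument into a single cube diagram---showing the back face is a pullback by pasting and then invoking extensivity---whereas you unwind the same steps linearly, but the mathematical content is identical.
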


\begin{proof}
  Let $A_i \to B_i \leftarrow C_i$ be a family of cospans indexed by $i \in I$.
  In the cube
  \begin{tikzeq*}
  \matrix[diagram,column sep={between origins,6em}]
  {
      |(p)| A_i \pull_{B_i} C_i & & |(pc)| \bigcoprod_i A_i \pull_{\bigcoprod_i B_i} \bigcoprod_i C_i & \\
    & |(C)| C_i                 & & |(cC)| \bigcoprod_i C_i                                             \\
      |(A)| A_i                 & & |(cA)| \bigcoprod_i A_i                                           & \\
    & |(B)| B_i                 & & |(cB)| \bigcoprod_i B_i                                             \\
  };

  \draw[->,shorten <=0.5mm]               (p)  to (A);
  \draw[->,shorten <=1mm,shorten >=0.5mm] (pc) to (cA);
  \draw[->]                               (p)  to (pc);
  \draw[->]                               (A)  to (cA);

  \draw[->,over]                            (C)  to (B);
  \draw[->,shorten <=0.5mm,shorten >=0.5mm] (cC) to (cB);
  \draw[->,over]                            (C)  to (cC);
  \draw[->]                                 (B)  to (cB);

  \draw[->]                                 (p)  to (C);
  \draw[->,shorten <=0.5mm,shorten >=0.5mm] (pc) to (cC);
  \draw[->]                                 (A)  to (B);
  \draw[->,shorten <=0.5mm,shorten >=0.5mm] (cA) to (cB);
  \end{tikzeq*}
  the left and right faces are pullbacks by construction.
  The front face is a pullback by extensivity and thus so is the back one.
  Using the extensivity once more, we conclude that
  $\bigcoprod_i A_i \pull_{\bigcoprod_i B_i} \bigcoprod_i C_i$ coincides with
  $\bigcoprod_i (A_i \pull_{B_i} C_i)$.
\end{proof}

\begin{proposition}\label{decidable-wfs}
  Decidable inclusions and split surjections form a \wfs{}.
  The \wfs{} is cofibrantly generated by $\{ \emptyset \to 1 \}$.
\end{proposition}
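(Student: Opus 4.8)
The plan is to describe both classes purely via lifting properties and to exhibit an explicit factorisation, after which the \wfs{} axioms and the ``cofibrantly generated'' clause follow formally, with no \smo{} needed. First I would identify the maps with the \rlp{} \wrt{} $\emptyset \to 1$: a lifting problem of $\emptyset \to 1$ against $p \from X \to Y$ is the same as an element of $Y$, and a solution is an element of $X$ over it, so a choice of solutions to all such problems is precisely a section of $p$; thus the maps with the \rlp{} \wrt{} $\{ \emptyset \to 1 \}$ are exactly the split surjections. Next I would check that every decidable inclusion $i \from A \to B$, with complement $j \from C \to B$, has the \llp{} \wrt{} every split surjection $p \from X \to Y$ with chosen section $s$: given a commutative square with top edge $a \from A \to X$ and bottom edge $b \from B \to Y$, the map $B \to X$ that is $a$ on the summand $A$ and $s b j$ on the summand $C$ is a diagonal filler, using $B \iso A \coprod C$ and $p s = \id_Y$.

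For the factorisation, given $f \from X \to Y$ I would write it as the coproduct inclusion $X \to X \coprod Y$, a decidable inclusion with complement $Y$, followed by the map $X \coprod Y \to Y$ that is $f$ on $X$ and $\id_Y$ on $Y$; this second map is split by the inclusion $Y \to X \coprod Y$, hence a split surjection. It then remains to show that any $i \from A \to B$ with the \llp{} \wrt{} all split surjections is itself a decidable inclusion: factoring $i$ this way gives $A \xrightarrow{k} A \coprod B \xrightarrow{q} B$ with $k$ a decidable inclusion and $q$ a split surjection, and the square with top edge $k$, left edge $i$, right edge $q$ and bottom edge $\id_B$ commutes (both composites equal $i$), so a diagonal filler $r \from B \to A \coprod B$ satisfies $r i = k$ and $q r = \id_B$ and exhibits $i$ as a retract of $k$; by \cref{decidable-properties}\ref{decidable-retract}, decidable inclusions are closed under retracts, so $i$ is one.

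Writing $\mathcal L$ for the decidable inclusions and $\mathcal R$ for the split surjections, the above shows $\mathcal L$ is exactly the class of maps with the \llp{} \wrt{} $\mathcal R$; and since $\emptyset \to 1 \in \mathcal L$, the maps with the \rlp{} \wrt{} $\mathcal L$ lie between $\mathcal R$ and the maps with the \rlp{} \wrt{} $\{ \emptyset \to 1 \}$, which is again $\mathcal R$, so they equal $\mathcal R$. Together with the factorisation this gives the \wfs{}, and it is cofibrantly generated by $\{ \emptyset \to 1 \}$ since $\mathcal R$ is precisely the class of maps with the \rlp{} \wrt{} this set; one may moreover note that every decidable inclusion $A \to A \coprod C$ is the pushout of the coproduct $\emptyset \to C$ of copies of $\emptyset \to 1$ along $\emptyset \to A$, so $\mathcal L$ is generated in the strong ``cellular'' sense as well. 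The only step that needs genuine care — and the only place the constructive setting enters — is the retract argument, which relies on the constructively non-trivial closure of decidable inclusions under retracts; the rest is a direct manipulation of chosen complements and sections.
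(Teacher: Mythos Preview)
Your proof is correct and follows essentially the same route as the paper: the same factorisation $X \to X \coprod Y \to Y$, and the same identification of split surjections as the maps with the \rlp{} \wrt{} $\emptyset \to 1$. You are more explicit than the paper in one place: where the paper says the lifting properties are ``immediate'', you spell out the retract argument showing that any map with the \llp{} \wrt{} split surjections is a retract of its decidable-inclusion factor and hence itself a decidable inclusion via \cref{decidable-properties}\ref{decidable-retract}; this is a sound and standard way to close the left class, and your observation that every decidable inclusion is a one-step pushout of a coproduct of copies of $\emptyset \to 1$ matches the paper's remark that $\emptyset \to Y \iso \bigcoprod_{y \in Y}(\emptyset \to 1)$.
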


\begin{proof}
  Every map $f \from S \to T$ factors in an evident way as $S \to S \coprod T \to T$,
  where the first map is a decidable inclusion by construction and
  the second one has a section given by
  the coproduct inclusion $T \to S \coprod T$.
  The lifting properties are immediate.

For the claim on cofibrant generation, first note that  $\emptyset \to 1$ is a decidable inclusion.
 Also,  a map $X \to Y$ with the \rlp{} \wrt{} $\emptyset \to 1$ is a split surjection.
  Indeed, it has the \rlp{} also \wrt{} $\emptyset \to Y$ since that map is
  the coproduct $\bigcoprod_{y \in Y} \emptyset \to \bigcoprod_{y \in Y} 1$.
\end{proof}

Since decidable inclusions $A \to B$ are monomorphisms
by \cref{decidable-monomorphism} of \cref{decidable-properties},
they can be seen as subobjects of $B$.
Subobjects corresponding to decidable inclusions will be called
\emph{decidable subsets}. We now establish some closure properties of
decidable subsets.

\begin{lemma}\label{decidable-closure}
  \leavevmode
  \begin{enumerate}
  \item \label{decidable-union}
    Decidable subsets are closed under finite unions.
  \item \label{decidable-limit}
  Decidable inclusions are closed under finite limits, \ie,
  if $X \to Y$ is
  a natural transformation between finite diagrams of sets that is
  a levelwise decidable inclusion,
  then so is the induced function $\lim X \to \lim Y$.
  \end{enumerate}
\end{lemma}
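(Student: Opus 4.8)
The plan is to handle the two parts separately; in both cases I would build on stability of decidable inclusions under pullback (\cref{decidable-pullback} of \cref{decidable-properties}) together with the evident fact that decidable inclusions are closed under composition (if $B = A \coprod C$ and $D = B \coprod E$ then $D \iso A \coprod (C \coprod E)$). For \cref{decidable-union}, I would reduce by induction to the empty union, which is $\emptyset \to X$ with complement $X$, and to binary unions: given decidable subsets $A, B \subseteq X$ with complements $A^c$ and $B^c$, first observe that $A^c \inter B^c$ is again a decidable subset of $X$, being the pullback of $B^c \to X$ along $A^c \to X$ followed by $A^c \to X$. Writing $C \to X$ for its complement, I claim this presents $A \union B$ as a decidable subset with complement $A^c \inter B^c$: an element of $X$ lies in $C$ \iff{} it does not lie in $A^c \inter B^c$, which, by decidability of membership in $A$ and in $B$ (so that the ambient double negations can be stripped), amounts to its lying in $A$ or in $B$. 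One can phrase this more slickly by identifying the decidable subsets of $X$ with the functions $X \to 1 \coprod 1$, hence with a Boolean algebra, which is closed under finite joins.

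For \cref{decidable-limit}, the plan is to invoke the standard presentation of a finite limit as an equalizer between finite products: for a finite category $\cat{J}$ and $D \from \cat{J} \to \Set$, the limit $\lim D$ is the equalizer of the two maps $\prod_{j \in \ob \cat{J}} D(j) \to \prod_{\alpha \in \mor \cat{J}} D(\cod \alpha)$ given by $(x_j)_j \mapsto (x_{\cod \alpha})_\alpha$ and $(x_j)_j \mapsto (D(\alpha)(x_{\dom \alpha}))_\alpha$. A levelwise decidable inclusion $X \to Y$ induces a morphism of parallel pairs between the two corresponding pairs (the squares commuting by naturality of $X \to Y$), so it suffices to prove two closure properties: (a) finite products of decidable inclusions are decidable inclusions; and (b) the map induced on equalizers by a morphism of parallel pairs whose two vertical legs are decidable inclusions is again a decidable inclusion. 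For (a), the empty product is an isomorphism, while a binary product $X_1 \pull X_2 \to Y_1 \pull Y_2$ factors as $X_1 \pull X_2 \to Y_1 \pull X_2 \to Y_1 \pull Y_2$, where the first map is a pullback of $X_1 \to Y_1$ and the second a pullback of $X_2 \to Y_2$; induction and closure under composition then give the general case.

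For (b), let $p \from A \to B$ and $p' \from A' \to B'$ be decidable inclusions and let $f_0, f_1 \from A \to A'$, $g_0, g_1 \from B \to B'$ with $p' f_i = g_i p$ for $i = 0, 1$; write $e \from E \to A$ and $e' \from E' \to B$ for the equalizer inclusions and $\phi \from E \to E'$ for the induced map. I would show that the square with sides $\phi$, $e$, $p$, $e'$ is a pullback: given $t \from T \to A$ and $s \from T \to E'$ with $p t = e' s$, one computes $p' f_0 t = g_0 p t = g_0 e' s = g_1 e' s = p' f_1 t$, whence $f_0 t = f_1 t$ since $p'$ is monic (\cref{decidable-monomorphism} of \cref{decidable-properties}); hence $t$ factors uniquely through $E$, and this factorisation is compatible with $s$ because $e'$ is monic. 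Therefore $\phi$ is a pullback of the decidable inclusion $p$, and so is itself decidable. The step I expect to require the most care is precisely this pullback identification. The tempting alternative --- realising $E'$ as a decidable subset of $B$ by viewing the equalizer as a pullback of the diagonal of $B'$ --- does not work, since that diagonal need not be a decidable inclusion, \ie{} equality on $B'$ need not be decidable; pulling back only the given decidable inclusion $p$ is what circumvents this. The analogous subtlety in \cref{decidable-union} is that the identity $A \union B = (A^c \inter B^c)^c$ is valid here only because decidability of the relevant memberships justifies the double-negation elimination.
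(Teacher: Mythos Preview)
Your proposal is correct; both parts go through as you describe.  The route, however, differs from the paper's in a few places worth noting.

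For \cref{decidable-union}, the paper works directly: using extensivity it decomposes $B$ into the four pieces $A_0 \inter A_1$, $A_0 \inter C_1$, $C_0 \inter A_1$, $C_0 \inter C_1$ and identifies the pushout $A_0 \union A_1$ with the first three, exhibiting $C_0 \inter C_1$ as its complement.  Your argument is the De~Morgan dual of this: you first show the intersection of complements is decidable and then take \emph{its} complement.  The content is the same; your Boolean-algebra repackaging via maps to $1 \coprod 1$ is a tidy way to say it.

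For \cref{decidable-limit}, the paper reduces to terminal objects and pullbacks, and for pullbacks performs a staged case analysis (first assuming both outer legs $X_0 \to Y_0$, $X_1 \to Y_1$ are isomorphisms, then just one, then the general case), each stage using the coproduct decomposition of the codomain.  You instead take the equalizer-of-products route: products are handled by factoring and pulling back, and the equalizer case by the pullback square you describe, which rests on $p'$ being monic.  This is a genuinely different decomposition and arguably cleaner, since it avoids the case splits; the paper also sketches a third alternative after its proof, namely realising $\lim X$ over $\lim Y$ as the finite intersection of the decidable subobjects $\lim Y \pull_{Y_i} X_i$ and invoking the dual of \cref{decidable-union}.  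All three approaches need the indexing category to have finitely many objects (your products are over $\ob \cat{J}$ and $\mor \cat{J}$; the paper's reduction to iterated pullbacks needs the same), which is the intended reading of ``finite diagram''.
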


\begin{proof} For \cref{decidable-union}, let $B$ be an arbitrary set.
  The nullary case is clear since $\emptyset$ is a decidable subset of $B$.
  For the binary case, consider decidable subsets $A_0$ and $A_1$ with
  complements $C_0$ and $C_1$.
  The intersection and the union of $A_0$ and $A_1$ are given by
  the pullback square on the left and the pushout square on the right:
  \begin{tikzeq*}
  \matrix[diagram]
  {
    |(l)|  A_\emptyset & |(0l)| A_0 & |(r)|  A_\emptyset & |(0r)|  A_0    &[-1em]                    \\
    |(1l)| A_1         & |(Bl)| B   & |(1r)| A_1         & |(01r)| A_{01} &                          \\[-4ex]
                       &            &                    &                & |(Br)| B \rlap{\text{.}} \\
  };

  \draw[->] (l)  to (0l);
  \draw[->] (1l) to (Bl);
  \draw[->] (l)  to (1l);
  \draw[->] (0l) to (Bl);

  \draw[->] (r)  to (0r);
  \draw[->] (1r) to (01r);
  \draw[->] (r)  to (1r);
  \draw[->] (0r) to (01r);

  \draw[->] (0r) to[bend left]  (Br);
  \draw[->] (1r) to[bend right] (Br);

  \draw[->,dashed] (01r) to (Br);

  \pb{l}{Bl};
  \pbdr{01r}{r};
  \end{tikzeq*}
  By the extensivity of the category of sets, we have an isomorphism
  $ B \iso (A_0 \pull_B A_1) \coprod (A_0 \pull_B C_1)
      \coprod (C_0 \pull_B A_1) \coprod (C_0 \pull_B C_1)$,
  where the first summand is $A_\emptyset$.
  Call the other three $C'_0$, $C'_1$ and $C'_{01}$.
  Then the map $A_{01} \to B$ is isomorphic to
  \begin{equation*}
    A_\emptyset \coprod C'_0 \coprod C'_1 \to A_\emptyset
                \coprod C'_0 \coprod C'_1 \coprod C'_{01}
  \rlap{,}\end{equation*}
  which is a decidable inclusion. The case of general finite unions follows by induction.

For \cref{decidable-limit},
  the conclusion holds trivially for terminal objects and
  it will be enough to verify it for pullbacks.
  (See \cite{awodey-ct}*{Proposition~5.21}.)
  Consider $X$ and $Y$ as cospans indexed over $0 \to 01 \leftarrow 1$ and
  assume that $X_0 \to Y_0$, $X_1 \to Y_1$ and $X_{01} \to Y_{01}$ are
  decidable inclusions.
  First, we treat the case when
  both $X_0 \to Y_0$, $X_1 \to Y_1$ are isomorphisms.
  In this case, the rows of the diagram
  \begin{tikzeq*}
  \matrix[diagram]
  {
    |(X0)|  X_0    & |(Y0)|  Y_0    & |(C0)|  \emptyset \\
    |(X01)| X_{01} & |(Y01)| Y_{01} & |(C01)| C_{01}    \\
    |(X1)|  X_1    & |(Y1)|  Y_1    & |(C1)|  \emptyset \\
  };

  \draw[->] (X0) to (Y0);
  \draw[->] (C0) to (Y0);

  \draw[->] (X01) to (Y01);
  \draw[->] (C01) to (Y01);

  \draw[->] (X1) to (Y1);
  \draw[->] (C1) to (Y1);

  \draw[->] (X0) to (X01);
  \draw[->] (X1) to (X01);

  \draw[->] (C0) to (C01);
  \draw[->] (C1) to (C01);

  \draw[->] (Y0) to (Y01);
  \draw[->] (Y1) to (Y01);
  \end{tikzeq*}
 are coproduct diagrams.
  Since pullbacks commute with coproducts
  by \cref{pullback-coproduct}, the induced diagram
  $X_0 \pull_{X_{01}} X_1 \to Y_0 \pull_{Y_{01}} Y_1 \leftarrow \emptyset \pull_{C_{01}} \emptyset$
  is also a coproduct.
  The map on the left is therefore a decidable inclusion.
  Next, assume only that $X_0 \to Y_0$ is an isomorphism.
  We pull back the coproduct diagram
  $X_1 \to Y_1 \leftarrow C_1$
  along $Y_0 \to Y_{01}$, which yields the coproduct diagram
  $Y_0 \pull_{Y_{01}} X_1 \to Y_0 \pull_{Y_{01}} Y_1 \leftarrow Y_0 \pull_{Y_{01}} C_1$.
  By the preceding case, $X_0 \pull_{X_{01}} X_1 \to Y_0 \pull_{Y_{01}} X_1$ is
  a decidable inclusion and thus
  the composite $X_0 \pull_{X_{01}} X_1 \to Y_0 \pull_{Y_{01}} Y_1$ is
  a decidable inclusion.
  The general statement is reduced to this case in the same way.
\end{proof}

Another approach to proving \cref{decidable-limit} above
reduces limits in the arrow category to limits in slice categories.
Let $I$ denote the indexing category;
we only require that $I$ has a finite set of objects.
In the slice over $\lim Y$, the object $\lim X$ is
the limit of $\lim Y \pull_{Y_i} X_i$ over $i \in I$.
The maps $\lim Y \pull_{Y_i} X_i \to \lim Y$ are decidable inclusions
by \cref{decidable-pullback} of \cref{decidable-properties},
in particular monomorphisms
by \cref{decidable-monomorphism} of \cref{decidable-properties}.
As subobjects of $\lim Y$, their limit is isomorphic to their intersection.
But decidable subsets are closed under finite intersection by
the dual version of~\cref{decidable-union}.

\subsection{Simplicial sets and the weak factorisation systems}
\label{sec:ssets}

We now move on to consider the category of simplicial sets and define the two weak factorisation systems that will be part of the constructive Kan--Quillen model
structure. For this, let us fix some notation and terminology.
We write $\Simp$ for the \emph{category of simplices}, \ie,
of non-empty finite ordinals, written $[n]$, for $n \in \nat$, and
order-preserving maps, to which we refer as \emph{simplicial operators}.
This category is a Reedy category~\cite{hovey}*{Chapter~5}.
Morphisms of its direct part $\Simp_\sharp$ are called \emph{face operators},
with generators denoted $\face_i \from [n-1] \to [n]$ (omitting $i$);
morphisms of its inverse part $\Simp_\flat$ are called \emph{degeneracy operators},
with generators denoted $\dgn_i \from [n+1] \to [n]$ (identifying $i$ and $i + 1$).
For a simplicial operator $\phi$, we write $\phi = \phi^\sharp \phi^\flat$
for its unique decomposition into a degeneracy operator followed by a face operator.

The category of simplicial sets $\sSet$ is the category of presheaves over $\Simp$.
We write $\simp{m}$ for the representable simplicial set represented by $[m]$.
Our convention is that simplicial operators act on the right,
\ie, if $X \in \sSet$, $x \in X_n$ and $\phi \from [m] \to [n]$, then
the image of $x$ under the action of $\phi$ is denoted by $x \phi$.

Being a presheaf category, $\sSet$ admits all (small) limits and colimits and
 is locally cartesian closed.
For $M \in \sSet$, we write $\sSet \slice M$ for the slice category over~$M$.
With a slight abuse of notation,
we sometimes refer to an object $p \from X \to M$ of $\sSet \slice M$ simply by
its domain and call $p$ its \emph{structure map}.
For a map $f \from M \to N$, we write
\begin{tikzeq*}
\matrix[diagram,column sep={between origins,6em}]
{
  |(Nl)| \sSet \slice N & |(Ml)| \sSet \slice M &
  |(Mr)| \sSet \slice M & |(Nr)| \sSet \slice N \\
};

\draw[->] (Nl) to node[above] {$f^*$}   (Ml);
\draw[->] (Mr) to node[above] {$\Pi_f$} (Nr);
\end{tikzeq*}
for the induced pullback functor and its right adjoint, to which
we refer as the \emph{dependent product} along $f$.

For the following statement, recall the discussion of van Kampen colimits in \cref{sec:dec-inc}.

\begin{lemma} \label{sSet-van-Kampen}
  The category of simplicial sets is extensive, adhesive and $\omega$-exhaustive.
\end{lemma}

\begin{proof}
  The category of sets satisfies these properties by \cref{Set-van-Kampen}.
  Thus so does the category of simplicial sets since monomorphisms, pullbacks and colimits of presheaves are determined pointwise.
\end{proof}

Denote the inclusions $\{ 0 \} \ito \simp{1}$ and $\{ 1 \} \ito \simp{1}$ by $\iota_0$ and $\iota_1$, respectively.
The simplex $\simp{1}$ will serve as an interval object, but we will occasionally use other intervals (such as $J$ in the proof of \cref{shrinkable-collapse})
with endpoint inclusions also denoted by $\iota_0$ and $\iota_1$.
The \emph{cylinder} on a simplicial set $X$ is the product $X \times \simp{1}$.
Let $f$ and $g$ simplicial maps $X \to Y$.
A \emph{homotopy} from $f$ to $g$ is a simplicial map $H \from X \times \simp{1} \to Y$ \st{} $f = H (X \times \iota_0)$ and $g = H (X \times \iota_1)$
(we will usually abbreviate these to $H \iota_0$ and $H \iota_1$).
Simplicial maps $f, g \from X \to Y$ are \emph{homotopic}, written $f \htp g$, if they can be connected by a zig-zag of homotopies.
The \emph{constant} homotopy on $f \from X \to Y$ is the composite $f \pi$ where $\pi \from X \times \simp{1} \to X$  is the projection.
If $X$ and $Y$ are simplicial sets over $M$, then a homotopy $H$ as above is \emph{fiberwise (over $M$)}
if it becomes constant when composed with the structure map $Y \to M$.

For $m \ge 0$, the \emph{boundary inclusion} $\bdsimp{m} \ito \simp{m}$ corresponds to the sieve on $[m]$ of all maps that lift through $\delta_j \from [m-1] \to [m]$ for some $j$.
We write $I$ for the set of boundary inclusions.
We say that a map is a \emph{trivial fibration} if it has the \rlp{} \wrt{} $I$
and that a map is a  \emph{cofibration} if it has the \llp{} \wrt{} to trivial fibrations.
A simplicial set $X$ is \emph{cofibrant} if
the map $\emptyset \to X$ is a cofibration.

For $m > 0$ and $0 \le i \le m$, the \emph{horn inclusion} $\horn{m,i} \ito \simp{m}$ corresponds to the sieve on $[m]$ of all maps that lift through $\delta_j \from [m-1] \to [m]$ for some $j \neq i$.
We write $J$ for the set of horn inclusions.
We say that a map is a \emph{Kan fibration} if it has the \rlp{} \wrt{} $J$
and that a map is a  \emph{trivial cofibration} if it has the \llp{} \wrt{} Kan fibrations.
By definition, a simplicial set is a \emph{Kan complex} if
the map $X \to \simp{0}$ is a Kan fibration.

Given a set of maps~$K$, a \emph{$K$-cell complex of height $\alpha \leq \omega$} is an $\alpha$-composition of pushouts of coproducts of maps in $K$.

\begin{lemma}\label{triv-cof-cof}
Every trivial cofibration is a cofibration and every trivial fibration is a fibration.
\end{lemma}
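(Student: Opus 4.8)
The plan is to reduce both statements to the single combinatorial fact that every horn inclusion $\horn{m,i} \ito \simp{m}$ is a cofibration. Granting this, the first assertion is immediate: a cofibration is by definition a map with the \llp{} \wrt{} every trivial fibration, so every trivial fibration has the \rlp{} \wrt{} every cofibration, in particular \wrt{} every horn inclusion, and is therefore a Kan fibration. The second assertion then follows as well: a trivial cofibration has the \llp{} \wrt{} every Kan fibration, hence---since trivial fibrations have just been shown to be Kan fibrations---\wrt{} every trivial fibration, and is therefore a cofibration. Note that this argument uses no weak equivalences, only the two defining lifting properties.

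To prove that $\horn{m,i} \ito \simp{m}$ is a cofibration, I would factor it as $\horn{m,i} \ito \bdsimp{m} \ito \simp{m}$ and handle the two maps separately. The second lies in $I$ and is thus a cofibration. For the first, recall that $\bdsimp{m}$ is the union of the images of the $m+1$ face maps $\face_j \from \simp{m-1} \to \simp{m}$ for $0 \le j \le m$, whereas $\horn{m,i}$ is the union of those with $j \ne i$. Thus $\bdsimp{m}$ is obtained from $\horn{m,i}$ by gluing in the single remaining face $\face_i$, whose boundary is already contained in $\horn{m,i}$ since each of its own faces is shared with some $\face_j$ with $j \ne i$. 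Making this precise, one checks levelwise that $\horn{m,i}$ and the image of $\face_i \from \simp{m-1} \to \simp{m}$ meet exactly in the image of $\bdsimp{m-1}$ under $\face_i$ and together cover $\bdsimp{m}$, so that the square
\begin{tikzeq*}
\matrix[diagram]
{
  |(a)| \bdsimp{m-1} & |(b)| \horn{m,i} \\
  |(c)| \simp{m-1}   & |(d)| \bdsimp{m} \\
};
\draw[inj] (a) to node[above] {$\face_i$} (b);
\draw[cof] (a) to (c);
\draw[inj] (c) to node[below] {$\face_i$} (d);
\draw[cof] (b) to (d);
\end{tikzeq*}
is a pushout and exhibits $\horn{m,i} \ito \bdsimp{m}$ as the pushout of the boundary inclusion $\bdsimp{m-1} \ito \simp{m-1}$ along $\face_i$. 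Since this boundary inclusion lies in $I$ and the class of cofibrations, being defined by a left lifting property, is closed under pushout and composition, it follows that $\horn{m,i} \ito \simp{m}$ is a cofibration.

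The only point requiring care is the elementary identification of $\horn{m,i} \inter \im \face_i$ with the image of $\bdsimp{m-1}$ under $\face_i$: a simplex of the $i$-th face of $\simp{m}$ lies in $\horn{m,i}$ precisely when the corresponding simplex of $\simp{m-1}$ fails to be surjective, that is, lies in $\bdsimp{m-1}$. Since all the simplicial sets occurring here are finite and all the relevant subobjects decidable, this poses no constructive difficulty; and the closure of a left lifting class under pushout and composition is a routine diagram chase needing no choice principle. I therefore do not foresee a real obstacle---the main subtlety is just the bookkeeping with the faces of the simplex, together with the point that only the finitary closure properties of the cofibrations are used, so that no small object argument (and hence no transfinite construction) enters.
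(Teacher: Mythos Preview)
Your proof is correct and takes essentially the same approach as the paper. The paper's proof is the single sentence ``a horn inclusion is an $I$-cell complex of height $2$''; your factorisation $\horn{m,i} \ito \bdsimp{m} \ito \simp{m}$ together with the pushout square attaching the missing face is precisely the unpacking of that claim, and your reduction of both assertions to the cofibrancy of horn inclusions is the implicit reasoning behind the paper's ``it suffices''. (Minor cosmetic point: you label the two assertions in the opposite order from the lemma statement.)
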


\begin{proof}
  It suffices to verify that every horn inclusion is a cofibration.
  Indeed, a horn inclusion is an $I$-cell complex  of height $2$.
\end{proof}

In order to define the weak factorisation systems, we adopt a slight variation of the
well-known small object argument. For this, we need a few results about colimits of diagrams of (trivial) fibrations
that will be needed also later on. In our constructive setting, these statements are somewhat more delicate than usual to prove.
Recall that a fibration is a map together with choice of lifts (from the right) against horn inclusions.
A \emph{structure morphism} of fibrations from $X \fto Y$ to $X' \fto Y'$ is a commuting square
\begin{tikzeq*}
\matrix[diagram]
{
  |(X)| X & |(X')| X' \\
  |(Y)| Y & |(Y')| Y' \\
};

\draw[fib]  (X) to (Y);
\draw[fib]  (X') to (Y');
\draw[->] (X) to (X');
\draw[->] (Y) to (Y');
\end{tikzeq*}
such that for any lifting problem of a horn inclusion $\horn{m,i} \ito \simp{m}$ against $X \fto Y$, the following diagram of chosen lifts commutes:
\begin{tikzeq*}
\matrix[diagram]
{
  |(h)| \horn{m,i} & |(X)| X & |(X')| X'                 \\
  |(s)| \simp{m}   & |(Y)| Y & |(Y')| Y' \rlap{\text{.}} \\
};

\draw[ano] (h) to (s);
\draw[fib] (X) to (Y);
\draw[fib] (X') to (Y');
\draw[->] (h) to (X);
\draw[->] (s) to (Y);
\draw[->] (X) to (X');
\draw[->] (Y) to (Y');
\draw[->,dashed] (s) to (X);
\draw[->,dashed] (s) to (X');
\end{tikzeq*}
The notion of structure morphism of trivial fibrations is defined similarly.
For the benefit of the readers, we remark that the use of these notions is confined to this subsection and to the derivation of the fibration extension property in \cref{fibration-extension}.

\begin{lemma} \label{colim-of-structured-fibrations}
\leavevmode
\begin{enumerate}
\item \label{omega-colim-of-structured-fibrations}
Let $p$ be a sequential diagram of (trivial) fibrations $p_k \from X_k \fto Y_k$ such that the naturality squares of $p$ are structure morphisms.
Then $\colim X \to \colim Y$ is a (trivial) fibration.
\item
\label{van-kampen-colim-of-structured-fibrations}
Let $p$ be a diagram of (trivial) fibrations $p_k \from X_k \fto Y_k$ such that the naturality squares of $p$ are structure morphisms and are pullbacks.
Assume that the induced square from $X_k \to Y_k$ to $\colim X \to \colim Y$ is a pullback.
Then $\colim X \to \colim Y$ is a (trivial) fibration.
\end{enumerate}
\end{lemma}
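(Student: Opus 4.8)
The plan is to verify the \rlp{} against the horn inclusions $\horn{m,i}\ito\simp{m}$ (for the fibration case) and the boundary inclusions $\bdsimp{m}\ito\simp{m}$ (for the trivial fibration case); I will write the argument for horns, the other being identical. The essential point is that the domain and codomain of each such inclusion is a finite colimit of representables, hence ``compact'' in the sense that maps out of it into the colimits in question factor through a stage; the task is then to solve the lifting problem at that stage and push the filler forward, using the structure-morphism hypothesis to ensure that this recipe is independent of the chosen stage and so assembles into an honest fibration structure on $\colim X \to \colim Y$.

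For \cref{omega-colim-of-structured-fibrations}, consider a lifting problem given by $u\from\horn{m,i}\to\colim X$ and $v\from\simp{m}\to\colim Y$ forming a commuting square together with the horn inclusion $\horn{m,i}\ito\simp{m}$ and the map $\colim X\to\colim Y$. Since $\horn{m,i}$ and $\simp{m}$ are finite colimits of representables and finite limits commute with $\omega$-colimits, the canonical map $\colim_k\sSet(K,Z_k)\to\sSet(K,\colim Z)$ is a bijection for $K\in\{\horn{m,i},\simp{m}\}$ and $Z\in\{X,Y\}$; hence, after passing to a sufficiently large stage $k$, we obtain $u_k\from\horn{m,i}\to X_k$ and $v_k\from\simp{m}\to Y_k$ representing $u$ and $v$, and --- enlarging $k$ once more --- making the evident square over $Y_k$ commute. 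Let $w_k\from\simp{m}\to X_k$ be the chosen filler of this lifting problem against $p_k$ and let $w$ be its image in $\colim X$; a short diagram chase with the triangles at stage $k$ shows that $w$ solves the original problem. If $k'\ge k$ is another admissible stage, the naturality square from $p_k$ to $p_{k'}$ is a structure morphism and so carries $w_k$ to $w_{k'}$, whence $w$ is unchanged; this makes the assignment a well-defined function and equips $\colim X\to\colim Y$ with the required fibration structure.

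For \cref{van-kampen-colim-of-structured-fibrations}, start again from $u\from\horn{m,i}\to\colim X$ and $v\from\simp{m}\to\colim Y$, but now only factor the map out of the \emph{representable} $\simp{m}$: since $\sSet(\simp{m},\colim Y)=(\colim Y)_m$ is the colimit in sets of the $(Y_k)_m$, we may write $v$ as $\simp{m}\xrightarrow{v_k}Y_k\to\colim Y$ for some stage $k$. By the hypothesis that the square with vertices $X_k,\colim X,Y_k,\colim Y$ is a pullback we have $X_k\iso\colim X\pull_{\colim Y}Y_k$, and pulling this back along $v_k$ gives $\simp{m}\pull_{Y_k}X_k\iso\simp{m}\pull_{\colim Y}\colim X$. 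The commutativity of the lifting square means that $u$ and the composite $\horn{m,i}\ito\simp{m}\xrightarrow{v_k}Y_k$ agree over $\colim Y$, so they induce $u_k\from\horn{m,i}\to X_k$ via the pullback presentation of $X_k$; together with $v_k$ this is a lifting problem of the horn inclusion against $p_k$, and taking its chosen filler $w_k$ and its image $w$ in $\colim X$ solves the original problem as before. Independence of the representative follows from the remaining hypotheses: if $v_k$ and $v_{k'}$ are related by a map of the diagram, uniqueness in the pullback presentations of $X_k$ and $X_{k'}$ forces the transported lifting problems to be related by that same map, and then the structure-morphism condition forces $w_k$ and $w_{k'}$ to be related, so $w$ is unchanged in the colimit; for the colimit shapes that occur in practice --- coproducts and pushouts along decidable inclusions --- every $v$ even has a canonical stage, so the question of well-definedness does not arise.

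The main obstacle is not the diagram chases, which are routine, but the constructive bookkeeping around choosing stages: because one cannot in general pick a canonical stage through which a map out of a finite colimit of representables factors, the fibration structure on the colimit must be produced directly from the stagewise data and shown to be independent of all choices, and it is exactly the requirement that the naturality squares be structure morphisms that makes this descent work. For \cref{van-kampen-colim-of-structured-fibrations} the extra idea --- pulling the entire fibration back along a stage inclusion $Y_k\to\colim Y$, which recovers $p_k$ by the pullback hypothesis, rather than attempting to factor the horn through a stage --- is what allows the pullback hypotheses to be brought to bear.
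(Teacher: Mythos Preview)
Your proof is correct and takes essentially the same approach as the paper: both parts reduce to solving lifting problems stagewise and using the structure-morphism hypothesis for coherence. The paper's presentation is slightly more streamlined in that it rewrites the map ``fillers $\to$ lifting problems'' itself as a colimit of the stagewise maps (using finite presentability in \cref{omega-colim-of-structured-fibrations} and representability of $\simp{m}$ plus the pullback hypothesis in \cref{van-kampen-colim-of-structured-fibrations}) and then obtains the section by functoriality of colimits, which absorbs your explicit well-definedness check into a single appeal.
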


\begin{proof}
\Cref{omega-colim-of-structured-fibrations} is a formal consequence of the fact that $\horn{m,i}$ and $\simp{m}$ are finite colimits of representables, hence finitely presented, \ie, mapping out of them preserves filtered colimits, and that pullbacks commute with filtered colimits; we provide the details of the case of fibrations for the convenience of the readers.
We wish to construct a section of the map
\[
\sSet(\simp{m}, \colim X) \to \sSet(\simp{m}, \colim Y) \pull_{\sSet(\horn{m,i}, \colim Y)} \sSet(\horn{m,i}, \colim X)
\text{.}\]
Since $\horn{m,i}$ and $\simp{m}$ are finitely presented, this is
\[
\colim_k \sSet(\simp{m}, X_k) \to (\colim_k \sSet(\simp{m}, Y_k)) \pull_{\colim_k \sSet(\horn{m,i}, Y_k)} (\colim_k \sSet(\horn{m,i}, X_k))
\text{.}\]
Since pullbacks commute with sequential colimits, this is
\[
\colim_k \sSet(\simp{m}, X_k) \to \colim_k \sSet(\simp{m}, Y_k) \pull_{\sSet(\horn{m,i}, Y_k)} \sSet(\horn{m,i}, X_k)
\text{.}\]
By functoriality of colimits, it thus suffices to have sections of
\[
\sSet(\simp{m}, X_k) \to \sSet(\simp{m}, Y_k) \pull_{\sSet(\horn{m,i}, Y_k)} \sSet(\horn{m,i}, X_k) \text{,}
\]
naturally in $k$.
We have such a section for each $k$ since $X_k \to Y_k$ is a fibration and they are natural since the naturality squares of $p$ are structure morphisms of fibrations.

\Cref{van-kampen-colim-of-structured-fibrations} is a formal consequence of the fact that mapping out of $\simp{m}$ preserves arbitrary colimits.
Again, we do the case of fibrations in detail.
We wish to construct a section of the map
\[
\sSet(\simp{m}, \colim X) \to \sSet(\simp{m}, \colim Y) \pull_{\sSet(\horn{m,i}, \colim Y)} \sSet(\horn{m,i}, \colim X)
\text{.}\]
Since mapping out of the representable $\simp{m}$ preserves colimits, this is
\[
\colim_k \sSet(\simp{m}, X_k) \to (\colim_k \sSet(\simp{m}, Y_k)) \pull_{\sSet(\horn{m,i}, \colim Y)} \sSet(\horn{m,i}, \colim X)
\text{.}\]
Since pullbacks preserve colimits, this is
\[
\colim_k \sSet(\simp{m}, X_k) \to \colim_k \sSet(\simp{m}, Y_k) \pull_{\sSet(\horn{m,i}, \colim Y)} \sSet(\horn{m,i}, \colim X)
\text{.}\]
By assumption, $\colim X$ pulls back along $Y_k \to \colim Y$ to $X_k$, so this is
\[
\colim_k \sSet(\simp{m}, X_k) \to \colim_k \sSet(\simp{m}, Y_k) \pull_{\sSet(\horn{m,i}, Y_k)} \sSet(\horn{m,i}, X_k)
\text{.}\]
From here, we conclude as in \cref{omega-colim-of-structured-fibrations}.
\end{proof}

\begin{lemma} \label{aligning-structured-fibrations}
In any commuting square
\begin{tikzeq*}
\matrix[diagram]
{
  |(X)| X & |(X')| X'                 \\
  |(Y)| Y & |(Y')| Y' \rlap{\text{,}} \\
};

\draw[->] (X) to (Y);
\draw[->] (X') to (Y');
\draw[->] (X) to (X');
\draw[->] (Y) to (Y');
\end{tikzeq*}
with $X \to Y$ and $X' \to Y'$ (trivial) fibrations and $X \to X'$ and $Y \to Y'$ levelwise decidable inclusions, there is a replacement for the choice of lifts of $X' \to Y'$ such that the square forms a structure morphism.
\end{lemma}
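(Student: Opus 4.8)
The plan is to recursively modify the chosen lifts of $X' \to Y'$ so that whenever a lifting problem against $X' \to Y'$ arises as the image (under $X \to X'$, $Y \to Y'$) of a lifting problem against $X \to Y$, the new chosen lift agrees with the image of the chosen lift of $X \to Y$. The key observation is that, because $X \to X'$ and $Y \to Y'$ are levelwise decidable inclusions, for any lifting problem
\begin{tikzeq*}
\matrix[diagram]
{
  |(h)| \horn{m,i} & |(X')| X' \\
  |(s)| \simp{m}   & |(Y')| Y' \\
};

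\draw[ano] (h) to (s);
\draw[fib] (X') to (Y');
\draw[->] (h) to (X');
\draw[->] (s) to (Y');
\end{tikzeq*}
we can decide whether the top map $\horn{m,i} \to X'$ factors through $X$ and whether the bottom map $\simp{m} \to Y'$ factors through $Y$; moreover such factorisations are unique when they exist, since decidable inclusions are monomorphisms by \cref{decidable-monomorphism} of \cref{decidable-properties}. The decidability of the factorisation reduces, by finite presentability of $\horn{m,i}$ and $\simp{m}$, to the decidability of membership in $X_n \subseteq X'_n$ and $Y_n \subseteq Y'_n$ on finitely many simplices, which holds since decidable inclusions of sets are levelwise decidable on elements.

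Concretely, I would define the replacement lift as follows. Given a lifting problem against $X' \to Y'$ as above, test whether both the top map factors as $\horn{m,i} \to X \to X'$ and the bottom map factors as $\simp{m} \to Y \to Y'$ (these are the two decidable conditions). Note that if the top map factors through $X$ then automatically the bottom map's restriction to $\horn{m,i}$ factors through $Y$, but we genuinely need both $\horn{m,i}\to X$ \emph{and} $\simp{m}\to Y$ to obtain a lifting problem against $X\to Y$. If both factorisations exist, we obtain a lifting problem of $\horn{m,i} \ito \simp{m}$ against $X \to Y$; apply its chosen lift and postcompose with $X \to X'$ to define the new chosen lift of the original problem. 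Otherwise, retain the old chosen lift of $X' \to Y'$. One checks this is a well-defined solution to the lifting problem: in the first case the square $\horn{m,i} \to X \to X' $, $\simp{m} \to X'$ commutes because it does after postcomposing with the monomorphism $X' \to X'$... more precisely, commutativity of the relevant triangles follows by uniqueness of factorisations through the monomorphisms $X \ito X'$, $Y \ito Y'$. The same recipe applies verbatim in the trivial-fibration case, replacing horn inclusions by boundary inclusions.

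Finally, I must verify that with this replacement the square becomes a structure morphism: given any lifting problem of $\horn{m,i} \ito \simp{m}$ against $X \to Y$, its image along $X \to X'$, $Y \to Y'$ is a lifting problem against $X' \to Y'$ whose top and bottom maps do factor through $X$ and $Y$ respectively (tautologically), so the new chosen lift is, by construction, the chosen lift of the original problem postcomposed with $X \to X'$ — which is exactly the commuting condition required of a structure morphism. The main obstacle is not conceptual but bookkeeping: one must be careful that the decidability of "does this map factor through the sub-simplicial-set" is genuinely available constructively, which is where levelwise decidability of the inclusions (rather than mere monicity) is essential, and one must confirm that in the "otherwise" branch the old lift still solves the (image) lifting problem — it does, since it already solved it before the modification and we only changed its value on problems in the image of $X \to Y$.
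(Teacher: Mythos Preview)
Your proposal is correct and follows essentially the same approach as the paper's proof: both argue that, since $\horn{m,i}$ and $\simp{m}$ are finitely presented and the inclusions $X \ito X'$, $Y \ito Y'$ are levelwise decidable, the subset of lifting problems against $X' \to Y'$ that factor through the square is decidable, and then define the new lift by case distinction. The paper's version is more concise, invoking \cref{decidable-limit} of \cref{decidable-closure} directly to conclude that $\sSet(\horn{m,i}, X) \to \sSet(\horn{m,i}, X')$ and $\sSet(\simp{m}, Y) \to \sSet(\simp{m}, Y')$ are decidable inclusions, whereas you unpack this as checking membership of finitely many simplices.
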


\begin{proof}
We only discuss the case of fibrations.
Since $\horn{m,i}$ is a finitely presented, the inclusion $\sSet(\horn{m,i}, X) \to \sSet(\horn{m,i}, X')$ is decidable by \cref{decidable-limit} of \cref{decidable-closure}, and so is $\sSet(\simp{m}, Y) \to \sSet(\simp{m}, Y')$.
If we now consider the set of lifting problems of horn inclusions against $X' \to Y'$, its subset consisting of those lifting problems that factor via the given square is a
decidable subset. Thus, given a lifting problem of a horn inclusion against $X' \to Y'$, we may perform a case distinction and pick the lift given by either the fibration $X \to Y$ or the fibration $X' \to Y'$.
\end{proof}

\begin{proposition} \label{omega-colim-of-fibrations}
Let $p$ be an sequential diagram of (trivial) fibrations $p_k \from X_k \fto Y_k$ where $X_k \to X_{k+1}$ and $Y_k \to Y_{k+1}$ are levelwise decidable inclusions for $k \geq 0$.
Then $\colim X \to \colim Y$ is a (trivial) fibration.
\end{proposition}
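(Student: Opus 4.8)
The plan is to deduce the statement from \cref{omega-colim-of-structured-fibrations} of \cref{colim-of-structured-fibrations}, whose hypothesis is that the naturality squares of the diagram are structure morphisms --- something that need not hold a priori for the given choices of lifts in the $p_k$. The key observation is that the underlying map $\colim X \to \colim Y$ does not depend on these choices of lifts, only on the underlying maps of the diagram. Hence it suffices to re-equip each $p_k$ with a (possibly new) fibration structure in such a way that all the naturality squares of the resulting diagram become structure morphisms; the proposition then follows immediately by applying the cited lemma to the re-structured diagram.

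To produce such choices of lifts I would proceed by recursion on $k$. Keep the given fibration structure on $p_0$. Assuming a fibration structure on $p_k$ has been fixed, apply \cref{aligning-structured-fibrations} to the naturality square from $p_k$ to $p_{k+1}$: its horizontal maps $X_k \to X_{k+1}$ and $Y_k \to Y_{k+1}$ are levelwise decidable inclusions by hypothesis, and $p_k$ and $p_{k+1}$ are (trivial) fibrations, so the lemma supplies a replacement fibration structure on $p_{k+1}$ making that square a structure morphism, while leaving the structure on $p_k$ untouched. Since the fibration structure on $p_{k+1}$ is altered exactly once --- at stage $k$ --- and stage $k+1$ then only alters $p_{k+2}$, these choices are coherent, and each generating naturality square of $p$ ends up a structure morphism. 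Verifying this coherence, i.e.\ that re-choosing lifts on $p_{k+1}$ to align it with $p_k$ cannot disturb an alignment established at an earlier stage, is the main bookkeeping point; it is resolved precisely because the recursion is run in increasing order of $k$, so that no fibration structure is ever revisited.

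Finally I would note that structure morphisms of fibrations are closed under composition (chasing a lifting problem of a horn inclusion through two consecutive structure-morphism squares shows the chosen lift transports correctly through the composite) and contain the identities, so that \emph{every} naturality square of $p$, not merely the generating ones, is a structure morphism; alternatively, for a sequential diagram the proof of \cref{omega-colim-of-structured-fibrations} of \cref{colim-of-structured-fibrations} only uses naturality on the generating maps. Either way, that lemma applies to the re-structured diagram and equips $\colim X \to \colim Y$ with a (trivial) fibration structure. The case of trivial fibrations is identical, using the trivial-fibration versions of \cref{colim-of-structured-fibrations} and \cref{aligning-structured-fibrations}.
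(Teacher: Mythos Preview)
Your proposal is correct and follows essentially the same approach as the paper: recursively apply \cref{aligning-structured-fibrations} to re-structure the $p_k$ so that each generating naturality square becomes a structure morphism, then invoke \cref{omega-colim-of-structured-fibrations} of \cref{colim-of-structured-fibrations}. Your additional remarks on coherence of the recursion and on closure of structure morphisms under composition are sound elaborations, but the paper's proof is content with the terse version.
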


\begin{proof}
Recursively in $k$, we apply \cref{aligning-structured-fibrations} to the naturality square of $p$ at $k \to k + 1$ to put a new choice of lifts on $X_{k+1} \to Y_{k+1}$ such that this square becomes a structure morphism.
Then the conclusion follows from \cref{omega-colim-of-structured-fibrations} of \cref{colim-of-structured-fibrations}.
\end{proof}

\begin{lemma} \label{small-object-argument-lemma}
Let $p$ be an sequential diagram of objects $p_k \from X_k \to Y$ over $Y$ where $X_k \to X_{k+1}$ is levelwise decidable inclusion for $k \geq 0$.
Assume that for each horn inclusion (boundary inclusion) $A \to B$, we have lifts for any lifting problem against $X_{k+1} \to Y$ that factors through $X_k \to X_{k+1}$.
Then $\colim X \to Y$ is a (trivial) fibration.
\end{lemma}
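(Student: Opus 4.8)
The plan is to equip $\colim X \to Y$ with a fibration structure directly, by transporting each lifting problem against it to a finite stage of the tower $X$. Fix a horn inclusion $A \ito B$ (for the trivial fibration case, a boundary inclusion; the argument is verbatim the same) and a commuting square over $Y$ with top edge $a \from A \to \colim X$ and bottom edge $b \from B \to Y$. Since $A$ is a finite colimit of representables, hence finitely presented, the map $a$ factors through $j_k \from X_k \to \colim X$ for some stage $k$; moreover $k$ can be chosen as a function of $a$ because the transition maps $X_k \to X_{k+1}$ are levelwise decidable inclusions. Concretely, the induced maps $\sSet(A, X_k) \to \sSet(A, X_{k+1})$ are decidable inclusions by \cref{decidable-limit} of \cref{decidable-closure} (as $A$ is a finite colimit of representables, $\sSet(A, -)$ is a finite limit of evaluation functors $X \mapsto X_n$), so the colimit $\sSet(A, \colim X) = \colim_k \sSet(A, X_k)$ can be computed as the set of maps tagged by the least stage at which they are defined; reading this tag off $a$ gives $k$ together with a factorisation $a = j_k a_k$, $a_k \from A \to X_k$.

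Next I would push this datum up one stage. Let $a_{k+1} \from A \to X_{k+1}$ be $a_k$ followed by $X_k \to X_{k+1}$, so that $a_{k+1}$ factors through $X_k \to X_{k+1}$ by construction. The square with left edge $A \ito B$, top edge $a_{k+1}$, bottom edge $b$, and right edge $p_{k+1} \from X_{k+1} \to Y$ commutes: indeed $p_{k+1} a_{k+1} = p_k a_k$ since $p$ is a diagram over $Y$, and $p_k a_k = (\colim X \to Y) j_k a_k = (\colim X \to Y) a$, which agrees with $b$ precomposed along $A \ito B$ by the assumption on the original square. This is therefore a lifting problem against $X_{k+1} \to Y$ factoring through $X_k \to X_{k+1}$, and the hypothesis of the lemma supplies a lift $\ell \from B \to X_{k+1}$ with $\ell$ restricting to $a_{k+1}$ along $A \ito B$ and $p_{k+1} \ell = b$.

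Finally, I would take $j_{k+1} \ell \from B \to \colim X$, with $j_{k+1} \from X_{k+1} \to \colim X$, as the lift of the original problem: restricting along $A \ito B$ gives $j_{k+1} a_{k+1} = j_{k+1} (X_k \to X_{k+1}) a_k = j_k a_k = a$, and the structure map $\colim X \to Y$ carries $j_{k+1} \ell$ to $p_{k+1} \ell = b$. Since every choice involved — the stage $k$, and then the lift $\ell$ provided by the hypothesis — is a function of the input, this exhibits a fibration structure (resp.\ trivial fibration structure) on $\colim X \to Y$. The only step carrying any real content is the first one: converting ``$a$ factors through some $X_k$'' into an honest function is exactly where levelwise decidability of the transition maps enters, via \cref{decidable-closure}; everything after that is a routine diagram chase.
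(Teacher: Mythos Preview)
Your proof is correct and takes a genuinely different route from the paper's. Both arguments hinge on the same observation: since $A$ is a finite colimit of representables and the transition maps are levelwise decidable, the maps $\sSet(A, X_k) \to \sSet(A, X_{k+1})$ are decidable inclusions. From there, the paper proceeds by recursively modifying the given relative lifts so that they cohere across stages (as in \cref{aligning-structured-fibrations}), and then applying the colimit machinery of \cref{colim-of-structured-fibrations}; the coherence ensures the resulting lift in $\colim X$ is independent of which stage a lifting problem is represented at. You instead exploit decidability to extract the \emph{least} stage $k$ at which $a \from A \to \colim X$ factors, solve the problem once at stage $k+1$, and push forward---sidestepping the coherence step entirely. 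Your approach is more elementary and self-contained; the paper's has the virtue of reusing the structured-fibration framework already developed, which pays dividends elsewhere (\eg in \cref{omega-colim-of-fibrations} and \cref{fibration-extension}).
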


\begin{proof}
We have an sequential diagram of triangles
\begin{tikzeq}{small-object-argument:triangle}
\matrix[diagram]
{
  |(Xk)| X_k &        & |(XSk)| X_{k+1} \rlap{\text{,}} \\
             &|(Y)| Y &                                 \\
};

\draw[->] (Xk) to (Y);
\draw[->] (XSk) to (Y);
\draw[->] (Xk) to (XSk);
\end{tikzeq}
each of which has \emph{relative right lifts} against $A \to B$, meaning that for any lifting problem of $A \to B$ against the left map, the induced lifting problem against the right map has a lift.
Since $A$ is finitely presented, the maps $\sSet(A, X_k) \to \sSet(A, X_{k+1})$ are decidable inclusion.
Thus, as in \cref{aligning-structured-fibrations}, we can choose relative right lifts at stage $k+1$ that cohere with those at stage $k$ whenever the relative lifting problem factors.
Recursively in $k$, as in \cref{omega-colim-of-fibrations}, we obtain a choice of relative right lifts for~\eqref{small-object-argument:triangle} that is natural in $k$.
Taking the sequential colimit as in \cref{omega-colim-of-structured-fibrations} of \cref{colim-of-structured-fibrations}, we obtain relative right lifts for
\begin{tikzeq*}
\matrix[diagram]
{
  |(Xk)| \colim X &        & |(XSk)| \colim X \rlap{\text{,}} \\
                  &|(Y)| Y &                                  \\
};

\draw[->] (Xk) to (Y);
\draw[->] (XSk) to (Y);
\draw[->] (Xk) to (XSk);
\end{tikzeq*}
{\ie}, lifts of $A \to B$ against $\colim X \to Y$.
\end{proof}

For the next statement, recall the sets $I$ of boundary inclusions and $J$ of horn inclusions.

\begin{proposition}[\Wfs{}s] \label{thm:wfs-via-soa}
\leavevmode
\begin{enumerate}
\item \label{thm:wfs-via-soa:triv-fib}
Every map $X \to Y$ factors functorially as a cofibration followed by a trivial fibration.
Every cofibration is a codomain retract of a relative $I$-cell complex of height $\omega$.
\item \label{thm:wfs-via-soa:fib}
Every map $X \to Y$ factors functorially as a trivial cofibration followed by a fibration.
Every trivial cofibration is a codomain retract of a relative $J$-cell complex of height $\omega$.
\end{enumerate}
\end{proposition}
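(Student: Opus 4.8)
The plan is to run a variant of the small object argument which, because the (co)domains of the generating maps are finitely presented, closes off after $\omega$ steps, and then to read off the two statements; \cref{small-object-argument-lemma} is tailored precisely to extract the needed lift out of the resulting colimit. I would treat \cref{thm:wfs-via-soa:triv-fib} and \cref{thm:wfs-via-soa:fib} in one go, writing $K$ for the generating set, so $K = I$ in the first case and $K = J$ in the second.

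Given $f \from X \to Y$, set $X_0 \coloneqq X$ with structure map $p_0 \coloneqq f$, and recursively let $X_{k+1}$ be the pushout of $p_k$ along the attaching map $\bigcoprod_\ell A_\ell \to X_k$, where $\ell$ ranges over the (small) set of lifting problems of a generating map $A_\ell \ito B_\ell$ in $K$ against $p_k$ and the map being pushed out is the coproduct $\bigcoprod_\ell(A_\ell \ito B_\ell)$; let $p_{k+1} \from X_{k+1} \to Y$ be induced by $p_k$ together with the bottom edges of these lifting problems. The construction is patently functorial in $f$: a map of arrows $f \to f'$ sends each lifting problem against $p_k$ to one against $p'_k$, hence induces compatible maps $X_k \to X'_k$. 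Setting $Z \coloneqq \colim_k X_k$ with induced map $p_\infty \from Z \to Y$ then gives a functorial factorisation $X \to Z \xrightarrow{p_\infty} Y$.

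It remains to identify the two factors. The maps in $K$ are levelwise decidable inclusions (for a boundary or horn inclusion this reduces to the decidability of a finitary condition on simplicial operators), so each is levelwise a coprojection $A \to A \coprod D$; hence $\bigcoprod_\ell(A_\ell \ito B_\ell)$ is levelwise such a coprojection, and its pushout $X_k \to X_{k+1}$ along an arbitrary map is levelwise again a coprojection $X_k \to X_k \coprod D$ for a suitable $D$, in particular a levelwise decidable inclusion. Therefore $X \to Z$ is an $\omega$-composition of pushouts of coproducts of maps in $K$, \ie{} a relative $K$-cell complex of height $\omega$; since the \llp{} \wrt{} any fixed class of maps is closed under coproducts, pushouts and $\omega$-compositions, and cofibrations (resp.\ trivial cofibrations) are by definition the maps with the \llp{} \wrt{} trivial fibrations (resp.\ fibrations), $X \to Z$ is a cofibration (resp.\ trivial cofibration). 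For the other factor, the sequential diagram $p_k \from X_k \to Y$ satisfies the hypotheses of \cref{small-object-argument-lemma}: the maps $X_k \to X_{k+1}$ are levelwise decidable inclusions, and for every generating map $A \ito B$ and every lifting problem of it against $p_k$, the coprojection $B \to X_{k+1}$ produced by the pushout defining $X_{k+1}$ solves the induced lifting problem against $p_{k+1}$. Hence $p_\infty \from Z \to Y$ is a trivial fibration when $K = I$ and a fibration when $K = J$, completing both factorisations.

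For the retract statements, let $j \from X \to Y$ be a cofibration (resp.\ trivial cofibration) and apply the factorisation just built to $j$ itself, obtaining $X \xrightarrow{c} Z \xrightarrow{p} Y$ with $c$ a relative $I$-cell (resp.\ $J$-cell) complex of height $\omega$ and $p$ a trivial fibration (resp.\ fibration). As $j$ has the \llp{} \wrt{} $p$, the square with top $c$, left $j$, bottom $\id_Y$ and right $p$ admits a diagonal filler $d \from Y \to Z$ with $d j = c$ and $p d = \id_Y$; together with $p$ this exhibits $j$ as a codomain retract of $c$. The one delicate point — and it is exactly what the preparatory lemmas are for — is that the argument terminates at height $\omega$ rather than at a larger ordinal: this rests on the finite presentability of the (co)domains of the generating maps and on pullbacks commuting with sequential colimits, both packaged into \cref{small-object-argument-lemma} via \cref{omega-colim-of-structured-fibrations} of \cref{colim-of-structured-fibrations}, together with the verification that the cell attachments stay levelwise decidable inclusions so those lemmas are applicable.
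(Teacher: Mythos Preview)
Your proposal is correct and follows essentially the same approach as the paper: run the small object argument with $X_0 = X$ and $X_{k+1}$ the pushout over all lifting problems against $p_k$, verify that the step maps are levelwise decidable inclusions so that \cref{small-object-argument-lemma} applies, and finish with the retract argument. Your write-up spells out functoriality and the retract step in more detail than the paper does, but there is no substantive difference in strategy or in the lemmas invoked.
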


\begin{proof}
We prove this by a constructive version of the  small object argument.
We take $X_0 = X$ and $X_k \to X_{k+1}$ as the pushout of a coproduct of horn inclusions (boundary inclusions) indexed by lifting problems against $X_k \to Y$.
Importantly, since horn inclusions (boundary inclusions) are levelwise decidable inclusions and these are closed under coproducts and pushout, so is $X_k \to X_{k+1}$.
That means we can use \cref{small-object-argument-lemma} to conclude that $X_\omega \to Y$ is a (trivial) fibration.
The relative cell complex $X \to X_\omega$ is a trivial cofibration (cofibration) by standard saturation properties of the left lifting closure.
The claim about trivial cofibrations (cofibrations) follows from the retract argument.
\end{proof}

For any simplicial set $M$, the \wfs{}s of \cref{thm:wfs-via-soa} induce \wfs{}s in
the slice $\sSet \slice M$.
A map over $M$ will be called a (trivial) (co)fibration if
it is a (trivial) (co)fibration in $\sSet$.

\begin{remark} \label{rem:acyclic-vs-trivial}
A map is \emph{acyclic} if it is a weak equivalence in a context dependent sense of \emph{fiberwise homotopy equivalence} of \cref{fibration-category} or \emph{weak homotopy equivalence} of \cref{whes,sec:model-structure-second-proof}.
In particular, an \emph{acyclic (co)fibration} is a (co)fibration that is also a weak equivalence.
We will be careful about the distinction between the notions of
acyclic (co)fibrations and trivial (co)fibrations until
they are proved equivalent. The notation introduced in~\cref{tab:arrows}
is intended to help readers keep track of the different notions.
\end{remark}

We conclude this subsection with two short but critical lemmas.

\begin{lemma}\label{simplex-cofibrant}
  Both $\simp{m}$ and $\bdsimp{m}$ are cofibrant for all $m$.
  In fact,
  $\bdsimp{m}$ is a cell complex \wrt{} $\set{\bdsimp{k} \to \simp{k}}{k < m}$.
\end{lemma}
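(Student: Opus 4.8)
The plan is to prove both claims simultaneously by induction on $m$, since the "in fact" clause---that $\bdsimp{m}$ is a cell complex with respect to $\set{\bdsimp{k} \ito \simp{k}}{k < m}$---is really the engine that drives the cofibrancy statement. For the base case $m = 0$ we have $\bdsimp{0} = \emptyset$, which is vacuously a cell complex (an empty composition), and $\simp{0}$ is obtained from $\emptyset$ by a single pushout of $\bdsimp{0} \ito \simp{0}$ itself---wait, that is circular, so more carefully: $\emptyset \to \simp{0}$ is a pushout of $\bdsimp{0} \ito \simp{0}$ along the unique map $\bdsimp{0} = \emptyset \to \emptyset$, exhibiting $\simp{0}$ as a cell complex, hence cofibrant. (Alternatively one notes $\emptyset \to \simp{0}$ is literally the generating cofibration for $m=0$.)

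For the inductive step, assume $\bdsimp{k}$ and $\simp{k}$ are cofibrant for all $k < m$. The key is the standard skeletal filtration of $\bdsimp{m}$: we build $\bdsimp{m}$ from $\emptyset$ by attaching cells dimension by dimension. Concretely, $\Sk_{-1} \bdsimp{m} = \emptyset$, and for each $k$ with $0 \le k \le m-1$ there is a pushout square
\begin{equation*}
\begin{tikzpicture}[baseline=(current bounding box.center)]
\matrix[diagram]
{
  |(A)| \coprod \bdsimp{k} & |(B)| \coprod \simp{k} \\
  |(C)| \Sk_{k-1} \bdsimp{m} & |(D)| \Sk_k \bdsimp{m} \rlap{\text{,}} \\
};
\draw[->] (A) to (B);
\draw[->] (A) to (C);
\draw[->] (B) to (D);
\draw[->] (C) to (D);
\pb{C}{B};
\end{tikzpicture}
\end{equation*}
where the coproducts range over the non-degenerate $k$-simplices of $\bdsimp{m}$, i.e.\ the injective order-preserving maps $[k] \to [m]$ that are not surjective (equivalently, $k$-element-face inclusions with $k < m$). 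Since $\bdsimp{m} = \Sk_{m-1} \bdsimp{m}$ (its non-degenerate simplices have dimension at most $m-1$), composing these pushouts exhibits $\emptyset \to \bdsimp{m}$ as a finite cell complex with respect to $\set{\bdsimp{k} \ito \simp{k}}{k < m}$, proving the "in fact" clause. Then $\bdsimp{m}$ is cofibrant because cofibrations are closed under pushout, coproduct, and (transfinite) composition---being defined by a left lifting property---and each attaching map $\bdsimp{k} \ito \simp{k}$ with $k < m$ is a cofibration by the inductive hypothesis (indeed $\simp{k}$ cofibrant and $\bdsimp{k}$ cofibrant over $\simp{k}$, or directly: $\bdsimp{k} \ito \simp{k}$ is a cofibration since $\emptyset \to \simp{k}$ factors through it and by the two-out-of-three-type cancellation for cofibrations---more simply, $\emptyset \to \bdsimp{k}$ is a cofibration and $\bdsimp{k} \ito \simp{k}$ is a generating cofibration in $I$). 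Finally $\simp{m}$ is cofibrant since $\emptyset \to \bdsimp{m} \ito \simp{m}$ is a composite of cofibrations, the first by what we just proved and the second being in $I$.

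The main obstacle, and the place where the constructive setting demands care, is verifying that the skeletal filtration really is a cell complex in the strict sense required---that is, that the non-degenerate $k$-simplices of $\bdsimp{m}$ form a \emph{finite decidable} set so that the coproduct is legitimate, and that the pushout square above genuinely has $\bdsimp{k}$ (not some variant) in the top-left corner. Here one uses that $\bdsimp{m}$, being a finite colimit of representables, has at each level a finite decidable set of simplices with decidable degeneracy, so "non-degenerate $k$-simplex of $\bdsimp{m}$" is a decidable finite set (these are exactly the face inclusions $[k] \ito [m]$ missing at least one vertex); the pushout identification is then the usual fact that attaching a non-degenerate simplex along its boundary builds the next skeleton, which goes through verbatim since all the relevant inclusions are decidable. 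No genuinely new idea beyond bookkeeping is needed, but the decidability bookkeeping is exactly what must not be skipped.
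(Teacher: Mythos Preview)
Your proof is correct and takes essentially the same approach as the paper: both build $\bdsimp{m}$ via the skeletal filtration, attaching at stage $k$ a coproduct of copies of $\bdsimp{k} \ito \simp{k}$ indexed by the face operators $\Simp_\sharp([k],[m])$ (what you call the non-degenerate $k$-simplices of $\bdsimp{m}$). The paper's version is slightly leaner in that it dispenses with the outer induction on $m$---since each $\bdsimp{k} \ito \simp{k}$ is already a generating cofibration in $I$, the cell-complex description immediately gives cofibrancy without needing to know that $\bdsimp{k}$ or $\simp{k}$ is cofibrant beforehand---but your argument is sound and your attention to decidability of the indexing sets is appropriate for the constructive setting.
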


\begin{proof}
  For $\bdsimp{m}$ consider
  a filtration $X^{(-1)} \ito X^{(0)} \ito \ldots X^{(m - 1)}$ where
  \begin{equation*}
    X^{(k)}_i = \set{\phi \from [i] \to [m]}{\phi \text{ factors through } [k]} \text{.}
  \end{equation*}
  Then we have $X^{(-1)} = \emptyset$, $X^{(m - 1)} = \bdsimp{m}$ and
  for each $k \in \{ 0, \ldots, m - 1 \}$ there is a pushout square
  \begin{tikzeq*}
  \matrix[diagram,column sep={10em,between origins}]
  {
    |(b)| \Simp_\sharp([k], [m]) \times \bdsimp{k} & |(k1)| X^{(k - 1)} \\
    |(s)| \Simp_\sharp([k], [m]) \times   \simp{k} & |(k)|  X^{(k)}     \\
  };

  \draw[cof] (b)  to (s);
  \draw[->]  (k1) to (k);
  \draw[->]  (b)  to (k1);
  \draw[->]  (s)  to (k);
  \end{tikzeq*}
  so $\bdsimp{m}$ is indeed
  a cell complex \wrt{} $\set{\bdsimp{k} \to \simp{k}}{k < m}$.
  It follows that $\simp{m}$ is cofibrant as well.
\end{proof}

As a consequence we derive a cancellation property for trivial fibrations,
which is crucially used in later arguments to
extend certain results about cofibrant simplicial sets to all simplicial sets.

\begin{lemma}\label{trivial-fibration-cancellation}
  If $f \from X \to Y$ and $g \from Y \to Z$ are simplicial maps \st{}
  $f$ and $g f$ are trivial fibrations, then so is $g$.
\end{lemma}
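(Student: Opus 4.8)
To show $g\colon Y\to Z$ is a trivial fibration, I must solve lifting problems of boundary inclusions $\bdsimp{m}\ito\simp{m}$ against $g$. So suppose given a commuting square with $\bdsimp{m}\to Y$ on top and $\simp{m}\to Z$ on the bottom. I would like to produce a diagonal filler $\simp{m}\to Y$. The idea is to first use that $gf$ is a trivial fibration to lift the composite problem into $X$, and then use that $f$ is a trivial fibration to push that lift back down to a compatible lift into $Y$; but these two steps do not immediately fit together, and reconciling them is where the real work lies.

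\textbf{Step 1: Lift against $gf$.} The square $\bdsimp{m}\to Y\xrightarrow{g}Z$ over $\simp{m}\to Z$ need not factor through $X$, so I cannot lift directly against $gf$. Instead I use \cref{simplex-cofibrant}: the inclusion $\bdsimp{m}\ito\simp{m}$ is a cofibration, hence has the \llp{} against the trivial fibration $f$. Applying this to the square with top $\bdsimp{m}\to Y$ (note $Y$ is the codomain of $f$, so I need a map $\bdsimp{m}\to X$ — this isn't automatic either). Let me reorganize: the correct first move is to lift $\bdsimp{m}\to Y$ through $f\colon X\fto Y$. Since $f$ is a trivial fibration and $\bdsimp{m}\ito\simp{m}$ a cofibration, actually I want a lift of the boundary map along $f$; but $f$ surjective on the relevant data lets me choose $u\colon\bdsimp{m}\to X$ with $fu$ equal to the given $\bdsimp{m}\to Y$. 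Concretely, $f$ being a trivial fibration means it has the \rlp{} against $\emptyset\ito\bdsimp{m}$ (which is a cofibration by \cref{simplex-cofibrant}), so such a $u$ exists.

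\textbf{Step 2: Extend over $\simp{m}$ using $gf$.} Now $\simp{m}\to Z$ together with $u\colon\bdsimp{m}\to X$ forms a lifting problem of $\bdsimp{m}\ito\simp{m}$ against the trivial fibration $gf$ (the square commutes because $g(fu)=g\cdot(\text{given }\bdsimp{m}\to Y)=$ restriction of $\simp{m}\to Z$). This yields $v\colon\simp{m}\to X$ with $v|_{\bdsimp{m}}=u$ and $(gf)v=(\simp{m}\to Z)$. Then $fv\colon\simp{m}\to Y$ satisfies $g(fv)=(\simp{m}\to Z)$, but its restriction to $\bdsimp{m}$ is $fu$, which is exactly the given top map — so $fv$ is the desired diagonal filler, and we are done.

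\textbf{Main obstacle.} The argument as just sketched is actually clean, so the only subtlety — and the reason \cref{simplex-cofibrant} is invoked just beforehand — is the constructive bookkeeping: one must check that the lifts in Steps 1 and 2 can be \emph{chosen functorially/coherently} if one wants $g$ equipped with a genuine choice of fillers (as the paper's conventions demand), rather than merely "there exists." This is handled by simply composing the chosen-lift functions of $f$ and $gf$ with the chosen \llp{}-structure of the cofibration $\bdsimp{m}\ito\simp{m}$ against $f$, all of which are given as data; no case distinctions or colimit arguments are needed here. I expect no genuine difficulty, only care that every lift used is part of the supplied structure.
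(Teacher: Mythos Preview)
Your proof is correct and is essentially identical to the paper's: first lift the boundary map $\bdsimp{m}\to Y$ through the trivial fibration $f$ using cofibrancy of $\bdsimp{m}$ (\cref{simplex-cofibrant}), then solve the resulting lifting problem against $gf$, and finally push the solution down via $f$ to obtain the desired filler into $Y$. The only difference is expository — your Step~1 meanders before arriving at the right move, whereas the paper goes there directly — but the mathematical content is the same.
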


\begin{proof}
  Consider a lifting problem
  \begin{tikzeq*}
  \matrix[diagram,column sep={between origins,6em}]
  {
    |(b)| \bdsimp{m} & |(Y)| Y                   \\
    |(s)|   \simp{m} & |(Z)| Z \rlap{\textrm{.}} \\
  };

  \draw[->]  (b) to node[above] {$u$} (Y);
  \draw[cof] (b) to node[left]  {$i$} (s);

  \draw[->] (s) to node[below] {$v$} (Z);
  \draw[->] (Y) to node[right] {$g$} (Z);
  \end{tikzeq*}
  Since $\bdsimp{m}$ is cofibrant by \cref{simplex-cofibrant},
  $u$ lifts along $f$:
  \begin{tikzeq*}
  \matrix[diagram,column sep={between origins,6em}]
  {
                     & |(X)| X                 \\
    |(b)| \bdsimp{m} & |(Y)| Y \rlap{\text{.}} \\
  };

  \draw[->,dashed] (b) to node[above left] {$\tilde u$} (X);

  \draw[->]   (b) to node[below] {$u$} (Y);
  \draw[tfib] (X) to node[right] {$f$} (Y);
  \end{tikzeq*}
  This leads to a lifting problem
  \begin{tikzeq*}
  \matrix[diagram,column sep={between origins,6em}]
  {
    |(b)| \bdsimp{m} & |(X)| X                   \\
    |(s)|   \simp{m} & |(Z)| Z \rlap{\textrm{.}} \\
  };

  \draw[->]  (b) to node[above] {$\tilde u$} (X);
  \draw[cof] (b) to node[left]  {$i$}        (s);

  \draw[->]   (s) to node[below] {$v$}   (Z);
  \draw[tfib] (X) to node[right] {$g f$} (Z);
  \end{tikzeq*}
  Call its solution $\tilde w \from \simp{m} \to X$, then
  $w = f \tilde w$ is a solution to the original problem.
  Indeed,
  \begin{equation*}
    g w = g f \tilde w = v \text{ and }
    w i = f \tilde w i = f \tilde u = u \text{.} \qedhere
  \end{equation*}
\end{proof}

\subsection{Pushout product properties}
\label{sec:two-wfss}

We now establish that the \wfs{s} of cofibrations and trivial fibrations and of trivial cofibrations and fibrations satisfy various forms of the pushout product property.
In order to do this, we need to introduce some notation.
Given $M \in \sSet$ and two objects $X \to M$ and $Y \to M$ in the slice category
$\sSet \slice M$, we write $\exp_M(X, Y) \to M$ for
their exponential in $\sSet \slice M$.
The category $\sSet \slice M$ is also a $\sSet$-enriched category
in a canonical way and we write $\hom_M(X, Y)$ for the simplicial hom-object.
When considering slices over the terminal object $\simp{0}$ of $\sSet$,
we often drop the subscript and write $Y^X$ for the common value of
$\exp_{\simp{0}}(X,Y)$ and $\hom_{\simp{0}}(X,Y)$.
As a $\sSet$-enriched category, $\sSet \slice M$ admits tensors and cotensors.
For $A \in \sSet$ and~$X \in \sSet \slice M$,
the tensor is written $A \tensor X \in \sSet \slice M$ and
is given by the cartesian product $A \times X$
(the structure map is the composite of the product projection and the structure map of $X$).
The cotensor is written $A \cotensor X \in \sSet \slice M$ and is given by
the pullback of $X^A \to M^A$ along the map $M \to M^A$ (the adjoint transpose of the product projection).
For maps $i \from A \to B$ and $p \from X \to Y$, we write
\begin{tikzeq*}
\matrix[diagram,column sep={between origins,8em}]
{
  |(pp)| (A \times Y) \coprod_{A \times X} (B \times X) &[2em] |(BY)| B \times Y          &
  |(XB)| X^B                                            &      |(pe)| X^A \pull_{Y^A} Y^B \\
};

\draw[->] (pp) to node[above] {$i \hat{\times} p$} (BY);
\draw[->] (XB) to node[above] {$\hat{\exp}(f, g)$} (pe);
\end{tikzeq*}
for their pushout product and their pullback exponential, respectively.
Analogous notation is used when we consider the tensor and cotensor functors
instead of the product and exponential functors.

The following proposition is well-known and easy to prove in classical logic.
However, it is less trivial constructively due to the delicate nature of cofibrations and thus we provide a complete proof.

\begin{proposition} \label{first-pushout-product}
  Let $M \in \sSet$.
  \begin{enumerate}
  \item\label{cofibration-pushout-product}
    If $i \from A \cto B$ and $j \from C \cto D$ are
    cofibrations in $\sSet \slice M$, then
    so is their pushout product $i \hat{\times}_M j$.
  \item\label{cof-tcof-pullback-hom}
    If $i \from A \cto B$ is a cofibration and
    $p \from X \tfto Y$ is a trivial fibration in $\sSet \slice M$, then
    their pullback exponential $\hat{\exp}_M(i, p)$  is also a trivial fibration.
    In particular, if $A$ is cofibrant and
    $p \from X \tfto Y$ is a trivial fibration, then $\exp_M(A, p)$ is a trivial fibration.
  \end{enumerate}
\end{proposition}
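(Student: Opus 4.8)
The plan is to derive \cref{cof-tcof-pullback-hom} from \cref{cofibration-pushout-product} by the two-variable adjunction relating pushout products and pullback exponentials in the cartesian closed category $\sSet \slice M$, and to prove \cref{cofibration-pushout-product} by reducing to generating cofibrations. Since $\sSet \slice M$ is cartesian closed --- being a slice of a locally cartesian closed category --- and cocomplete, $(-) \hat{\times}_M (-)$ is cocontinuous in each variable, being a left adjoint in each variable; hence for a fixed cofibration $i$ the class of maps $k$ for which $i \hat{\times}_M k$ is a cofibration is closed under pushout, transfinite composition, coproduct and retract. By \cref{thm:wfs-via-soa} the cofibrations of $\sSet \slice M$ form the saturated class generated by the boundary inclusions $\bdsimp{m} \ito \simp{m}$ equipped with structure maps $\simp{m} \to M$, so running this closure argument in each variable reduces \cref{cofibration-pushout-product} to showing that $i_0 \hat{\times}_M j_0$ is a cofibration whenever $i_0 = (\bdsimp{m} \ito \simp{m})$ and $j_0 = (\bdsimp{n} \ito \simp{n})$, for arbitrary structure maps $u \from \simp{m} \to M$ and $v \from \simp{n} \to M$.

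For this generator case I would write $E = \simp{m} \times_M \simp{n}$, so that a $k$-simplex of $E$ is a pair of simplicial operators $\phi \from [k] \to [m]$ and $\psi \from [k] \to [n]$ with $\hat{u} \phi = \hat{v} \psi$ in $M_k$, where $\hat{u} \in M_m$ and $\hat{v} \in M_n$ correspond to $u$ and $v$. One checks that $i_0 \hat{\times}_M j_0$ is the inclusion into $E$ of the union $\partial_M E := (\bdsimp{m} \times_M \simp{n}) \cup (\simp{m} \times_M \bdsimp{n})$, consisting of those pairs $(\phi, \psi)$ for which $\phi$ or $\psi$ is not surjective. I would then filter $E$ over $\partial_M E$ by $E^{(d)} := \partial_M E \cup \Sk_d E$: this starts at $E^{(-1)} = \partial_M E$, terminates at $E$ after finitely many steps (since $E$ is a subobject of $\simp{m} \times \simp{n}$, hence of dimension at most $m + n$), and at each step $E^{(d-1)} \ito E^{(d)}$ freely adjoins the nondegenerate $d$-simplices $(\phi, \psi)$ of $E$ for which both $\phi$ and $\psi$ are surjective. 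The crucial observation is that this indexing set is a decidable subset of $E_d$: surjectivity of $\phi$, surjectivity of $\psi$, and degeneracy of $(\phi, \psi)$ --- the existence of some $i$ with $\phi(i) = \phi(i+1)$ and $\psi(i) = \psi(i+1)$ --- are all decidable conditions on operators between finite ordinals, \emph{even though} $E$ itself need not be a decidable subobject of $\simp{m} \times \simp{n}$, since equality in $M_k$ need not be decidable. Since every face of such a simplex lies in the previous stage --- a face either picks up a non-surjective component, and so lies in $\partial_M E$, or has strictly smaller dimension --- each step is a pushout of a coproduct of boundary inclusions, so $\partial_M E \ito E$ is a finite relative $I$-cell complex and hence a cofibration.

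For \cref{cof-tcof-pullback-hom}, the two-variable adjunction gives that $\hat{\exp}_M(i, p)$ has the \rlp{} against a map $k$ \iff{} $p$ has the \rlp{} against $i \hat{\times}_M k$. Any lifting problem of a boundary inclusion $\bdsimp{k} \ito \simp{k}$ against $\hat{\exp}_M(i, p)$ equips $\simp{k}$ with a map to $M$, so it suffices to test $\hat{\exp}_M(i, p)$ against boundary inclusions in $\sSet \slice M$; for such a one, $i \hat{\times}_M (\bdsimp{k} \ito \simp{k})$ is a cofibration by \cref{cofibration-pushout-product}, against which the trivial fibration $p$ has the \rlp{} (cofibrations being the maps with the \llp{} against trivial fibrations). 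Hence $\hat{\exp}_M(i, p)$ is a trivial fibration. The final assertion is the case $i = (\emptyset \to A)$ with $A$ cofibrant, using that $\exp_M(\emptyset, -)$ is the terminal object of $\sSet \slice M$, which collapses $\hat{\exp}_M(\emptyset \to A, p)$ to $\exp_M(A, p)$.

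The main obstacle is the generator case of \cref{cofibration-pushout-product}. The classical one-line argument --- a pushout product of monomorphisms is a monomorphism --- is not available, since constructively the cofibrations form a proper subclass of the monomorphisms and, in particular, are not stable under pullback; so one must carry out the explicit skeletal decomposition of $\simp{m} \times_M \simp{n}$ and check by hand that the relevant combinatorial predicates on pairs of operators remain decidable even when $\simp{m} \times_M \simp{n}$ fails to be a decidable subobject of $\simp{m} \times \simp{n}$. Getting this decidability bookkeeping right is the heart of the matter.
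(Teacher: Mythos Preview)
Your proposal is correct and follows essentially the same approach as the paper: reduce \cref{cofibration-pushout-product} to the generating case via the two-variable adjunction and saturation, then exhibit $\partial_M E \ito E = \simp{m} \times_M \simp{n}$ as a finite relative $I$-cell complex by a skeletal filtration, the key point being that surjectivity of $\phi$, surjectivity of $\psi$, and nondegeneracy of $(\phi,\psi)$ are decidable predicates on pairs of operators between finite ordinals even when membership in $E$ itself is not decidable. Your indexing set at step $d$ --- nondegenerate $d$-simplices with \emph{both} components surjective --- is precisely what is needed for the filtration starting from $\partial_M E$; the paper's displayed condition ``at least one of $\phi$ or $\psi$ surjective'' would include simplices already in $\partial_M E$, so your formulation is the cleaner one.
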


\begin{proof}
  For~\cref{cofibration-pushout-product},
  it suffices to verify that
  for any pair of simplices $\simp{m} \to M$ and $\simp{n} \to M$
  the pushout product of $\bdsimp{m} \to \simp{m}$ and $\bdsimp{n} \to \simp{n}$
  is a cofibration.
  Indeed, there is a filtration
  \begin{equation*}
    \bdsimp{m} \pull_M \simp{n} \union \simp{m} \pull_M \bdsimp{n}
    = X^{(-1)} \ito X^{(0)} \ito \ldots \ito X^{(m + n)} = \simp{m} \pull_M \simp{n}
  \end{equation*}
  where $X^{(k)}$ is formed iteratively by taking pushouts
  \begin{tikzeq*}
  \matrix[diagram,column sep={8em,between origins}]
  {
    |(b)| S_k \times \bdsimp{k} & |(k1)| X^{(k - 1)} \\
    |(s)| S_k \times   \simp{k} & |(k)|  X^{(k)}     \\
  };

  \draw[cof] (b)  to (s);
  \draw[->]  (k1) to (k);
  \draw[->]  (b)  to (k1);
  \draw[->]  (s)  to (k);
  \pbdr{k}{b};
  \end{tikzeq*}
  with
  \begin{align*}
    S_k = \left\{(\phi, \psi) \in \simp{m} \pull_M \simp{n} \right. \mid
    & \text{ at least one of } \phi \from [k] \to [m] \text{ or } \psi \from [k] \to [n] \text{ is surjective} \\
    & \left. \text{ and } (\phi, \psi) \from [k] \to [m] \times [n] \text{ is injective} \right\} \text{.}
  \end{align*}
  \Cref{cof-tcof-pullback-hom} follows from~\cref{cofibration-pushout-product} by adjointness.
\end{proof}

\begin{corollary} \label{pullback-of-cofibrations}
  If $A \to B$ is a simplicial map with $A$ cofibrant,
  then the induced pullback functor $\sSet \slice B \to \sSet \slice A$
  preserves cofibrations.
\end{corollary}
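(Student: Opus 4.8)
The plan is to realise $f^*$ of a cofibration as a pushout product of cofibrations and then invoke \cref{cofibration-pushout-product} of \cref{first-pushout-product}. Write $f \from A \to B$ for the given map, with $A$ cofibrant, and let $j \from C \cto D$ be a cofibration in $\sSet \slice B$; we must show that $f^* j$ is a cofibration. Let $A_f$ denote the object $(A \xrightarrow{f} B)$ of $\sSet \slice B$. A map of $\sSet \slice B$ is a cofibration exactly when its underlying simplicial map is one, so $\emptyset \to A_f$ is a cofibration in $\sSet \slice B$ precisely because $\emptyset \to A$ is a cofibration in $\sSet$, \ie{} because $A$ is cofibrant.

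Next I would compute the pushout product $(\emptyset \to A_f) \hat{\times}_B j$ formed in $\sSet \slice B$. Binary products in $\sSet \slice B$ are fibre products over $B$, and since $\emptyset$ is a strict initial object we have $\emptyset \pull_B C \iso \emptyset \iso \emptyset \pull_B D$; hence the pushout corner $(\emptyset \pull_B D) \push_{\emptyset \pull_B C} (A_f \pull_B C)$ is simply $A_f \pull_B C$, so that $(\emptyset \to A_f) \hat{\times}_B j$ is the canonical comparison map $A_f \pull_B C \to A_f \pull_B D$. Its underlying simplicial map is the induced map of pullbacks $A \pull_B C \to A \pull_B D$, which is exactly the underlying map of $f^* j$. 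Since $\emptyset \to A_f$ and $j$ are cofibrations in $\sSet \slice B$, \cref{cofibration-pushout-product} of \cref{first-pushout-product}, applied with $M = B$, shows that $(\emptyset \to A_f) \hat{\times}_B j$ is a cofibration in $\sSet \slice B$; therefore so is $f^* j$. As $j$ was arbitrary, $f^*$ preserves cofibrations.

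I do not expect a real obstacle here. The two points that still deserve a line of justification are the identification of the underlying map of $f^* j$ with that of $(\emptyset \to A_f) \hat{\times}_B j$ — which uses only strictness of the initial object $\emptyset$ and the fact that colimits in $\sSet \slice B$ are computed in $\sSet$ — and the observation that cofibrancy transfers between $\sSet \slice B$ and $\sSet$. The sole genuinely constructive ingredient is \cref{first-pushout-product} itself, whose proof equips the pushout product of cofibrations with an explicit family of fillers, so no appeal to choice is made. This deduction is, in an evident sense, dual to the way \cref{cof-tcof-pullback-hom} of \cref{first-pushout-product} was obtained from \cref{cofibration-pushout-product} by adjointness.
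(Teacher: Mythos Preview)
Your proof is correct and follows exactly the same approach as the paper: identify $f^* j$ with the pushout product $(\emptyset \cto A) \hat{\times}_B j$ in $\sSet \slice B$ and invoke \cref{cofibration-pushout-product} of \cref{first-pushout-product}. The paper's version is terser, but the argument is identical.
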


\begin{proof}
  If $X \cto Y$ is a cofibration over $B$, then
  the induced map $A \pull_B X \to A \pull_B Y$ coincides with
  the pushout product (in $\sSet \slice B$) of
  $\emptyset \cto A$ and $X \cto Y$.
  The conclusion follows from \cref{cofibration-pushout-product}
  of \cref{first-pushout-product}.
\end{proof}

\begin{proposition} \label{second-pushout-product}
  \leavevmode
  \begin{enumerate}
  \item\label{ano-cof-pushout-product}
    If $i \from A \ato B$ is a trivial cofibration and
    $j \from C \cto D$ is a cofibration
    then their pushout product $i \hat{\times} j$ is a trivial cofibration.
  \item\label{cof-fib-pullback-hom}
    If $i \from A \cto B$ is a cofibration and
    $p \from X \fto Y$ is a Kan fibration, then
    their pullback exponential $\hat{\hom}(i, p)$ is a Kan fibration.
    In particular, if $i \from C \cto D$ is a cofibration and $K$ a Kan complex,
    then $\hom(i, K)$ is a Kan fibration.
    In particular, if $A$ is cofibrant and $K$ is a Kan complex, then
    $K^A$ is a Kan complex.
  \item\label{ano-fib-pullback-hom}
    If $i \from A \ato B$ is a trivial cofibration and
    $p \from X \fto Y$ is a Kan fibration, then
    their pullback exponential $\hat{\hom}(i, p)$ is a trivial fibration.
    In particular, if $i \from A \ato B$ is a trivial cofibration and
    $K$ a Kan complex, then
    $\hom(i, K)$ is a trivial fibration.
  \end{enumerate}
\end{proposition}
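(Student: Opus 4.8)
The plan is to reduce the whole statement to the first claim, prove that by the usual saturation argument down to a combinatorial lemma about products of simplices, and then deduce the second and third claims formally by transposing lifting problems across the pushout-product/pullback-hom adjunction.

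For the first claim, let $i \from A \ato B$ be a trivial cofibration and $j \from C \cto D$ a cofibration. By \cref{thm:wfs-via-soa}, $i$ is a codomain retract of a relative $J$-cell complex and $j$ a codomain retract of a relative $I$-cell complex, both of height $\omega$. The pushout product $\hat\times$ preserves retracts, coproducts, pushouts and sequential composites separately in each variable --- being a left adjoint in each variable --- and trivial cofibrations, being a left lifting closure, are closed under retracts, coproducts, pushouts and sequential composites. Hence $i \hat\times j$ is a trivial cofibration as soon as every $h \hat\times b$ with $h \in J$ and $b \in I$ is one, so it remains to prove that
\[
  \horn{m,k} \times \simp{n} \union \simp{m} \times \bdsimp{n} \ito \simp{m} \times \simp{n}
\]
is a trivial cofibration for all $m, n$ and $0 \le k \le m$. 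I would establish this by a filtration of $\simp{m} \times \simp{n}$ by sub-simplicial sets, as in the proof of \cref{first-pushout-product}: the nondegenerate simplices not already in the subobject are the pairs $(\alpha, \beta)$ of simplicial operators $\alpha \from [p] \to [m]$ and $\beta \from [p] \to [n]$ for which $(\alpha, \beta) \from [p] \to [m] \times [n]$ is injective, $\beta$ is surjective and $\alpha$ omits at most the vertex $k$, and one attaches these in a suitable order so that each attaching map is a horn inclusion rather than merely a boundary inclusion. As in \cref{first-pushout-product}, the constructive content is exactly that these index sets are decidable subsets of pairs of simplicial operators --- equality, injectivity and surjectivity being decidable for order-preserving maps of finite ordinals --- and that the horn to be filled at each stage is singled out by a decidable case distinction on $(\alpha, \beta)$.

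Granting the first claim, the other two follow formally, using that a lifting problem of a map $j$ against $\hat{\hom}(i, p)$ transposes to a lifting problem of $i \hat\times j$ against $p$. For the second claim: to see that $\hat{\hom}(i, p)$ is a Kan fibration it suffices to solve lifting problems of horn inclusions $h$ against it; transposing, each becomes a lifting problem of $i \hat\times h$ against $p$, and since $h$ is a trivial cofibration and $i$ a cofibration, $i \hat\times h$ is a trivial cofibration by the first claim, against which the Kan fibration $p$ has the right lifting property by definition. For the third claim: to see that $\hat{\hom}(i, p)$ is a trivial fibration it suffices to solve lifting problems of boundary inclusions $b$ against it; transposing, each becomes a lifting problem of $i \hat\times b$ against $p$, and now $i$ is a trivial cofibration and $b$ a cofibration, so $i \hat\times b$ is again a trivial cofibration by the first claim and $p$ lifts. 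The remaining assertions in each part are the cases $p = (K \to \simp{0})$ and $i = (\emptyset \to A)$.

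The main obstacle is the combinatorial lemma on $\simp{m} \times \simp{n}$: one has to produce an explicit order on the attached cells and check that each attachment fills a horn, not just a boundary. Its decidability, required for the constructive argument, is then routine, since everything comes down to decidable predicates on order-preserving maps between finite ordinals, exactly as in \cref{first-pushout-product}.
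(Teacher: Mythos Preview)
Your proposal is correct and follows essentially the same approach as the paper: reduce \cref{ano-cof-pushout-product} to the combinatorial fact that the pushout product of a horn inclusion with a boundary inclusion is a trivial cofibration, then obtain \cref{cof-fib-pullback-hom,ano-fib-pullback-hom} by adjointness. The paper simply cites Gabriel--Zisman \cite{gz}*{Proposition~IV.2.2} for the combinatorial core (noting that the argument there is already constructive), whereas you spell out the saturation reduction via \cref{thm:wfs-via-soa} and sketch the filtration; this is an unpacking of the same argument rather than a different route.
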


\begin{proof}
  \Cref{ano-cof-pushout-product} is proved in \cite{gz}*{Proposition~IV.2.2}
  with a constructive argument.
  \Cref{cof-fib-pullback-hom,ano-fib-pullback-hom} follow by adjointness.
\end{proof}

\begin{corollary}\label{cof-tensor-cotensor}
  \leavevmode
  \begin{enumerate}
  \item\label{cof-ano-pushout-tensor}
    If $j \from A \cto B$ is a cofibration and
    $i \from C \ato D$ is a trivial cofibration over $M$,
    then their pushout tensor $j \hat{\tensor} i$ is a trivial cofibration.
  \item\label{ano-cof-pushout-tensor}
    If $i \from A \ato B$ is a trivial cofibration and
    $j \from C \cto D$ is a cofibration over $M$, then
    their pushout tensor $i \hat{\tensor} j$ is a trivial cofibration.
  \item\label{cof-fib-pullback-cotensor}
    If $i \from A \cto B$ is a cofibration and
    $p \from X \fto Y$ is a Kan fibration over $M$, then
    their pullback cotensor $i \hat{\cotensor} p$ is a Kan fibration.
  \item\label{ano-fib-pullback-cotensor}
    If $j \from A \ato B$ is a trivial cofibration and
    $p \from X \fto Y$ is a Kan fibration over $M$,
    then their pullback cotensor $j \hat{\cotensor} p$ is a trivial fibration.
  \end{enumerate}
\end{corollary}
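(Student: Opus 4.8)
The plan is to deduce all four parts from \cref{second-pushout-product}, using that the $\sSet$-enriched tensor and cotensor on $\sSet \slice M$ are computed from the cartesian product and exponential of underlying simplicial sets. Concretely, for $A \in \sSet$ and $X \to M$ in $\sSet \slice M$ the tensor $A \tensor X$ has underlying simplicial set $A \times X$, with structure map $A \times X \to X \to M$; this is immediate from the defining natural isomorphism $\sSet \slice M(A \tensor X, Y) \iso \sSet(A, \hom_M(X, Y))$. Consequently, for a map $f$ in $\sSet$ and a map $g$ in $\sSet \slice M$, the pushout tensor $f \hat{\tensor} g$ has, as underlying simplicial map, the pushout product of $f$ with the underlying map of $g$.

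Granting this, \cref{cof-ano-pushout-tensor,ano-cof-pushout-tensor} are immediate: in either case the underlying map of the pushout tensor is the pushout product in $\sSet$ of a cofibration and a trivial cofibration, hence a trivial cofibration by \cref{ano-cof-pushout-product} of \cref{second-pushout-product}; and whether a map over $M$ is a trivial cofibration depends only on its underlying simplicial map.

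For \cref{cof-fib-pullback-cotensor,ano-fib-pullback-cotensor} I would argue by adjointness. Since $\sSet \slice M$ is tensored and cotensored over $\sSet$, one has: for a map $f$ in $\sSet$ and maps $g, p$ in $\sSet \slice M$, the pushout tensor $f \hat{\tensor} g$ has the \llp{} \wrt{} $p$ \iff{} $g$ has the \llp{} \wrt{} the pullback cotensor $f \hat{\cotensor} p$. To see that $i \hat{\cotensor} p$ (resp.\ $j \hat{\cotensor} p$) is a Kan fibration (resp.\ trivial fibration) it suffices to solve lifting problems of horn inclusions $\horn{m,k} \ito \simp{m}$ (resp.\ boundary inclusions $\bdsimp{m} \ito \simp{m}$) against its underlying map; and any such square is automatically a square of $\sSet \slice M$ once the inclusion is equipped with the structure map to $M$ inherited from the bottom map of the square. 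It therefore transposes to a lifting problem against $p$ whose left-hand map is $i \hat{\tensor} (\horn{m,k} \ito \simp{m})$ (resp.\ $j \hat{\tensor} (\bdsimp{m} \ito \simp{m})$). By the first paragraph the underlying map of this pushout tensor is, in $\sSet$, the pushout product of a cofibration and a trivial cofibration, hence a trivial cofibration by \cref{ano-cof-pushout-product} of \cref{second-pushout-product}; since the underlying map of $p$ is a Kan fibration, the transposed problem has a solution, which lies over $M$ because $p$ does. Transposing back yields the desired lift.

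The argument is essentially formal, so I anticipate no real obstacle. The one point deserving care --- and the reason the statement is not wholly trivial in the constructive setting --- is that one cannot shortcut the pullback-cotensor cases by writing $i \hat{\cotensor} p = \hat{\exp}_M(i \tensor M, p)$ and quoting \cref{second-pushout-product} directly, since $i \tensor M$ need not be a cofibration when $M$ is not cofibrant (for instance $\emptyset \to \simp{0}$ becomes $\emptyset \to M$ after tensoring with $M$). Forming all the pushout products inside $\sSet$, between data that is genuinely cofibrant there, as above, circumvents this.
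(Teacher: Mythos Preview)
Your proof is correct and follows essentially the same approach as the paper. The paper's argument is simply a compressed version of yours: it notes that the underlying map of the pushout tensor is the ordinary pushout product (your first paragraph), derives \cref{cof-ano-pushout-tensor,ano-cof-pushout-tensor} from \cref{ano-cof-pushout-product} of \cref{second-pushout-product} exactly as you do, and then says \cref{cof-fib-pullback-cotensor,ano-fib-pullback-cotensor} ``follow from these by adjointness''; your third paragraph is precisely an unpacking of that adjointness step, reducing to generators and transposing. Your final remark about why the shortcut via $\hat{\exp}_M(i \tensor M, p)$ fails when $M$ is not cofibrant is a correct and useful observation, though the paper does not make it explicit.
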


\begin{proof}
  \Cref{cof-ano-pushout-tensor,ano-cof-pushout-tensor} follow
  from \cref{ano-cof-pushout-product} of \cref{second-pushout-product} since
  the underlying map of the pushout tensor is the ordinary pushout product.
  \Cref{cof-fib-pullback-cotensor,ano-fib-pullback-cotensor} follow from these
  by adjointness.
\end{proof}

  \subsection{Cofibrations as Reedy decidable inclusions}
\label{sec:cofibrations}

The aim of this subsection is to exhibit
the \wfs{} of cofibrations and trivial fibrations on simplicial sets of~\cref{thm:wfs-via-soa} as
the Reedy \wfs{} induced by
the \wfs{} of decidable inclusions and split surjections on sets, and
use this fact to give streamlined proofs of
several closure properties of cofibrations and cofibrant objects. For background on Reedy weak
factorisation systems, and in particular the definition of latching and matching objects, we refer to~\cite{hovey}*{Chapter~5} and~\cite{riehl-verity-reedy}.

A simplicial map $A \to B$ is a \emph{Reedy decidable inclusion} if
for all $m$ the relative latching map $A_m \push_{L_m A} L_m B \to B_m$ is
a decidable inclusion.
Dually, a simplicial map $X \to Y$ is a \emph{Reedy split surjection} if
all its relative matching maps $X_m \to Y_m \times_{M_m Y} M_m X$ are
split surjections. We define Reedy decidable inclusions of cosimplicial sets similarly.

\begin{lemma}\label{cofibration-wfs}
  Reedy decidable inclusions coincide with cofibrations and
  Reedy split surjections coincide with trivial fibrations.
\end{lemma}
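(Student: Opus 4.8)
The plan is to establish the two halves of the statement in parallel, by reducing each to the explicit structure of the boundary inclusions $\bdsimp m \to \simp m$ as Reedy maps and invoking the general theory of Reedy \wfs{}s \cite{hovey,riehl-verity-reedy} only for the closure and lifting properties of the two Reedy classes. The computation at the heart of the argument is this: for $\bdsimp m \to \simp m$ and each $n$, the relative latching map $(\bdsimp m)_n \push_{L_n \bdsimp m} L_n \simp m \to (\simp m)_n$ is the inclusion into the set $(\simp m)_n$ of simplicial operators $[n]\to[m]$ of the subset of those operators that are non-injective or non-surjective; its complement is the set of bijections $[n]\to[m]$, which is empty unless $n=m$ and is $\{\id_{[m]}\}$ when $n=m$. (The pushout computing the domain is the union inside $(\simp m)_n$ of the decidable subsets ``non-surjective'' and ``non-injective'', by the binary case of \cref{decidable-union} of \cref{decidable-closure}.) Since injectivity and surjectivity of a map of finite ordinals are decidable, this relative latching map is a decidable inclusion, so every boundary inclusion is a Reedy decidable inclusion. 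Dually, using the standard identification of the matching object $M_m X$ with $\sSet(\bdsimp m, X)$, a lifting problem of $\bdsimp m \to \simp m$ against a map $p \from X \to Y$ is precisely a point of $Y_m \pull_{M_m Y} M_m X$, and a diagonal filler is precisely a preimage of it under the relative matching map $X_m \to Y_m \pull_{M_m Y} M_m X$; hence $p$ has the \rlp{} \wrt{} every boundary inclusion (that is, is a trivial fibration) if and only if all its relative matching maps admit sections, that is, if and only if $p$ is a Reedy split surjection. This is the second assertion.

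For the first assertion I would show cofibrations $=$ Reedy decidable inclusions by mutual inclusion. For ``$\subseteq$'': by the general theory of Reedy \wfs{}s applied to the \wfs{} of decidable inclusions and split surjections of sets (\cref{decidable-wfs}) --- whose Reedy counterpart is formed using only the finite latching and matching categories of $\Simp$ and finite limits and colimits of sets, so goes through in our constructive setting without change --- the Reedy decidable inclusions form the left class of a \wfs{} on $\sSet$, hence a class closed under pushout, coproduct, transfinite composition and retract. As it contains every boundary inclusion by the computation above, and every cofibration is a retract of a relative $I$-cell complex by \cref{thm:wfs-via-soa}, every cofibration is a Reedy decidable inclusion. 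For ``$\supseteq$'': given a Reedy decidable inclusion $f \from A \to B$, I would run the relative skeletal filtration $A = B^{(-1)} \ito B^{(0)} \ito \dots$, where $B^{(n)}$ adjoins to (the image of) $A$ the simplices of $B$ of dimension at most $n$. Reedy decidability of $f$ says exactly that the set $S_n$ of ``new'' non-degenerate $n$-simplices of $B$ --- the complement of the $n$-th relative latching map of $f$ --- is a decidable, hence bona fide, set, and then $B^{(n-1)} \to B^{(n)}$ is the pushout of $\bigcoprod_{S_n} (\bdsimp n \to \simp n)$, exactly as in the proof of \cref{simplex-cofibrant} but relative to $A$. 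Since $B = \colim_n B^{(n)}$, the map $f$ is a countable composite of pushouts of coproducts of boundary inclusions, in particular a cofibration. Combining the two inclusions gives cofibrations $=$ Reedy decidable inclusions, and passing to right lifting classes re-derives trivial fibrations $=$ Reedy split surjections.

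The step I expect to need the most care is checking that the general Reedy \wfs{} machinery of \cite{hovey,riehl-verity-reedy}, stated there classically, is valid in CZF: one must confirm that its factorisation is produced by a finite induction on degree using only finite limits, finite colimits, the \wfs{} structure on sets from \cref{decidable-wfs}, and the ``function providing fillers'' convention, with no appeal to EM or AC. The decidability of the complements of the relative latching maps of boundary inclusions is precisely the ingredient that makes this go through, and it is the same decidability that allows the coproducts $\bigcoprod_{S_n}$ in the skeletal filtration to be formed; so for a reader who prefers not to invoke the general theorem, the relative skeletal filtration (used both for ``$\supseteq$'' and to establish directly the closure of Reedy decidable inclusions under the saturation operations) together with the matching-object reformulation above already yields a self-contained argument.
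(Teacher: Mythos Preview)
Your proposal is correct and follows essentially the same approach as the paper, which simply cites \cref{decidable-wfs} together with \cite{riehl-verity-reedy}*{Lemmas~7.3, 7.4 and Corollary~6.7}. You have unpacked precisely the content of those references: the identification of the relative matching map with lifting data against boundary inclusions (giving trivial fibrations $=$ Reedy split surjections directly), the computation that boundary inclusions are Reedy decidable inclusions, and the relative skeletal filtration showing every Reedy decidable inclusion is an $I$-cell complex.
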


\begin{proof}
  This follows from \cref{decidable-wfs} and
  the fact the associated Reedy \wfs{} of a cofibrantly generated \wfs{}
  is also cofibrantly generated with generators given by pushout products of the original generators
  and boundary inclusions of representable functors.
  See \cite{riehl-verity-reedy}*{Lemmas~7.3, 7.4 and~Corollary~6.7} for details.
\end{proof}

\begin{lemma} \label{delta-elegancy}
Let $X$ be a simplicial set.
\begin{enumerate}
\item \label{delta-elegancy-deg-on-obj}
Every degeneracy operator $[m] \sto [n]$ acts by a monomorphism $X_n \to X_m$.
\item \label{delta-elegancy-latching-obj}
The latching map $L_m X \to X_m$ is a monomorphism.
Moreover, as a subset of $X_m$, $L_m X$ is the union of the subsets $X_n$ indexed over non-identity degeneracy operators $[m] \sto [n]$.
\end{enumerate}
Let $A \to B$ be a monomorphism  of simplicial sets.
\begin{enumerate}[resume]
\item \label{delta-elegancy-deg-on-mono}
For every degeneracy operator $[m] \sto [n]$, the set $A_n$ is the intersection of the subobjects $A_m$ and $B_n$ of $B_m$.
\item \label{delta-elegancy-latching-mono}
$L_m A$ is the intersection of the subobjects $A_m$ and $L_m B$ of $B_m$.
\end{enumerate}
\end{lemma}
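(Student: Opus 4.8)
I would prove \cref{delta-elegancy-deg-on-obj} first, then \cref{delta-elegancy-deg-on-mono} (which uses only it), then \cref{delta-elegancy-latching-obj}, and finally \cref{delta-elegancy-latching-mono}, which is a formal consequence of the two preceding parts. For \cref{delta-elegancy-deg-on-obj}, I would observe that every degeneracy operator $\sigma \from [m] \sto [n]$ is a split epimorphism in $\Simp$, a section being the operator $\delta \from [n] \to [m]$ sending $i$ to the least element of the fibre $\sigma^{-1}(i)$. This fibre is inhabited, since $\sigma$ is surjective, and is a decidable subset of the finite set $[m]$, so its least element is available; $\delta$ is order-preserving and injective because the fibres of $\sigma$ are nonempty intervals partitioning $[m]$ in order. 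Applying $X$, the section $\delta$ becomes a retraction $X(\delta)$ of $X(\sigma) \from X_n \to X_m$, so $X(\sigma)$ is a split monomorphism.

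For \cref{delta-elegancy-deg-on-mono}, fix $\sigma \from [m] \sto [n]$ and a section $\delta$ as above. Since $X(\sigma)$ is monic for every simplicial set $X$ by \cref{delta-elegancy-deg-on-obj}, the map $B(\sigma) \from B_n \to B_m$ exhibits $B_n$ as a subobject of $B_m$, and naturality of the $\sigma$-action along $A \to B$ shows that the image of the composite $A_n \xrightarrow{A(\sigma)} A_m \ito B_m$ is contained in both $A_m$ and $B_n$. Conversely, if $\xi \in B_m$ lies in $A_m$ and equals $b\sigma$ for a necessarily unique $b \in B_n$, then $b = \xi\delta$; since $\xi$ comes from $A_m$ and $A \to B$ is a simplicial map, $\xi\delta$ comes from $A_n$, so $\xi = b\sigma$ lies in the image of $A_n$. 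Hence $A_n = A_m \inter B_n$ as subobjects of $B_m$.

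For \cref{delta-elegancy-latching-obj}, the case $m = 0$ is trivial since $L_0 X = \emptyset$, so let $m \ge 1$. I would use the standard presentation of $L_m X$ as the colimit of the latching diagram of $\Simp$ at $[m]$: the indexing poset has as objects the non-identity degeneracy operators $\sigma \from [m] \sto [n]$, with $\sigma \le \tau$ iff $\sigma$ factors through $\tau$ (so it has binary meets, and the meet of two non-identity degeneracies is again non-identity), and the diagram sends $\sigma$ to $X_n$. By \cref{delta-elegancy-deg-on-obj}, all transition maps of this diagram and all cocone maps $X(\sigma) \from X_n \to X_m$ are monic, so the comparison $L_m X \to X_m$ has image the finite union $\bigcup_\sigma \im(X(\sigma))$, which is exactly the asserted union of the subsets $X_n$; it remains to check that the comparison is injective. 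Given $x \in X_n$ and $x' \in X_{n'}$ with $x\sigma = x'\sigma'$ in $X_m$, let $\mu \from [m] \sto [p]$ be the meet $\sigma \wedge \sigma'$; the crucial point is that $x\sigma = x'\sigma'$ lies in the image of $X(\mu)$, which holds because $X$ sends the pushout of $\sigma$ and $\sigma'$ in $\Simp_\flat$ to a pullback --- this pushout being absolute, that is, $\Simp$ being an \emph{elegant} Reedy category. Granting this, a witness $z \in X_p$ satisfies $z\alpha = x$, where $\alpha$ is the unique degeneracy factoring $\mu$ through $\sigma$ (using monicity of $X(\sigma)$), and symmetrically $z\alpha' = x'$, so both $(\sigma, x)$ and $(\sigma', x')$ are identified with $(\mu, z)$ in the colimit, which gives injectivity. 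The elegance of $\Simp$ is the step I expect to be the main obstacle; I would prove it directly, describing degeneracy operators out of $[m]$ via the corresponding subsets of $\{1, \dots, m\}$ (their jump positions), which form a Boolean algebra, and checking that the pushout square of degeneracies above is absolute, i.e.\ preserved by every functor $\Simp(-, [q])$ --- alternatively, one may invoke the known elegance of $\Simp$ from the literature, taking care that the argument is constructive.

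Finally, \cref{delta-elegancy-latching-mono} follows from \cref{delta-elegancy-latching-obj,delta-elegancy-deg-on-mono}: by \cref{delta-elegancy-latching-obj}, $L_m B$ is the union inside $B_m$ of the images of the maps $B(\sigma)$ over non-identity degeneracies $\sigma \from [m] \sto [n]$, so any element of $A_m \inter L_m B$ has the form $b\sigma$ with $b \in B_n$; by \cref{delta-elegancy-deg-on-mono} applied to $\sigma$ it already lies in the image of $A_n$, hence in $L_m A$ by \cref{delta-elegancy-latching-obj} applied to $A$. The reverse inclusion $L_m A \subseteq A_m \inter L_m B$ is immediate from naturality, so $L_m A = A_m \inter L_m B$ as subobjects of $B_m$.
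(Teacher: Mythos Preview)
Your proof is correct and follows essentially the same approach as the paper: the section argument for \cref{delta-elegancy-deg-on-obj,delta-elegancy-deg-on-mono}, elegance of $\Simp$ (absolute pushouts of degeneracy spans) for \cref{delta-elegancy-latching-obj}, and the distributive combination of the previous two parts for \cref{delta-elegancy-latching-mono}. The only differences are cosmetic: you reorder (iii) before (ii), and in (ii) you phrase the key step as an explicit injectivity check on colimit representatives, whereas the paper says equivalently that the family of subsets $X_n \subseteq X_m$ is closed under intersection and hence its union coincides with its colimit in the slice over $X_m$.
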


\begin{proof}
\Cref{delta-elegancy-deg-on-obj} holds since $[m] \sto [n]$ has a section.

For \cref{delta-elegancy-latching-obj}, we use the fact that every span of degeneracy operators in $\Simp$
has an absolute pushout\footnote{This means that $\Simp$ is an \emph{elegant Reedy category} in the sense of \cite{bergner-rezk-elegant}.}
(\ie, one that is preserved by all functors) which is proven in \cite{jt}*{Theorem~1.2.1}.
It follows that each latching diagram $(\bd([m] \slice \Simp_\flat))^\op \to \Set$ induced by $X$
sends pushouts in $\bd([m] \slice \Simp_\flat)$ to pullbacks, and hence the family of subsets $X_n \to X_m$ (as $[m] \sto [n]$ varies over non-identity degeneracy operators with source $[m]$) is closed under intersection.
Thus, its union coincides with its colimit $L_m X$ in the slice over $X_m$.
In particular, $L_m X \to X_m$ a monomorphism.

For \cref{delta-elegancy-deg-on-mono}, we need to show that the square
\begin{tikzeq*}
\matrix[diagram]
{
  |(An)| A_n & |(Bn)| B_n \\
  |(Am)| A_m & |(Bm)| B_m \\
};

\draw[->] (An) to (Bn);
\draw[->] (Am) to (Bm);
\draw[->] (An) to (Am);
\draw[->] (Bn) to (Bm);
\end{tikzeq*}
is a pullback.
Indeed, let $X$ be a set and $u \from X \to A_m$ and $v \from X \to B_n$ maps \st{} $v \dgn = i_m u$.
Let $\face \from [n] \to [m]$ be a section of $\dgn$.
Then, by a direct calculation, $w = u \face \from X \to A_n$ is the unique map
\st{} $w \dgn = u$ and $i_n w = v$.

\Cref{delta-elegancy-latching-mono}, follows from \cref{delta-elegancy-latching-obj,delta-elegancy-deg-on-mono} using that intersecting with $A_m$ preserves unions.
\end{proof}

With the knowledge of \cref{delta-elegancy},  \cref{cofibration-wfs} corresponds to \cite{Henry-wms}*{Proposition~5.1.4}, there proved without explicit reference to Reedy \wfs{}s.

Given a simplicial set $A$ and a cosimplicial set $W$, the set $A \times_\Simp W$ is defined as the coend $\coend^{k} A_k \times W_k$.
This set is also known as the \emph{weighted colimit} of $A$ with weight $W$.
While the following two statements do not depend on the theory of weighted colimits and their pushout constructions,
readers familiar with it may find it useful to think of them in these terms (\cf~\cite{riehl-verity-reedy}).

\begin{lemma}\label{Reedy-decidable-pushout-colimit}
  The bifunctor $A, W \mapsto A \times_\Simp W$ satisfies the pushout product property \wrt{} Reedy decidable inclusions,
  \ie, if $A \to B$ and $V \to W$ are Reedy decidable inclusions of simplicial and cosimplicial sets, respectively,
  then $A \times_\Simp W \push_{A \times_\Simp V} B \times_\Simp V \to B \times_\Simp W$ is a decidable inclusion.
\end{lemma}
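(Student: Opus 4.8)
The plan is to reduce the statement, by cellulating the cosimplicial variable, to a single family of maps that turn out to be exactly the relative latching maps of $A \to B$, so that the hypothesis on $A \to B$ applies directly.

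I would first fix notation for the generating cells on the cosimplicial side: write $\nabla^{k} = \Simp([k], {-})$ for the corepresentable cosimplicial set at $[k]$, and let $\partial\nabla^{k} \to \nabla^{k}$ be the \emph{boundary inclusion}, where $\partial\nabla^{k}$ is the colimit of the corepresentables $\nabla^{n}$ over the non-identity degeneracy operators $[k] \sto [n]$ (compare the treatment of latching objects in \cref{delta-elegancy}). The bifunctor $A, W \mapsto A \times_\Simp W$ is a coend and so preserves colimits in each variable separately; combined with the co-Yoneda lemma, this gives for every simplicial set $X$ natural isomorphisms $X \times_\Simp \nabla^{k} \iso X_{k}$ and $X \times_\Simp \partial\nabla^{k} \iso L_{k} X$, under which $X \times_\Simp (\partial\nabla^{k} \to \nabla^{k})$ becomes the latching map $L_{k} X \to X_{k}$. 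I would also record, by the relative skeletal filtration used for simplicial sets in the proof of \cref{simplex-cofibrant} but now carried out in the category of cosimplicial sets, that every Reedy decidable inclusion $V \to W$ of cosimplicial sets is an $\omega$-composition of pushouts of coproducts of the boundary inclusions $\partial\nabla^{k} \to \nabla^{k}$ ($k \ge 0$): at stage $k$ one attaches a coproduct of copies of $\partial\nabla^{k} \to \nabla^{k}$ indexed by the complement of the relative latching map of $V \to W$ at $[k]$, which is a \emph{decidable} subset of $W_{k}$ precisely because $V \to W$ is a Reedy decidable inclusion.

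Now I would fix the Reedy decidable inclusion $A \to B$ and consider the Leibniz (pushout--product) construction $(A \to B) \hat{\times}_\Simp ({-})$ in the cosimplicial variable. Since $\times_\Simp$ preserves colimits in each variable, this construction carries a pushout of a coproduct of maps to a pushout of a coproduct of Leibniz products and commutes with $\omega$-composition, so it sends a $K$-cell complex of height $\omega$ to an $\set{(A \to B) \hat{\times}_\Simp g}{g \in K}$-cell complex of height $\omega$; this is the standard reduction of a pushout--product property to its generators. Applying it to the cellular presentation of $V \to W$ from the previous paragraph, $(A \to B) \hat{\times}_\Simp (V \to W)$ is an $\omega$-composition of pushouts of coproducts of copies of the maps $(A \to B) \hat{\times}_\Simp (\partial\nabla^{k} \to \nabla^{k})$, $k \ge 0$. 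By the identifications above, this last map is nothing but the relative latching map
\[
  A_{k} \push_{L_{k} A} L_{k} B \to B_{k}
\]
of $A \to B$ at $[k]$, which is a decidable inclusion by hypothesis. Finally, decidable inclusions form the left class of a \wfs{} on the category of sets by \cref{decidable-wfs}, hence are closed under coproducts, pushouts and transfinite composites; therefore $(A \to B) \hat{\times}_\Simp (V \to W)$, which is the map $A \times_\Simp W \push_{A \times_\Simp V} B \times_\Simp V \to B \times_\Simp W$ of the statement, is a decidable inclusion.

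The step I expect to be the main obstacle is the setup in the first paragraph: one has to introduce the cosimplicial boundary inclusions $\partial\nabla^{k} \to \nabla^{k}$, verify the identification $X \times_\Simp \partial\nabla^{k} \iso L_{k} X$, and carry out the relative skeletal filtration of a Reedy decidable inclusion of cosimplicial sets. The constructive content concentrated here is twofold: that the cell-complex description does not silently assume decidability of the index sets — this is exactly guaranteed by the hypothesis that the relative latching maps of $V \to W$ are decidable inclusions — and that $X \times_\Simp ({-})$ interacts with these colimits as expected, for which the structural facts about $\Simp$ in \cref{delta-elegancy} are the relevant input. Everything after that is the formal reduction to generators together with the already-established saturation properties of decidable inclusions from \cref{decidable-wfs}.
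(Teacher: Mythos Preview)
Your argument is correct and follows essentially the same reduction-to-generators strategy as the paper, but with a small and pleasant difference in how the reduction is organised. The paper reduces \emph{both} variables to generators simultaneously: it invokes the cell presentation of Reedy decidable inclusions on each side (via Riehl--Verity) and then computes the single pushout product $(\bdsimp{n} \to \simp{n}) \hat{\times}_\Simp (L_m \Simp([m],\uvar) \to \Simp([m],\uvar))$ explicitly as the decidable inclusion $\set{\phi \in \Simp([m],[n])}{\phi \ne \id} \ito \Simp([m],[n])$. You instead reduce only the cosimplicial variable and observe that $(A \to B) \hat{\times}_\Simp (\partial\nabla^{k} \to \nabla^{k})$ \emph{is} the relative latching map of $A \to B$ at $[k]$, so the hypothesis applies directly without any further computation. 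Your route makes the conceptual content of the lemma more visible (the pushout product with a cosimplicial boundary inclusion literally recovers the Reedy condition), while the paper's route is slightly shorter on the page because the final map is a one-line explicit description. Both rely on the same saturation properties of decidable inclusions from \cref{decidable-wfs}.
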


\begin{proof}
  It suffices to verify this property on generators.
  (\cite{riehl-verity-reedy}*{Corollary~6.7} implies that
  cosimplicial Reedy decidable inclusions are generated by
  $\set{L_m \Simp([m], \uvar) \to \Simp([m], \uvar)}{m \in \nat}$.)
  Applying the construction to $\bdsimp{n} \to \simp{n}$ and $L_m \Simp([m], \uvar) \to \Simp([m], \uvar)$ yields
  the inclusion of $\set{\phi \in \Simp([m], [n])}{\phi \ne \id}$ into $\Simp([m], [n])$, which is decidable.
\end{proof}

We will now prove a useful characterisation of cofibrations.

\begin{proposition} \label{cof-characterisation}
The following are equivalent for a simplicial map $A \to B$:
\begin{conditions}
\item
  $A \to B$ is a cofibration,
\item
  $A_0 \to B_0$ is a decidable inclusion and, for each generating degeneracy operator $[n + 1] \sto [n]$,
  the induced map $A_{n+1} \push_{A_n} B_n \to B_{n+1}$ is a decidable inclusion.
\item
  $A \to B$ is a levelwise decidable inclusion and, for each degeneracy operator $[m] \sto [n]$,
  the induced map $A_m \push_{A_n} B_n \to B_m$ is a decidable inclusion.
\end{conditions}
\end{proposition}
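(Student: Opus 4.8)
The plan is to prove the cycle of implications (i) $\Rightarrow$ (iii) $\Rightarrow$ (ii) $\Rightarrow$ (i), using throughout the identification of cofibrations with Reedy decidable inclusions from \cref{cofibration-wfs}, the structural facts about latching objects in the elegant Reedy category $\Simp$ collected in \cref{delta-elegancy}, and the pushout--product property of the weighted colimit bifunctor $(A,W) \mapsto A \times_\Simp W$ from \cref{Reedy-decidable-pushout-colimit}. The implication (iii) $\Rightarrow$ (ii) needs no work: the degree-$0$ instance of ``levelwise decidable inclusion'' is the first clause of (ii), and every generating degeneracy operator is in particular a degeneracy operator, so (ii)'s pushout condition is an instance of (iii)'s.

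For (i) $\Rightarrow$ (iii), the idea is to feed suitable cosimplicial Reedy decidable inclusions into \cref{Reedy-decidable-pushout-colimit}. First one checks that $\emptyset \to \Simp([m], \uvar)$ is a Reedy decidable inclusion of cosimplicial sets, since its relative latching map in degree $k$ is the inclusion into $\Simp([m], [k])$ of the subset of non-surjective operators, and non-surjectivity of an operator between finite ordinals is decidable. Likewise, for a degeneracy operator $\dgn \from [m] \sto [n]$, precomposition with $\dgn$ defines a Reedy decidable inclusion $\Simp([n], \uvar) \to \Simp([m], \uvar)$: it is a levelwise decidable inclusion because, after fixing a section of $\dgn$, an operator $[m] \to [k]$ factors through $\dgn$ \iff{} it equals an explicit composite, which is decidable; and its relative latching maps are monomorphisms onto decidable subsets by the same observation together with \cref{decidable-union} of \cref{decidable-closure}. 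Now $A \to B$ is a Reedy decidable inclusion by \cref{cofibration-wfs}, and since $X \times_\Simp \Simp([m], \uvar) \iso X_m$ by the co-Yoneda lemma---with the map induced by precomposition with $\dgn$ corresponding to the degeneracy action $X_n \to X_m$---and $X \times_\Simp \emptyset = \emptyset$, applying \cref{Reedy-decidable-pushout-colimit} to $A \to B$ together with $\emptyset \to \Simp([m], \uvar)$ shows that $A_m \to B_m$ is a decidable inclusion, and applying it to $A \to B$ together with $\Simp([n], \uvar) \to \Simp([m], \uvar)$ shows that $A_m \push_{A_n} B_n \to B_m$ is a decidable inclusion. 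This is exactly (iii).

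For (ii) $\Rightarrow$ (i), I would first bootstrap levelwise decidability by induction on degree. The base case is the first clause of (ii). For the inductive step, if $A_n \to B_n$ is a decidable inclusion with complement $C_n$, then pushing out the coproduct inclusion $A_n \to A_n \coprod C_n \iso B_n$ along the degeneracy action $A_n \to A_{n+1}$ of some generating degeneracy $[n+1] \sto [n]$ yields $A_{n+1} \push_{A_n} B_n \iso A_{n+1} \coprod C_n$, so $A_{n+1}$ is a decidable subset of $A_{n+1} \push_{A_n} B_n$; composing with the decidable inclusion $A_{n+1} \push_{A_n} B_n \to B_{n+1}$ supplied by (ii) shows $A_{n+1} \to B_{n+1}$ is a decidable inclusion. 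Now $A \to B$ is a levelwise monomorphism by \cref{decidable-monomorphism} of \cref{decidable-properties}, hence a monomorphism of simplicial sets, so \cref{delta-elegancy} applies; by \cref{cofibration-wfs} it then suffices to show each relative latching map $A_m \push_{L_m A} L_m B \to B_m$ is a decidable inclusion. By \cref{delta-elegancy} (and the fact that in the category of sets the union of two subobjects is their pushout over the intersection), this map is a monomorphism with image the union $A_m \union L_m B$ inside $B_m$, and $L_m B$ is the union of the images $B_{m-1} \dgn_j \subseteq B_m$ of the degeneracy actions of the generating degeneracy operators $\dgn_j \from [m] \sto [m-1]$ (every non-identity degeneracy operator factors through one of these). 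For each such $\dgn_j$ we have $A_m \inter B_{m-1} \dgn_j = A_{m-1} \dgn_j$ by \cref{delta-elegancy-deg-on-mono} of \cref{delta-elegancy}, so $A_m \union B_{m-1} \dgn_j$ is the image of the canonical map $A_m \push_{A_{m-1}} B_{m-1} \to B_m$, which is a decidable inclusion by (ii). Hence $A_m \union L_m B = \bigcup_j (A_m \union B_{m-1} \dgn_j)$ is a finite union of decidable subsets of $B_m$, so it is decidable by \cref{decidable-union} of \cref{decidable-closure}, and the relative latching map is a decidable inclusion.

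The hard part will be the direction (ii) $\Rightarrow$ (i): one must first establish levelwise decidability before \cref{delta-elegancy} becomes available, and then use the elegance of $\Simp$ carefully to express the relative latching maps as finite unions of the pushouts $A_m \push_{A_{m-1}} B_{m-1}$. By contrast, (i) $\Rightarrow$ (iii) reduces at once to \cref{Reedy-decidable-pushout-colimit} once one has verified the (elementary) decidability of the relevant cosimplicial relative latching maps.
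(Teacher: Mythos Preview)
Your proof is correct and uses the same ingredients as the paper (\cref{cofibration-wfs}, \cref{delta-elegancy}, \cref{Reedy-decidable-pushout-colimit}, \cref{decidable-closure}), but the cycle runs in the opposite direction: the paper proves (i) $\Rightarrow$ (ii) $\Rightarrow$ (iii) $\Rightarrow$ (i). Your (i) $\Rightarrow$ (iii) is essentially the paper's (i) $\Rightarrow$ (ii) argument (which in fact already works for arbitrary degeneracy operators), with the added observation that feeding $\emptyset \to \Simp([m],\uvar)$ into \cref{Reedy-decidable-pushout-colimit} yields levelwise decidability directly. The more interesting divergence is in closing the cycle: the paper goes (ii) $\Rightarrow$ (iii) by factoring an arbitrary degeneracy into generators (so that $A_m \push_{A_n} B_n \to B_m$ becomes a composite of pushouts of the generating-degeneracy maps) and then obtains levelwise decidability from the degeneracy $[m] \sto [0]$, whereas you bootstrap levelwise decidability by induction on dimension and then prove (ii) $\Rightarrow$ (i) directly, exploiting that $L_m B$ is already the union of the images $B_{m-1}\dgn_j$ over just the \emph{generating} degeneracies. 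Your route is slightly more economical in that it never needs to assemble the pushout condition for a composite degeneracy; the paper's route has the virtue of establishing (iii) as an intermediate step, which is itself a useful characterisation. Both arguments are short and rest on the same elegance-of-$\Simp$ facts.
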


\begin{proof}
We go from~(i) to~(ii).
The Reedy condition in dimension $0$ means that $A_0 \to B_0$ is a decidable inclusion.
Next, take a degeneracy operator $\sigma \from [m] \to [n]$.
We will check that
the induced map of cosimplicial sets $\Simp([n], \uvar) \to \Simp([m], \uvar)$
is a Reedy decidable inclusion.
Indeed, for each $k$, the map $\Simp([n], [k]) \to \Simp([m], [k])$ is
(isomorphic to) the inclusion of the subset of
those simplicial operators $[m] \to [k]$ that factor through $\sigma$, which
is decidable.
For each $a$, note that $L_k \Simp([a], \uvar) \to \Simp([a], [k])$ is the (decidable) subset of non-surjective operators.
Thus, $L_k \Simp([n], \uvar)$ is the intersection over $\Simp([m], [k])$ of
$L_k \Simp([m], \uvar)$ and $\Simp([n], [k])$.
It follows by \cref{decidable-union} of \cref{decidable-closure} that
$\Simp([n], [k]) \push_{L_k \Simp([n], \uvar)} L_k \Simp([m], \uvar) \to \Simp([m], [k])$
is a decidable inclusion.
The map $A_m \push_{A_n} B_n \to B_m$ coincides with $A \times_\Simp \Simp([m], \uvar) \push_{A \times_\Simp \Simp([n], \uvar)} B \times_\Simp \Simp([n], \uvar) \to B \times_\Simp \Simp([m], \uvar)$ induced by $A \to B$ and $\Simp([n], \uvar) \to \Simp([m], \uvar)$, so is a decidable inclusion by \cref{Reedy-decidable-pushout-colimit}.

We go from~(ii) to~(iii).
Each degeneracy operator $[m] \sto [n]$ is a finite composition of generators.
Thus, $A_m \push_{A_n} B_n \to B_m$ is a finite composition of pushouts of maps $A_{k+1} \push_{A_k} B_k \to B_{k+1}$ for generators $\dgn_i \from [k+1] \to [k]$, hence is a decidable inclusion.
For each $m$, the map $A_m \to B_m$ factors as $A_m \to A_m \push_{A_0} B_0 \to B_m$ (where the first map is a pushout of $A_0 \to B_0$) using the degeneracy operator $[m] \sto [0]$, hence is a decidable inclusion.

We go from~(iii) to~(i).
Given $m$ and working over $B_m$, we have to show that $A_m \push_{L_m A} L_m B$ is a decidable subobject.
Recall from \cref{delta-elegancy-latching-obj} of \cref{delta-elegancy} that $L_m B$ is the finite union of $B_n$ indexed over non-identity degeneracy operators $[m] \sto [n]$.
By \cref{delta-elegancy-deg-on-mono} of \cref{delta-elegancy}, $A_m \push_{A_n} B_n$ is the union of the subobjects $A_m$ and $B_n$.
Similarly, by \cref{delta-elegancy-latching-mono} of \cref{delta-elegancy}, $A_m \push_{L_m A} L_m B$ is the union of the subobjects $A_m$ and $L_m B$.
Distributing unions, it follows that $A_m \push_{L_m A} L_m B$ is the union of the decidable subobject $A_m$ with the finite union of decidable subobjects $A_m \push_{A_n} B_n$, hence is a decidable subobject by \cref{decidable-union} of \cref{decidable-properties}.
\end{proof}

\begin{corollary} \label{cofibrant-degeneracy}
A simplicial set is cofibrant \iff{} all (generating) degeneracy operators act on it by decidable inclusions.
\end{corollary}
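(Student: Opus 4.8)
The plan is to simply specialise \cref{cof-characterisation} to the map $\emptyset \to X$. First I would record the elementary observation that since $\emptyset$ is the initial simplicial set, all the pushouts appearing in conditions~(ii) and~(iii) of \cref{cof-characterisation} degenerate: for any degeneracy operator $[m] \sto [n]$ the set $\emptyset_m \push_{\emptyset_n} X_n$ is canonically $X_n$, because $\emptyset_n = \emptyset$ is initial (and likewise $L_m \emptyset = \emptyset$, though this is not strictly needed). Under this identification, the map $\emptyset_m \push_{\emptyset_n} X_n \to X_m$ of condition~(iii) is exactly the action of $[m] \sto [n]$ on $X$, and the map $\emptyset_{n+1} \push_{\emptyset_n} X_n \to X_{n+1}$ of condition~(ii) is exactly the action of the generating degeneracy $[n+1] \sto [n]$.

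Next I would dispatch the remaining clauses of~(ii) and~(iii) as automatic: $\emptyset \to X_0$ is a decidable inclusion because $\emptyset$ is a decidable subset of any set (with complement the whole set), and for the same reason $\emptyset \to X$ is a levelwise decidable inclusion. Hence, applied to $\emptyset \to X$, condition~(ii) of \cref{cof-characterisation} says precisely that every generating degeneracy operator acts on $X$ by a decidable inclusion, and condition~(iii) says precisely that every degeneracy operator does. Since \cref{cof-characterisation} gives (i)~$\Leftrightarrow$~(ii)~$\Leftrightarrow$~(iii), and (i) is by definition the cofibrancy of $X$, the corollary follows; the equivalence of the ``generating'' and ``all'' formulations is exactly the implication (ii)~$\Rightarrow$~(iii) supplied by that proposition.

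I do not expect any genuine obstacle. The only points requiring a moment's care are correctly identifying $\emptyset_m \push_{\emptyset_n} X_n \to X_m$ with the relevant degeneracy action, and noting that $\emptyset$ is always a decidable subobject so that the dimension-$0$ and levelwise hypotheses in \cref{cof-characterisation} are vacuous in this case.
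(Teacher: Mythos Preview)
Your proposal is correct and is exactly the paper's approach: the paper's proof is the single sentence ``This is \cref{cof-characterisation} for a map with initial domain,'' and you have simply spelled out why conditions~(ii) and~(iii) specialise as claimed when $A = \emptyset$. Nothing is missing.
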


\begin{proof}
This is \cref{cof-characterisation} for a map with initial domain.
\end{proof}

\begin{corollary}\label{cofibration-levelwise-decidable}
\leavevmode
\begin{enumerate}
\item \label{cof-to-dec} Every cofibration is a levelwise decidable inclusion.
\item \label{dec-to-cof} A levelwise decidable inclusion with cofibrant codomain is
  a cofibration.
  \end{enumerate}
  In particular, a map between cofibrant objects is a cofibration if and only if it is
  a levelwise decidable inclusion.
\end{corollary}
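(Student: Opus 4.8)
The plan is to obtain both statements as immediate consequences of the characterisation of cofibrations in \cref{cof-characterisation}, combined with the facts about latching objects recorded in \cref{delta-elegancy} and the description of cofibrant objects in \cref{cofibrant-degeneracy}.

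Part \cref{cof-to-dec} requires no work beyond citing \cref{cof-characterisation}: condition~(iii) there already asserts, in particular, that a cofibration is a levelwise decidable inclusion.

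For part \cref{dec-to-cof}, I would proceed as follows. Let $A \to B$ be a levelwise decidable inclusion with $B$ cofibrant; the goal is to verify condition~(iii) of \cref{cof-characterisation}. Its first clause holds by hypothesis, so fix a degeneracy operator $[m] \sto [n]$; it remains to see that $A_m \push_{A_n} B_n \to B_m$ is a decidable inclusion. By \cref{delta-elegancy-deg-on-mono} of \cref{delta-elegancy} (applicable since $A \to B$ is in particular a monomorphism), the pushout $A_m \push_{A_n} B_n$ is, as a subobject of $B_m$, the union of the subobjects $A_m$ and $B_n$. Now $A_m$ is a decidable subobject of $B_m$ by hypothesis, and $B_n$ is a decidable subobject of $B_m$ since $B$ is cofibrant (\cref{cofibrant-degeneracy}). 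Hence their union is a decidable subobject of $B_m$ by \cref{decidable-union} of \cref{decidable-closure}, so $A_m \push_{A_n} B_n \to B_m$ is a decidable inclusion. This verifies condition~(iii), and therefore $A \to B$ is a cofibration.

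Finally, the closing sentence of the statement follows at once: given $f \from A \to B$ with $A$ and $B$ cofibrant, \cref{cof-to-dec} shows a cofibration is a levelwise decidable inclusion, while \cref{dec-to-cof}, applied with the cofibrant codomain $B$, gives the converse. I do not foresee any genuine obstacle here: everything reduces to \cref{cof-characterisation} and to the identification of $A_m \push_{A_n} B_n$ with $A_m \cup B_n$ inside $B_m$, which is precisely the point already settled in the proof of that proposition. (As a byproduct, one could also note that $A$ is then automatically cofibrant, since $A_n \to A_m$ is a pullback of the decidable inclusion $B_n \to B_m$ and decidable inclusions are pullback-stable by \cref{decidable-pullback} of \cref{decidable-properties}, though this is not needed for the statement.)
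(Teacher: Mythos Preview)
Your proposal is correct and follows essentially the same route as the paper: both derive \cref{cof-to-dec} directly from condition~(iii) of \cref{cof-characterisation}, and for \cref{dec-to-cof} both verify that same condition by identifying $A_m \push_{A_n} B_n$ with the union $A_m \cup B_n$ in $B_m$ and appealing to \cref{cofibrant-degeneracy} and \cref{decidable-union} of \cref{decidable-closure}. Your version is slightly more explicit in citing \cref{delta-elegancy-deg-on-mono} of \cref{delta-elegancy} for the union identification, and your parenthetical remark about $A$ being automatically cofibrant anticipates \cref{cofibration-cancellation}.
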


\begin{proof}
  \Cref{cof-to-dec} was shown in \cref{cof-characterisation}.
  For \cref{dec-to-cof}, let $A \to B$ be a levelwise decidable inclusion and
  let $B$ be cofibrant.
  Then, for any degeneracy operator $[m] \sto [n]$,
  the map $B_n \to B_m$ is a decidable inclusion by \cref{cofibrant-degeneracy}.
  Thus \cref{decidable-union} of \cref{decidable-closure} implies that
  $A_m \push_{A_n} B_n$ is a decidable subobject of $B_m$ as
  the union of decidable subobjects $B_n$ and $A_m$.
  The conclusion follows from \cref{cofibrant-degeneracy}.
\end{proof}

Next, we obtain some closure properties of cofibrations and cofibrant
objects.

\begin{lemma}\label{cofibrant-finite-limits}
  Cofibrant objects are closed under finite limits.
\end{lemma}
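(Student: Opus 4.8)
The plan is to reduce to the characterisation of cofibrancy from \cref{cofibrant-degeneracy}: a simplicial set $X$ is cofibrant if and only if every degeneracy operator $[m] \sto [n]$ acts on $X$ by a decidable inclusion $X_n \to X_m$. Since a category has all finite limits as soon as it has a terminal object and pullbacks, it then suffices to treat these two cases separately.

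The terminal simplicial set $\simp{0}$ is cofibrant by \cref{simplex-cofibrant}. For pullbacks, let $A \to C \leftarrow B$ be a cospan of cofibrant simplicial sets and put $P = A \pull_C B$, so that $P_k = A_k \pull_{C_k} B_k$ for every $k$. Fix a degeneracy operator $\dgn \from [m] \sto [n]$. Evaluation at $[n]$ and at $[m]$ are functors $\sSet \to \Set$ that preserve limits, and $\dgn$ induces a natural transformation between them; applying this transformation to the cospan $A \to C \leftarrow B$ identifies the action $P_n \to P_m$ of $\dgn$ on $P$ with the map on pullbacks induced by the natural transformation of cospans
\begin{equation*}
  (A_n \to C_n \leftarrow B_n) \longrightarrow (A_m \to C_m \leftarrow B_m) \text{.}
\end{equation*}
Each of its components $A_n \to A_m$, $B_n \to B_m$ and $C_n \to C_m$ is a decidable inclusion because $A$, $B$ and $C$ are cofibrant. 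Hence, by \cref{decidable-limit} of \cref{decidable-closure} (closure of decidable inclusions under finite limits of diagrams of sets, here applied to the pullback-shaped diagram), the induced map $P_n \to P_m$ is again a decidable inclusion. Since $\dgn$ was arbitrary, $P$ is cofibrant by \cref{cofibrant-degeneracy}.

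I do not expect a genuine obstacle here; the only step needing a little care is the identification of the action of a degeneracy operator on the pullback with the map induced on limits in $\Set$, which amounts to the fact that evaluation functors on a presheaf category preserve limits. (One could equivalently reduce to finite products and equalisers instead of the terminal object and pullbacks, and argue componentwise in exactly the same fashion.)
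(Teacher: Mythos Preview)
Your proof is correct and follows essentially the same route as the paper: reduce via \cref{cofibrant-degeneracy} to showing that degeneracy operators act by decidable inclusions on the limit, then invoke \cref{decidable-limit} of \cref{decidable-closure}. The only difference is that you decompose finite limits into a terminal object and pullbacks, whereas the paper's one-line proof applies \cref{decidable-limit} directly to an arbitrary finite limit diagram; your reduction is harmless but unnecessary, since that lemma already covers all finite limits at once.
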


\begin{proof}
  This follows from \cref{decidable-limit} of \cref{decidable-closure}
  and \cref{cofibrant-degeneracy}.
\end{proof}

\begin{lemma}\label{cofibration-cancellation}
  If $A \ito B$ is a monomorphism and $B$ is cofibrant, then so is $A$.
\end{lemma}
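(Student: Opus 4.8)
The plan is to reduce everything to the criterion of \cref{cofibrant-degeneracy}: it suffices to check that every generating degeneracy operator $\dgn_i \from [n+1] \to [n]$ acts on $A$ by a decidable inclusion $A_n \to A_{n+1}$.

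First I would exploit cofibrancy of $B$: by \cref{cofibrant-degeneracy}, each generating degeneracy operator acts on $B$ by a decidable inclusion, so $B_n$ is a decidable subset of $B_{n+1}$. Next I would apply \cref{delta-elegancy-deg-on-mono} of \cref{delta-elegancy} to the monomorphism $A \ito B$ and the operator $\dgn_i$: it tells us precisely that $A_n$, viewed as a subobject of $B_{n+1}$, is the intersection of the subobjects $A_{n+1}$ and $B_n$; equivalently, the evident square whose horizontal maps are the components of $A \ito B$ and whose vertical maps are the degeneracy actions is a pullback. Hence $A_n \to A_{n+1}$ is the pullback of the decidable inclusion $B_n \to B_{n+1}$ along $A_{n+1} \to B_{n+1}$, and so is itself a decidable inclusion by \cref{decidable-pullback} of \cref{decidable-properties}. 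Invoking \cref{cofibrant-degeneracy} once more then yields that $A$ is cofibrant.

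Since the argument is just a short chain of previously established facts, I do not anticipate a genuine obstacle. The only point worth care is that $A \ito B$ is assumed to be merely a monomorphism, not a priori a levelwise decidable inclusion; this is harmless, because \cref{delta-elegancy-deg-on-mono} is stated for arbitrary monomorphisms and decidable inclusions are closed under pullback along \emph{any} map. The one substantive observation is recognising that \cref{delta-elegancy} was engineered exactly to supply the pullback square that makes this closure property applicable.
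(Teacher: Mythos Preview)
Your proof is correct and essentially identical to the paper's own argument: both use \cref{cofibrant-degeneracy} to reduce to showing degeneracy operators act by decidable inclusions on $A$, then invoke \cref{delta-elegancy-deg-on-mono} of \cref{delta-elegancy} to recognise $A_n \to A_m$ as a pullback of $B_n \to B_m$, and finish with \cref{decidable-pullback} of \cref{decidable-properties}. The only cosmetic difference is that you work with generating degeneracy operators while the paper treats arbitrary ones, but \cref{cofibrant-degeneracy} explicitly allows either.
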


\begin{proof}
  Let $[m] \sto [n]$ be a degneracy operator.
  The map $B_n \to B_m$ is a decidable inclusion by \cref{cofibrant-degeneracy}.
  The map $A_n \to A_m$ is a pullback of $B_n \to B_m$ by \cref{delta-elegancy-deg-on-mono} of \cref{delta-elegancy}, hence is a decidable inclusion by \cref{decidable-pullback} of \cref{decidable-properties}.
  Using \cref{cofibrant-degeneracy} again, we conclude that $A$ is cofibrant.
\end{proof}

The above statement is a special case of the closure of cofibrations under pullback along monomorphisms.
However, we will not need that more general fact.

\begin{remark}\label{bisimplicial-cofibrant-degeneracy}
The notion of cofibration and the associated statements in this subsection up to \cref{cofibration-cancellation} generalise, with the same arguments, to presheaves over an elegant Reedy category (of countable height) with decidable identities (as defined below), finite slices of face operators and finite coslices of degeneracy operators.
In \cref{sec:diagonals-of-ssets}, we will need the case of bisimplicial sets, where a cofibration is defined as a Reedy decidable inclusion of presheaves over the elegant Reedy category $\Simp \times \Simp$.
There, we will use bisimplicial versions of the current simplicial statements.
In particular, a bisimplicial set is cofibrant \iff{} all degeneracy operators act on it by decidable inclusions.
(For this, it suffices to separately examine the two cases of a degeneracy operator that is an identity in one of the two direction.)
\end{remark}

\begin{corollary}\label{horn-cofibrant}
  Every horn $\horn{m,i}$ is cofibrant.
\end{corollary}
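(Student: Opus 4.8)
The plan is to deduce this immediately from the cancellation property for cofibrant objects along monomorphisms that was just established, together with the cofibrancy of the standard simplices.

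First I would observe that the horn inclusion $\horn{m,i} \ito \simp{m}$ is a monomorphism of simplicial sets; this is clear since $\horn{m,i}$ is, by definition, a subobject of $\simp{m}$ (the union of all faces except the $i$-th). Next I would invoke \cref{simplex-cofibrant}, which tells us that $\simp{m}$ is cofibrant for every $m$. Finally, applying \cref{cofibration-cancellation} to the monomorphism $\horn{m,i} \ito \simp{m}$ with cofibrant codomain $\simp{m}$ yields that $\horn{m,i}$ is cofibrant.

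There is essentially no obstacle here: the only thing one might worry about is whether \cref{cofibration-cancellation} genuinely applies, \ie{} that its hypotheses (a monomorphism into a cofibrant object) are met, which they manifestly are. So the proof is a one-line consequence of the preceding two results. If one wanted a self-contained alternative, one could instead note directly via \cref{cofibrant-degeneracy} that every degeneracy operator acts on $\horn{m,i}$ by a decidable inclusion, since $\horn{m,i}$ sits inside $\simp{m}$ and the action of a degeneracy on $\simp{m}$ is a decidable inclusion whose restriction along the decidable subobject $\horn{m,i}$ remains decidable by \cref{decidable-union} of \cref{decidable-closure}; but routing through \cref{cofibration-cancellation} is cleaner.
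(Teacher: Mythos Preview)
Your proof is correct and follows essentially the same route as the paper: both apply \cref{cofibration-cancellation} to the horn inclusion into the cofibrant simplex $\simp{m}$. The only cosmetic difference is that the paper cites \cref{triv-cof-cof} to observe the horn inclusion is a cofibration (hence a monomorphism), whereas you observe directly that it is a monomorphism and make the appeal to \cref{simplex-cofibrant} explicit.
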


\begin{proof}
  The horn inclusion $\horn{m,i} \to \simp{m}$ is a cofibration by \cref{triv-cof-cof}
  and thus the horn $\horn{m,i}$ is cofibrant by \cref{cofibration-cancellation}.
\end{proof}

\medskip

As a consequence of the characterisation of cofibrant objects of \cref{cofibrant-degeneracy} we can distinguish a class of categories with cofibrant nerves,
which will be useful in \cref{sec:first-proof}. Let us say that
a category $J$ has \emph{decidable identities} if the function $\ob J \to \mor J$ that sends each object to its identity morphism is a decidable inclusion.
A functor $I \to J$ is a \emph{decidable inclusion} if both functions $\ob I \to \ob J$ and $\mor I \to \mor J$ are decidable inclusions.
For example, the category $\Simp$ and its wide subcategories $\Simp_\flat$ and $\Simp_\sharp$ have decidable identities.

\begin{lemma} \label{nerve-cof}
\leavevmode
\begin{enumerate}
\item  \label{nerve-cofibrant}  If $J$ is a category with decidable identities, then $\nerve J$ is cofibrant.
\item \label{nerve-cofibration}
  If $I \to J$ is a decidable inclusion, then $\nerve I \to \nerve J$ is
  a levelwise decidable inclusion.
  In particular, if $I$ and $J$ have moreover decidable identities, then
  $\nerve I \to \nerve J$ is a cofibration.
  \end{enumerate}
\end{lemma}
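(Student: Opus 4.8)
To prove \cref{nerve-cofibrant} the plan is to invoke \cref{cofibrant-degeneracy}, so it is enough to check that every generating degeneracy operator $\dgn_i \from [n+1] \to [n]$ acts on $\nerve J$ by a decidable inclusion. Identifying $(\nerve J)_n$ with the set of composable strings $x_0 \to \dots \to x_n$ in $J$, the operator $\dgn_i$ acts by inserting $\id_{x_i}$ as the arrow from vertex $i$ to vertex $i{+}1$. Thus the image of $\dgn_i^* \from (\nerve J)_n \to (\nerve J)_{n+1}$ is exactly the set of strings whose arrow from vertex $i$ to vertex $i{+}1$ is an identity; since $J$ has decidable identities this subset is decidable, with complement the set of strings whose arrow from $i$ to $i{+}1$ is not an identity. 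Since $\dgn_i^*$ is injective by \cref{delta-elegancy-deg-on-obj} of \cref{delta-elegancy} — and recovers its argument by deleting that arrow — this exhibits $\dgn_i^*$ as a decidable inclusion, and \cref{cofibrant-degeneracy} then yields that $\nerve J$ is cofibrant.

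For \cref{nerve-cofibration} I would fix $n$ and show that $(\nerve I)_n \to (\nerve J)_n$ is a decidable inclusion. For $n = 0$ this is the map $\ob I \to \ob J$, a decidable inclusion by assumption. For $n \geq 1$, present $(\nerve J)_n$ as the finite iterated pullback $\mor J \pull_{\ob J} \dots \pull_{\ob J} \mor J$ of $n$ copies of $\mor J$ over $\ob J$, and similarly for $(\nerve I)_n$; this uses that $\ob I \to \ob J$ is a monomorphism, so that a string of arrows of $I$ is composable in $J$ precisely when it is composable in $I$. Then $(\nerve I)_n \to (\nerve J)_n$ is the map induced on limits by a natural transformation between finite diagrams that is a levelwise decidable inclusion (levelwise either $\mor I \to \mor J$ or $\ob I \to \ob J$); by \cref{decidable-limit} of \cref{decidable-closure} the induced map on limits is therefore a decidable inclusion. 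This shows $\nerve I \to \nerve J$ is a levelwise decidable inclusion. If in addition $J$ has decidable identities, then $\nerve J$ is cofibrant by \cref{nerve-cofibrant}, and so $\nerve I \to \nerve J$ is a cofibration by \cref{dec-to-cof} of \cref{cofibration-levelwise-decidable}.

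Both arguments are largely bookkeeping. The step that needs genuine care is, in \cref{nerve-cofibrant}, the identification of the image of $\dgn_i$ with the decidable locus where one specified arrow of the string is an identity, together with the check that the resulting case distinction yields an actual coproduct decomposition — which is precisely where injectivity of degeneracy operators enters — and, in \cref{nerve-cofibration}, the observation that composability of strings transfers along $I \to J$. I do not anticipate a serious obstacle beyond this.
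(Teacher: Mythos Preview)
Your proof is correct and follows essentially the same strategy as the paper: both parts reduce to \cref{cofibrant-degeneracy} and \cref{decidable-limit} of \cref{decidable-closure} via the iterated pullback description of $(\nerve J)_m$. The only tactical difference is in \cref{nerve-cofibrant}: the paper observes that $(\nerve J)_m$ can be rewritten as an $(m{+}1)$-fold pullback with a copy of $\ob J$ inserted in the $i$-th place, so that $\dgn_i^*$ is the map on limits induced by a transformation whose components are identities and the single map $\ob J \to \mor J$, and then applies \cref{decidable-limit} directly --- the same lemma you use for \cref{nerve-cofibration} --- whereas you argue by explicitly identifying the image as the locus where the $i$-th arrow is an identity. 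Your remark about $\ob I \to \ob J$ being a monomorphism in \cref{nerve-cofibration} is unnecessary: the map $(\nerve I)_n \to (\nerve J)_n$ is induced by a natural transformation of the pullback diagrams simply because $I \to J$ is a functor, and \cref{decidable-limit} applies without any further justification.
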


\begin{proof} For \cref{nerve-cofibrant},
  for each $m$ the set $(\nerve J)_m$ is
  the iterated $m$-fold pullback $\mor J \pull_{\ob J} \ldots \pull_{\ob J} \mor J$.
  By \cref{cofibrant-degeneracy}, it will be enough to check that
  the generating degeneracy operators $\sigma_i \from [m + 1] \to [m]$
  act by decidable inclusions.
  The set $(\nerve J)_m$ can be seen as an analogous $(m+1)$-fold pullback with
  a copy of $\ob J$ inserted in the $i$-th place.
  Then the action of $\sigma_i$ is induced by
  a transformation of these limit diagrams that consists of
  identities and the map $\ob J \to \mor J$.
  The conclusion follows by \cref{decidable-limit} of \cref{decidable-closure}.

  \Cref{nerve-cofibration} is a direct consequence of
  \cref{decidable-limit} of \cref{decidable-closure},
  using the pullback presentation of $\nerve I$ and $\nerve J$
  as in the proof of \cref{nerve-cofibrant}.
\end{proof}

Given a presheaf $F$ on a category $\cat{C}$, we write $\cat{C} \slice F$ for the category of elements of $F$.

\begin{lemma} \label{discrete-fibration-decidable-identities}
Let $F$ be a presheaf on a category $\cat{C}$.
If $\cat{C}$ has decidable identities, then so does $\cat{C} \slice F$.
\end{lemma}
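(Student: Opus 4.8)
The plan is to realise the identity-assigning function of $\cat{C} \slice F$ as a pullback of the identity-assigning function of $\cat{C}$, and then invoke the stability of decidable inclusions under pullback, \ie{} \cref{decidable-pullback} of \cref{decidable-properties}.

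First I would unwind the definition of $\cat{C} \slice F$: an object is a pair $(c, x)$ with $c \in \ob \cat{C}$ and $x \in F(c)$, and a morphism $(c, x) \to (c', x')$ is a morphism $f \from c \to c'$ of $\cat{C}$ \st{} $F(f)(x') = x$. In particular the domain of such a morphism is determined by $f$ together with the target $(c', x')$, so the function sending a morphism of $\cat{C} \slice F$ to the pair consisting of its underlying morphism of $\cat{C}$ and its codomain identifies $\mor(\cat{C} \slice F)$ with the pullback $\mor \cat{C} \pull_{\ob \cat{C}} \ob(\cat{C} \slice F)$ formed along $\cod \from \mor \cat{C} \to \ob \cat{C}$ and the projection $\ob(\cat{C} \slice F) \to \ob \cat{C}$, $(c, x) \mapsto c$. (Here $\ob(\cat{C} \slice F) = \bigcoprod_{c} F(c)$.)

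Next I would check that, under this identification, the square with corners $\ob(\cat{C} \slice F)$, $\mor(\cat{C} \slice F)$, $\ob \cat{C}$, $\mor \cat{C}$ — horizontal maps the two identity-assigning functions, left vertical map the projection $(c, x) \mapsto c$, right vertical map the projection to $\mor \cat{C}$ — commutes and is itself a pullback. Commutativity is immediate. For the pullback property, a point of $\ob \cat{C} \pull_{\mor \cat{C}} \mor(\cat{C} \slice F)$ is an object $c$ of $\cat{C}$ together with a morphism of $\cat{C} \slice F$ whose underlying morphism of $\cat{C}$ equals $\id_c$; such a morphism has both domain and codomain of the form $(c, x')$ for a uniquely determined $x' \in F(c)$, hence equals $\id_{(c, x')}$, so the data amounts precisely to a point $(c, x')$ of $\ob(\cat{C} \slice F)$, and the comparison map is compatible with the projections. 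Thus the top map is a pullback of the bottom one.

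Finally, since $\cat{C}$ has decidable identities, the bottom map $\ob \cat{C} \to \mor \cat{C}$ is a decidable inclusion, and therefore so is the top map $\ob(\cat{C} \slice F) \to \mor(\cat{C} \slice F)$ by \cref{decidable-pullback} of \cref{decidable-properties}; this says exactly that $\cat{C} \slice F$ has decidable identities. The single step meriting care — the ``main obstacle'', modest as it is — is recognising the square as a pullback, which rests on the observation that a morphism of a category of elements whose image in $\cat{C}$ is an identity must itself be an identity.
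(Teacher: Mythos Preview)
Your proposal is correct and essentially the same as the paper's proof: both exhibit the identity-assigning map of $\cat{C} \slice F$ as a pullback of that of $\cat{C}$ and then invoke \cref{decidable-pullback} of \cref{decidable-properties}. The only cosmetic difference is that the paper, having noted that the $\cod$-square is a pullback and that the outer rectangle (with horizontal composites $\cod \circ \id = \id$) is trivially a pullback, concludes by pullback pasting rather than verifying the left square by hand as you do.
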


\begin{proof}
We form the below diagram, in which the right square is a pullback:
\begin{tikzeq*}
\matrix[diagram,column sep={8em,between origins}]
{
  |(E0)| \ob (\cat{C} \downarrow F) & |(E1)| \mor (\cat{C} \downarrow F) & |(E0')| \ob (\cat{C} \downarrow F)                 \\
  |(C0)| \ob \cat{C}                & |(C1)| \mor \cat{C}                & |(C0')| \ob \cat{C}                \rlap{\text{.}} \\
};

\draw[->] (E0)  to node[above] {$\id$}  (E1) ;
\draw[->] (E1)  to node[above] {$\cod$} (E0');
\draw[->] (C0)  to node[above] {$\id$}  (C1) ;
\draw[->] (C1)  to node[above] {$\cod$} (C0');

\draw[->] (E0)  to (C0) ;
\draw[->] (E1)  to (C1) ;
\draw[->] (E0') to (C0');

\pb{E1}{C0'};
\end{tikzeq*}
By pullback pasting, the left square is also a pullback.
Since the identity structure map of $\cat{C} \slice F$ is a pullback of the identity structure map of $\cat{C}$, it is a decidable inclusion by \cref{decidable-pullback} of \cref{decidable-properties}.
\end{proof}

For the reader familiar with discrete Grothendieck fibrations, we note that the above lemma has a natural phrasing in terms of a discrete Grothendieck fibration $\cat{E} \to \cat{C}$.

\medskip

In order to prove closure of cofibrant objects under certain exponentials, we need some preliminary lemmas.
A simplicial map $A \to B$ is called a \emph{generalised degeneracy} if for every cofibrant simplicial set $X$
the induced map $\sSet(B, X) \to \sSet(A, X)$ is a decidable inclusion.

\begin{lemma}\label{poset-epimorphism-decidable}
  The class of generalised degeneracies contains the degeneracy operators and is closed under composition, finite colimits, pushouts along arbitrary maps
  and retracts.
\end{lemma}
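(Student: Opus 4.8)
The plan is to verify each closure property directly from the defining condition: $A \to B$ is a generalised degeneracy precisely when, for every cofibrant $X$, the map $\sSet(B,X) \to \sSet(A,X)$ is a decidable inclusion. First I would handle the generators: for a degeneracy operator $[m] \sto [n]$, the induced map $\sSet(\simp{n}, X) \to \sSet(\simp{m}, X)$ is, under the Yoneda isomorphism, the action $X_n \to X_m$ of that degeneracy operator on $X$, which is a decidable inclusion exactly when $X$ is cofibrant by \cref{cofibrant-degeneracy}. So every degeneracy operator is a generalised degeneracy. Closure under composition is immediate since a composite of decidable inclusions is a decidable inclusion (the complement of $A \to C$ through $B$ is the complement of $A \to B$ together with the preimage in $C$ of the complement of $B \to C$, using \cref{decidable-pullback,decidable-union} of \cref{decidable-properties,decidable-closure}).

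For the colimit-type closure properties, the key observation is that $\sSet(-, X)$ turns colimits in the first variable into limits, so the relevant constructions on the maps $\sSet(B_i, X) \to \sSet(A_i, X)$ become limits of levelwise decidable inclusions, and we invoke \cref{decidable-limit} of \cref{decidable-closure}. Concretely: if $A_i \to B_i$ is a (small? — no, \emph{finite}) diagram of generalised degeneracies with colimit $A \to B$, then $\sSet(B, X) = \lim_i \sSet(B_i, X) \to \lim_i \sSet(A_i, X) = \sSet(A, X)$ is a finite limit of decidable inclusions between finite diagrams, hence a decidable inclusion. For a pushout: given a generalised degeneracy $A \to B$ and an arbitrary map $A \to A'$, forming the pushout $B' = B \push_A A'$, the square $\sSet(B', X) \to \sSet(A', X)$ is the pullback of $\sSet(B, X) \to \sSet(A, X)$ along $\sSet(A', X) \to \sSet(A, X)$ (since $\sSet(-,X)$ sends pushouts to pullbacks), and decidable inclusions are stable under pullback by \cref{decidable-pullback} of \cref{decidable-properties}. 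Retracts are then handled by \cref{decidable-retract} of \cref{decidable-properties}: a retract of $A \to B$ induces, for each cofibrant $X$, a retract of the decidable inclusion $\sSet(B,X) \to \sSet(A,X)$, which is again a decidable inclusion.

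The main point requiring care — and the only genuine subtlety — is the bookkeeping in the finite-limits case: one must make sure the ambient diagram category is finite so that \cref{decidable-limit} of \cref{decidable-closure} applies, and that a levelwise generalised degeneracy really does induce a decidable inclusion on the limit hom-sets, which follows because $\sSet(-, X)$ is continuous in the first variable and contravariant, turning the colimit diagram of the $A_i \to B_i$ into a limit diagram of the $\sSet(B_i, X) \to \sSet(A_i, X)$. Everything else is a direct application of the closure properties of decidable inclusions established in \cref{decidable-properties} and \cref{decidable-closure}, combined with the fact that $\sSet(-, X)$ preserves the relevant (co)limits.
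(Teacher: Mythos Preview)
Your proof is correct and follows essentially the same approach as the paper's: both reduce each closure property to the corresponding dual closure property of decidable inclusions under $\sSet(-,X)$. The paper's proof is much terser (it simply says the closure properties hold since decidable inclusions satisfy the dual ones, citing \cref{decidable-properties,decidable-closure}), while you spell out each case explicitly; your unpacking is accurate, though the parenthetical on composition is slightly overcomplicated---closure of decidable inclusions under composition is immediate from associativity of coproducts.
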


\begin{proof}
  Degeneracy operators are generalised degeneracies by \cref{cofibrant-degeneracy}.
  The closure properties hold since the class of decidable inclusions satisfies the dual closure properties by \cref{decidable-properties,decidable-closure}.
\end{proof}



\begin{lemma}\label{cylinder-cofinal}
  If a simplicial set $A$ is a finite colimit of representables, then the product projection $A \times \simp{1} \to A$ is a generalised degeneracy.
\end{lemma}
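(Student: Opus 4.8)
The plan is to reduce to the case of a representable $A = \simp{n}$ and then treat that case via the classical prism decomposition of $\simp{n} \times \simp{1}$, throughout using the closure properties of generalised degeneracies recorded in \cref{poset-epimorphism-decidable}.

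For the reduction, I would write $A$ as a finite colimit $\colim_j \simp{n_j}$ of representables. Since $\sSet$ is cartesian closed, $(\uvar) \times \simp{1}$ preserves colimits, so the projection $A \times \simp{1} \to A$ is the colimit, in the arrow category of $\sSet$, of the projections $\simp{n_j} \times \simp{1} \to \simp{n_j}$ over that same finite diagram. As generalised degeneracies are closed under finite colimits by \cref{poset-epimorphism-decidable}, it then suffices to treat $A = \simp{n}$.

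For that case, recall that $\simp{n} \times \simp{1} = \nerve([n] \times [1])$, whose nondegenerate $(n+1)$-simplices are the $n + 1$ maximal chains $\tau_k = \bigl( (0,0) < \dots < (k,0) < (k,1) < \dots < (n,1) \bigr)$ for $0 \le k \le n$. Consecutive chains $\tau_k, \tau_{k+1}$ meet in a common $n$-dimensional face $F_k := \tau_k \cap \tau_{k+1}$; one checks that $\tau_j \cap \tau_{j'} \subseteq F_k$ whenever $j \le k < j'$, and that the $\tau_k$ jointly cover $\simp{n} \times \simp{1}$, so that $\simp{n} \times \simp{1}$ is the colimit of the connected zigzag $\tau_0 \hookleftarrow F_0 \hookrightarrow \tau_1 \hookleftarrow \dots \hookrightarrow \tau_n$, in which each inclusion $F_k \hookrightarrow \tau_k$ and $F_k \hookrightarrow \tau_{k+1}$ is the face operator $\face_{k+1}$ omitting the vertex at position $k + 1$. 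I would then observe that the projection $\simp{n} \times \simp{1} \to \simp{n}$ restricts on each $\tau_k \iso \simp{n+1}$ to the degeneracy operator $\dgn_k \from [n+1] \to [n]$ and on each $F_k \iso \simp{n}$ to the identity, since $\dgn_k \face_{k+1} = \id = \dgn_{k+1} \face_{k+1}$ by the simplicial identities. Hence the projection is, in the arrow category, the colimit over this zigzag (whose connectedness guarantees that the colimit of codomains is $\simp{n}$) of the objects $\dgn_0, \dots, \dgn_n$ together with copies of $\id_{\simp{n}}$, all of which are generalised degeneracies — the degeneracy operators by \cref{poset-epimorphism-decidable} and the identities trivially, isomorphisms having empty complement. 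Closure of generalised degeneracies under finite colimits then finishes the argument. One could equally present $\simp{n} \times \simp{1} \to \simp{n}$ as an iterated pushout, adjoining $\tau_{k+1}$ along $F_k$ at each stage, and invoke closure under pushouts instead.

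The main obstacle is the combinatorial bookkeeping behind the prism decomposition: verifying that the $\tau_k$ cover and that their pairwise intersections collapse into the faces $F_k$, so that the zigzag colimit genuinely reconstructs $\simp{n} \times \simp{1}$, and keeping track of the face and degeneracy indices. This is a classical computation in the finite poset $[n] \times [1]$, and since it only concerns finite data it raises no constructive issues beyond those already handled in \cref{poset-epimorphism-decidable}.
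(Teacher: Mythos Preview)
Your proof is correct and follows essentially the same approach as the paper's: reduce to $A = \simp{n}$ via closure of generalised degeneracies under finite colimits, then exhibit $\simp{n} \times \simp{1} \to \simp{n}$ as the colimit over the prism zigzag of the maps $\dgn_k \from \simp{n+1} \to \simp{n}$ and identities. The paper simply writes down the zigzag diagram directly, while you spell out the underlying chain combinatorics and face indices in more detail; the arguments are otherwise identical.
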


\begin{proof}
  Since $A$ is a finite colimit of representables, the claim reduces to the case $A = \simp{m}$ by \cref{poset-epimorphism-decidable}.
  The projection $\simp{m} \times \simp{1} \to \simp{m}$ is the row-wise colimit of the diagram
  \begin{tikzeq*}
  \matrix[diagram,column sep={6em,between origins}]
  {
    |(P0)| \simp{m + 1} & |(P01)| \simp{m} & |(P1)| \simp{m + 1} & |(Pd)| \ldots & |(Pm1)| \simp{m + 1} & |(Pm1m)| \simp{m} & |(Pm)| \simp{m + 1} \\
    |(B0)| \simp{m}     & |(B01)| \simp{m} & |(B1)| \simp{m}     & |(Bd)| \ldots & |(Bm1)| \simp{m}     & |(Bm1m)| \simp{m} & |(Bm)| \simp{m}     \\
  };

  \draw[->] (P0)  to node[left]  {$\dgn_0$}       (B0);
  \draw[->] (P1)  to node[left]  {$\dgn_1$}       (B1);
  \draw[->] (Pm1) to node[right] {$\dgn_{m - 1}$} (Bm1);
  \draw[->] (Pm)  to node[right] {$\dgn_m$}       (Bm);

  \draw[->] (P01)  to (B01);
  \draw[->] (Pm1m) to (Bm1m);

  \draw[->] (P01)  to node[above] {$\face_1$} (P0);
  \draw[->] (P01)  to node[above] {$\face_1$} (P1);
  \draw[->] (Pm1m) to node[above] {$\face_m$} (Pm1);
  \draw[->] (Pm1m) to node[above] {$\face_m$} (Pm);

  \draw[->] (Pd) to (P1);
  \draw[->] (Pd) to (Pm1);

  \draw[->] (B01)  to (B0);
  \draw[->] (B01)  to (B1);
  \draw[->] (Bm1m) to (Bm1);
  \draw[->] (Bm1m) to (Bm);
  \draw[->] (Bd)   to (B1);
  \draw[->] (Bd)   to (Bm1);
  \end{tikzeq*}
  (where all unlabelled morphisms are identities), \ie, it is a finite colimit of degeneracy operators and identity maps.
  Hence the conclusion follows from \cref{poset-epimorphism-decidable}.
\end{proof}

A \emph{deformation section} of a map $f \from A \to B$ is a map $s \from B \to A$ \st{} $f s = \id_Y$
and there is a zig-zag of fiberwise homotopies from $s f$ to $\id_A$ over $A$.
A map that admits a deformation section is called \emph{shrinkable}.
Note that shrinkability is an entirely fiberwise notion.
It follows that shrinkable maps are closed under pullback.

\begin{lemma}\label{shrinkable-collapse}
  A shrinkable map between simplicial sets that are finite colimits of representables is a generalised degeneracy.
\end{lemma}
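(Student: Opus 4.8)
The plan is to fix an arbitrary cofibrant simplicial set $X$ and show directly that the precomposition map $\sSet(f, X) \from \sSet(B, X) \to \sSet(A, X)$ is a decidable inclusion of sets, that is, an injection whose image is a decidable subset. Injectivity is immediate: writing $s \from B \to A$ for the deformation section, we have $\sSet(s, X) \circ \sSet(f, X) = \sSet(fs, X) = \id$, so $\sSet(f, X)$ is split monic. Moreover, a map $g \from A \to X$ factors through $f$ \iff{} $g = gsf$ (if $g = g'f$ then necessarily $g' = gs$), so the image of $\sSet(f, X)$ is exactly $\set{g}{gsf = g}$, and the crux of the matter is to prove that this is a decidable subset of $\sSet(A, X)$.

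The main idea is to reformulate the equation $gsf = g$ as membership in the image of a generalised degeneracy, for which the fiberwise homotopy witnessing shrinkability is used in an essential way. First I would assemble the defining zig-zag of fiberwise homotopies from $sf$ to $\id_A$ into a single homotopy $H \from A \times J \to A$, where $J$ is the finite one-dimensional simplicial set obtained by gluing copies of $\simp{1}$ end-to-end along $\simp{0}$'s in the pattern of the zig-zag; it has two extreme vertices, along one of which $H$ restricts to $sf$ and along the other to $\id_A$. Because each homotopy in the zig-zag is fiberwise, $f$ is constant along it, so $fh = f$ for every map $h$ occurring in the zig-zag, and hence $fH = f\pi$ where $\pi \from A \times J \to A$ is the projection.

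Next I would prove the key equivalence: for $g \from A \to X$, one has $gsf = g$ \iff{} $gH$ lies in the image of $\sSet(\pi, X) \from \sSet(A, X) \to \sSet(A \times J, X)$. The forward direction is immediate, since $g = g'f$ gives $gH = g'fH = g'f\pi = g\pi$. Conversely, if $gH = g''\pi$, then restricting along the extreme vertex of $J$ where $H$ is $\id_A$ forces $g'' = g$, so $gH = g\pi$, and restricting along the vertex where $H$ is $sf$ then gives $gsf = g$. Now $\pi$ is a finite colimit, in the arrow category, of projections $A \times \simp{1} \to A$ — which are generalised degeneracies by \cref{cylinder-cofinal} since $A$ is a finite colimit of representables — together with identity maps on $A$, hence $\pi$ is a generalised degeneracy by the closure properties of \cref{poset-epimorphism-decidable}. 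Therefore $\sSet(\pi, X)$ has decidable image, and $\set{g}{gsf = g}$, being the preimage of that image under $\sSet(H, X)$, is a decidable subset of $\sSet(A, X)$. Together with injectivity this shows $\sSet(f, X)$ is a decidable inclusion, and since $X$ was arbitrary, $f$ is a generalised degeneracy.

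The step I expect to be the main obstacle is precisely this reformulation of $gsf = g$ as a decidable condition: one cannot just test whether $g$ and $gsf$ are equal, because equality of simplicial maps into a cofibrant simplicial set is not decidable in general; what rescues the argument is that fiberwiseness forces $gH$ to be \emph{the constant} homotopy on $g$ as soon as $gsf = g$, combined with the fact (\cref{cylinder-cofinal}) that projecting away a $\simp{1}$ from a finite colimit of representables is a generalised degeneracy. The remaining work — the combinatorial bookkeeping of gluing the zig-zag into $J$ and checking that $fh = f$ propagates along it — is routine.
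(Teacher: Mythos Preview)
Your proof is correct. Both your argument and the paper's hinge on the same key fact, that the projection $\pi \from A \times J \to A$ is a generalised degeneracy (via \cref{cylinder-cofinal} and closure under finite colimits), but they diverge in how they finish.

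The paper never tests the condition $gsf = g$ directly. Instead it forms the pushout $B \push_A (A \times J)$ (along the endpoint inclusion at which $H$ restricts to $\id_A$) and uses $H$ to exhibit $f$ as a retract, in the arrow category, of the map $B \push_A (A \times J) \to B$. Since this latter map is a pushout of $\pi$, closure of generalised degeneracies under pushouts and retracts (\cref{poset-epimorphism-decidable}) finishes the proof without ever unfolding the definition. Your route is the ``unfolded'' version: you fix $X$, identify the image of $\sSet(f,X)$ with the preimage under $\sSet(H,X)$ of the decidable image of $\sSet(\pi,X)$, and invoke stability of decidable inclusions under pullback. What the paper's approach buys is that it stays entirely within the closure calculus for generalised degeneracies and so never needs to reason about individual elements $g$; what yours buys is an explicit description of the image (namely $\set{g}{gsf=g}$) and an explanation of why the fiberwise hypothesis matters (it is exactly what makes $fH = f\pi$, hence what forces $gH = g\pi$ once $g$ factors through $f$). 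The two arguments are dual perspectives on the same retract diagram.
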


\begin{proof}
  Let $f \from A \to B$ be a shrinkable map with a deformation section $s$.
  A zig-zag of homotopies connecting $s f$ to $\id_A$ can be represented as a map $H \from A \times J \to A$ where $J$ is its indexing zig-zag, \ie,
  a colimit of a finite number of copies of $\simp{1}$ attached to each other at their endpoints in the directions matching the directions of the homotopies.
  The product projection $A \times J \to A$ is a finite colimit of projections $A \times \simp{1} \to A$ (and identity maps) and thus it is a generalised degeneracy
  by \cref{poset-epimorphism-decidable,cylinder-cofinal}.
  In the diagram
  \begin{tikzeq*}
  \matrix[diagram,column sep={6em,between origins}]
  {
    |(A0)| A & |(CA)|            A \times J  & |(A1)| A                 \\
    |(B0)| B & |(Ci)| B \push_A (A \times J) & |(B1)| B \rlap{\text{,}} \\
  };

  \draw[->] (A0) to (B0);
  \draw[->] (A1) to (B1);
  \draw[->] (CA) to (Ci);

  \draw[->] (A0) to (CA);
  \draw[->] (B0) to (Ci);

  \draw[->] (CA) to (A1);
  \draw[->] (Ci) to (B1);
  \end{tikzeq*}
  the left square is a pushout by construction and so is the outer rectangle since the horizontal composites are identities.
  Thus the right square is also a pushout.
  Moreover, $H$ exhibits $f$ a retract of $B \push_A (A \times J) \to B$ and thus \cref{poset-epimorphism-decidable} completes the proof.
\end{proof}

\begin{lemma}\label{degeneracy-shrinkable}
  Any degeneracy operator $\dgn \from \simp{m} \to \simp{n}$ is shrinkable.
\end{lemma}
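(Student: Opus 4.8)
The plan is to exploit the poset description of the simplices involved. Identify $\simp{k}$ with the nerve $\nerve[k]$ and recall that $\nerve$ preserves products, so $\simp{m} \times \simp{1} \iso \nerve([m] \times [1])$, and that $\nerve$ is fully faithful; thus a simplicial map $\simp{m} \times \simp{1} \to \simp{m}$ is the same datum as an order-preserving map $h \from [m] \times [1] \to [m]$, and such an $h$ satisfying $\dgn h = \dgn \circ (\text{projection } [m] \times [1] \to [m])$ will yield a homotopy, fiberwise over $\simp{n}$, between the simplicial maps obtained by restricting $h$ to $[m] \times \{0\}$ and to $[m] \times \{1\}$ (here the two copies of $\simp{m}$ are regarded as objects over $\simp{n}$ via $\dgn$).

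First I would construct the section. Writing $\dgn$ also for the underlying surjective monotone map $[m] \to [n]$, note that for each $j \in [n]$ the fibre $\dgn^{-1}(j)$ is an inhabited decidable subset of the finite linear order $[m]$, hence has a least element; define $\face \from [n] \to [m]$ by $\face(j) = \min \dgn^{-1}(j)$. It is routine to check that $\face$ is order-preserving (the fibres of $\dgn$ are intervals of $[m]$ which move upward as $j$ increases) and that $\dgn\face = \id_{[n]}$, so $\face$ determines a section $\simp{n} \to \simp{m}$ of $\dgn$. The one point worth isolating is the inequality $\face\dgn(i) \le i$ for all $i \in [m]$, which is immediate because $i \in \dgn^{-1}(\dgn(i))$ and $\face\dgn(i)$ is the least element of that fibre.

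Next I would write down the homotopy. Define $h \from [m] \times [1] \to [m]$ by $h(i, 0) = \face\dgn(i)$ and $h(i, 1) = i$. Order-preservation in the product order is checked in three cases: at each fixed value of the second coordinate it holds since $\face\dgn$ and $\id$ are order-preserving, along the second coordinate it is exactly $\face\dgn(i) \le i$, and the remaining mixed comparison $(i,0) \le (i',1)$ with $i \le i'$ follows from $\face\dgn(i) \le i \le i'$. Moreover $\dgn h$ equals $\dgn$ composed with the projection, since on the slice at $1$ this is clear and on the slice at $0$ it reads $\dgn\face\dgn = \dgn$. Therefore the induced simplicial map $H \from \simp{m} \times \simp{1} \to \simp{m}$ is a homotopy from $\face\dgn$ to $\id_{\simp{m}}$ that becomes constant after postcomposition with $\dgn$, \ie a fiberwise homotopy over $\simp{n}$; no zig-zag of length greater than one is needed. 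This exhibits $\face$ as a deformation section of $\dgn$, so $\dgn$ is shrinkable.

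I do not anticipate a genuine obstacle: everything in sight is explicit and finite, so there is no constructivity subtlety (the least element of a fibre is found by a decidable search over $[m]$). If anything requires care, it is keeping the orientation straight — with the convention $H\iota_0 = \face\dgn$ and $H\iota_1 = \id$, the homotopy must run from $\face\dgn$ towards the identity, and this is exactly what forces the choice of the \emph{least}-preimage section (so that $\face\dgn(i) \le i$) rather than the greatest-preimage one.
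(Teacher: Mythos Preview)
Your proof is correct and follows essentially the same route as the paper: take the minimal section $\face$ of $\dgn$, use the inequality $\face\dgn(i) \le i$, and note that this inequality defines an order-preserving map $[m]\times[1]\to[m]$ giving a single fiberwise homotopy from $\face\dgn$ to $\id_{\simp{m}}$ over $\simp{n}$. Your write-up simply unpacks what the paper compresses into one sentence; the only cosmetic difference is that you spell out the monotonicity checks and the fiberwise condition $\dgn h = \dgn\pi$ explicitly.
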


\begin{proof}
  Let $\face \from \simp{n} \to \simp{m}$ be the minimal section of $\dgn$.
  Then we have $\face \dgn (i) \le i$ for each $i \in [m]$, which induces a fiberwise homotopy from $\face \dgn$ to $\id_{\simp{m}}$.
\end{proof}

\begin{corollary} \label{pullback-of-degeneracy-shrinkable}
Let $A \to B$ be a pullback of a degeneracy operator $\simp{m} \to \simp{n}$ with $B$ a finite colimit of representables.
Then $A \to B$ is a generalised degeneracy.
\end{corollary}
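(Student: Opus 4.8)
The plan is to deduce this from \cref{degeneracy-shrinkable}, \cref{shrinkable-collapse}, and the closure of shrinkable maps under pullback observed just before \cref{shrinkable-collapse}. Write $\sigma \from \simp m \to \simp n$ for the degeneracy operator being pulled back. By \cref{degeneracy-shrinkable}, $\sigma$ is shrinkable, and since shrinkability is a fiberwise notion it is inherited by the pullback $A \to B$. As $B$ is a finite colimit of representables by hypothesis, \cref{shrinkable-collapse} will then give that $A \to B$ is a generalised degeneracy, once we know that $A$ too is a finite colimit of representables. That last point is the only real content.

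To establish it, I would first note that the functor $\simp m \times_{\simp n} (\uvar) \from \sSet \slice \simp n \to \sSet$ preserves colimits: it is the composite of the pullback functor $\sigma^* \from \sSet \slice \simp n \to \sSet \slice \simp m$, which is a left adjoint since $\sSet$ is locally cartesian closed, with the forgetful functor $\sSet \slice \simp m \to \sSet$, which creates colimits. Lifting the presentation of $B$ as a finite colimit $\colim_i \simp{k_i}$ of representables to $\sSet \slice \simp n$ (with structure maps supplied by the colimit cocone and $B \to \simp n$), we obtain $A \iso \colim_i \bigl( \simp m \times_{\simp n} \simp{k_i} \bigr)$, so it suffices to show that $\simp m \times_{\simp n} \simp k$ is a finite colimit of representables for every simplicial operator $\psi \from [k] \to [n]$.

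Since the nerve functor preserves limits, $\simp m \times_{\simp n} \simp k$ is the nerve $\nerve P$ of the pullback poset $P$ of $[m] \xrightarrow{\sigma} [n] \xleftarrow{\psi} [k]$, whose underlying set $\set{(a,b) \in [m] \times [k]}{\sigma(a) = \psi(b)}$ is a decidable subset of the finite set $[m] \times [k]$ — using that equality in $[n]$ is decidable — and hence finite, with order inherited from $[m] \times [k]$. Finally, the nerve of a finite poset is a finite colimit of representables: it is finite-dimensional, and in each dimension its non-degenerate simplices form a decidable, hence finite, subset, so it is obtained from $\emptyset$ by finitely many pushouts along boundary inclusions, which are themselves finite colimits of representables by \cref{simplex-cofibrant}. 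The main obstacle is exactly this verification that ``finite colimit of representables'' is preserved by the pullback; once the decidability facts above are in place, the rest is a direct application of the cited lemmas.
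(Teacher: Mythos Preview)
Your proof is correct and follows the same strategy as the paper: both use that $\sigma$ is shrinkable (\cref{degeneracy-shrinkable}), that shrinkability is stable under pullback, and then apply \cref{shrinkable-collapse}, with the remaining content being the verification that $A$ is a finite colimit of representables. The only difference is in that verification: the paper simply invokes the blanket fact that simplicial sets which are finite colimits of representables are closed under finite limits, whereas you unwind the specific pullback at hand by commuting it past the colimit presentation of $B$ and identifying each $\simp{m} \times_{\simp{n}} \simp{k}$ as the nerve of a finite poset. Your route is more explicit and self-contained (and makes the constructive content of the step visible), while the paper's is terser; substantively they amount to the same argument.
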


\begin{proof}
Simplicial sets that are finite colimits of representables are closed under finite limits, in particular pullback.
It follows that also $A$ is a finite colimit of representables.
Since $\simp{m} \to \simp{n}$ is shrinkable by \cref{degeneracy-shrinkable}, so is its pullback $A \to B$.
Then $A \to B$ is a generalised degeneracy by \cref{shrinkable-collapse}.
\end{proof}

\begin{corollary} \label{exponential-finite-cofibrant}
Let $A$ be a finite colimit of simplices.
Exponentiation with $A$ preserves cofibrant simplicial sets.
\end{corollary}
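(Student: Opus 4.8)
The plan is to deduce this from the characterisation of cofibrant objects in terms of degeneracy operators, \cref{cofibrant-degeneracy}, together with \cref{pullback-of-degeneracy-shrinkable}. Let $X$ be a cofibrant simplicial set; I want to show $X^A$ is cofibrant. By \cref{cofibrant-degeneracy} it suffices to check that every generating degeneracy operator $\dgn_i \from [m+1] \to [m]$ acts on $X^A$ by a decidable inclusion, i.e.\ that the induced map $(X^A)_m \to (X^A)_{m+1}$ is a decidable inclusion. Using the exponential adjunction and the Yoneda lemma, $(X^A)_k \iso \sSet(\simp{k} \times A, X)$, and under this identification the action of $\dgn_i$ is precomposition with the map $\simp{m+1} \times A \to \simp{m} \times A$ induced by $\dgn_i$. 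So the statement reduces to showing that this last map is a generalised degeneracy.

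To apply \cref{pullback-of-degeneracy-shrinkable} to it, I would verify two things. First, the map $\simp{m+1} \times A \to \simp{m} \times A$ is the pullback of the degeneracy operator $\simp{m+1} \to \simp{m}$ along the projection $\simp{m} \times A \to \simp{m}$; this is an immediate levelwise computation in a presheaf category. Second, $\simp{m} \times A$ is again a finite colimit of representables: since $\simp{m} \times (-)$ preserves colimits ($\sSet$ being cartesian closed) and $A$ is a finite colimit of simplices, $\simp{m} \times A$ is a finite colimit of the simplicial sets $\simp{m} \times \simp{n}$, and each $\simp{m} \times \simp{n}$, being the nerve of a finite poset, has finitely many nondegenerate simplices, so its skeletal filtration (with \cref{simplex-cofibrant} supplying the attaching cells) presents it as a finite colimit of representables. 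Granting these two points, \cref{pullback-of-degeneracy-shrinkable} shows $\simp{m+1} \times A \to \simp{m} \times A$ is a generalised degeneracy, which is exactly what was needed.

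The only non-formal ingredient is the second point above — that a finite product of simplices is itself a finite colimit of simplices; everything else is bookkeeping once \cref{cofibrant-degeneracy} and \cref{pullback-of-degeneracy-shrinkable} are available. I do not expect this to be a serious obstacle, as it is the standard fact that the nerve of a finite poset is a finite simplicial set and hence built from finitely many cells, but it is the one step that uses something beyond the abstract closure properties of generalised degeneracies recorded in \cref{poset-epimorphism-decidable}.
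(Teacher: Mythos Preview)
Your proposal is correct and follows essentially the same route as the paper: reduce via \cref{cofibrant-degeneracy} to showing that $A \times \simp{m} \to A \times \simp{n}$ is a generalised degeneracy, then invoke \cref{pullback-of-degeneracy-shrinkable}. The only difference is cosmetic: the paper dispatches the ``$\simp{m} \times A$ is a finite colimit of representables'' step by appealing to the general closure of finite colimits of representables under finite limits (a fact it also uses in the proof of \cref{pullback-of-degeneracy-shrinkable}), whereas you argue it directly via the finiteness of $\simp{m} \times \simp{n}$.
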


\begin{proof}
Let $X$ be a cofibrant simplicial set.
To show that $X^A$ is cofibrant, it suffices to show for every degeneracy operator $\simp{m} \to \simp{n}$ that $A \times \simp{m} \to A \times \simp{n}$ is a generalised degeneracy.
Since simplicial sets that are finite colimits of representables are closed under finite limits, in particular products, $A \times \simp{n}$ is a finite colimit of representables.
Then the claim is given by \cref{pullback-of-degeneracy-shrinkable}.
\end{proof}

We can alternatively derive \cref{exponential-finite-cofibrant} from the following, more general statement.

\begin{proposition} \label{pushforward-preserves-cofibrancy}
Let $f \from X \to Y$ be a map such that $Y$ is cofibrant and the pullback of $X$ along every map $\simp{n} \to Y$ is a finite colimit of representables.
Then the dependent product $\Pi_f \from \sSet \slice X \to \sSet \slice Y$ preserves cofibrant objects.
\end{proposition}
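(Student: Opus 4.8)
The plan is to check cofibrancy of the relevant domain via the criterion of \cref{cofibrant-degeneracy}. By the conventions in force, a cofibrant object of $\sSet \slice X$ is a map $q \from Z \to X$ with $Z$ a cofibrant simplicial set, and $\Pi_f$ preserving it means precisely that the simplicial set $\Pi_f Z$ is cofibrant; so the task is to show that every degeneracy operator $\dgn \from [m] \to [n]$ acts on $\Pi_f Z$ by a decidable inclusion. First I would make this action explicit. Unwinding the adjunction $f^* \adj \Pi_f$, an $n$-simplex of $\Pi_f Z$ is a pair $(y, g)$ with $y \from \simp{n} \to Y$ and $g \from \simp{n} \pull_Y X \to Z$ a map over $X$, so that $(\Pi_f Z)_n \iso \bigcoprod_{y \in Y_n} (\sSet \slice X)(\simp{n} \pull_Y X, Z)$, the pullback being formed along $y$. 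Tracing through the adjunction, $\dgn$ sends $(y, g)$ to $(y \dgn, g \beta_y)$, where $\beta_y \from \simp{m} \pull_Y X \to \simp{n} \pull_Y X$ is the base change, along the projection $\simp{n} \pull_Y X \to \simp{n}$, of the map $\simp{m} \to \simp{n}$ induced by $\dgn$.

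Next I would split off the two coordinates. The action $(\Pi_f Z)_n \to (\Pi_f Z)_m$ lies over the map $Y_n \to Y_m$, $y \mapsto y \dgn$, which is a decidable inclusion since $Y$ is cofibrant, again by \cref{cofibrant-degeneracy}; let $C$ be its complement. In view of the analogous description $(\Pi_f Z)_m \iso \bigcoprod_{y' \in Y_m} (\sSet \slice X)(\simp{m} \pull_Y X, Z)$, the action factors through the decidable subobject of $(\Pi_f Z)_m$ indexed by the image of $Y_n$. Since decidable inclusions are closed under coproducts (with complements formed componentwise) and under composition, it is enough to show that for each fixed $y \in Y_n$ precomposition with $\beta_y$ defines a decidable inclusion $(\sSet \slice X)(\simp{n} \pull_Y X, Z) \to (\sSet \slice X)(\simp{m} \pull_Y X, Z)$, the codomain being the summand indexed by $y \dgn$.

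For this I would invoke the hypothesis: $\simp{n} \pull_Y X$ is a finite colimit of representables, being the pullback of $X$ along $y \from \simp{n} \to Y$, so by \cref{pullback-of-degeneracy-shrinkable} the base change $\beta_y$ of the degeneracy operator $\simp{m} \to \simp{n}$ is a generalised degeneracy; as $Z$ is cofibrant, precomposition with $\beta_y$ is therefore a decidable inclusion $\sSet(\simp{n} \pull_Y X, Z) \to \sSet(\simp{m} \pull_Y X, Z)$ of ambient hom-sets. To descend to the hom-sets over $X$, I would note that $\beta_y$, as a base change of the split epimorphism $\simp{m} \to \simp{n}$, is a split epimorphism, and that the projection $\simp{m} \pull_Y X \to X$ factors as $\beta_y$ followed by $\simp{n} \pull_Y X \to X$; hence a map $\simp{n} \pull_Y X \to Z$ lies over $X$ exactly when its restriction along the epimorphism $\beta_y$ does. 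This says precisely that the square with horizontal edges the forgetful inclusions $(\sSet \slice X)(\simp{k} \pull_Y X, Z) \ito \sSet(\simp{k} \pull_Y X, Z)$, $k = n, m$, and vertical edges precomposition with $\beta_y$, is a pullback; so its left edge, being a pullback of the decidable inclusion forming its right edge, is a decidable inclusion by \cref{decidable-pullback} of \cref{decidable-properties}. Assembling the pieces and applying \cref{cofibrant-degeneracy} completes the argument.

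I expect the main obstacle to be this last step: deducing the statement about hom-sets over $X$ from the one about ambient hom-sets. It is the only place where cofibrancy of $Z$ enters, whereas cofibrancy of $X$ is neither assumed nor needed; and it rests on $\beta_y$ being an honest epimorphism, which is why it matters that $\beta_y$ arises as the base change of a genuine degeneracy operator rather than of an arbitrary generalised degeneracy.
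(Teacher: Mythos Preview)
Your proof is correct and follows the same overall strategy as the paper's: verify the criterion of \cref{cofibrant-degeneracy} by writing $(\Pi_f Z)_n$ as a coproduct over $Y_n$ of slice hom-sets, use cofibrancy of $Y$ for the indexing part, and use \cref{pullback-of-degeneracy-shrinkable} to see that each $\beta_y$ is a generalised degeneracy so that precomposition with it gives a decidable inclusion of ambient hom-sets.

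The one point of divergence is the final step, descending from ambient hom-sets to hom-sets over $X$. The paper realises $(\sSet \slice X)(f^* \simp{n}, A)$ as the fiber of $\sSet(f^* \simp{n}, A) \to \sSet(f^* \simp{n}, X)$ over the structure map, and then applies the pullback case of \cref{decidable-limit} in \cref{decidable-closure} to the resulting map of cospans; for this it asserts that $\sSet(f^* \simp{n}, X) \to \sSet(f^* \simp{m}, X)$ is a decidable inclusion, which follows from $\beta_y$ being a generalised degeneracy only once one knows $X$ is cofibrant---a hypothesis not stated in the proposition (though satisfied in both corollaries). Your argument sidesteps this by observing instead that $\beta_y$, as a base change of a split epimorphism, is itself a split epimorphism, so the commuting square of slice and ambient hom-sets is a pullback, and then invoking only pullback stability of decidable inclusions (\cref{decidable-pullback} of \cref{decidable-properties}). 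This genuinely avoids any cofibrancy assumption on $X$ and hence proves the proposition exactly as stated. Your closing remark about why it matters that $\beta_y$ is an honest epimorphism is spot on.
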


\begin{proof}
Consider $A \to X$ with $A$ cofibrant.
Take a degeneracy operator $\sigma \from [m] \sto [n]$ and a simplex $y \in Y_n$.
By assumption, $f^* \simp{n}$ is a finite colimit of representables, so $f^* \simp{m} \to f^* \simp{n}$ is a generalised degeneracy by \cref{pullback-of-degeneracy-shrinkable}.
Thus, $\sSet(f^* \simp{n}, A) \to \sSet(f^* \simp{m}, A)$ and $\sSet(f^* \simp{n}, X) \to \sSet(f^* \simp{m}, X)$ are decidable inclusions.
Since $(\sSet \slice X)(f^* \simp{n}, A)$ is a fiber of $\sSet(f^* \simp{n}, A) \to \sSet(f^* \simp{n}, X)$ and similarly for $m$, the case for pullbacks of \cref{decidable-limit} of \cref{decidable-closure} implies that
\begin{equation*}
(\sSet \slice X)(f^* \simp{n}, A) \to (\sSet \slice X)(f^* \simp{m}, A)
\end{equation*}
is a decidable inclusion.
Finally, we have
\begin{equation*}
(\Pi_f A)_n = \bigcoprod_{y \in Y_n} (\sSet \slice X)(f^* \simp{n}, A)
\end{equation*}
and similarly for $m$, hence $(\Pi_f A)_n \to (\Pi_f A)_m$ decomposes into a coproduct of decidable inclusions followed by a decidable inclusion induced by $Y_n \to Y_m$ and thus is a decidable inclusion itself.
\end{proof}

The following statement is also proved in~\cite{Gambino-Henry}*{Lemma~5.1} by different methods.

\begin{corollary} \label{cofibrant-pushforward}
Let $i \from X \to Y$ be a cofibration between cofibrant objects.
Then the dependent product $\Pi_i \from \sSet \slice X \to \sSet \slice Y$ preserves cofibrant objects.
\end{corollary}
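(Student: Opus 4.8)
The plan is to deduce this from \cref{pushforward-preserves-cofibrancy} applied to $f = i$. Since $Y$ is already assumed cofibrant, the only hypothesis left to verify is that for every map $\simp{n} \to Y$ the pullback $\simp{n} \pull_Y X$ is a finite colimit of representables.

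First I would observe that $\simp{n} \pull_Y X \to \simp{n}$ is obtained by pulling the cofibration $i$ back along $\simp{n} \to Y$. As $\simp{n}$ is cofibrant by \cref{simplex-cofibrant}, \cref{pullback-of-cofibrations} shows that this map is again a cofibration, hence a levelwise decidable inclusion by \cref{cof-to-dec} of \cref{cofibration-levelwise-decidable}. Identifying $\simp{n} \pull_Y X$ with its image, we may therefore regard it as a levelwise decidable subcomplex $A \subseteq \simp{n}$.

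Next I would show that any such $A$ is a finite colimit of representables. The nondegenerate simplices of $\simp{n}$ are the injective operators $[k] \to [n]$; in each dimension $k \le n$ there are finitely many of them, and none for $k > n$. Since $A \to \simp{n}$ is levelwise decidable, it is decidable which of these lie in $A$, so the nondegenerate simplices of $A$ form a finite decidable set $S$. Because every simplex of $\simp{n}$ factors uniquely as a degeneracy operator followed by a nondegenerate one, $A$ is the subcomplex generated by $S$, and its skeletal filtration $\emptyset = \Sk_{-1} A \ito \Sk_0 A \ito \dots \ito \Sk_n A = A$ exhibits $A$ as a finite cell complex: each inclusion $\Sk_{k-1} A \ito \Sk_k A$ is a pushout of $\bigcoprod_\sigma \bdsimp{k} \to \bigcoprod_\sigma \simp{k}$, the coproduct ranging over the finitely many nondegenerate $k$-simplices $\sigma \in S$. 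Since $\bdsimp{k}$ and $\simp{k}$ are finite colimits of representables by \cref{simplex-cofibrant}, so is each $\Sk_k A$, and hence $A$.

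With the hypothesis of \cref{pushforward-preserves-cofibrancy} verified, that proposition gives that $\Pi_i$ preserves cofibrant objects. The one place where the constructive assumptions do real work is this last step: decidability of the inclusion $A \ito \simp{n}$ is precisely what makes the set $S$ of generating nondegenerate simplices, and the coproducts appearing in the skeletal filtration, finite — something that would be automatic classically. Everything else in the argument is formal.
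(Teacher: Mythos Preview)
Your proof is correct and follows essentially the same route as the paper: reduce to \cref{pushforward-preserves-cofibrancy}, use \cref{pullback-of-cofibrations} to see that $i^* \simp{n} \to \simp{n}$ is a cofibration, and then conclude that a levelwise decidable subobject of $\simp{n}$ is a finite colimit of representables. The paper simply asserts this last fact (phrased more generally for levelwise decidable subobjects of any finite colimit of representables), whereas you spell out the skeletal-filtration argument explicitly; this is a welcome elaboration rather than a different approach.
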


\begin{proof}
This reduces to \cref{pushforward-preserves-cofibrancy} once we verify for every map $\simp{n} \to Y$ that $i^* \simp{n}$ is a finite colimit of representables.
By \cref{pullback-of-cofibrations}, the map $i^* \simp{n} \to \simp{n}$ is a cofibration.
Finally, a levelwise decidable subobject of a finite colimit of representables is again a finite colimit of representables.
\end{proof}

\begin{remark}
We limited ourselves to proving only the facts on cofibrations needed for the constructive Kan--Quillen model structure, but more can be said.
For example, a map over a cofibrant simplicial set $X$ is a cofibration \iff{} its pullback along every map $\simp{m} \to X$ is a cofibration.
\end{remark}

\subsection{Homotopies and \he{}s}
\label{sec:htpy-and-whe}

The issue of defining the weak equivalences of the Kan--Quillen model structure
is fairly delicate in the constructive framework.
In fact, our two proofs use different definitions that
are only concluded to agree once both arguments are complete.
However, in both cases the definitions ultimately go back to the notion of
a \he{} between cofibrant Kan complexes.
In this section we establish some basic properties of \he{}s
common to both approaches.

\begin{lemma}\label{composite-homotopy}
  Let $f$, $g$ and $h$ be maps $X \to Y$ and
  let $G$ and $H$ be homotopies
  from $f$ to $g$ and from $f$ to $h$ respectively.
  If $X$ is cofibrant and
  there is a Kan fibration $p \from Y \fto M$ \st{} $p G = p H$
  (in particular, $p g = p h$), then
  there is a fiberwise homotopy from $g$ to $h$ over $M$.
\end{lemma}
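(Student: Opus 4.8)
The plan is to assemble $G$ and $H$ into a single horn-filling problem against $p$, solve it using cofibrancy of $X$, and read off the desired homotopy from the remaining face. Since $G\iota_0 = f = H\iota_0$, the homotopies $G$ and $H$ agree on the vertex $\{0\}$ and hence glue to a map $\Phi \from X \times \horn{2,0} \to Y$ out of the outer horn at $0$, restricting to $G$ on the edge $\{0,1\}$ and to $H$ on the edge $\{0,2\}$.

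Next I would exhibit a commuting square with top $\Phi$, left-hand map $X \times (\horn{2,0} \ito \simp{2})$, right-hand map $p$, and bottom map $L \from X \times \simp{2} \to M$ given by $L = K \circ (X \times \dgn_1)$, where $K = pG = pH \from X \times \simp{1} \to M$. This square commutes because $\dgn_1 \from [2] \to [1]$ restricts to the identity on each of the edges $\{0,1\}$ and $\{0,2\}$, so that $L$ restricts to $pG$, resp.\ $pH$, on the two faces of $\horn{2,0}$. The left-hand map is the pushout product of the cofibration $\emptyset \cto X$ (using that $X$ is cofibrant) with the horn inclusion $\horn{2,0} \ito \simp{2}$, which is a trivial cofibration by \cref{triv-cof-cof}; hence it is a trivial cofibration by \cref{ano-cof-pushout-product} of \cref{second-pushout-product}. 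Since $p$ is a Kan fibration, the square has a diagonal filler $\Psi \from X \times \simp{2} \to Y$.

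Finally I would set $W = \Psi \circ (X \times \face_0) \from X \times \simp{1} \to Y$, the restriction of $\Psi$ to the edge $\{1,2\}$. Evaluating at the endpoints gives $W\iota_0 = g$ and $W\iota_1 = h$, since $\Psi$ agrees with $G$ and $H$ at the vertices $1$ and $2$; thus $W$ is a homotopy from $g$ to $h$. Moreover $pW = L \circ (X \times \face_0) = K \circ (X \times (\dgn_1 \face_0))$, and $\dgn_1 \face_0 \from [1] \to [1]$ is constant with value $1$, so $pW = (pg)\pi$ is the constant homotopy on $pg = ph$; that is, $W$ is fiberwise over $M$. The only step needing care is the bookkeeping with simplicial operators showing that $L$ has the asserted restrictions to the faces of $\simp{2}$ — in particular the choice of the degeneracy $\dgn_1$, which is exactly what forces the lift to be fiberwise; once this is settled and the pushout product property \cref{second-pushout-product} is applied, the argument is formal.
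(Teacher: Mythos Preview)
Your argument is correct and essentially identical to the paper's proof: both glue $G$ and $H$ to a map $X \times \horn{2,0} \to Y$, fill against $p$ using that $X \times \horn{2,0} \to X \times \simp{2}$ is a trivial cofibration (via \cref{ano-cof-pushout-product} of \cref{second-pushout-product}) with bottom map $pG\dgn_1$, and extract the $\face_0$-face as the desired fiberwise homotopy. Your additional bookkeeping with the simplicial identities $\dgn_1\face_1 = \dgn_1\face_2 = \id_{[1]}$ and $\dgn_1\face_0 = \text{const}_1$ spells out precisely what the paper leaves implicit.
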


\begin{proof}
  There is a commutative square
  \begin{tikzeq*}
  \matrix[diagram,column sep={between origins,6em}]
  {
    |(h)| X \times \horn{2,0} & |(Y)| Y                 \\
    |(s)| X \times \simp{2}   & |(M)| M \rlap{\text{,}} \\
  };

  \draw[->] (h) to node[above] {$[G, H]$}     (Y);
  \draw[->] (s) to node[below] {$p G \dgn_1$} (M);

  \draw[ano] (h) to (s);

  \draw[fib] (Y) to node[right] {$p$} (M);
  \end{tikzeq*}
  which has a diagonal filler $J$
  by \cref{ano-cof-pushout-product} of \cref{second-pushout-product}.
  Then $J \face_0$ is a homotopy from $g$ to $h$ over $M$.
\end{proof}

Using \cref{composite-homotopy}, note that maps $f, g \from X \to Y$ from a cofibrant object $X$ to a fibrant object $Y$ are homotopic \iff{} there is a homotopy from $f$ to $g$ or vice versa, {\ie}, the homotopy relation is witnessed by single-step homotopies.
A similar remark applies to fiberwise homotopies.
We will frequently use this without further reference.

A simplicial map $f \from X \to Y$ is a \emph{\he{}}
if there is a map $g \from Y \to X$ \st{} $g f$ is homotopic to $\id_X$
and $f g$ is homotopic to $\id_Y$.
If $X$ and $Y$ are both cofibrant and fibrant, this means there is a homotopy from $g f$ to $\id_X$ and from $f g$ to $\id_Y$.

\begin{lemma} Homotopy equivalences satisfy: \label{he-closure}
\begin{enumerate}
\item \label{he-closure:2-out-of-6} 2-out-of-6,
\item \label{he-closure:retract} closure under retracts.
\end{enumerate}
\end{lemma}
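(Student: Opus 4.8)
The plan is to reduce both statements to elementary facts about isomorphisms in an arbitrary category, by passing to the homotopy category. First I would check that the homotopy relation $\htp$ is a congruence on $\sSet$, that is, an equivalence relation on each hom-set which is moreover compatible with composition on both sides. The equivalence-relation part is immediate from the zig-zag definition: reflexivity is witnessed by constant homotopies, and symmetry and transitivity by reversing and concatenating zig-zags. For compatibility, observe that if $H \from X \times \simp{1} \to Y$ is a homotopy from $f$ to $f'$, then $h H$ is a homotopy from $h f$ to $h f'$ for any $h \from Y \to Z$, and $H (e \times \simp{1})$ is a homotopy from $f e$ to $f' e$ for any $e \from W \to X$; propagating this stepwise along a zig-zag yields the general statement. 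This produces a quotient category $\mathrm{Ho}(\sSet)$ together with a functor $\gamma \from \sSet \to \mathrm{Ho}(\sSet)$, and unwinding the definitions shows that a simplicial map $f$ is a \he{} precisely when $\gamma f$ is an isomorphism. Note that no cofibrancy or fibrancy hypotheses enter here.

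Granting this, \cref{he-closure:2-out-of-6} and \cref{he-closure:retract} follow by transporting standard categorical facts along $\gamma$. For \cref{he-closure:2-out-of-6}, suppose $f \from A \to B$, $g \from B \to C$ and $h \from C \to D$ with $g f$ and $h g$ \he{}s. Then $\gamma(g f) = \gamma g\,\gamma f$ is invertible, so $\gamma g$ is a split epimorphism and $\gamma f$ a split monomorphism; likewise $\gamma(h g) = \gamma h\,\gamma g$ is invertible, so $\gamma g$ is a split monomorphism and $\gamma h$ a split epimorphism. Hence $\gamma g$, being split mono and split epi, is an isomorphism, and therefore $\gamma f = (\gamma g)^{-1}\,\gamma(gf)$, $\gamma h = \gamma(hg)\,(\gamma g)^{-1}$ and $\gamma(h g f) = \gamma h\,\gamma g\,\gamma f$ are all invertible; thus $f$, $g$, $h$ and $h g f$ are \he{}s. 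For \cref{he-closure:retract}, applying $\gamma$ to a diagram exhibiting $f$ as a retract of a \he{} $f'$ in the arrow category of $\sSet$ produces such a diagram in the arrow category of $\mathrm{Ho}(\sSet)$; since a retract of an isomorphism is an isomorphism, $\gamma f$ is invertible, so $f$ is a \he{}.

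The only step carrying any content is the verification that $\htp$ descends to a well-defined associative composition on homotopy classes, \ie the congruence property, and this is routine; I expect no real obstacle. One could alternatively give direct, computation-heavy arguments for both parts without introducing $\mathrm{Ho}(\sSet)$, but routing through the quotient category keeps the reasoning transparent and makes the closure properties formal.
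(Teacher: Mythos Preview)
Your proposal is correct and follows essentially the same approach as the paper: the paper's proof simply records that the homotopy relation is a congruence, that \he{}s are exactly the maps becoming isomorphisms under this congruence, and that isomorphisms satisfy 2-out-of-6 and closure under retracts. You have spelled out the details of each of these steps, but the strategy is identical.
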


\begin{proof}
Recall that the homotopy relation forms a congruence.
A map is a homotopy equivalence if and only if it is an isomorphism under this congruence.
Since isomorphisms satisfy 2-out-of-6 and closure under retracts, so do homotopy equivalences.
\end{proof}

\begin{lemma}\label{he-hom}
  If a simplicial map $f \from X \to Y$ is a \he{},
  then for all simplicial sets $Z$ so is the induced map $Z^Y \to Z^X$.
\end{lemma}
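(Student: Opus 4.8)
The plan is to show that the contravariant functor $Z^{(-)} \from \sSet^\op \to \sSet$ sends homotopic maps to homotopic maps and sends a homotopy equivalence to a homotopy equivalence simply by transporting the equivalence data. First I would observe that since $Z^{(-)}$ is a $2$-functor with respect to the homotopy relation — more precisely, it takes a homotopy $H \from X \times \simp{1} \to Y$ to a homotopy $Z^Y \times \simp{1} \to Z^X$ — it suffices to check that $Z^{(-)}$ preserves the homotopy relation on morphisms. The key point is the adjunction $\sSet(A \times \simp{1}, B) \iso \sSet(\simp{1}, B^A)$ together with the exponential isomorphism $(B^A)^{\simp{1}} \iso B^{A \times \simp{1}}$; feeding a homotopy $H \from X \times \simp{1} \to Y$ through $Z^{(-)}$ and using these isomorphisms produces a map $Z^Y \to (Z^X)^{\simp{1}}$, i.e.\ a homotopy $Z^Y \times \simp{1} \to Z^X$, and one checks on endpoints that it connects $Z^{H\iota_1}$ to $Z^{H\iota_0}$. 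This endpoint check is a diagram chase in the cartesian closed structure of $\sSet$, and since homotopies are composed by zig-zags, arbitrary homotopic maps are sent to homotopic maps.

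Next I would apply this to the homotopy equivalence data for $f$. Let $g \from Y \to X$ witness that $f$ is a \he{}, so $g f \htp \id_X$ and $f g \htp \id_Y$. Applying $Z^{(-)}$, we get maps $Z^f \from Z^Y \to Z^X$ and $Z^g \from Z^X \to Z^Y$, and by the preservation of homotopies just established, $Z^f Z^g = Z^{g f} \htp Z^{\id_X} = \id_{Z^X}$ and $Z^g Z^f = Z^{f g} \htp Z^{\id_Y} = \id_{Z^Y}$ (note the composition order reverses because $Z^{(-)}$ is contravariant). Hence $Z^f \from Z^Y \to Z^X$ is a \he{} with homotopy inverse $Z^g$.

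I expect the only mildly delicate point to be the explicit identification, in the constructive setting, of the homotopy $Z^Y \times \simp{1} \to Z^X$ with correctly-oriented endpoints, i.e.\ making sure the direction of the homotopy is tracked correctly through the contravariant exponential; this is purely a matter of unwinding the cartesian closed adjunctions and involves no choice principles, so it goes through verbatim. No cofibrancy or fibrancy hypotheses are needed here, and the lemma as stated makes none — the single-step reduction of the homotopy relation mentioned after \cref{composite-homotopy} is not required, since we may simply transport zig-zags term by term.
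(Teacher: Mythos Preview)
Your proposal is correct and takes essentially the same approach as the paper: the paper's proof is the one-liner ``The functor $Z^{(\uvar)}$ is simplicial and thus preserves homotopies,'' and your argument is precisely an unpacking of what it means for this contravariant functor to be simplicially enriched (i.e.\ that it carries homotopies to homotopies via the cartesian closed structure), followed by the routine transport of the homotopy-inverse data.
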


\begin{proof}
  The functor $Z^{(\uvar)}$ is simplicial and thus preserves homotopies.
\end{proof}

Dually to the definition of a shrinkable map of the preceding section,
a \emph{strong deformation retraction} of a map $f \from X \to Y$ is a map $r \from Y \to X$ \st{} $r f = \id_X$ and there is a homotopy $H$ from $f r$ to $\id_Y$ under $X$, \ie, $H f$ is constant.

\begin{lemma} \label{trivial-to-she}
\leavevmode
\begin{enumerate}
\item \label{trivial-fibration-shrinkable}
A trivial fibration between cofibrant simplicial sets is shrinkable.
In particular, it is a fiberwise homotopy equivalence over its codomain.
\item \label{trivial-cofibration-sdr}
A trivial cofibration between fibrant simplicial admits a strong deformation retraction.
In particular, it is a homotopy equivalence.
\end{enumerate}
\end{lemma}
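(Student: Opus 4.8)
The plan is to handle the two parts dually: in each case I would first produce the requisite (co)section by solving one lifting problem, and then produce the (fiberwise) homotopy witnessing (co)triviality by solving a second lifting problem against a pushout product of $\bdsimp{1} \cto \simp{1}$ with the given map.

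For \cref{trivial-fibration-shrinkable}, let $p \from X \tfto Y$ be a trivial fibration with $X$ and $Y$ cofibrant. Since $Y$ is cofibrant, $\emptyset \to Y$ is a cofibration, so the lifting problem with top map $\emptyset \to X$ and bottom map $\id_Y$ against $p$ has a solution $s \from Y \to X$ with $p s = \id_Y$. To connect $s p$ to $\id_X$ fiberwise over $Y$, I would solve the lifting problem given by the top map $[s p, \id_X] \from X \times \bdsimp{1} \to X$ and the bottom map $X \times \simp{1} \xrightarrow{\pi} X \xrightarrow{p} Y$ against $p$; the square commutes because $p s = \id_Y$ (both composites equal $p$ on each of the two copies of $X$). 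The left-hand map $X \times \bdsimp{1} \to X \times \simp{1}$ is the pushout product of $\emptyset \cto X$ and $\bdsimp{1} \cto \simp{1}$, hence a cofibration by \cref{cofibration-pushout-product} of \cref{first-pushout-product} (this is where cofibrancy of $X$ enters), so a diagonal filler $H$ exists. By construction $H$ is a fiberwise homotopy over $Y$ from $s p$ to $\id_X$, so $s$ is a deformation section; that a shrinkable map is a fiberwise homotopy equivalence over its codomain is then immediate from the definitions (take $s$ as the inverse over $Y$).

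For \cref{trivial-cofibration-sdr}, let $i \from A \ato B$ be a trivial cofibration with $A$ and $B$ fibrant. Since $A$ is fibrant, $A \to \simp{0}$ is a Kan fibration, so the lifting problem with top map $\id_A$ and left map $i$ has a solution $r \from B \to A$ with $r i = \id_A$. For the strong deformation retraction I would solve the lifting problem whose domain is the pushout $(A \times \simp{1}) \push_{A \times \bdsimp{1}} (B \times \bdsimp{1})$, mapping to $B$ by the constant homotopy $A \times \simp{1} \xrightarrow{\pi} A \xrightarrow{i} B$ on the first summand and by $[i r, \id_B]$ on the second summand (these agree on $A \times \bdsimp{1}$ since $r i = \id_A$), against $B \to \simp{0}$. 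The left-hand map of this lifting problem is the pushout product $i \mathbin{\hat{\times}} (\bdsimp{1} \cto \simp{1})$, which is a trivial cofibration by \cref{ano-cof-pushout-product} of \cref{second-pushout-product}, and $B \to \simp{0}$ is a Kan fibration since $B$ is fibrant, so a filler $H \from B \times \simp{1} \to B$ exists. It is a homotopy from $i r$ to $\id_B$ that is constant when restricted along $i$, i.e.\ a strong deformation retraction of $i$; in particular $r i = \id_A$ and $i r \htp \id_B$, so $i$ is a homotopy equivalence.

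The only point requiring care in the constructive setting is the validity of the two pushout product facts invoked, namely that $\emptyset \cto X$ pushed against $\bdsimp{1} \cto \simp{1}$ remains a cofibration and that a trivial cofibration pushed against a cofibration remains a trivial cofibration. Both are already established in \cref{first-pushout-product,second-pushout-product}, so no further combinatorial work on anodyne maps is needed and the remainder is formal diagram manipulation; I do not expect any real obstacle beyond bookkeeping of the commuting squares.
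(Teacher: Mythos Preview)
Your proof is correct and follows essentially the same approach as the paper: construct the section (resp.\ retraction) by a first lift using cofibrancy of $Y$ (resp.\ fibrancy of $A$), then obtain the fiberwise homotopy (resp.\ relative homotopy) by a second lift whose left map is the pushout product of the given cofibration $\emptyset \cto X$ (resp.\ trivial cofibration $i$) with $\bdsimp{1} \cto \simp{1}$. The paper only spells out \cref{trivial-fibration-shrinkable} and declares \cref{trivial-cofibration-sdr} dual; your explicit treatment of both parts is fine, and your citation of \cref{cofibration-pushout-product} of \cref{first-pushout-product} for the first part is in fact the more precise reference.
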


\begin{proof}
  We only do \cref{trivial-fibration-shrinkable} as \cref{trivial-cofibration-sdr} is dual.
  Let $p \from X \to Y$ be a trivial fibration.
  Consider the square
  \begin{tikzeq*}
  \matrix[diagram]
  {
    |(e)| \emptyset & |(X)|  X \\
    |(Y0)| Y        & |(Y1)| Y \rlap{\textrm{,}} \\
  };

  \draw[->]  (e) to (X);
  \draw[cof] (e) to (Y0);

  \draw[->]   (Y0) to node[below] {$\id_Y$} (Y1);
  \draw[tfib] (X)  to node[right] {$p$}     (Y1);
  \end{tikzeq*}
  which has a lift $s \from Y \to X$ since $Y$ is cofibrant.
  Then there is a square
  \begin{tikzeq*}
  \matrix[diagram,column sep={between origins,7em}]
  {
    |(bd)| X \times \bd \simp{1} & |(X)| X \\
    |(s1)| X \times \simp{1}     & |(Y)| Y \rlap{\textrm{,}} \\
  };

  \draw[->]  (bd) to node[above] {$[\id_X, s p]$} (X);

  \draw[cof] (bd) to (s1);

  \draw[->]   (s1) to node[below] {$p \pi$} (Y);
  \draw[tfib] (X)  to node[right] {$p$}     (Y);
  \end{tikzeq*}
  which also admits a lift $H \from X \times \simp{1} \to X$
  by \cref{ano-cof-pushout-product} of \cref{second-pushout-product} since
  $X$ is cofibrant.
  Therefore, $s$ is a deformation section of $p$.
\end{proof}

\subsection{The fibration category of Kan fibrations over a base in cofibrant simplicial sets}
\label{fibration-category}

In this final subsection of~\cref{sec:preliminaries},
we establish that if $M$ is a cofibrant simplicial set, then
the full subcategory of cofibrant simplicial sets over $M$ spanned by fibrant objects
admits a fibration category structure (\cref{fibration-category-slice-cofibrant}).
This result completes the preliminaries needed to carry out our proofs of
the existence of the constructive Kan--Quillen model structure. 
Let us begin by recalling that a \emph{fibration category} is
a category $\cat{C}$ equipped with a subcategory of \emph{weak equivalences}
(denoted by $\weto$) and
a subcategory of \emph{fibrations} (denoted by $\fto$)
subject to the axioms (F1--4) listed below;
note that a fibration is called \emph{acyclic} if it is a weak equivalence.

\begin{fibcat-axioms}
\item\label{fibcat-terminal}
  $\cat{C}$ has a terminal object $1$ and all objects are fibrant.
\item\label{fibcat-pullback}
  Pullbacks along fibrations exist in $\cat{C}$ and
  (acyclic) fibrations are stable under pullback.
\item\label{fibcat-factorisation}
  Every morphism factors as a weak equivalence followed by a fibration.
\item\label{fibcat-2-out-of-6}
  Weak equivalences satisfy the 2-out-of-6 property.
\end{fibcat-axioms}

Let $M$ be a simplicial set and $X$ and $Y$ simplicial sets over $M$.
A simplicial map $f \from X \to Y$ over $M$ is a \emph{\fhe{}} if
there is a map $g \from Y \to X$ over $M$ \st{}
$g f$ is fiberwise homotopic to $\id_X$ and
$f g$ is fiberwise homotopic to $\id_Y$.
In this subsection, we call a map in $\sSet_\cof \fslice M$ \emph{acyclic} if it is a \fhe{}.

\begin{lemma}\label{he-cotensor}
  If a simplicial map $f \from X \to Y$ is a \he{},
  then for all simplicial sets $Z$ over $M$
  the map $f \cotensor Z \from Y \cotensor Z \to X \cotensor Z$ is a \fhe{}.
\end{lemma}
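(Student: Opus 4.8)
The plan is to argue in close parallel with \cref{he-hom}: the cotensor is a simplicially enriched bifunctor, so for a fixed $Z$ over $M$ the functor $(\uvar) \cotensor Z$ transports the $\simp{1}$-homotopical structure of $\sSet$ to the fiberwise $\simp{1}$-homotopical structure of $\sSet \slice M$, and in particular carries \he{}s to \fhe{}s. The one point that is genuinely new compared to \cref{he-hom} is keeping track of fiberwise-ness over $M$, but this is handled automatically once everything is phrased in terms of the canonical simplicial enrichment of $\sSet \slice M$.

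Concretely, I would proceed in three steps. First, record that $(\uvar) \cotensor Z \from \sSet^\op \to \sSet \slice M$ is a simplicially enriched functor; this is part of the standard package coming from the tensor--cotensor adjunction $\hom_M(A \tensor W, Z) \iso \hom_M(W, A \cotensor Z)$ exhibiting $\sSet \slice M$ as tensored and cotensored over $\sSet$, and I would either cite it or check the evident coherence directly. Second, unwind the relevant notions of homotopy in enriched terms: an ordinary homotopy $X \times \simp{1} \to Y$ is the same as a map $\simp{1} \to \hom_{\simp{0}}(X, Y)$, hence a simplicial homotopy between the corresponding morphisms of $\sSet^\op$; dually, a simplicial homotopy in $\sSet \slice M$, i.e.\ a map $\simp{1} \tensor W \to W'$ over $M$, is --- since $\simp{1} \tensor W$ is $W \times \simp{1}$ with structure map factoring through the projection to $W$ --- exactly a fiberwise homotopy over $M$. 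Third, apply the functor: given a homotopy inverse $g \from Y \to X$ of $f$ together with zig-zags of homotopies $g f \htp \id_X$ and $f g \htp \id_Y$, applying the contravariant functor $(\uvar) \cotensor Z$ yields zig-zags of fiberwise homotopies exhibiting $(f \cotensor Z)(g \cotensor Z) = (g f) \cotensor Z$ as fiberwise homotopic to $\id_{X \cotensor Z}$ and $(g \cotensor Z)(f \cotensor Z) = (f g) \cotensor Z$ as fiberwise homotopic to $\id_{Y \cotensor Z}$; thus $g \cotensor Z$ is a fiberwise homotopy inverse of $f \cotensor Z$.

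The main thing to be careful about --- and the only place I would write details rather than gloss --- is the second step: identifying fiberwise homotopies over $M$ with the canonical $\simp{1}$-homotopies in the simplicial enrichment of $\sSet \slice M$, which amounts to checking that the tensor $\simp{1} \tensor W$ in $\sSet \slice M$ really is $W \times \simp{1}$ with structure map through $W$, so that the endpoint conditions and the constancy condition after composing with $W' \to M$ match up on the nose. This is routine cartesian-closed bookkeeping; it involves no constructive subtleties, and no cofibrancy or fibrancy hypotheses enter the argument at all.
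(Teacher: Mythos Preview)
Your proposal is correct and follows exactly the same approach as the paper, which simply says that the functor $(\uvar) \cotensor Z$ is simplicial and thus carries homotopies to fiberwise homotopies. Your version unpacks this one-line argument in more detail --- particularly the identification of the simplicial enrichment of $\sSet \slice M$ with fiberwise homotopies --- but the underlying reasoning is identical.
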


\begin{proof}
  The functor $\uvar \cotensor Z$ is simplicial and thus
  carries homotopies to fiberwise homotopies.
\end{proof}

Let $\sSet_\cof$ be the full subcategory of $\sSet$ spanned by
the cofibrant simplicial sets and
let $\sSet_\cof \fslice M$ be
the full subcategory of $\sSet_\cof \slice M$ spanned by
Kan fibrations over $M$. Recall the notion of a shrinkable map from \cref{sec:cofibrations}.

\begin{lemma}\label{acyclic-fibration-shrinkable}
  An acyclic Kan fibration in $\sSet_\cof \fslice M$ is shrinkable.
\end{lemma}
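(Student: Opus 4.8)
Write $p\from X\fto Y$ for the given map: it is a Kan fibration and a \fhe{} over $M$, with $X$, $Y$ cofibrant and $X\to M$, $Y\to M$ Kan fibrations. The plan is to first convert a fiberwise homotopy inverse of $p$ into an honest section $s$ with $ps=\id_Y$, and then to straighten the resulting homotopy $sp\htp\id_X$ into one that is fiberwise over $Y$. For the section, choose $q\from Y\to X$ over $M$ and fiberwise homotopies $\eta\from\id_X\htp qp$, $\epsilon\from pq\htp\id_Y$ over $M$ (single homotopies, by the remark after \cref{composite-homotopy}, since the objects are cofibrant and fibrant over $M$). The inclusion $Y\times\{0\}\ito Y\times\simp{1}$ is the pushout product of $\emptyset\to Y$ with $\horn{1,0}\ito\simp{1}$, hence a trivial cofibration by \cref{ano-cof-pushout-product} of \cref{second-pushout-product}; lifting $\epsilon$ along $p$ with $q$ as the lift over $Y\times\{0\}$ yields $\tilde\epsilon\from Y\times\simp{1}\to X$ with $\tilde\epsilon\iota_0=q$ and $p\tilde\epsilon=\epsilon$, and we set $s:=\tilde\epsilon\iota_1$.

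Set $K:=\overline{\tilde\epsilon(p\times\simp{1})}*\overline{\eta}\from X\times\simp{1}\to X$, a homotopy over $M$ from $sp$ to $\id_X$ (here $*$ is concatenation, which exists since $Y\to M$ is a Kan fibration, and $\overline{(\uvar)}$ is reversal). Using $p\tilde\epsilon=\epsilon$ one sees that $pK$ is, up to homotopy rel endpoints over $M$, the reverse of the triangle loop $\theta:=(p\eta)*(\epsilon(p\times\simp{1}))$ at $p$. In general $\theta$ need not be nullhomotopic, and this is the crux: we may arrange it to be so by first replacing $q,\eta,\epsilon$ by a \emph{coherent} (adjoint) fiberwise homotopy inverse, i.e.\ one for which one triangle identity holds up to a fiberwise homotopy rel endpoints over $M$, so that $\theta$---and hence $pK$---becomes nullhomotopic rel endpoints over $M$. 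This is the standard ``every equivalence is adjoint'' modification of $\epsilon$, built purely by composing, concatenating, reversing and whiskering the homotopies at hand; the hard part will be checking that this improvement goes through fiberwise over $M$ using only the single-step-homotopy machinery available at this point. Granting it, fix a nullhomotopy $N\from X\times\simp{1}_s\times\simp{1}_t\to Y$ of $pK$ rel endpoints, over $M$ (subscripts labelling the two $\simp{1}$-factors): thus $N$ restricts to $pK$ on the face $t=0$ and to $p$ composed with the evident projection on the faces $s=0$, $s=1$ and $t=1$.

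For the straightening, let $C:=X\times\bigl((\simp{1}_s\times\{0\}_t)\cup(\bdsimp{1}_s\times\simp{1}_t)\bigr)\ito X\times\simp{1}_s\times\simp{1}_t$; this is the pushout product of $\emptyset\to X$ with the pushout product of $\bdsimp{1}\ito\simp{1}$ and $\horn{1,0}\ito\simp{1}$, hence again a trivial cofibration by \cref{ano-cof-pushout-product} of \cref{second-pushout-product}. Define $w\from C\to X$ to be $K$ on the face $t=0$ and the constant homotopies at $sp$ and at $\id_X$ on the faces $s=0$ and $s=1$; a direct check gives $pw=N|_C$. Lifting $w$ along the Kan fibration $p$ over $N$ produces $\Phi\from X\times\simp{1}_s\times\simp{1}_t\to X$, and $H':=\Phi|_{t=1}$ is a homotopy from $sp$ to $\id_X$ with $pH'$ factoring through the projection, that is, a fiberwise homotopy over $Y$. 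Together with $ps=\id_Y$, this exhibits $s$ as a deformation section of $p$, so $p$ is shrinkable.
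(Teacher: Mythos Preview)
Your construction of the section $s$ (lifting $\epsilon$ along $p$ and evaluating at the endpoint $1$) is exactly what the paper does. The divergence is in the second half, where you aim to straighten the resulting homotopy $sp\htp\id_X$ into one that is fiberwise over $Y$.

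Your proposed route---first upgrading $(q,\eta,\epsilon)$ to a \emph{coherent} inverse so that the triangle loop $\theta$ becomes nullhomotopic, then performing a two-dimensional lift---is plausible in outline, but it has a genuine gap at precisely the point where the content lies. You write ``Granting it'' for the coherence improvement and acknowledge that ``the hard part will be checking that this improvement goes through fiberwise over $M$ using only the single-step-homotopy machinery available at this point.'' That step \emph{is} the lemma: without it, nothing has been proved beyond the existence of the section. The standard adjoint-equivalence modification of $\epsilon$ uses concatenation, reversal, and a cancellation of loops, all of which at this stage are available only as non-canonical horn fillings over $M$; each identity you assert (``$pK$ is, up to homotopy rel endpoints, $\overline{\theta}$''; the nullhomotopy $N$ with its full prescribed boundary) would need a separate verification. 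The argument may well go through, but you have not supplied it.

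The paper bypasses all of this with two applications of \cref{composite-homotopy}. After obtaining $s$ with $ps=\id_Y$, one has $sp\htp qp\htp\id_X$, hence (first application) a single homotopy $\hat G$ from $sp$ to $\id_X$. Now consider the two homotopies $sp\hat G$ and $\hat G$: both start at $sp$ (since $spsp=sp$), and $p(sp\hat G)=(ps)p\hat G=p\hat G$, so their images under the Kan fibration $p\from X\to Y$ coincide. A second application of \cref{composite-homotopy}, this time with $p$ itself as the fibration, produces a homotopy from $sp$ to $\id_X$ that is fiberwise over $Y$. No triangle identities, no two-dimensional lifting, no concatenation are needed; in fact the argument uses only that $p$ is an ordinary (not fiberwise) homotopy equivalence.
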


\begin{proof}
  Let $K$ and $L$ be objects of $\sSet_\cof \fslice M$ and
  let $p \from K \to L$ be an acyclic Kan fibration over $M$.
  Since $p$ is a \he{}, there exist a map $\tilde s \from L \to K$ and
  homotopies $\tilde H$ from $p \tilde s$ to $\id_L$ and
  $\tilde G$ from $\tilde s p$ to $\id_K$.
  The commutative square
  \begin{tikzeq*}
  \matrix[diagram,column sep={between origins,6em}]
  {
    |(0)| L \times \{ 0 \}  & |(K)| K                   \\
    |(1)| L \times \simp{1} & |(L)| L  \\
  };

  \draw[->] (0) to node[above] {$\tilde s$} (K);
  \draw[->] (1) to node[below] {$\tilde H$} (L);

  \draw[ano] (0) to (1);

  \draw[fib] (K) to node[right] {$p$} (L);
  \end{tikzeq*}
  admits a lift $H \from L \times \simp{1} \to K$
  by \cref{ano-cof-pushout-product} of \cref{second-pushout-product}.
  Set $s = H \iota_1$ so that $H$ is a homotopy from $\tilde s$ to $s$.
  Then we have $p s = p H \iota_1 = \tilde H \iota_1 = \id_L$.
  Moreover, $s p \htp \tilde s p$ so we can pick
  a direct homotopy $\hat G$ from $s p$ to $\id_K$ 
  by \cref{composite-homotopy}.
  Now, $s p \hat G$ is a homotopy from $s p s p = s p$ to $s p$ and
  we also have $p s p \hat G = p \hat G$.
  Thus by \cref{composite-homotopy} there is
  a homotopy $G$ from $p s$ to $\id_K$ over $L$.
\end{proof}

Note that the proof above only used the assumption that $p$ is a \he{}, which is
in general weaker than a \fhe{}.
This argument implies that these two notions coincide for fibrations,
but in the current section we put emphasis on \fhe{}.

\begin{lemma}\label{shrinkable-fibration-trivial}
  A shrinkable Kan fibration is trivial.
\end{lemma}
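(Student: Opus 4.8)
The plan is to produce, directly, lifts of $p \from X \fto Y$ against the boundary inclusions $\bdsimp{m} \ito \simp{m}$ (in fact against every cofibration). Fix a deformation section $s \from Y \to X$, so $p s = \id_Y$ and $s p$ is connected to $\id_X$ by a zig-zag of homotopies that are fiberwise over $Y$. I will package this zig-zag as a single map $H \from X \times J \to X$, where $J$ is the indexing zig-zag with endpoints that I also write $0$ and $1$, such that $H|_{X \times \{0\}} = s p$, $H|_{X \times \{1\}} = \id_X$ and $p H = p \pi$ with $\pi \from X \times J \to X$ the projection. (On a first reading one may pretend $J = \simp{1}$ and postpone the general zig-zag.)

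Given a lifting problem $u \from \bdsimp{m} \to X$ and $v \from \simp{m} \to Y$ with $p u = v i$, writing $i \from \bdsimp{m} \ito \simp{m}$, the key point is that $s v \from \simp{m} \to X$ is already a lift of $v$, since $p (s v) = v$; it simply need not restrict to $u$, although on the boundary it agrees with $u$ after applying $p$. I would then homotope $s v$ into place along the boundary using $H$: the composite $H \circ (u \times J) \from \bdsimp{m} \times J \to X$ restricts to $s v \circ i$ over the endpoint $0$ and to $u$ over the endpoint $1$, and its composite with $p$ is a constant homotopy. Together $s v$ and $H \circ (u \times J)$ assemble into a map $\simp{m} \push_{\bdsimp{m}} (\bdsimp{m} \times J) \to X$; the inclusion of this domain into $\simp{m} \times J$ is the pushout product of the endpoint inclusion $\simp{0} \ito J$ with $i$, and it fits into a commuting square whose bottom side is the constant homotopy $v \pi \from \simp{m} \times J \to Y$ and whose right side is $p$. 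Since the endpoint inclusion $\simp{0} \ito J$ is anodyne and $i$ is a cofibration, the left side is a trivial cofibration by \cref{ano-cof-pushout-product} of \cref{second-pushout-product}, so the square admits a diagonal filler $G \from \simp{m} \times J \to X$ because $p$ is a Kan fibration. Restricting $G$ over the endpoint $1$ yields $w \from \simp{m} \to X$ with $w i = u$ and $p w = v$, which solves the original problem; since $G$ comes from the lifting structure of $p$ and everything else is canonical, this is a choice of lifts exhibiting $p$ as a trivial fibration.

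The only step that is not a purely formal rearrangement is the bookkeeping for the zig-zag, namely the claim that an endpoint inclusion $\simp{0} \ito J$ of an arbitrary zig-zag $J$ is anodyne. For $J = \simp{1}$ it is $\horn{1,0} \ito \simp{1}$ or $\horn{1,1} \ito \simp{1}$, and in general $J$ is obtained from a shorter zig-zag $J'$ by gluing on one further copy of $\simp{1}$ at an endpoint, so an endpoint inclusion of $J$ factors either as an endpoint inclusion of $J'$ followed by a pushout of such a horn inclusion, or as a horn inclusion followed by a pushout of an endpoint inclusion of $J'$; induction on the length of $J$ then finishes the argument, using that anodyne maps are closed under pushout and composition. (Alternatively one could straighten the zig-zag into a single homotopy at the outset, but carrying $J$ along seems cleaner.) Apart from this, the one thing to watch is the orientation of $H$, which must be chosen so that the controllable endpoint value $s p$ is the one we pin down and $\id_X$ is the one we read off.
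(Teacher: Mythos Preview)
Your proof is correct and is essentially the same argument as the paper's: set $\tilde w = s v$, glue it to the homotopy $H(u \times J)$ along the pushout product $(\simp{m} \times \{0\}) \cup (\bdsimp{m} \times J) \ato \simp{m} \times J$, lift against $p$, and read off the solution at the other endpoint. Your explicit induction showing that an endpoint inclusion of $J$ is anodyne is a detail the paper leaves implicit.
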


\begin{proof}
  Assume that $p \from K \to L$ is a shrinkable fibration with a deformation section $s \from L \to K$ with $p s = \id_L$
  and a zigzag of homotopies from $s p$ to $\id_K$ over $L$.
  Let this zig-zag be represented by $ H \from K \times J \to K$ with $J$ as its indexing zig-zag.  
  Consider a cofibration $i \from A \cto B$ and the first of the following two squares.
  \begin{tikzeq*}
  \matrix[diagram]
  {
    |(A)| A & |(K)| K &[5em] |(A1)| A \times J \union B &[4em] |(K1)| K \\
    |(B)| B & |(L)| L &      |(B1)| B \times J          &      |(L1)| L \\
  };

  \draw[->]  (A) to node[above] {$u$} (K);
  \draw[cof] (A) to node[left]  {$i$} (B);

  \draw[->]  (B) to node[below] {$v$} (L);
  \draw[fib] (K) to node[right] {$p$} (L);

  \draw[->]  (A1) to node[above] {$[H u, \tilde w]$} (K1);

  \draw[ano] (A1) to (B1);

  \draw[->]  (B1) to node[below] {$v \pi$} (L1);
  \draw[fib] (K1) to node[right] {$p$}     (L1);
  \end{tikzeq*}
  Set $\tilde w = s v$.
  Then $\tilde w i = s v i = s p u$, so $H u$ is a zig-zag of homotopies from $\tilde w i$ to $u$ over $L$.
  Moreover, $p \tilde w = p s v = v$ and so the right square above commutes and admits a lift $G \from B \times J \to K$
  by \cref{ano-cof-pushout-product} of \cref{second-pushout-product} (note that the inclusion of an endpoint into $J$ is a trivial cofibration).
  Set $w = G \iota_1$.
  Then $p w = p G \iota_1 = v \pi \iota_1 = v$ and $w i = G \iota_1 i = G i \iota_1 = H u \iota_1 = u$ so that
  $w$ is a lift in the original square and hence $p$ is a trivial fibration.
\end{proof}

Alternatively, \cref{shrinkable-fibration-trivial} can be concluded as a special case of \cref{she-as-retract},
at least for shrinkable maps with homotopies indexed over $\simp{1}$, which is sufficient for our purposes (see \cref{she-for-cylinders} for the general case).
Indeed, such a shrinkable Kan fibration is a strong homotopy equivalence, hence a retract of its pullback exponential with $\{ 0 \} \to \simp{1}$,
which is a trivial fibration by \cref{second-pushout-product}.
However, we gave a self-contained proof above for the sake of clarity.

\begin{corollary}\label{fiberwise-acyclic-fibration-trivial-Kan}
  A morphism in $\sSet_\cof \fslice M$ is
  an acyclic Kan fibration \iff{}
  it is a trivial fibration.
\end{corollary}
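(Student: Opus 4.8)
The plan is to establish the two implications separately, each as a short chain of the lemmas proved in this subsection together with \cref{trivial-to-she}; no new homotopies need to be constructed by hand, so the argument is essentially bookkeeping.

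For the direction from acyclic Kan fibration to trivial fibration, I would take $p$ an acyclic Kan fibration in $\sSet_\cof \fslice M$, apply \cref{acyclic-fibration-shrinkable} to conclude that $p$ is shrinkable, and then apply \cref{shrinkable-fibration-trivial} to conclude that $p$ is a trivial fibration. This direction is immediate once those two lemmas are in hand.

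For the converse, I would start from a trivial fibration $p \from K \to L$ in $\sSet_\cof \fslice M$, recall that $p$ is by definition a trivial fibration in $\sSet$ and that $K$ and $L$ are cofibrant, and invoke \cref{trivial-fibration-shrinkable} of \cref{trivial-to-she}: $p$ is shrinkable, hence a fiberwise homotopy equivalence over its codomain $L$. It then remains to upgrade this to a fiberwise homotopy equivalence over $M$, using that the deformation section $s$ satisfies $p s = \id_L$ (so that it is a map over $M$) and that the witnessing homotopies become constant after composition with $p$, hence also after composition with the structure map $K \to M$. This shows that $p$ is acyclic in the sense of this subsection.

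I do not expect a serious obstacle here: the substantive work was done in \cref{acyclic-fibration-shrinkable} and \cref{shrinkable-fibration-trivial}. The one point deserving a moment's care is the final passage in the converse, namely checking that shrinkability of $p$ over its codomain yields a fiberwise homotopy equivalence over the base $M$; this is routine once the definitions of fiberwise homotopy and of deformation section are unwound.
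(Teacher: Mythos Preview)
Your argument is essentially the paper's own proof: the forward direction chains \cref{acyclic-fibration-shrinkable} and \cref{shrinkable-fibration-trivial}, and the converse uses \cref{trivial-fibration-shrinkable} of \cref{trivial-to-she} to get shrinkability, hence a \fhe{} over the codomain and therefore over $M$. The one small omission is that in the converse you must also record that a trivial fibration is a Kan fibration (by \cref{triv-cof-cof}), since ``acyclic Kan fibration'' requires both conditions; the paper states this explicitly.
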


\begin{proof}
  An acyclic Kan fibration is trivial
  by \cref{acyclic-fibration-shrinkable,shrinkable-fibration-trivial}.
  Conversely, a trivial fibration is a Kan fibration by \cref{triv-cof-cof}.
  Moreover, it is shrinkable by \cref{trivial-fibration-shrinkable} of \cref{trivial-to-she}.
  Hence it is a \fhe{} over its codomain and thus also over $M$.
\end{proof}


\begin{theorem}\label{fibration-category-slice-cofibrant}
  If $M$ is cofibrant, the category $\sSet_\cof \fslice M$ with \fhe{}s and (underlying) Kan fibrations is a fibration category.
  The acyclic fibrations are the trivial fibrations.
\end{theorem}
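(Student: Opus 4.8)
The plan is to verify the fibration category axioms \cref{fibcat-terminal,fibcat-pullback,fibcat-factorisation,fibcat-2-out-of-6} together with the description of acyclic fibrations, relying on the closure of cofibrant objects under finite limits (\cref{cofibrant-finite-limits}), the pushout product results of \cref{sec:two-wfss}, and \cref{fiberwise-acyclic-fibration-trivial-Kan}.

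\Cref{fibcat-terminal} is immediate: the terminal object is $\id_M \from M \to M$, which lies in $\sSet_\cof \fslice M$ since $M$ is cofibrant and identities are Kan fibrations, and every object $X \to M$ is fibrant because the map to the terminal object is its structure map. For \cref{fibcat-pullback}, given a fibration $p \from E \to B$ and a map $f \from A \to B$ in $\sSet_\cof \fslice M$, I would form the pullback $A \times_B E$ in $\sSet$: it is a finite limit of cofibrant simplicial sets, hence cofibrant by \cref{cofibrant-finite-limits}; its projection to $A$ is a pullback of the Kan fibration $p$ and hence a Kan fibration, and so is the composite to $M$; thus $A \times_B E$ lies in $\sSet_\cof \fslice M$ and is the pullback there. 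Kan fibrations are stable under pullback in $\sSet$, and trivial fibrations are stable under pullback and, by \cref{cofibrant-finite-limits}, remain between cofibrant objects; since the acyclic fibrations of $\sSet_\cof \fslice M$ are exactly the trivial fibrations by \cref{fiberwise-acyclic-fibration-trivial-Kan}, this also gives stability of acyclic fibrations and the final sentence of the theorem.

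The substantial point is \cref{fibcat-factorisation}, for which I would use the fiberwise mapping path object. Given $f \from X \to Y$ over $M$, let $PY = \simp{1} \cotensor Y$ be the cotensor by $\simp{1}$ in $\sSet \slice M$, with endpoint evaluations $d_0, d_1 \from PY \to Y$ and constant-path section $s \from Y \to PY$, so $d_0 s = d_1 s = \id_Y$. Since $PY$ is the pullback of $Y^{\simp{1}} \to M^{\simp{1}}$ along the constant-path map $M \to M^{\simp{1}}$, and exponentiation by the simplex $\simp{1}$ preserves cofibrancy by \cref{exponential-finite-cofibrant}, the object $PY$ is a finite limit of cofibrant simplicial sets, hence cofibrant by \cref{cofibrant-finite-limits}. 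The map $d_0$ is the pullback cotensor of the trivial cofibration $\iota_0 \from \{0\} \ito \simp{1}$ with $Y \to M$, hence a trivial fibration by \cref{ano-fib-pullback-cotensor} of \cref{cof-tensor-cotensor}, while $(d_0, d_1) \from PY \to Y \times_M Y$ is the pullback cotensor of the cofibration $\bdsimp{1} \ito \simp{1}$ with $Y \to M$, hence a Kan fibration by \cref{cof-fib-pullback-cotensor} of \cref{cof-tensor-cotensor}. Now put $P_f = X \times_Y PY$, the pullback of $f$ along $d_0$, with projections $q \from P_f \to X$ and $\pi \from P_f \to PY$, and set $w = (\id_X, s f) \from X \to P_f$ and $p = d_1 \pi \from P_f \to Y$, so that $f = p w$. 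Then $P_f$, a finite limit of cofibrant objects, is cofibrant with Kan fibration structure map, so $P_f \in \sSet_\cof \fslice M$; and since $P_f$ is also the pullback of $(d_0, d_1)$ along $f \times_M \id_Y$, the map $P_f \to X \times_M Y$ is a Kan fibration, so $p$ is a Kan fibration (composing with the projection $X \times_M Y \to Y$, a pullback of the structure map of $X$). Finally, $q$ is a pullback of the trivial fibration $d_0$, hence a trivial fibration between cofibrant objects, hence shrinkable with some deformation section $\sigma$ by \cref{trivial-fibration-shrinkable} of \cref{trivial-to-she}; precomposing the shrinking homotopy $\sigma q \htp \id_{P_f}$ with $w$ and using $q w = \id_X$ yields a fiberwise homotopy $\sigma \htp w$ over $M$, so $w$ is itself a deformation section of $q$ and in particular a \fhe{}.

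It remains to verify \cref{fibcat-2-out-of-6}: fiberwise homotopy over $M$ is a congruence on $\sSet_\cof \fslice M$ — visibly compatible with composition, and an equivalence relation on maps whose target has Kan fibration structure map by the fiberwise reading of \cref{composite-homotopy} — so, exactly as in the proof of \cref{he-closure}, \fhe{}s are precisely the isomorphisms of the quotient category and therefore satisfy 2-out-of-6. The step I expect to be the main obstacle is the cofibrancy of $PY$: it depends on recognising $\simp{1} \cotensor Y$ as the pullback $Y^{\simp{1}} \times_{M^{\simp{1}}} M$ and on the constructively delicate \cref{exponential-finite-cofibrant}, after which the remaining verifications are routine applications of \cref{cof-tensor-cotensor} and closure of cofibrant objects under finite limits.
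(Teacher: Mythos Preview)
Your proof is correct and follows essentially the same route as the paper. The only notable difference is in \cref{fibcat-factorisation}: the paper invokes Brown's factorisation lemma \cite{br}*{p.~421} to reduce to factoring the diagonal $K \to K \pull_M K$ via the path object $\simp{1} \cotensor K$, showing $K \to \simp{1} \cotensor K$ is a \fhe{} in one line via \cref{he-cotensor} (applied to the \he{} $\simp{1} \to \simp{0}$), whereas you construct the full mapping path space factorisation of an arbitrary $f$ and argue that $w$ is a \fhe{} via the shrinkability of $q$ --- a step that would be shorter by just applying 2-out-of-3 (from your \cref{fibcat-2-out-of-6}) to $q w = \id_X$ once you know the trivial fibration $q$ is a \fhe{}.
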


\begin{proof}
  \Cref{fibcat-terminal} holds since $\id_M$ is the terminal object and
  all objects are fibrant by definition.
  For \cref{fibcat-pullback}, Kan fibrations are stable under pullback since
  they are defined by a \rlp{}
  (and cofibrancy is preserved by \cref{cofibrant-finite-limits}).
  The same argument applies to acyclic fibrations, which are the trivial fibrations by \cref{fiberwise-acyclic-fibration-trivial-Kan}.
  To verify \cref{fibcat-factorisation} it suffices
  (by \cite{br}*{p.~421, Factorization lemma}) to
  factor the diagonal map $K \to K \pull_M K$.
  In the factorisation $K \to \simp{1} \cotensor K \to \bdsimp{1} \cotensor K \iso K \pull_M K$,
  the first map is a \fhe{} by \cref{he-cotensor} (since $\simp{1} \to \simp{0}$ is a \he{}) and
  the second one is a Kan fibration by \cref{cof-fib-pullback-cotensor} of \cref{cof-tensor-cotensor}.
  Note that $\simp{1} \cotensor K$ is cofibrant by \cref{exponential-finite-cofibrant} and \cref{cofibrant-finite-limits}.
  \Fhe{}s satisfy the 2-out-of-6 property (\cref{fibcat-2-out-of-6})
  by the same argument as in
  the proof of \cref{he-closure:2-out-of-6} of \cref{he-closure}.
\end{proof}

  \section{The model structure via the $\Ex^\infty$ functor}
  \label{sec:first-proof}

In this section we present the first proof of our main theorem.
Our approach follows closely classical
simplicial homotopy theory and readers familiar with that area will recognise
many standard concepts and ideas such as Kan's $\Ex$ functor
(following the treatment of Latch--Thomason--Wilson~\cite{ltw}),
diagonals of bisimplicial sets or Quillen's Theorem A.
Constructively, however, much of that theory is
valid only for cofibrant simplicial sets and requires more delicate arguments,
which occupy most of this section.
It is only in the final subsection where we are able to
go beyond cofibrant objects and establish enough of their properties to
construct the full Kan--Quillen model structure.

\subsection{The \whe{}s}
\label{whes}

Recall that in \cref{sec:preliminaries} we defined the fibrations and cofibrations of
the Kan--Quillen model structure.
We now move on to introduce its weak equivalences, called \emph{\whe{}s}.
However, we do not give a direct general definition.
Instead, we split it into four cases, each building on the previous one.

Some of these definitions will depend on the notion of a \emph{strong cofibrant replacement} of a simplicial set $X$,
which is a cofibrant simplicial set $\tilde X$ equipped with a trivial fibration $\tilde X \to X$.
A strong cofibrant replacement of a simplicial map $f \from X \to Y$ is
a simplicial map $\tilde f \from \tilde X \to \tilde Y$ equipped with a square
\begin{tikzeq*}
\matrix[diagram]
{
  |(tX)| \tilde X & |(X)| X \\
  |(tY)| \tilde Y & |(Y)| Y \\
};

\draw[->] (tX) to node[left]  {$\tilde f$} (tY);
\draw[->] (X)  to node[right] {$f$}        (Y);

\draw[tfib] (tX) to (X);
\draw[tfib] (tY) to (Y);
\end{tikzeq*}
where $\tilde X$ and $\tilde Y$ are cofibrant and
both horizontal maps are trivial fibrations.
A strong cofibrant replacement always exists by~\cref{thm:wfs-via-soa}.

Let $f \from X \to Y$ be a simplicial map.
\begin{axioms}[label=\axm{W}]
\item\label{whe-cof-Kan-def} If $X$ and $Y$ are cofibrant Kan complexes, then
  $f$ is a \whe{} if it is a \he{}.
\item\label{whe-Kan-def} If $X$ and $Y$ are Kan complexes, then
  $f$ is a \whe{} if it has a strong cofibrant replacement that is a \whe{}
  in the sense of \ref{whe-cof-Kan-def}.
\item\label{whe-cof-def} If $X$ and $Y$ are cofibrant, then
  $f$ is a \whe{} if for every Kan complex $K$
  the induced map $f^* \from K^Y \to K^X$ is
  a \whe{} in the sense of \ref{whe-Kan-def}.
  Note that this is a valid definition since
  $K^X$ and $K^Y$ are Kan complexes by
  \cref{cof-fib-pullback-hom} of \cref{second-pushout-product}.
\item\label{whe-arbitrary-def} If $X$ and $Y$ are arbitrary, then
  $f$ is a \whe{} if it has a strong cofibrant replacement that is a \whe{}
  in the sense of \ref{whe-cof-def}.
\end{axioms}

We collect some basic properties of definition~\ref{whe-cof-Kan-def}.

\begin{lemma}\label{whe-cof-Kan-properties}
  In the category of cofibrant Kan complexes:
  \begin{enumerate}
  \item\label{whe-cof-Kan-2-out-of-6} \whe{}s in the sense of \ref{whe-Kan-def}
    satisfy 2-out-of-6;
  \item\label{whe-cof-Kan-retract} \whe{}s in the sense of \ref{whe-Kan-def}
    are closed under retracts;
  \item\label{whe-cof-Kan-trivial-fibration}
    every trivial fibration satisfies \ref{whe-Kan-def};
  \end{enumerate}
\end{lemma}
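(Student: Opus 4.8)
The plan is to reduce all three claims to the properties of homotopy equivalences established in \cref{he-closure,trivial-to-she}, by way of the observation that, within the full subcategory of cofibrant Kan complexes, the maps satisfying \ref{whe-Kan-def} are exactly the homotopy equivalences (equivalently, the maps satisfying \ref{whe-cof-Kan-def}). Granting this, \ref{whe-cof-Kan-2-out-of-6} and \ref{whe-cof-Kan-retract} are immediate from \cref{he-closure} (2-out-of-6 and retract closure for homotopy equivalences), and \ref{whe-cof-Kan-trivial-fibration} follows since a trivial fibration between cofibrant simplicial sets is shrinkable, hence a homotopy equivalence, by \cref{trivial-fibration-shrinkable} of \cref{trivial-to-she}.

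To prove the observation, I would argue both inclusions. If $f \from X \to Y$ is a homotopy equivalence between cofibrant Kan complexes, then taking $\tilde X = X$, $\tilde Y = Y$, $\tilde f = f$ with identity trivial fibrations exhibits a strong cofibrant replacement of $f$ that is a homotopy equivalence, so $f$ satisfies \ref{whe-Kan-def}. Conversely, suppose $f$ has a strong cofibrant replacement: a homotopy equivalence $\tilde f \from \tilde X \to \tilde Y$ between cofibrant objects together with trivial fibrations $q \from \tilde X \to X$ and $r \from \tilde Y \to Y$ satisfying $f q = r \tilde f$. Since $X$ and $Y$ are cofibrant, $q$ and $r$ are trivial fibrations between cofibrant simplicial sets, hence homotopy equivalences by \cref{trivial-fibration-shrinkable} of \cref{trivial-to-she}. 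Then $f q = r \tilde f$ is a composite of homotopy equivalences and $q$ is a homotopy equivalence, so $f$ is a homotopy equivalence by the 2-out-of-3 property, which is a consequence of \cref{he-closure:2-out-of-6} of \cref{he-closure}.

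I do not anticipate any genuine difficulty. The one point that needs attention is the converse inclusion above: it uses that the legs $q$ and $r$ of the replacement are themselves homotopy equivalences, and this is exactly where the hypothesis that $X$ and $Y$ --- not merely $\tilde X$ and $\tilde Y$ --- are cofibrant enters, so that \cref{trivial-fibration-shrinkable} of \cref{trivial-to-she} applies to $q$ and $r$.
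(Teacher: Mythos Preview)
Your proposal is correct and follows the same approach as the paper: both reduce the three claims to \cref{he-closure} for \cref{whe-cof-Kan-2-out-of-6,whe-cof-Kan-retract} and to \cref{trivial-fibration-shrinkable} of \cref{trivial-to-she} for \cref{whe-cof-Kan-trivial-fibration}. The paper's proof is even terser---it simply cites those lemmas without further comment---whereas you add the explicit observation that \ref{whe-Kan-def} and \ref{whe-cof-Kan-def} agree on cofibrant Kan complexes; this extra step is harmless and arguably clarifies why the reduction is valid as the statement is worded (the same equivalence is recorded later as \cref{whe-compatible:cof-Kan-and-Kan} of \cref{whe-compatible}).
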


\begin{proof}
\Cref{whe-cof-Kan-2-out-of-6,whe-cof-Kan-retract} were verified in \cref{he-closure}.
\Cref{whe-cof-Kan-trivial-fibration} was verified in \cref{trivial-fibration-shrinkable} of \cref{trivial-to-she}.
\end{proof}

We proceed to establish the analogous properties of the remaining definitions.
Having done that, we will be able to show, in \cref{whe-compatible} below, that
these apparently different notions are mutually consistent.

\begin{lemma}\label{whe-Kan-independent}
  Definition \ref{whe-Kan-def} does not depend on the choice of strong cofibrant replacements.
\end{lemma}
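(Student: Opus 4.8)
The plan is to compare any two strong cofibrant replacements of $f$ by a square that commutes up to homotopy and whose horizontal maps are homotopy equivalences, and then appeal to the fact that homotopy equivalences between cofibrant Kan complexes are detected in the homotopy category. So fix strong cofibrant replacements $\tilde f \from \tilde X \to \tilde Y$ and $\tilde f' \from \tilde X' \to \tilde Y'$ of $f \from X \to Y$, with accompanying trivial fibrations $p \from \tilde X \to X$, $q \from \tilde Y \to Y$, $p' \from \tilde X' \to X$ and $q' \from \tilde Y' \to Y$. First I would observe that $\tilde X$, $\tilde X'$, $\tilde Y$, $\tilde Y'$ are cofibrant Kan complexes: cofibrancy is part of the data, and since a trivial fibration is a Kan fibration by \cref{triv-cof-cof} and $X$, $Y$ are Kan complexes, the composites $\tilde X \to X \to \simp{0}$ and $\tilde Y \to Y \to \simp{0}$ are Kan fibrations. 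In particular the homotopy relation between maps among these four objects is a congruence, and, as recalled in the proof of \cref{he-closure}, a map between them is a \he{} \iff{} it becomes invertible modulo homotopy; from this one obtains closure of homotopy equivalences under composition, $2$-out-of-$3$ (a consequence of \cref{he-closure:2-out-of-6} of \cref{he-closure}), and that a map homotopic to a \he{} is itself a \he{}.

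The only geometric input needed is a single lifting argument: if $g \from E \to S$ is a trivial fibration and $Z$ is cofibrant, then any two maps $u, v \from Z \to E$ with $g u = g v$ are homotopic (in fact fiberwise over $S$). Indeed, $Z \times \bdsimp{1} \to Z \times \simp{1}$ is the pushout product of $\emptyset \to Z$ with $\bdsimp{1} \to \simp{1}$, hence a cofibration by \cref{cofibration-pushout-product} of \cref{first-pushout-product}; a solution of the lifting problem of this cofibration against $g$ with top map $[u, v] \from Z \times \bdsimp{1} \to E$ and bottom map $g u \pi \from Z \times \simp{1} \to S$ is the desired homotopy.

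Next I would build the comparison maps. Since $\tilde X$ is cofibrant, lifting $p$ against the trivial fibration $p'$ gives $a \from \tilde X \to \tilde X'$ with $p' a = p$, and symmetrically there is $a' \from \tilde X' \to \tilde X$ with $p a' = p'$. Then $p (a' a) = p$ and $p' (a a') = p'$, so the lifting argument (for $g = p$ and then $g = p'$) yields $a' a \htp \id_{\tilde X}$ and $a a' \htp \id_{\tilde X'}$; hence $a$ is a \he{}. In the same way one obtains a \he{} $b \from \tilde Y \to \tilde Y'$ with $q' b = q$. Finally $q' (b \tilde f) = q \tilde f = f p = f p' a = q' (\tilde f' a)$, so the lifting argument with $g = q'$ and $Z = \tilde X$ gives $b \tilde f \htp \tilde f' a$.

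To conclude, suppose $\tilde f$ is a \he{}. Then $\tilde f' a$ is homotopic to $b \tilde f$, which is a composite of the homotopy equivalences $b$ and $\tilde f$, hence a \he{}; since $a$ is a \he{}, it follows by $2$-out-of-$3$ that $\tilde f'$ is a \he{}. By symmetry the converse holds as well, so whether a strong cofibrant replacement of $f$ is a \whe{} in the sense of~\ref{whe-cof-Kan-def} does not depend on the chosen replacement, which is the assertion of the lemma. I expect the only delicate point to be organisational, namely keeping track of the fact that all the auxiliary objects are simultaneously cofibrant and fibrant so that the homotopy relation and the quotient category behave as expected; the substantive content is confined to the one-step lifting argument of the second paragraph.
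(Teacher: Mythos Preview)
Your argument is correct. The key lifting observation---that two maps into the domain of a trivial fibration which agree after composing with it are homotopic when the source is cofibrant---is valid by \cref{cofibration-pushout-product} of \cref{first-pushout-product}, and from it you correctly extract homotopy equivalences $a$ and $b$ and a square commuting up to homotopy, after which the conclusion follows from $2$-out-of-$3$ in the homotopy category.

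The paper takes a different route: rather than building direct comparison maps between the two replacements, it constructs a common refinement. One forms the pullbacks $\tilde X \times_X \tilde X'$ and $\tilde Y \times_Y \tilde Y'$, takes a further strong cofibrant replacement $\tilde f'' \from \tilde X'' \to \tilde Y''$ of the induced map, and observes that the projections $\tilde X'' \to \tilde X$, $\tilde X'' \to \tilde X'$ (and similarly for $Y$) are trivial fibrations between cofibrant Kan complexes, hence satisfy~\ref{whe-cof-Kan-def} by \cref{whe-cof-Kan-trivial-fibration} of \cref{whe-cof-Kan-properties}; then $2$-out-of-$3$ finishes. Your approach is more elementary in that it avoids the third replacement and the pullback, at the cost of working directly with homotopies and needing the objects to be fibrant so that homotopy is a congruence. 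The paper's approach, by contrast, uses only that trivial fibrations are weak equivalences and $2$-out-of-$6$; this makes it reusable verbatim for \cref{whe-arbitrary-independent}, where the replacements are merely cofibrant (not fibrant) and your homotopy-category argument would not apply.
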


\begin{proof}
  Let $f \from X \to Y$ be a map between Kan complexes and
  let $\tilde f$ and $\tilde f'$ be strong cofibrant replacements of it.
  Form a diagram
  \begin{tikzeq*}
  \matrix[diagram]
  {
    |(tX'')| \tilde X'' & |(bX)| \bullet &                 & |(tX')| \tilde X' &         \\
                        &                & |(tX)| \tilde X &                   & |(X)| X \\
    |(tY'')| \tilde Y'' & |(bY)| \bullet &                 & |(tY')| \tilde Y' &         \\
                        &                & |(tY)| \tilde Y &                   & |(Y)| Y \\
  };

  \draw[->] (tX'') to (tY'');
  \draw[->] (bX)   to (bY);

  \draw[->] (tX') to node[above left,yshift=8pt] {$\tilde f'$} (tY');

  \draw[tfib] (tX'') to (bX);
  \draw[tfib] (bX)   to (tX');
  \draw[tfib] (tY'') to (bY);
  \draw[tfib] (bY)   to (tY');

  \draw[->,over] (tX) to node[above left,yshift=8pt] {$\tilde f$} (tY);
  \draw[->]      (X)  to node[right]      {$f$}        (Y);

  \draw[tfib,over] (tX) to (X);
  \draw[tfib]      (tY) to (Y);

  \draw[tfib] (bX)  to (tX);
  \draw[tfib] (tX') to (X);
  \draw[tfib] (bY)  to (tY);
  \draw[tfib] (tY') to (Y);
  \end{tikzeq*}
  where the top and bottom squares are pullbacks and
  the square on the left is another strong cofibrant replacement.
  Then the maps $\tilde X'' \to \tilde X$ and $\tilde X'' \to \tilde X'$
  (as well as their counterparts for $Y$) are
  trivial fibrations between cofibrant Kan complexes and hence satisfy \ref{whe-cof-Kan-def}
  by \cref{whe-cof-Kan-trivial-fibration} of \cref{whe-cof-Kan-properties}.
  Thus, by \cref{whe-cof-Kan-2-out-of-6} of \cref{whe-cof-Kan-properties},
  $\tilde f$ satisfies \ref{whe-cof-Kan-def} \iff{} $\tilde f'$ does.
\end{proof}

\begin{lemma}\label{whe-Kan-properties}
  In the category of all Kan complexes:
  \begin{enumerate}
  \item\label{whe-Kan-2-out-of-6} \whe{}s in the sense of \ref{whe-Kan-def}
    satisfy 2-out-of-6;
  \item\label{whe-Kan-retract} \whe{}s in the sense of \ref{whe-Kan-def}
    are closed under retracts;
  \item\label{whe-Kan-trivial-fibration}
    every trivial fibration satisfies \ref{whe-Kan-def};
  \item\label{whe-Kan-he} every \he{} satisfies \ref{whe-Kan-def}.
  \end{enumerate}
\end{lemma}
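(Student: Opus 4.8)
The plan is to reduce all four statements to the corresponding facts about \he{}s between cofibrant Kan complexes recorded in \cref{whe-cof-Kan-properties}, using \cref{whe-Kan-independent}: a map $f$ of Kan complexes satisfies \ref{whe-Kan-def} precisely when some (equivalently, every) strong cofibrant replacement of it is a \he{}. The real content is therefore to arrange strong cofibrant replacements compatible with whatever structure is present. I will repeatedly use that a strong cofibrant replacement $\tilde Z \tfto Z$ of a Kan complex $Z$ is again a Kan complex, since a trivial fibration is a Kan fibration by \cref{triv-cof-cof}.

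For 2-out-of-6, given a composable chain $X_0 \to X_1 \to X_2 \to X_3$ of Kan complexes with $f_2 f_1$ and $f_3 f_2$ satisfying \ref{whe-Kan-def}, I would replace the whole chain: pick a cofibrant replacement $\tilde X_0 \tfto X_0$ and recursively factor $\tilde X_i \to X_i \to X_{i+1}$ as a cofibration $\tilde X_i \cto \tilde X_{i+1}$ followed by a trivial fibration $\tilde X_{i+1} \tfto X_{i+1}$ via \cref{thm:wfs-via-soa}; since $\tilde X_0$ is cofibrant, so is every $\tilde X_i$. The composites $\tilde X_0 \to \tilde X_2$ and $\tilde X_1 \to \tilde X_3$ are strong cofibrant replacements of $f_2 f_1$ and $f_3 f_2$, hence \he{}s, so \cref{whe-cof-Kan-2-out-of-6} of \cref{whe-cof-Kan-properties} makes each map of the chain (and their total composite) a \he{}, and \cref{whe-Kan-independent} transports this back. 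For closure under retracts, I would apply the \emph{functorial} factorisation of \cref{thm:wfs-via-soa} to $\emptyset \to (\uvar)$ to get a functorial cofibrant replacement, apply it to the retract diagram in the arrow category to obtain a retract diagram of \he{}s between cofibrant Kan complexes, and conclude with \cref{whe-cof-Kan-retract} of \cref{whe-cof-Kan-properties} and \cref{whe-Kan-independent}.

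For trivial fibrations $p \from X \tfto Y$ of Kan complexes, I would build a strong cofibrant replacement $\tilde p$ that is itself a trivial fibration between cofibrant objects: pull $p$ back along a cofibrant replacement $\tilde Y \tfto Y$, then take a cofibrant replacement $\tilde X \tfto X \pull_Y \tilde Y$. Both maps out of $\tilde X$ are composites of trivial fibrations, so $\tilde p \from \tilde X \tfto \tilde Y$ is a strong cofibrant replacement of $p$, and it is a \he{} by \cref{trivial-fibration-shrinkable} of \cref{trivial-to-she}. For a \he{} $f \from X \to Y$ of Kan complexes with homotopy inverse $g$, I would take an arbitrary strong cofibrant replacement $\tilde f \from \tilde X \to \tilde Y$ with trivial fibrations $a,b$ to $X,Y$, lift $g b$ through $a$ to $\tilde g \from \tilde Y \to \tilde X$ (using that $\tilde Y$ is cofibrant and $a$ is a trivial fibration), and then deduce $\tilde g \tilde f \htp \id$ and $\tilde f \tilde g \htp \id$ from the identities $a \tilde g \tilde f = g f a \htp a$ and $b \tilde f \tilde g = f g b \htp b$ by lifting these homotopies against $a$ and $b$ along the cofibrations $\tilde X \times \bdsimp{1} \cto \tilde X \times \simp{1}$ and $\tilde Y \times \bdsimp{1} \cto \tilde Y \times \simp{1}$; here one uses that homotopies from a cofibrant object into a Kan complex may be taken single-step (as after \cref{composite-homotopy}).

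I expect the main obstacle to be the constructive handling of cofibrancy. Classically one would simply pull back or lift and be done, but here a trivial fibration with cofibrant codomain need not have cofibrant domain, which is why the trivial-fibration case needs the extra cofibrant-replacement step and the retract case must go through a genuinely functorial replacement rather than ad hoc choices. The other delicate point is the homotopy-lifting argument for \he{}s: because $X$ and $Y$ are not assumed cofibrant, the trivial fibrations $a$ and $b$ are not known to be \he{}s, so 2-out-of-3 is unavailable and the homotopies must be lifted by hand against the trivial fibrations.
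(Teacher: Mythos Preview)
Your proposal is correct. For \cref{whe-Kan-2-out-of-6,whe-Kan-retract} your approach matches the paper's (the paper phrases both uniformly via the functorial replacement of \cref{thm:wfs-via-soa}, whereas you build the chain for 2-out-of-6 by hand, but this is cosmetic). For \cref{whe-Kan-trivial-fibration} the paper takes a shortcut you missed: given a trivial fibration $p \from X \tfto Y$, choose a strong cofibrant replacement $\tilde X \tfto X$; then the composite $\tilde X \tfto X \tfto Y$ is itself a trivial fibration from a cofibrant object, so the \emph{identity} on $\tilde X$ serves as a strong cofibrant replacement of $p$ (with $\tilde Y = \tilde X$), and an identity is trivially a \he{}. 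Your pullback-then-replace construction also works but is unnecessary. For \cref{whe-Kan-he} the paper takes a quite different, more conceptual route: it first observes that the path-object projections $X^{\simp{1}} \to X$ are trivial fibrations (hence satisfy \ref{whe-Kan-def} by the just-proved \cref{whe-Kan-trivial-fibration}), deduces that any two homotopic maps are simultaneously \whe{}s, and then concludes via 2-out-of-6 from $gf \htp \id$ and $fg \htp \id$. Your direct homotopy-lifting argument is valid and arguably more elementary, but the paper's approach avoids the by-hand lifting and instead reuses \cref{whe-Kan-trivial-fibration,whe-Kan-2-out-of-6}; this is also why the paper's ordering proves \cref{whe-Kan-trivial-fibration} before \cref{whe-Kan-he}.
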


\begin{proof}
  For \cref{whe-Kan-2-out-of-6,whe-Kan-retract}, it follows from functorial cofibrant replacement (as given by \cref{thm:wfs-via-soa}) and \cref{whe-Kan-independent} that \whe{}s in the sense of~\ref{whe-Kan-def} are created by a functor from \whe{}s in the sense of~\ref{whe-cof-Kan-def}.
  The latter satisfy 2-out-of-6 and closure under retracts by \cref{whe-cof-Kan-2-out-of-6,whe-cof-Kan-retract} of \cref{whe-cof-Kan-properties}, hence so do the former.

  For \cref{whe-Kan-trivial-fibration}, if $f \from X \to Y$ is a trivial fibration between Kan complexes, pick a strong cofibrant replacement
  $\tilde{X} \tfto X$. The identity on $\tilde{X}$ is a strong cofibrant replacement of $f$, which therefore satisfies \ref{whe-Kan-def}.

  For \cref{whe-Kan-he}, first note that the endpoint projections $X^{\simp{1}} \to X$ of path objects are trivial fibrations by \cref{ano-fib-pullback-hom} of \cref{second-pushout-product}, hence weak homotopy equivalences by \cref{whe-Kan-trivial-fibration}.
  Next, every map homotopic to a weak homotopy equivalence is a weak homotopy equivalence.
  For this, we use 2-out-of-3 to see that in every diagram
  \begin{tikzeq*}
  \matrix[diagram]
  {
             & |(X)| X             &          \\
    |(Y0)| Y & |(Yp)| Y^{\simp{1}} & |(Y1)| Y \\
  };

  \draw[->] (X) to (Y0);
  \draw[->] (X) to (Y1);
  \draw[->] (X) to (Yp);

  \draw[->] (Yp) to node[below] {$\weq$} (Y0);
  \draw[->] (Yp) to node[below] {$\weq$} (Y1);
  \end{tikzeq*}
  if one of the maps $X \to Y$ is a weak homotopy equivalence, then so is the other.
  Finally, let $f \from X \to Y$ be a map with a homotopy inverse $g$.
  Since $g f$ and $f g$ are homotopic to identities, they are weak homotopy equivalences.
  Then $f$ is a weak homotopy equivalence by 2-out-of-6.
\end{proof}

\begin{remark}
One may avoid using functoriality of cofibrant replacements in the proof of \cref{whe-Kan-properties} by using Reedy cofibrant replacements.
For this, we note that the indexing categories of the respective diagrams ($[3]$ in \cref{whe-Kan-2-out-of-6} and the walking retract in \cref{whe-Kan-retract}) are Reedy categories.
\end{remark}

\begin{lemma}\label{whe-cof-properties}
  In the category of cofibrant simplicial sets:
  \begin{enumerate}
  \item \label{whe-cof-2-out-of-6} \whe{}s in the sense of \ref{whe-cof-def} satisfy 2-out-of-6;
  \item \label{whe-cof-retract} \whe{}s in the sense of \ref{whe-cof-def} are closed under retracts;
  \item \label{whe-cof-trivial-fibration} every trivial fibration satisfies \ref{whe-cof-def};
  \item \label{whe-cof-he} every \he{} satisfies \ref{whe-cof-def}.
  \end{enumerate}
\end{lemma}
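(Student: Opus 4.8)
The plan is to deduce each clause from the corresponding clause of \cref{whe-Kan-properties} by transporting it along the contravariant exponential functor $K^{(-)}$, using that, by definition, $f$ is a \whe{} in the sense of \ref{whe-cof-def} exactly when $f^* \colon K^Y \to K^X$ is a \whe{} in the sense of \ref{whe-Kan-def} for every Kan complex $K$. Throughout we use that if $X$ is cofibrant and $K$ is a Kan complex then $K^X$ is again a Kan complex by \cref{cof-fib-pullback-hom} of \cref{second-pushout-product}, so every map $f^*$ appearing below lies between Kan complexes and \cref{whe-Kan-properties} applies to it.

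I would begin with \cref{whe-cof-he}, the simplest: if $f \colon X \to Y$ is a \he{} between cofibrant objects, then for any Kan complex $K$ the map $f^* \colon K^Y \to K^X$ is a \he{} by \cref{he-hom}, hence a \whe{} in the sense of \ref{whe-Kan-def} by \cref{whe-Kan-he} of \cref{whe-Kan-properties}; as $K$ was arbitrary, $f$ satisfies \ref{whe-cof-def}. \Cref{whe-cof-trivial-fibration} then follows immediately, since a trivial fibration between cofibrant objects is shrinkable, in particular a \he{}, by \cref{trivial-fibration-shrinkable} of \cref{trivial-to-she}.

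Next, for \cref{whe-cof-2-out-of-6} I would take composable maps $X \xrightarrow{f} Y \xrightarrow{g} Z \xrightarrow{h} W$ of cofibrant objects with $gf$ and $hg$ satisfying \ref{whe-cof-def}, fix a Kan complex $K$, and apply $K^{(-)}$ to obtain the composable triple $K^W \xrightarrow{h^*} K^Z \xrightarrow{g^*} K^Y \xrightarrow{f^*} K^X$ of maps between Kan complexes, in which $g^* h^* = (hg)^*$ and $f^* g^* = (gf)^*$ are \whe{}s in the sense of \ref{whe-Kan-def}. \Cref{whe-Kan-2-out-of-6} of \cref{whe-Kan-properties} applied to this triple gives that $h^*$, $g^*$, $f^*$ and $(hgf)^* = f^* g^* h^*$ are \whe{}s in the sense of \ref{whe-Kan-def}, so $f$, $g$, $h$ and $hgf$ satisfy \ref{whe-cof-def}. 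For \cref{whe-cof-retract} I would, given a retract diagram exhibiting $f$ as a retract of some $f'$ satisfying \ref{whe-cof-def}, apply $K^{(-)}$ to it to exhibit $f^*$ as a retract of $(f')^*$ in the arrow category of Kan complexes, and then invoke \cref{whe-Kan-retract} of \cref{whe-Kan-properties}.

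There is no real obstacle here; the only thing to watch is that $K^{(-)}$ reverses arrows, so in the proof of \cref{whe-cof-2-out-of-6} the composable triple must be relabelled before applying \cref{whe-Kan-2-out-of-6} — which is harmless, since the 2-out-of-6 property of a class of maps is preserved under passing to the opposite category, and the same self-duality makes the retract argument go through verbatim.
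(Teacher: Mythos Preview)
Your proposal is correct and follows essentially the same approach as the paper: reduce \cref{whe-cof-2-out-of-6,whe-cof-retract} to the corresponding parts of \cref{whe-Kan-properties} via the functors $K^{(-)}$, prove \cref{whe-cof-he} first using \cref{he-hom} and \cref{whe-Kan-he}, and then deduce \cref{whe-cof-trivial-fibration} from \cref{whe-cof-he} via the fact that trivial fibrations between cofibrant objects are \he{}s. The paper is terser about \cref{whe-cof-2-out-of-6,whe-cof-retract} and cites \cref{whe-cof-Kan-trivial-fibration} of \cref{whe-cof-Kan-properties} rather than \cref{trivial-to-she} directly, but the content is the same.
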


\begin{proof}
  \Cref{whe-cof-2-out-of-6,whe-cof-retract} reduce to the respective parts of \cref{whe-Kan-properties}.
  We prove \cref{whe-cof-he} before dealing with \cref{whe-cof-trivial-fibration}.
    If $f \from X \to Y$ is a \he{} with $X$ and $Y$ are cofibrant and $K$ is a Kan complex, then $f^* \from K^Y \to K^X$ is a \he{} by \cref{he-hom}.
    Thus, it satisfies \ref{whe-Kan-def} by \cref{whe-Kan-he} of \cref{whe-Kan-properties}, and so $f$ satisfies \ref{whe-cof-def}.
  For \cref{whe-cof-trivial-fibration}, a trivial fibration between cofibrant simplicial sets is a \he{}
    by \cref{whe-cof-Kan-trivial-fibration} of \cref{whe-cof-Kan-properties} and thus
    it satisfies \ref{whe-cof-def} by \cref{whe-cof-he}.
\end{proof}

\begin{lemma}\label{whe-arbitrary-independent}
  Definition \ref{whe-arbitrary-def} does not depend on the choice of strong cofibrant replacements.
\end{lemma}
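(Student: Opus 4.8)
The argument follows the same pattern as the proof of \cref{whe-Kan-independent}, with \ref{whe-cof-Kan-def} and \cref{whe-cof-Kan-properties} replaced throughout by \ref{whe-cof-def} and \cref{whe-cof-properties}. Let $f \from X \to Y$ be a simplicial map and let $\tilde f \from \tilde X \to \tilde Y$ and $\tilde f' \from \tilde X' \to \tilde Y'$ be two strong cofibrant replacements of $f$. The plan is to show that $\tilde f$ satisfies \ref{whe-cof-def} \iff{} $\tilde f'$ does.

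First I would form the pullbacks $P = \tilde X \pull_X \tilde X'$ and $Q = \tilde Y \pull_Y \tilde Y'$. By \cref{cofibrant-finite-limits} both $P$ and $Q$ are cofibrant (this is the point where constructivity matters, since classically every object is cofibrant). Each of the four projections $P \to \tilde X$, $P \to \tilde X'$, $Q \to \tilde Y$, $Q \to \tilde Y'$ is a trivial fibration, being a pullback of the structure map of one of the given replacements; composing these with the structure maps shows that $P \to X$ and $Q \to Y$ are trivial fibrations as well. Since $\tilde f$ and $\tilde f'$ are compatible with the structure maps to $X$ and to $Y$, the composites $P \to \tilde X \to \tilde Y$ and $P \to \tilde X' \to \tilde Y'$ agree after postcomposition with the maps to $Y$, and therefore induce a map $h \from P \to Q$. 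A routine diagram chase with the universal property of $Q$ shows that $h$, together with $P \to X$ and $Q \to Y$, is a strong cofibrant replacement of $f$ and that the square
\begin{tikzeq*}
\matrix[diagram]
{
  |(P)|  P        & |(Q)|  Q                        \\
  |(tX)| \tilde X & |(tY)| \tilde Y \rlap{\text{,}} \\
};
\draw[->]   (P)  to node[above] {$h$}        (Q);
\draw[->]   (tX) to node[below] {$\tilde f$} (tY);
\draw[tfib] (P)  to (tX);
\draw[tfib] (Q)  to (tY);
\end{tikzeq*}
as well as its analogue with $\tilde X$, $\tilde Y$, $\tilde f$ replaced by $\tilde X'$, $\tilde Y'$, $\tilde f'$, both commute.

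The vertical maps $P \to \tilde X$, $Q \to \tilde Y$, $P \to \tilde X'$, $Q \to \tilde Y'$ are trivial fibrations between cofibrant simplicial sets, hence satisfy \ref{whe-cof-def} by \cref{whe-cof-trivial-fibration} of \cref{whe-cof-properties}. By the 2-out-of-3 property for \whe{}s in the sense of \ref{whe-cof-def} — which follows from the 2-out-of-6 property of \cref{whe-cof-2-out-of-6} of \cref{whe-cof-properties} together with the fact that identities satisfy \ref{whe-cof-def} by \cref{whe-cof-he} of \cref{whe-cof-properties} — the first square shows that $\tilde f$ satisfies \ref{whe-cof-def} \iff{} $h$ does, and the primed square shows that $\tilde f'$ satisfies \ref{whe-cof-def} \iff{} $h$ does. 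Chaining the two equivalences yields the claim. The only part requiring real care is the construction of $h$ and the verification that these two squares commute; once that is done, everything is a direct application of the results already established.
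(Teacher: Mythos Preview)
Your argument has a genuine gap at the point where you invoke \cref{cofibrant-finite-limits} to conclude that the pullbacks $P = \tilde X \pull_X \tilde X'$ and $Q = \tilde Y \pull_Y \tilde Y'$ are cofibrant. That lemma (via its proof through \cref{cofibrant-degeneracy} and \cref{decidable-closure}) asserts that a finite limit of a diagram of cofibrant objects is cofibrant; but the cospan $\tilde X \to X \leftarrow \tilde X'$ has the arbitrary simplicial set $X$ at its apex, and $X$ is not assumed cofibrant. Concretely, to check that a degeneracy operator acts by a decidable inclusion on $P_n = \tilde X_n \pull_{X_n} \tilde X'_n$, the argument of \cref{decidable-limit} of \cref{decidable-closure} requires the degeneracy to act by a decidable inclusion on each of $\tilde X$, $X$, and $\tilde X'$, and the middle one fails in general. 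So you cannot conclude that $P$ and $Q$ are cofibrant, and hence cannot apply \cref{whe-cof-properties} to the projections.

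The paper's proof of \cref{whe-Kan-independent}, which you are meant to imitate, handles exactly this issue: it forms the pullbacks (the bullets in that diagram) but does \emph{not} claim they are cofibrant; instead it takes a further strong cofibrant replacement $\tilde X'' \tfto \bullet$, $\tilde Y'' \tfto \bullet$ of the induced map between the bullets. The composites $\tilde X'' \to \tilde X$, $\tilde X'' \to \tilde X'$ (and their $Y$-counterparts) are then trivial fibrations between cofibrant objects, so \cref{whe-cof-trivial-fibration} of \cref{whe-cof-properties} applies and the 2-out-of-3 argument goes through. Your proof is easily repaired by inserting this extra cofibrant replacement step, but as written it does not go through.
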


\begin{proof}
  This follows by the same argument as \cref{whe-Kan-independent}, using
  \cref{whe-cof-properties} instead of \cref{whe-cof-Kan-properties}.
\end{proof}

\begin{lemma}\label{whe-arbitrary-properties}
  In the category of all simplicial sets:
  \begin{enumerate}
  \item \label{whe-2-out-of-6} \whe{}s in the sense of \ref{whe-arbitrary-def} satisfy 2-out-of-6;
  \item \label{whe-retract} \whe{}s in the sense of \ref{whe-arbitrary-def} are closed under retracts;
  \item \label{whe-trivial-fibration} every trivial fibration satisfies \ref{whe-arbitrary-def};
  \item \label{whe-he} every \he{} satisfies \ref{whe-arbitrary-def}.
  \end{enumerate}
\end{lemma}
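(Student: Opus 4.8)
The plan is to mirror the proofs of \cref{whe-Kan-properties,whe-cof-properties}: here the layer \ref{whe-cof-def} plays the role that \ref{whe-cof-Kan-def} played for \ref{whe-Kan-def}, and the functorial strong cofibrant replacement furnished by \cref{thm:wfs-via-soa}, together with \cref{whe-arbitrary-independent}, lets us push everything down one level. For \cref{whe-2-out-of-6,whe-retract} I would argue exactly as for \cref{whe-Kan-2-out-of-6,whe-Kan-retract} of \cref{whe-Kan-properties}: functorial strong cofibrant replacement is a functor $\sSet \to \sSet_\cof$, and, applying it pointwise to diagrams indexed by $[3]$ (respectively the walking retract), \ref{whe-arbitrary-def} and \cref{whe-arbitrary-independent} show that such a diagram consists of \whe{}s in the sense of \ref{whe-arbitrary-def} if and only if its image consists of \whe{}s in the sense of \ref{whe-cof-def}. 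Since the latter class satisfies 2-out-of-6 and closure under retracts by \cref{whe-cof-2-out-of-6,whe-cof-retract} of \cref{whe-cof-properties}, so does the former.

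For \cref{whe-trivial-fibration}, given a trivial fibration $f \from X \to Y$ I would pick a strong cofibrant replacement $p \from \tilde X \tfto X$ and note that $\id_{\tilde X}$ is a strong cofibrant replacement of $f$, its comparison square having legs $p$ and $f p$ — the latter a trivial fibration, being a composite of trivial fibrations. As $\id_{\tilde X}$ is a \he{}, it satisfies \ref{whe-cof-def} by \cref{whe-cof-he} of \cref{whe-cof-properties}, so $f$ satisfies \ref{whe-arbitrary-def}.

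For \cref{whe-he}, given a \he{} $f \from X \to Y$ with homotopy inverse $g$, I would take a functorial strong cofibrant replacement $\tilde f \from \tilde X \to \tilde Y$ with comparison trivial fibrations $p_X \from \tilde X \tfto X$ and $p_Y \from \tilde Y \tfto Y$, and show that $\tilde f$ is again a \he{}; it then satisfies \ref{whe-cof-def} by \cref{whe-cof-he}, so $f$ satisfies \ref{whe-arbitrary-def}. Using that $\tilde Y$ is cofibrant and $p_X$ is a trivial fibration, lift $g p_Y \from \tilde Y \to X$ along $p_X$ to obtain $h \from \tilde Y \to \tilde X$ with $p_X h = g p_Y$. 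Whiskering a zig-zag of homotopies witnessing $g f \htp \id_X$ with $p_X$ produces a zig-zag $\tilde X \times J \to X$ from $g f p_X = p_X h \tilde f$ to $p_X = p_X \id_{\tilde X}$; this zig-zag, together with $[h \tilde f, \id_{\tilde X}]$ on $\tilde X \times \bd J$, constitutes a lifting problem of $\tilde X \times \bd J \to \tilde X \times J$ against $p_X$ whose solution is a homotopy $h \tilde f \htp \id_{\tilde X}$. The map $\tilde X \times \bd J \to \tilde X \times J$ is the pushout product of $\emptyset \to \tilde X$ with $\bd J \to J$, hence a cofibration by \cref{cofibration-pushout-product} of \cref{first-pushout-product}, so the required lift exists. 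The symmetric argument over $p_Y$ gives $\tilde f h \htp \id_{\tilde Y}$, completing the proof that $\tilde f$ is a \he{}.

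I expect \cref{whe-he} to be the delicate point. The naive attempt — deduce that $\tilde f$ is a \he{} from 2-out-of-3 applied to the comparison square — is unavailable, since $p_X \from \tilde X \to X$ need not be a \he{} when $X$ is not cofibrant (it need not even admit a section). Instead one transports homotopies across $p_X$ using its lifting property against cofibrations, which relies on the small but constructively non-vacuous fact that $\bd J \to J$ is a cofibration for every indexing zig-zag $J$ (for instance because it is a levelwise decidable inclusion into the cofibrant simplicial set $J$, by \cref{cofibration-levelwise-decidable}).
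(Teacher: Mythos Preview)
Your proof is correct. Parts \ref{whe-2-out-of-6}, \ref{whe-retract}, and \ref{whe-trivial-fibration} match the paper's argument exactly.

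For \cref{whe-he} you take a genuinely different route. The paper does not attempt to lift the homotopy equivalence data through the cofibrant replacement. Instead it shows, for any $X$, that the endpoint projection $X^{\simp{1}} \to X$ satisfies \ref{whe-arbitrary-def}: if $\tilde X \tfto X$ is a strong cofibrant replacement then so is $\tilde X^{\simp{1}} \tfto X^{\simp{1}}$ (by \cref{exponential-finite-cofibrant} and \cref{cof-tcof-pullback-hom} of \cref{first-pushout-product}), and $\tilde X^{\simp{1}} \to \tilde X$ is a \he{} hence satisfies \ref{whe-cof-def}. Once the path-object projections are known to be \whe{}s, the argument of \cref{whe-Kan-he} of \cref{whe-Kan-properties} applies verbatim (homotopic maps are simultaneously \whe{}s, then 2-out-of-6 finishes). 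Your direct lifting argument is more elementary in that it avoids the exponential machinery, trading it for the zig-zag lifting computation and the check that $\bd J \to J$ is a cofibration; the paper's argument is more modular, isolating as a reusable lemma that path-object projections are \whe{}s for arbitrary (not necessarily cofibrant) $X$.
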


\begin{proof}
  \Cref{whe-2-out-of-6,whe-retract,whe-trivial-fibration} follow by the arguments for the respective parts of \cref{whe-Kan-properties}, with \cref{whe-cof-properties,whe-arbitrary-independent} used in the place of \cref{whe-cof-Kan-properties,whe-Kan-independent}.
  For \cref{whe-he}, pick a strong cofibrant replacement $\tilde X \to X$.
    Then the square
    \begin{tikzeq*}
    \matrix[diagram,column sep={between origins,6em}]
    {
      |(tX1)| \tilde X^{\simp{1}} & |(X1)| X^{\simp{1}} \\
      |(tX)|  \tilde X            & |(X)|  X            \\
    };

    \draw[tfib] (tX1) to (X1);
    \draw[tfib] (tX)  to (X);

    \draw[->] (tX1) to (tX);
    \draw[->] (X1)  to (X);
    \end{tikzeq*}
    is a strong cofibrant replacement of the projection $X^{\simp{1}} \to X$ (for either endpoint)
    by \cref{exponential-finite-cofibrant} and
    \cref{cof-tcof-pullback-hom} of \cref{first-pushout-product}.
    The projection $\tilde X^{\simp{1}} \to \tilde X$ satisfies \ref{whe-cof-def}
    by \cref{whe-cof-he} of \cref{whe-cof-properties} since it is a homotopy equivalence by \cref{he-hom},
    so $X^{\simp{1}} \to X$ satisfies \ref{whe-arbitrary-def} (and similarly for $Y$).
    Therefore, $f$ also satisfies \ref{whe-arbitrary-def} by \cref{whe-2-out-of-6} and the argument of \cref{whe-Kan-he} of \cref{whe-Kan-properties}.
    \end{proof}

\begin{proposition}\label{whe-compatible}
  Let $f \from X \to Y$ be a simplicial map.
  \begin{enumerate}
  \item \label{whe-compatible:cof-Kan-and-Kan}
    If $X$ and $Y$ are cofibrant Kan complexes, then
    $f$ satisfies \ref{whe-cof-Kan-def} \iff{} it satisfies \ref{whe-Kan-def}.
  \item \label{whe-compatible:cof-Kan-and-cof}
    If $X$ and $Y$ are cofibrant Kan complexes, then
    $f$ satisfies \ref{whe-cof-Kan-def} \iff{} it satisfies \ref{whe-cof-def}.
  \item \label{whe-compatible:Kan-and-arbitrary}
    If $X$ and $Y$ are Kan complexes, then
    $f$ satisfies \ref{whe-Kan-def} \iff{} it satisfies \ref{whe-arbitrary-def}.
  \item \label{whe-compatible:cof-and-arbitrary}
    If $X$ and $Y$ are cofibrant simplicial sets, then
    $f$ satisfies \ref{whe-cof-def} \iff{} it satisfies \ref{whe-arbitrary-def}.
  \end{enumerate}
\end{proposition}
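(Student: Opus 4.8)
The plan is to reduce all four equivalences to the single substantial one, the ``only if'' direction of \cref{whe-compatible:cof-Kan-and-cof}; everything else is bookkeeping with strong cofibrant replacements.

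I would start with \cref{whe-compatible:cof-and-arbitrary}: since $X$ and $Y$ are cofibrant, the pair of identity maps is a strong cofibrant replacement of $f$, so \ref{whe-cof-def} immediately witnesses \ref{whe-arbitrary-def}, while the converse is \cref{whe-arbitrary-independent}. For \cref{whe-compatible:cof-Kan-and-Kan}, the forward direction is again witnessed by the identity replacement; conversely, given a strong cofibrant replacement $\tilde f \from \tilde X \to \tilde Y$ that is a \he{}, I would observe that $\tilde X$ and $\tilde Y$ are cofibrant Kan complexes — Kan because the legs are trivial fibrations, hence fibrations by \cref{triv-cof-cof}, over the Kan complexes $X$ and $Y$ — so that the legs $\tilde X \tfto X$ and $\tilde Y \tfto Y$ are \he{}s by \cref{trivial-fibration-shrinkable} of \cref{trivial-to-she}; the 2-out-of-3 property of \he{}s, which holds (as in the proof of \cref{he-closure}) because they are the isomorphisms of the homotopy category, then forces $f$ to be a \he{}. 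Finally, \cref{whe-compatible:Kan-and-arbitrary} reduces to \cref{whe-compatible:cof-Kan-and-cof}: choosing a strong cofibrant replacement $\tilde f$ of $f$, the objects $\tilde X, \tilde Y$ are cofibrant Kan complexes, so \cref{whe-compatible:cof-Kan-and-cof} says $\tilde f$ is a \he{} \iff{} it satisfies \ref{whe-cof-def}, and \cref{whe-Kan-independent,whe-arbitrary-independent} upgrade this to \ref{whe-Kan-def} \iff{} \ref{whe-arbitrary-def}.

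The content is the ``only if'' direction of \cref{whe-compatible:cof-Kan-and-cof}, the ``if'' direction being \cref{whe-cof-he} of \cref{whe-cof-properties}. The key lemma I would prove first is that any \whe{} in the sense of \ref{whe-Kan-def} between Kan complexes induces a bijection on $\pi_0$, the set of vertices modulo the equivalence relation generated by edges. For this: $\pi_0$ sends homotopic maps to the same function, so homotopy equivalences induce bijections; trivial fibrations induce bijections because they are surjective on vertices (lifting against $\emptyset \to \simp{0}$) and injective on $\pi_0$ — given vertices with a zig-zag of edges joining their images, one lifts the zig-zag edge by edge (lifting against $\{0\} \ito \simp{1}$ and $\{1\} \ito \simp{1}$) and finishes inside a fibre over a vertex, which is a trivial fibration over $\simp{0}$ and hence connected; and a \whe{} in the sense of \ref{whe-Kan-def} sits, by definition, in a commuting square whose remaining three maps are of these two kinds. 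Granting the lemma, suppose $f$ satisfies \ref{whe-cof-def}. Then for every Kan complex $K$ the map $f^* \from K^Y \to K^X$ is a \whe{} in the sense of \ref{whe-Kan-def} between Kan complexes — the latter by \cref{cof-fib-pullback-hom} of \cref{second-pushout-product} — hence a bijection on $\pi_0$; since $\pi_0(K^W)$ is the set $[W, K]$ of homotopy classes of maps $W \to K$ (its vertices being such maps, its edges being homotopies), we get that $f^* \from [Y, K] \to [X, K]$ is a bijection for every Kan $K$. Taking $K = X$ yields $g \from Y \to X$ with $g f \htp \id_X$; taking $K = Y$, the classes of $f g$ and of $\id_Y$ are both sent to the class of $f$ by the injection $[Y, Y] \to [X, Y]$ — here using $g f \htp \id_X$ and, as recalled in \cref{he-closure}, that the homotopy relation is a congruence, so that $f g f \htp f$ — whence $f g \htp \id_Y$, and $f$ is a \he{}.

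The step I expect to be the main obstacle is precisely this final one. One cannot simply apply \cref{whe-compatible:cof-Kan-and-Kan} to $f^* \from K^Y \to K^X$, since $K^X$ and $K^Y$ need not be cofibrant; the detection of homotopy equivalences therefore has to be carried out at the level of $\pi_0$ of these merely-Kan function complexes, and in particular one must verify with some care, in the constructive setting, that trivial fibrations induce bijections on $\pi_0$ — surjectivity being immediate, but injectivity requiring the edgewise lifting of zig-zags together with connectedness of fibres over vertices.
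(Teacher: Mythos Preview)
Your proof is correct; the bookkeeping for \cref{whe-compatible:cof-Kan-and-Kan,whe-compatible:Kan-and-arbitrary,whe-compatible:cof-and-arbitrary} matches the paper's (which simply cites \cref{whe-Kan-independent,whe-arbitrary-independent} and says \cref{whe-compatible:Kan-and-arbitrary} follows from \cref{whe-compatible:cof-Kan-and-cof}), and your argument for the hard direction of \cref{whe-compatible:cof-Kan-and-cof} goes through.

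The only genuine difference is how that hard direction is organised. The paper does not isolate a $\pi_0$ lemma. Instead, it works directly with chosen strong cofibrant replacements $u_X \from E_X \tfto X^X$ and $u_Y \from E_Y \tfto X^Y$ of $f^* \from X^Y \to X^X$: picking a vertex $i \in E_X$ with $u_X i = \id_X$ and a homotopy inverse $\phi$ of $\tilde f$, it sets $g = u_Y \phi i$ and verifies $g f = f^* g = u_X \tilde f \phi i \htp u_X i = \id_X$ by tracing through the square; a second pass with $K = Y$ gives $f g \htp \id_Y$. Your route packages exactly the same manipulation as the statement that $\pi_0$ of a \ref{whe-Kan-def}-equivalence is a bijection, identifies $\pi_0(K^W)$ with $[W,K]$, and finishes by Yoneda. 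The paper's version avoids having to discuss $\pi_0$ (and in particular the zig-zag--lifting argument for injectivity under trivial fibrations) at the cost of a slightly more ad hoc calculation; your version buys a reusable lemma and a cleaner endgame, at the cost of that extra verification. Neither approach needs $K^X$ or $K^Y$ to be cofibrant, which --- as you correctly flag --- is the reason one cannot simply invoke \cref{whe-compatible:cof-Kan-and-Kan} on $f^*$.
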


\begin{proof}
  \Cref{whe-compatible:cof-Kan-and-Kan,whe-compatible:cof-and-arbitrary} follow from \cref{whe-Kan-independent,whe-arbitrary-independent}, respectively.
  \Cref{whe-compatible:Kan-and-arbitrary} is a consequence of \cref{whe-compatible:cof-Kan-and-Kan}.

  It remains to check \cref{whe-compatible:cof-Kan-and-cof}.
  If $f$ satisfies \ref{whe-cof-Kan-def}, then
  it satisfies \ref{whe-cof-def} by \cref{he-hom} and
  \cref{whe-cof-he} of \cref{whe-cof-properties}.
  Conversely, assume that $f$ satisfies \ref{whe-cof-def} and
  pick a strong cofibrant replacement of $f^* \from X^Y \to X^X$:
  \begin{tikzeq*}
  \matrix[diagram]
  {
    |(EY)| E_Y & |(XY)| X^Y \\
    |(EX)| E_X & |(XX)| X^X \\
  };

  \draw[->] (EY) to node[left]  {$\tilde f$} (EX);
  \draw[->] (XY) to node[right] {$f^*$}      (XX);

  \draw[tfib] (EY) to node[above] {$u_Y$} (XY);
  \draw[tfib] (EX) to node[below] {$u_X$} (XX);
  \end{tikzeq*}
  so that $\tilde f \from E_Y \to E_X$ is a \he{}.
  Let $\phi \from E_X \to E_Y$ be its homotopy inverse.
  Since $u_X$ is a trivial fibration, we can pick $i \in E_X$ \st{}
  $u_X i = \id_X$.
  If we set $g = u_Y \phi i \from Y \to X$,
  then we have
  \begin{equation*}
    g f = f^* g = f^* u_Y \phi i = u_X \tilde f \phi i \htp u_X i = \id_X
    \text{.}
  \end{equation*}
  Next, choose a strong cofibrant replacement of $f^* \from Y^Y \to Y^X$:
  \begin{tikzeq*}
  \matrix[diagram]
  {
    |(EY)| E'_Y & |(YY)| Y^Y \\
    |(EX)| E'_X & |(YX)| Y^X \\
  };

  \draw[->] (EY) to node[left]  {$\tilde f'$} (EX);
  \draw[->] (YY) to node[right] {$f^*$}      (YX);

  \draw[tfib] (EY) to node[above] {$u'_Y$} (YY);
  \draw[tfib] (EX) to node[below] {$u'_X$} (YX);
  \end{tikzeq*}
  with $\phi' \from E'_X \to E'_Y$ a homotopy inverse of $\tilde f'$.
  Since $u'_Y$ is a trivial fibration, we can pick $i', j' \in E'_Y$ \st{}
  $u'_Y i' = \id_Y$ and $u'_Y j' = f g$.
  Then we have $u'_X \tilde f' i' = f^* u'_Y i' = f^* \id_Y = f$ and
  $u'_X \tilde f' j' = f^* u'_Y j' = f^* (f g) = f g f$.
  However, we already know that $g f \htp \id_X$, and thus $f g f \htp f$.
  Since $u'_X$ is a trivial fibration, the latter homotopy lifts to
  $\tilde f' j' \htp \tilde f' i'$, and hence
  \begin{equation*}
    f g = u'_Y j' \htp u'_Y \phi' \tilde f' j' \htp u'_Y \phi' \tilde f' i'
    \htp u'_Y i' = \id_Y \text{.}
  \end{equation*}
  Therefore, $g$ is a homotopy inverse of $f$, \ie,
  $f$ satisfies \ref{whe-cof-Kan-def}.
\end{proof}



Having established that our definitions of \whe{}s are mutually compatible,
we will use them interchangeably, often without comment.
In particular, ``acyclic (co)fibration'' will refer to
a (co)fibration that is also a \whe{}. See~\cref{tab:arrows} for the notation used to denote these maps.

We conclude the subsection with an observation that will be useful later.
The following statements holds also without the cofibrancy assumption since
strong cofibrant replacements are closed under product (as cofibrant objects and trivial fibrations are).
However, we will not need that stronger statement.

\begin{corollary}\label{whe-product}
  If $f \from X \to Y$ is a \whe{} between cofibrant
  simplicial sets and
  $A$ is a cofibrant simplicial set, then $f \times A$ is a \whe{}.
\end{corollary}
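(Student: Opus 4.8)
The plan is to verify definition~\ref{whe-cof-def} directly for the map $f \times A \from X \times A \to Y \times A$. First I would note that $X \times A$ and $Y \times A$ are cofibrant by \cref{cofibrant-finite-limits}, so this definition applies. It therefore suffices to show that for every Kan complex $K$, the induced map $(f \times A)^* \from K^{Y \times A} \to K^{X \times A}$ is a \whe{} in the sense of~\ref{whe-Kan-def}.

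The key point is the exponential adjunction in $\sSet$: there are isomorphisms $K^{X \times A} \iso (K^A)^X$ and $K^{Y \times A} \iso (K^A)^Y$, natural in the exponent, under which $(f \times A)^*$ corresponds to $f^* \from (K^A)^Y \to (K^A)^X$. Since $A$ is cofibrant and $K$ is a Kan complex, $K^A$ is again a Kan complex by \cref{cof-fib-pullback-hom} of \cref{second-pushout-product}. Hence the defining property~\ref{whe-cof-def} of $f$, applied to the Kan complex $K^A$ in place of $K$, says precisely that $f^* \from (K^A)^Y \to (K^A)^X$ is a \whe{} in the sense of~\ref{whe-Kan-def}. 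Transporting along the natural isomorphisms (which are in particular \whe{}s, together with 2-out-of-3 from \cref{whe-Kan-properties}), it follows that $(f \times A)^*$ is a \whe{} in the sense of~\ref{whe-Kan-def} as well.

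Since $K$ was an arbitrary Kan complex, this establishes that $f \times A$ satisfies~\ref{whe-cof-def}. I do not expect any genuine obstacle here: the only points needing a little care are that the exponential isomorphism is natural in the \emph{exponent} variable, so that it really intertwines $(f \times A)^*$ with $f^*$, and that $K^A$ is Kan when $A$ is cofibrant — both already available above. Conceptually this is just the general fact that tensoring a \whe{} between cofibrant objects with a cofibrant object preserves \whe{}s, which holds because $(\uvar) \times A$ is left adjoint to $(\uvar)^A$, and the latter sends Kan complexes to Kan complexes on cofibrant objects.
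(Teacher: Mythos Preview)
Your proof is correct and follows essentially the same route as the paper: both use the exponential isomorphism $K^{Y\times A}\iso (K^A)^Y$ to reduce $(f\times A)^*$ to $f^*$ applied with the Kan complex $K^A$ in place of $K$, invoking \cref{cof-fib-pullback-hom} of \cref{second-pushout-product} to see that $K^A$ is Kan. You add the explicit remark that $X\times A$ and $Y\times A$ are cofibrant (via \cref{cofibrant-finite-limits}) and spell out the transport along isomorphisms, but otherwise the arguments coincide.
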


\begin{proof}
  For any Kan complex $K$, we have a commutative square
  \begin{tikzeq*}
  \matrix[diagram,column sep={between origins,6em}]
  {
    |(YxA)| K^{Y \times A}   & |(XxA)| K^{X \times A}           \\
    |(YA)| (K^A)^Y           & |(XA)| (K^A)^X \rlap{\textrm{.}} \\
  };

  \draw[->] (YxA) to node[above] {$(f \times A)^*$} (XxA);
  \draw[->] (YA)  to node[below] {$f^*$}            (XA);

  \draw[->] (YxA) to node[left]  {$\iso$} (YA);
  \draw[->] (XxA) to node[right] {$\iso$} (XA);
  \end{tikzeq*}
  By \cref{cof-fib-pullback-hom} of \cref{second-pushout-product}, $K^A$ is a Kan complex, and therefore
  $f^*$ is a \whe{} (in the sense of \ref{whe-Kan-def}).
  Hence so is $(f \times A)^*$, and thus $f \times A$ is a \whe{}
  (in the sense of \ref{whe-cof-def}).
\end{proof}

\subsection{The (co)fibration category of (co)fibrant simplicial sets}
\label{sec:the-cofibration-category-of-cofibrant-ssets}

In this subsection,
we establish the fibration category of fibrant simplicial sets
(\ie, Kan complexes) and
the cofibration category of cofibrant simplicial sets.
This will give us
sufficient understanding of constructive simplicial homotopy theory to
prove a number of intermediate results in the following subsections.
We will then use these results to derive the full model structure.
The case of the fibration category reduces directly to
\cref{fibration-category-slice-cofibrant} (specialised to $M = \simp{0}$).

\begin{proposition}\label{acyclic-fibration-trivial-Kan}
  A simplicial map between Kan complexes is an acyclic Kan fibration \iff{}
  it is a trivial fibration.
\end{proposition}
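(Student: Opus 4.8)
The plan is to prove both implications, the forward one being immediate and the converse requiring a bootstrap from the cofibrant case treated in \cref{fibration-category-slice-cofibrant}. For the ``if'' direction, suppose $p \from X \to Y$ is a trivial fibration between Kan complexes. Then $p$ is a Kan fibration by \cref{triv-cof-cof}, and it is a \whe{} by \cref{whe-trivial-fibration} of \cref{whe-arbitrary-properties} (equivalently \cref{whe-Kan-trivial-fibration} of \cref{whe-Kan-properties}), hence an acyclic Kan fibration.

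For the converse, let $p \from X \to Y$ be an acyclic Kan fibration between Kan complexes; I want lifts against boundary inclusions. First I would build a strong cofibrant replacement of $p$ that is \emph{simultaneously a Kan fibration}. Choose a trivial fibration $\beta \from \tilde Y \tfto Y$ with $\tilde Y$ cofibrant, by \cref{thm:wfs-via-soa}; since $Y$ is a Kan complex, so is $\tilde Y$. Form the pullback $X' = \tilde Y \pull_Y X$, with projections $\pi_1 \from X' \to \tilde Y$ (a Kan fibration, being a pullback of $p$) and $\pi_2 \from X' \tfto X$ (a trivial fibration, being a pullback of $\beta$); thus $X'$ is a Kan complex. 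Factoring $\emptyset \to X'$ as a cofibration $\emptyset \cto \tilde X$ followed by a trivial fibration $\gamma \from \tilde X \to X'$, the composite $\alpha = \pi_2 \gamma \from \tilde X \tfto X$ is a trivial fibration (so $\tilde X$ is a Kan complex), while the composite $\tilde p = \pi_1 \gamma \from \tilde X \to \tilde Y$ is a Kan fibration. From the pullback one gets $p \alpha = p \pi_2 \gamma = \beta \pi_1 \gamma = \beta \tilde p$, so $\tilde p$ is a strong cofibrant replacement of $p$ with $\tilde X$ and $\tilde Y$ cofibrant Kan complexes.

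Now $\tilde p$ is a \whe{}: since $\alpha$ and $\beta$ are trivial fibrations they are \whe{}s, so $\beta \tilde p = p \alpha$ is a \whe{}, and $\tilde p$ is a \whe{} by $2$-out-of-$3$ (a consequence of \cref{whe-2-out-of-6} of \cref{whe-arbitrary-properties}); by \cref{whe-compatible}, since $\tilde X$ and $\tilde Y$ are cofibrant Kan complexes, $\tilde p$ is in fact a \he{}. Hence $\tilde p$ is a Kan fibration between cofibrant objects that is a \he{}, so it is shrinkable by \cref{acyclic-fibration-shrinkable} (whose proof, as noted there, uses only that the map is a \he{}), hence a trivial fibration by \cref{shrinkable-fibration-trivial}. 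Finally $p \alpha = \beta \tilde p$ is a composite of trivial fibrations, hence a trivial fibration, and $\alpha$ is a trivial fibration, so $p$ is a trivial fibration by the cancellation property \cref{trivial-fibration-cancellation} (with $f = \alpha$ and $g = p$).

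The delicate point, and the only one, is the construction in the second paragraph: one cannot just apply a functorial factorisation to the replacement square for $p$, since that would not keep the replacement fibred over the base; the ``pull back along $\beta$, then cofibrantly replace'' trick is exactly what forces the relevant composites to be (trivial) fibrations. Everything else is assembling previously established facts---stability of (trivial) fibrations under pullback and composition, the mutual compatibility of the notions of \whe{}, and the chain shrinkable Kan fibration $\Rightarrow$ trivial fibration together with the cancellation lemma.
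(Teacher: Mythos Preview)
Your proof is correct and follows essentially the same route as the paper: choose a strong cofibrant replacement $\tilde Y \tfto Y$, pull back, take a further strong cofibrant replacement $\tilde X$, observe that the resulting $\tilde p \from \tilde X \to \tilde Y$ is a Kan fibration and a \he{} between cofibrant Kan complexes, conclude it is a trivial fibration, and finish with \cref{trivial-fibration-cancellation}. The only difference is cosmetic: where you invoke \cref{acyclic-fibration-shrinkable} and \cref{shrinkable-fibration-trivial} separately, the paper cites their packaged form \cref{fiberwise-acyclic-fibration-trivial-Kan} (relying on the remark that the proof of \cref{acyclic-fibration-shrinkable} needs only a \he{}).
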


\begin{proof}
  Let $f \from X \to Y$ be an acyclic Kan fibration between Kan complexes.
  Choose a strong cofibrant replacement $\tilde Y \tfto Y$, take a pullback and
  another strong cofibrant replacement:
  \begin{tikzeq*}
  \matrix[diagram]
  {
    |(tX)| \tilde X & |(b)|  \bullet  & |(X)| X                 \\
                    & |(tY)| \tilde Y & |(Y)| Y \rlap{\text{.}} \\
  };

  \draw[tfib] (tX) to (b);
  \draw[fib]  (b)  to (tY);

  \draw[fib] (X) to node[right] {$f$} (Y);

  \draw[tfib] (b)  to (X);
  \draw[tfib] (tY) to (Y);
  \end{tikzeq*}
  Then both $\tilde X \to X$ and $\tilde Y \to Y$ are trivial fibrations, and
  so is the composite $\tilde X \to \tilde Y$ by \cref{fiberwise-acyclic-fibration-trivial-Kan}.
  It follows that $f$ is also a trivial fibration
  by \cref{trivial-fibration-cancellation}.

  Conversely, a trivial fibration is a Kan fibration by \cref{triv-cof-cof}.
  Moreover, it is acyclic by \cref{whe-Kan-trivial-fibration} of \cref{whe-Kan-properties}.
\end{proof}

The fibration category of Kan complexes established in the next theorem
satisfies certain additional axioms
beyond~(F1--4) given in \cref{fibration-category}, listed below.
They assert that certain infinite limits exist and are
well-behaved \wrt{} fibrations and acyclic fibrations.
Such a fibration category is called \emph{complete}.
The only reason why
the fibration category of \cref{fibration-category-slice-cofibrant}
is not complete is that cofibrant objects are
not generally closed under infinite limits.

\begin{fibcat-axioms}[]
\item\label{fibcat-product} $\cat{C}$ has products and
  (acyclic) fibrations are stable under products.
\item\label{fibcat-tower}
  $\cat{C}$ has limits of countable towers of fibrations and
  (acyclic) fibrations are stable under such limits.
\end{fibcat-axioms}

\begin{theorem}\label{fibration-category-Kan}
  The category of Kan complexes
  (\ie, the full subcategory of the category of simplicial sets spanned by
  the Kan complexes) with \whe{}s (in the sense of \ref{whe-Kan-def}) and
  Kan fibrations is a complete fibration category.
\end{theorem}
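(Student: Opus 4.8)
The plan is to verify, in turn, the fibration-category axioms \cref{fibcat-terminal,fibcat-pullback,fibcat-factorisation,fibcat-2-out-of-6} and the completeness axioms \cref{fibcat-product,fibcat-tower}, drawing on \cref{sec:preliminaries} and on the homotopy-theoretic facts collected above. Most of the content is already in place: acyclic Kan fibrations between Kan complexes coincide with trivial fibrations by \cref{acyclic-fibration-trivial-Kan}, and \whe{}s satisfy 2-out-of-6, are closed under retracts, and contain all trivial fibrations and all homotopy equivalences by \cref{whe-Kan-properties}. So the remaining work is chiefly to check that the limits required by the axioms exist within the category of Kan complexes and interact correctly with Kan fibrations and trivial fibrations.

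For \cref{fibcat-terminal} take $\simp{0}$ as terminal object; all objects are fibrant by definition of the subcategory. For \cref{fibcat-pullback}, the pullback of a Kan fibration $X \to Y$ along a map $Z \to Y$ between Kan complexes is a Kan fibration $X \pull_Y Z \to Z$, being defined by a \rlp{}; then $X \pull_Y Z$ is a Kan complex, so the pullback is computed in the subcategory, and Kan fibrations are stable under it. By \cref{acyclic-fibration-trivial-Kan} the acyclic fibrations are exactly the trivial fibrations, which are stable under pullback for the same reason. For \cref{fibcat-factorisation}, factor $f \from X \to Y$ as a trivial cofibration followed by a fibration using \cref{thm:wfs-via-soa}; the intermediate object is a fibration over the Kan complex $Y$, hence a Kan complex, and the trivial cofibration is a homotopy equivalence between fibrant simplicial sets by \cref{trivial-cofibration-sdr} of \cref{trivial-to-she}, hence a \whe{} by \cref{whe-Kan-he} of \cref{whe-Kan-properties}. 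Finally, \cref{fibcat-2-out-of-6} is \cref{whe-Kan-2-out-of-6} of \cref{whe-Kan-properties}.

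For \cref{fibcat-product}, a product of Kan complexes, a product of Kan fibrations, and a product of trivial fibrations are each again of the same kind, since the required lifts against horn inclusions or boundary inclusions may be chosen componentwise; with \cref{acyclic-fibration-trivial-Kan} this also settles acyclic fibrations. The substantive point is \cref{fibcat-tower}. Given an inverse tower $\cdots \to X_1 \to X_0$ of Kan fibrations, I would form $\lim_k X_k$ in $\sSet$ and argue that $\lim_k X_k \to X_0$ is a Kan fibration: since $\sSet(\horn{m,i}, -)$ and $\sSet(\simp{m}, -)$ preserve limits, a solution of a lifting problem of $\horn{m,i} \ito \simp{m}$ against $\lim_k X_k \to X_0$ is precisely a compatible family of solutions against the successive maps $X_k \to X_{k-1}$, and such a family is constructed by recursion on $k$ from the chosen lifts of the tower, the given map to $X_0$ supplying the base case. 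The point worth stressing is that this is ordinary recursion on $\nat$: in contrast with the sequential colimit of \cref{omega-colim-of-fibrations}, no decidability and no re-alignment of chosen lifts is required, because the recursion descends the tower, so that the lifting problem at stage $k$ is determined by stage $k-1$. It follows that $\lim_k X_k$ is a Kan complex and that all projections $\lim_k X_k \to X_i$ are Kan fibrations (apply the argument to the cofinal subtower from $X_i$ onward); and when every $X_k \to X_{k-1}$ is a trivial fibration, the same recursion with boundary inclusions shows $\lim_k X_k \to X_0$ is a trivial fibration, hence an acyclic fibration.

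The main obstacle is \cref{fibcat-tower}, and specifically the requirement that the recursive construction above yield an actual function from lifting problems to solutions, so that $\lim_k X_k \to X_0$ is a Kan fibration in the structured sense used throughout the paper. This is unproblematic, being a definition by recursion on $\nat$ from the chosen lifts of the tower, but it is the one place in the argument where the constructive reading needs a deliberate check. Everything else reduces directly to results already established, the one recurring observation being that pullbacks along fibrations, countable products, and limits of towers of fibrations all remain inside the category of Kan complexes, since in each case the object produced is a Kan fibration, or a composite of Kan fibrations, over the terminal object.
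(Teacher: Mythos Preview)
Your proposal is correct. It differs from the paper's proof only in minor respects.

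For \cref{fibcat-factorisation}, the paper instead invokes Brown's factorisation lemma to reduce to factoring the diagonal $K \to K \times K$, which it does explicitly via the path object $K \to K^{\simp{1}} \to K \times K$, using \cref{he-hom} for the first map and \cref{cof-fib-pullback-hom} of \cref{second-pushout-product} for the second. Your route through the $(\text{trivial cofibration}, \text{fibration})$ factorisation of \cref{thm:wfs-via-soa} together with \cref{trivial-cofibration-sdr} of \cref{trivial-to-she} is equally valid and arguably more direct; the paper's choice has the mild advantage of giving an explicit, functorial-by-inspection factorisation and of matching the path-object argument already used in \cref{fibration-category-slice-cofibrant}.

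For \cref{fibcat-product,fibcat-tower}, the paper simply says they ``follow by the same argument as \cref{fibcat-pullback}'', meaning that both classes of maps are defined by right lifting properties and hence closed under the relevant limits. Your componentwise argument for products and your recursive construction of lifts for towers spell this out in detail; your observation that the tower case is an unproblematic recursion on $\nat$, requiring no alignment of the kind in \cref{omega-colim-of-fibrations}, is correct and is precisely why the paper can dispatch it so tersely.
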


\begin{proof}
\Cref{fibcat-terminal} holds since $\simp{0}$ is the terminal Kan complex and
  all Kan complexes are fibrant by definition.
  For \cref{fibcat-pullback}, Kan fibrations are stable under pullback since
  they are defined by a \rlp{}.
  The same argument applies to acyclic fibrations
  by \cref{acyclic-fibration-trivial-Kan}.
  For \cref{fibcat-factorisation}, it suffices
  (by \cite{br}*{p.~421, Factorization lemma}) to
  factor the diagonal map $K \to K \times K$.
  In the factorisation $K \to K^{\simp{1}} \to K \times K$,
  the first map is a homotopy equivalence by \cref{he-hom}
  and hence a \whe{} by \cref{whe-Kan-he} of \cref{whe-Kan-properties} and
  the second one is a Kan fibration
  by \cref{cof-fib-pullback-hom} of \cref{second-pushout-product}.
  For \cref{fibcat-2-out-of-6}, \whe{}s satisfy the 2-out-of-6 property
  by \cref{whe-Kan-2-out-of-6} of \cref{whe-Kan-properties}.
  Finally, \cref{fibcat-product,fibcat-tower} follow by
  the same argument as \cref{fibcat-pullback}.
\end{proof}

We go on to show that the category of cofibrant simplicial sets carries
a structure of a \emph{cocomplete cofibration category}, {\ie},
it satisfies axioms (C1--6) dual to
the axioms (F1--4) of \cref{fibration-category} and (F5--6) above.
As usual, the critical difficulty lies in showing that
acyclic cofibrations are closed under pushout.
In contrast to the arguments
of \cref{fibration-category-slice-cofibrant,fibration-category-Kan},
we do not establish any lifting property of acyclic cofibrations.
Instead, the theorem below is proved by
reduction to \cref{fibration-category-Kan} via
the exponential functors $K^{(\uvar)}$ for all Kan complexes $K$
(which justifies the choice of \ref{whe-cof-def} as
the definition of \whe{}s between cofibrant objects).
Later, in \cref{acyclic-cofibration-anodyne}, we will show that
acyclic cofibrations coincide with trivial cofibrations and thus
are characterised by a lifting property.

\begin{theorem}\label{cofibration-category-cofibrant}
  The category of cofibrant simplicial sets with \whe{}s
  (in the sense of \ref{whe-cof-def}) and cofibrations is
  a cocomplete cofibration category.
\end{theorem}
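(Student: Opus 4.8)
The plan is to verify the axioms (C1--6) dual to (F1--6). Axioms (C1) and (C4) need essentially no work: $\emptyset$ is initial in $\sSet_\cof$ and $\emptyset \to X$ is a cofibration for every $X \in \sSet_\cof$ by the definition of cofibrancy, while \whe{}s in the sense of \ref{whe-cof-def} satisfy 2-out-of-6 by \cref{whe-cof-2-out-of-6} of \cref{whe-cof-properties}. For (C3) I would factor an arbitrary map $f \from X \to Y$ of cofibrant simplicial sets, using \cref{thm:wfs-via-soa:triv-fib}, as a cofibration $X \cto Z$ followed by a trivial fibration $Z \tfto Y$: here $Z$ is cofibrant because $\emptyset \to X \to Z$ is a composite of cofibrations, and the trivial fibration $Z \tfto Y$ is a \whe{} by \cref{whe-cof-trivial-fibration} of \cref{whe-cof-properties}, so this is a factorisation of $f$ into a cofibration followed by a \whe{} inside $\sSet_\cof$.

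For the colimit axioms (C2), (C5) and (C6), the first step is to observe that the colimits in question --- pushouts along cofibrations, coproducts, and colimits of countable sequences of cofibrations --- exist in $\sSet_\cof$: each is computed as in $\sSet$, and the map from $\emptyset$ into it is a (possibly transfinite) composite of cofibrations, so the colimit is again cofibrant. That cofibrations are themselves stable under these three colimit constructions is among the standard saturation properties of the left class of a \wfs{}, exactly as used in the proof of \cref{thm:wfs-via-soa}.

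The substance of the theorem is the stability of acyclic cofibrations under pushout, coproduct and sequential colimit. As anticipated in the discussion preceding the statement, I would prove no lifting property for acyclic cofibrations; instead I would transport the question through the exponential functors $K^{(\uvar)} \from \sSet_\cof^\op \to \sSet$, for $K$ a Kan complex, which take values in Kan complexes by \cref{cof-fib-pullback-hom} of \cref{second-pushout-product}, into the complete fibration category of Kan complexes of \cref{fibration-category-Kan}. Such a functor sends a cofibration $i$ to the Kan fibration $i^*$ (again by \cref{cof-fib-pullback-hom} of \cref{second-pushout-product}) and, by the very definition \ref{whe-cof-def}, sends an acyclic cofibration to a Kan fibration that is a \whe{} in the sense of \ref{whe-Kan-def}, \ie{} to an acyclic Kan fibration, equivalently a trivial fibration by \cref{acyclic-fibration-trivial-Kan}. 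Moreover, regarded as a functor $\sSet^\op \to \sSet$, $K^{(\uvar)}$ is right adjoint to itself, hence carries colimits to limits: a pushout square of cofibrant simplicial sets becomes a pullback square, a coproduct a product, and a sequence of cofibrations $Y_n \cto Y_{n+1}$ a tower of Kan fibrations $K^{Y_{n+1}} \to K^{Y_n}$.

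With these preparations in hand, suppose $j$ is the pushout of an acyclic cofibration $i$ along a map of cofibrant objects. For every Kan complex $K$, applying $K^{(\uvar)}$ exhibits $j^*$ as the pullback of the acyclic Kan fibration $i^*$, so $j^*$ is an acyclic Kan fibration by \cref{fibcat-pullback} of \cref{fibration-category-Kan}; in particular it is a \whe{}, and since this holds for all $K$ we conclude from \ref{whe-cof-def} that $j$ is a \whe{}, hence an acyclic cofibration. The coproduct and sequential-colimit cases go through verbatim, using \cref{fibcat-product} (stability of acyclic fibrations under products) and \cref{fibcat-tower} (stability under limits of towers of fibrations) of \cref{fibration-category-Kan} in place of \cref{fibcat-pullback}. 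I expect the main obstacle --- closure of acyclic cofibrations under pushout --- to be dispatched completely by this reduction to the already-established fibration category of \cref{fibration-category-Kan}; the remaining work is just the bookkeeping that colimits computed in $\sSet$ stay within $\sSet_\cof$ and that $K^{(\uvar)}$ converts the three kinds of colimit into the correspondingly named limits.
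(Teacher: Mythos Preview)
Your proposal is correct and follows essentially the same approach as the paper: the heart of the argument---transporting the closure of acyclic cofibrations under pushout, coproduct, and sequential colimit through the functors $K^{(\uvar)}$ into the complete fibration category of Kan complexes established in \cref{fibration-category-Kan}---is exactly what the paper does. The only difference worth mentioning is in (C3): the paper invokes Brown's factorisation lemma to reduce to factoring the codiagonal $X \coprod X \to X$ via the cylinder $X \times \simp{1}$, whereas you factor an arbitrary map directly using the \wfs{} of \cref{thm:wfs-via-soa:triv-fib}; your route is arguably simpler since it avoids the appeal to Brown's lemma.
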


\begin{proof}
  Axiom~(C1) holds since $\emptyset$ is the initial cofibrant simplicial sets
  and all objects are cofibrant by definition.
  For axiom~(C2), cofibrations are stable under pushout since they are defined by
  a \llp{}.
  To verify that acyclic cofibrations are stable under pushout,
  consider a pushout square
  \begin{tikzeq*}
  \matrix[diagram]
  {
    |(C)| C & |(X)| X \\
    |(D)| D & |(Y)| Y \\
  };

  \draw[cof] (C) to node[left] {$i$} node[right] {$\weq$} (D);
  \draw[cof] (X) to node[right] {$j$} (Y);
  \draw[->]  (C) to node[above] {$u$} (X);
  \draw[->]  (D) to node[below] {$v$} (Y);
  \pbdr{Y}{C};
  \end{tikzeq*}
  where $i$ is an acyclic cofibration (and all objects are cofibrant).
  Then for any Kan complex $K$ there is
  an induced pullback square of Kan complexes
  (by \cref{cof-fib-pullback-hom} of \cref{second-pushout-product})
  \begin{tikzeq*}
  \matrix[diagram]
  {
    |(Y)| K^Y & |(D)| K^D \\
    |(X)| K^X & |(C)| K^C \\
  };

  \draw[fib] (D) to node[right] {$i^*$} node[left] {$\weq$} (C);
  \draw[fib] (Y) to node[left]  {$j^*$} (X);
  \draw[->]  (X) to node[below] {$u^*$} (C);
  \draw[->]  (Y) to node[above] {$v^*$} (D);
  \pb{Y}{C};
  \end{tikzeq*}
  where $i^*$ is a Kan fibration
  by \cref{cof-fib-pullback-hom} of \cref{second-pushout-product} and
  a \whe{} by definition.
  Thus $j^*$ is a \whe{} by \cref{fibration-category-Kan},
  and hence $j$ is a \whe{}.
  To verify axiom~(C3), it suffices (by \cite{br}*{p.~421, Factorization lemma})
  to factor the codiagonal map $X \coprod X \to X$.
  In the factorisation $X \coprod X \to X \times \simp{1} \to X$,
  the first map is a cofibration
  by \cref{cofibration-pushout-product} of \cref{first-pushout-product} and
  the second one is a \whe{} by \cref{whe-cof-he} of \cref{whe-cof-properties}.
  For axiom~(C4), \whe{}s satisfy the 2-out-of-6 property
  by \cref{whe-cof-2-out-of-6} of \cref{whe-cof-properties}.
  Axioms (C5) and (C6) follow by the same argument as (C2).
\end{proof}

\subsection{Diagonals of bisimiplicial sets}
\label{sec:diagonals-of-ssets}

In this subsection we prove \cref{bisimplicial-diagonal-whe},
a constructive version of the classical result saying that
the diagonal of a bisimplicial map that is a levelwise \whe{} is itself
a \whe{}.
We can establish this only under a suitable cofibrancy assumption, but
our argument follows a standard approach
(e.g., as in \cite{Goerss-Jardine}*{Proposition~1.9}), which relies only on
the cocomplete cofibration category of cofibrant simplicial sets that
we constructed in \cref{cofibration-category-cofibrant}.

For purposes of the present subsection, we consider bisimplicial sets as
simplicial objects in the category of simplicial sets.
In particular, we will use the fact that a cofibration of bisimplicial sets
(\ie, a Reedy decidable inclusion over $\Simp \times \Simp$) is
the same thing as a Reedy cofibration over $\Simp$ \wrt{}
cofibrations of simplicial sets (see \cite{riehl-verity-reedy}*{Example~4.6}).
If $A$ and $B$ are simplicial sets, then their \emph{external product} is the bisimplicial set $A \etimes B$ given as $(A \etimes B)_{m, n} = A_m \times B_n$.

The $k$-th \emph{skeleton} of a bisimplicial set $X$ is the coend $\Sk^k X = \coend^{[m] \in \Simp} X_m \etimes (\Sk^k \simp{m})$
where $\Sk^k \simp{m}$ stands for the simplicial subset of $\simp{m}$ spanned by its $k$-simplices.
For completeness, we will establish a few basic facts about skeleta, a more abstract discussion can be found in \cite{riehl-verity-reedy}*{Section 6}.

\begin{lemma}\label{bisimplicial-skeletal-filtration}
  The skeleta of a bisimplicial set $X$ come with
  canonical morphisms $\Sk^k X \to \Sk^l X$ for all $l \ge k \ge -1$.
  These morphisms exhibit $X$ as the colimit of the resulting sequence
  \begin{equation*}
    \Sk^{-1} X \to \Sk^0 X \to \Sk^1 X \to \ldots
  \end{equation*}
\end{lemma}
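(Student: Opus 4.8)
The plan is to deduce the claim from the elementary behaviour of the skeleta of the representables $\simp{m}$ and to transport it along the coend defining $\Sk^k X$. I would first record the relevant facts about $\Sk^k \simp{m}$: as the subpresheaf of $\simp{m}$ generated by its $k$-simplices, its set of $p$-simplices consists exactly of the operators $[p] \to [m]$ that factor through $[k]$. Hence $\Sk^{-1} \simp{m} = \emptyset$, the inclusions $\Sk^k \simp{m} \ito \Sk^l \simp{m}$ exist for $-1 \le k \le l$, and $\Sk^l \simp{m} = \simp{m}$ once $l \ge m$ (every operator into $[m]$ factors through its image, which is $[j]$ for some $j \le m$). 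All of this is natural in $[m] \in \Simp$, so the functors $[m] \mapsto \Sk^k \simp{m}$ form an increasing chain of subfunctors of the Yoneda embedding $[m] \mapsto \simp{m}$, with $\simp{m} = \colim_k \Sk^k \simp{m}$ computed pointwise in $\sSet$.

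Next, I would use that the functor sending a diagram $W \from \Simp \to \sSet$ to the bisimplicial set $\coend^{[m] \in \Simp} X_m \etimes W_m$ is cocontinuous: a coend is a colimit, for each fixed simplicial set $A$ the functor $A \etimes \uvar$ preserves colimits (since $(A \etimes B)_{p,q} = A_p \times B_q$ and a product with a fixed set preserves colimits of sets), and colimits commute with colimits. Applying this functor to the natural inclusions above produces the canonical morphisms $\Sk^k X \to \Sk^l X$, compatibly for $k \le l$, and identifies $\colim_k \Sk^k X$ with $\coend^{[m] \in \Simp} X_m \etimes \simp{m}$. The co-Yoneda lemma — the density presentation of a presheaf as the colimit of representables weighted by itself — then supplies a natural isomorphism $\coend^{[m] \in \Simp} X_m \etimes \simp{m} \iso X$, whence $\colim_k \Sk^k X \iso X$.

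I do not expect any step to be a real obstacle; the lemma is in essence a repackaging of co-Yoneda. The only points requiring care are bookkeeping ones: that the handedness in the density formula is compatible with the paper's conventions for $\etimes$ and for viewing a bisimplicial set as a simplicial object in $\sSet$, and that every colimit appearing above is a colimit of sets, so that the whole argument is unproblematic constructively.
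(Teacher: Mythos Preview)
Your proposal is correct and follows essentially the same approach as the paper: reduce to the skeletal filtration of the representables $\simp{m}$, use cocontinuity of the coend $W \mapsto \coend^{[m]} X_m \etimes W_m$ to transport the colimit, and finish with co-Yoneda. The paper's proof is just a terser version of what you wrote, omitting the verification that the coend functor preserves colimits and the explicit description of $\Sk^k \simp{m}$.
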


\begin{proof}
  The functors in the definition of the skeleta come with maps
  \begin{equation*}
    \Sk^{-1} \simp{\uvar} \to \Sk^0 \simp{\uvar} \to \Sk^1 \simp{\uvar} \to \ldots
  \end{equation*}
  and the colimit of this sequence is $\simp{\uvar}$.
  Thus the colimit of
  \begin{equation*}
    \Sk^{-1} X \to \Sk^0 X \to \Sk^1 X \to \ldots
  \end{equation*}
  is $\coend^{[m]} X_m \etimes \simp{m} \iso X$.
\end{proof}

\begin{lemma}
  For all natural numbers $k$, $m$ and $n$, the square
  \begin{tikzeq*}
  \matrix[diagram,column sep={20em,between origins}]
  {
      |(b)| \bdsimp{k}_m \times \Simp([k], [n]) \union \simp{k}_m \times \bd\Simp([k], \uvar)_n
    & |(k-1)| \Sk^{k - 1} \simp{n}_m \\
      |(p)| \simp{k}_m \times \Simp([k], [n])
    & |(k)| \Sk^k \simp{n}_m \\
  };

  \draw[->] (b) to (k-1);
  \draw[->] (p) to (k);

  \draw[inj] (b)   to (p);
  \draw[inj] (k-1) to (k);
  \end{tikzeq*}
  is a pushout.
\end{lemma}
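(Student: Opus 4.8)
The plan is to make every object in the square explicit in terms of simplicial operators and then read off the pushout directly. Recall first that $\simp{k}_m = \Simp([m],[k])$, that $\bdsimp{k}_m$ is the decidable subset of \emph{non-surjective} operators $[m]\to[k]$ (equivalently, those $\phi$ with $\phi^\sharp\neq\id_{[k]}$), and, dually, that $\bd\Simp([k],\uvar)_n$ is (unravelling the definition) the decidable subset of $\Simp([k],[n])$ consisting of the \emph{non-injective} operators $[k]\to[n]$ (equivalently, those $\chi$ with $\chi^\flat\neq\id_{[k]}$). Finally, $\Sk^k\simp{n}_m$ is the decidable subset of $\Simp([m],[n])$ consisting of those $\psi$ whose image has at most $k+1$ elements, equivalently those whose factorisation $\psi = \psi^\sharp\psi^\flat$ has $\psi^\flat$ with codomain $[j]$ for some $j\le k$. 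All of these subsets are decidable because surjectivity, injectivity and the cardinality of the image of an operator between finite ordinals are decidable. Under these identifications both maps out of $\simp{k}_m\times\Simp([k],[n])$ in the square are instances of composition, $(\phi,\chi)\mapsto\chi\phi$, and $\Sk^{k-1}\simp{n}_m\to\Sk^k\simp{n}_m$ is the evident inclusion.

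The first step is to describe the top-left corner and its complement. Writing $B = \bdsimp{k}_m\times\Simp([k],[n])\union\simp{k}_m\times\bd\Simp([k],\uvar)_n$, this is a decidable subset of $\simp{k}_m\times\Simp([k],[n])$ whose complement is exactly the set of pairs $(\phi,\chi)$ with $\phi\from[m]\sto[k]$ surjective and $\chi\from[k]\ito[n]$ injective. For such a pair $\chi\phi$ is already in epi--mono form, so $\lvert\im(\chi\phi)\rvert = k+1$; conversely, whenever $(\phi,\chi)\in B$ one has $\lvert\im(\chi\phi)\rvert\le k$ (if $\phi$ is non-surjective this is immediate, and if $\chi$ is non-injective then $\im(\chi\phi)$, being the image under $\chi$ of $\im\phi$, again has at most $k$ elements). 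Hence composition carries $B$ into $\Sk^{k-1}\simp{n}_m$, which is what makes the top map of the square well defined and the square commute, and composition restricts to a map from the complement of $B$ to $\Sk^k\simp{n}_m\setminus\Sk^{k-1}\simp{n}_m$.

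The second step is to recognise this last map as a bijection: uniqueness of the epi--mono factorisation supplies an inverse, sending $\psi$ with $\lvert\im\psi\rvert = k+1$ to $(\psi^\flat,\psi^\sharp)$, where now $\psi^\flat\from[m]\sto[k]$ and $\psi^\sharp\from[k]\ito[n]$. The final step is to assemble the pushout. Since $B\ito\simp{k}_m\times\Simp([k],[n])$ is a decidable inclusion, extensivity identifies the pushout of the left and top maps of the square with the coproduct of $\Sk^{k-1}\simp{n}_m$ and the complement of $B$, and the induced comparison map into $\Sk^k\simp{n}_m$ is, on the first summand, the inclusion $\Sk^{k-1}\simp{n}_m\ito\Sk^k\simp{n}_m$, and, on the second, the bijection just described. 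As $\Sk^k\simp{n}_m$ is itself the coproduct of its decidable subset $\Sk^{k-1}\simp{n}_m$ with the complement $\{\psi\in\Simp([m],[n]) : \lvert\im\psi\rvert = k+1\}$, this comparison map is an isomorphism, so the square is a pushout.

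The routine heart of the argument is the analysis of the complement of $B$ via epi--mono factorisations; the only points that need a little care are the verification that the comparison map out of the pushout agrees on the nose with the composition map into $\Sk^k\simp{n}_m$, and the observation that all the subsets in play are decidable, so that the extensivity arguments apply as stated.
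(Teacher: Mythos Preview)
Your proof is correct and follows essentially the same approach as the paper: both identify the corners of the square in terms of simplicial operators and their epi--mono factorisations, observe that the union $B$ consists exactly of those pairs whose composite factors through $[k-1]$, and conclude using the description of the skeleta as operators factoring through $[k]$ (resp.\ $[k-1]$). You spell out more detail than the paper does---in particular you make explicit the bijection between the complement of $B$ and $\Sk^k\simp{n}_m\setminus\Sk^{k-1}\simp{n}_m$ via unique epi--mono factorisation, and you invoke extensivity explicitly to assemble the pushout---whereas the paper compresses these steps into ``the conclusion follows''.
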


\begin{proof}
  The set $\bd\Simp([k], \uvar)$ is the boundary of the cosimplicial set $\Simp([k], \uvar)$, i.e.,
  $\bd\Simp([k], \uvar)_n$ consists of simplicial operators $\alpha \from [k] \to [n]$ \st{} $\alpha^\flat \ne \id_{[k]}$.
  Similarly, $\bdsimp{k}_m$ consists of operators $\beta \from [m] \to [k]$ \st{} $\beta^\sharp \ne \id_{[k]}$.
  Thus the union in the upper left corner is the set of pairs $(\beta, \alpha)$ \st{} the composite $\alpha \beta$ factors through $[k - 1]$.
  The conclusion follows since $\Sk^k \simp{n}_m$ is the set of operators $[m] \to [n]$ that factor through $[k]$ and $\Sk^{k - 1} \simp{n}_m$
  is the set of those that factor through $[k - 1]$.
\end{proof}

\begin{corollary}\label{bisimplicial-skeleton-to-skeleton}
  For any bisimplicial set $X$ the square
  \begin{tikzeq*}
  \matrix[diagram,column sep={11em,between origins}]
  {
      |(b)| \bdsimp{k} \etimes X_k \union \simp{k} \etimes L_k X & |(k-1)| \Sk^{k - 1} X \\
      |(p)|   \simp{k} \etimes X_k                               & |(k)|   \Sk^k X \\
  };

  \draw[->] (b) to (k-1);
  \draw[->] (p) to (k);

  \draw[->] (b)   to (p);
  \draw[->] (k-1) to (k);
  \end{tikzeq*}
  is a pushout.
\end{corollary}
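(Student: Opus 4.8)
The plan is to obtain the corollary by applying the functor $\coend^{[n] \in \Simp} X_n \etimes (-)$ to the pushout square of the preceding lemma. Letting the index $m$ there range over $\Simp$, that lemma exhibits, for each $n$, a pushout square of simplicial sets (its lower right corner being the simplicial set $\Sk^k \simp{n}$, its lower left corner being $\simp{k}$ with a degreewise-constant factor $\Simp([k], [n])$, and so on), and these squares are natural in $n$; so it is a pushout square in $\mathrm{Fun}(\Simp, \sSet)$. Since $\coend^{[n]} X_n \etimes (-)$ is a colimit and $X_n \etimes (-)$ is cocontinuous in each variable, this functor preserves pushouts, so it carries the square to a pushout square of bisimplicial sets; concretely, this amounts to evaluating the defining coend $\Sk^j X = \coend^{[n]} X_n \etimes \Sk^j \simp{n}$ ($j \in \{k-1, k\}$) in each bidegree.

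It then remains to identify the four corners. The two right-hand ones become $\Sk^k X$ and $\Sk^{k-1} X$ by the definition of the skeleta. For the lower left corner, the factor $\simp{k}$ is constant in $n$ and pulls out of the coend, while $\coend^{[n]} X_n \etimes \Simp([k], [n]) \cong X_k$ by the co-Yoneda (density) formula for the presheaf $[n] \mapsto X_n$ (with $\Simp([k], [n])$ regarded degreewise as a discrete simplicial set); so it becomes $\simp{k} \etimes X_k$. The top left corner is the union of two sub-bisimplicial-sets of the lower left one, hence the pushout of $\bdsimp{k} \times \Simp([k], \uvar)$ and $\simp{k} \times \bd\Simp([k], \uvar)$ over their intersection $\bdsimp{k} \times \bd\Simp([k], \uvar)$; applying our pushout-preserving functor and using the identifications $\coend^{[n]} X_n \etimes \Simp([k], [n]) \cong X_k$, once with $\simp{k}$ replaced by $\bdsimp{k}$, together with $\coend^{[n]} X_n \etimes \bd\Simp([k], \uvar) \cong L_k X$ (see below), rewrites it as the pushout of $\bdsimp{k} \etimes X_k$ and $\simp{k} \etimes L_k X$ over $\bdsimp{k} \etimes L_k X$. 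Since $\bdsimp{k} \to \simp{k}$ and $L_k X \to X_k$ are monomorphisms (\cref{delta-elegancy}) and $\etimes$ preserves monomorphisms and pullbacks, this pushout is precisely the union $\bdsimp{k} \etimes X_k \cup \simp{k} \etimes L_k X$ inside $\simp{k} \etimes X_k$, as in the statement.

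The one step that is not pure bookkeeping is the identity $\coend^{[n]} X_n \etimes \bd\Simp([k], \uvar) \cong L_k X$. By definition $\bd\Simp([k], [n])$ consists of the operators $[k] \to [n]$ whose degeneracy part is not an identity — equivalently, those factoring through some non-identity degeneracy operator $[k] \sto [j]$ — so $\bd\Simp([k], \uvar)$ is the union of the images of the precomposition maps $\Simp([j], \uvar) \to \Simp([k], \uvar)$ indexed by the latching category $\bd([k] \slice \Simp_\flat)$. Since $\Simp$ is an elegant Reedy category, that diagram consists of monomorphisms and its colimit coincides with this union — the same fact about absolute pushouts of spans of degeneracies already used in the proof of \cref{delta-elegancy}. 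Applying $\coend^{[n]} X_n \etimes (-)$, which preserves this colimit, together with the co-Yoneda identity $\coend^{[n]} X_n \etimes \Simp([j], [n]) \cong X_j$ termwise, yields $\colim_{[k] \sto [j]} X_j = L_k X$, latching objects of a simplicial object in $\sSet$ being computed pointwise. This elegance input is the crux; everything else is routine manipulation of coends and the co-Yoneda lemma.
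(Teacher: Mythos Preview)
Your proposal is correct and follows essentially the same approach as the paper: both apply the colimit-preserving coend functor $\coend^{[n]} X_n \etimes (\uvar)$ to the pushout square of the preceding lemma, viewed as a pushout of functors $\Simp \to \sSet$. The paper's proof is a one-liner that leaves the corner identifications implicit, whereas you spell them out---in particular the identity $\coend^{[n]} X_n \etimes \bd\Simp([k],[n]) \cong L_k X$ via elegance and co-Yoneda, and the recognition of the top-left corner as the union---which is exactly the content the paper is tacitly assuming.
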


\begin{proof}
  When $m$ and $n$ vary in the square of the preceding lemma, we obtain a pushout square of functors $\Simp \to \sSet$.
  Applying the coend $\coend^{[m]} X_m \etimes (\uvar)_m$ yields the required pushout square.
\end{proof}

In the remainder of this section, we will freely use
the cocomplete cofibration category of cofibrant simplicial sets
established in \cref{cofibration-category-cofibrant} in order to
invoke some standard results from \cite{rb}.

\begin{lemma}\label{bisimplicial-latching-whe}
  If $X \to Y$ is a map between cofibrant bisimplicial sets \st{}
  $X_k \to Y_k$ is a \whe{} for all $k$, then
  the induced map $L_k X \to L_k Y$ is also a \whe{} for all $k$.
\end{lemma}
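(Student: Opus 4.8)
The plan is to induct on $k$, using the skeletal filtration of latching objects provided by \cref{bisimplicial-skeleton-to-skeleton} and the fact that cofibrant simplicial sets form a cofibration category (\cref{cofibration-category-cofibrant}), so that the gluing lemma from \cite{rb} is available. For $k = 0$ we have $L_0 X = \emptyset = L_0 Y$, so there is nothing to prove. For the inductive step, the key observation is that $L_k X$ can itself be expressed via a pushout of the same shape as in \cref{bisimplicial-skeleton-to-skeleton} but one dimension lower: writing $\partial([k] \slice \Simp_\flat)$ for the indexing category of the latching object, $L_k X$ is the colimit over this category of the $X_j$ for $j < k$, and this colimit can be built up by a finite skeletal filtration analogous to the one for $\Sk^\bullet$. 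Concretely, I would set up, for each $k$, a finite filtration $\emptyset = P^{(-1)} \to P^{(0)} \to \dots \to P^{(k-1)} = L_k X$ together with pushout squares
\begin{tikzeq*}
\matrix[diagram,column sep={11em,between origins}]
{
  |(b)| \bdsimp{j} \etimes X_j \union \simp{j} \etimes L_j X & |(k1)| P^{(j-1)} \\
  |(p)| \simp{j} \etimes X_j                                  & |(k)|  P^{(j)}   \\
};
\draw[->] (b)  to (k1);
\draw[->] (p)  to (k);
\draw[->] (b)  to (p);
\draw[->] (k1) to (k);
\end{tikzeq*}
and similarly for $Y$, with the map $P^{(j)}_X \to P^{(j)}_Y$ induced compatibly. (This is the latching-object analogue of \cref{bisimplicial-skeleton-to-skeleton}; it can be extracted from \cite{riehl-verity-reedy}*{Section~6}, or proved directly by the same coend manipulation restricted to the boundary category $\partial([k] \slice \Simp_\flat)$.)

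Granting this, the induction runs as follows. Assume $L_j X \to L_j Y$ is a \whe{} for all $j < k$; we want the same for $j = k$, which by the filtration above reduces to showing $P^{(j)}_X \to P^{(j)}_Y$ is a \whe{} for each $j \le k - 1$, by a secondary induction on $j$. The base $P^{(-1)}$ is the identity on $\emptyset$. For the step, compare the two pushout squares defining $P^{(j)}_X$ and $P^{(j)}_Y$. The left vertical maps $\bdsimp{j} \etimes X_j \union \simp{j} \etimes L_j X \to \simp{j} \etimes X_j$ are cofibrations of bisimplicial sets (they are pushout products of the boundary inclusion $\bdsimp{j} \to \simp{j}$ with the cofibrations $L_j X \to X_j$, which are cofibrations because $X$ is Reedy cofibrant, and cofibrations are closed under pushout product by \cref{cofibration-pushout-product} of \cref{first-pushout-product}, applied in the bisimplicial setting as in \cref{bisimplicial-cofibrant-degeneracy}). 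The map of spans is a \whe{} in each corner: on $\simp{j} \etimes X_j$ and on $\bdsimp{j} \etimes X_j$ by \cref{whe-product} (tensoring the \whe{} $X_j \to Y_j$ with a fixed cofibrant simplicial set), on $\simp{j} \etimes L_j X$ by \cref{whe-product} applied to the \whe{} $L_j X \to L_j Y$ (available by the outer inductive hypothesis since $j < k$), and on the union by the gluing lemma applied to the sub-spans. Hence the gluing lemma (\cite{rb}) gives that the induced map of pushouts $P^{(j-1)}_X \to P^{(j-1)}_Y$ being a \whe{} (secondary inductive hypothesis) implies $P^{(j)}_X \to P^{(j)}_Y$ is a \whe{}. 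Taking $j = k - 1$ yields $L_k X \to L_k Y$ a \whe{}, completing the induction.

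The main obstacle is the bookkeeping of the latching-object filtration and checking that all the objects involved are genuinely cofibrant so that the cofibration-category machinery applies; in particular one must know that $L_j X$, $X_j$, the pushout products, and the iterated pushouts $P^{(j)}$ are all cofibrant simplicial sets, which follows from Reedy cofibrancy of $X$ together with closure of cofibrant objects under the relevant colimits in \cref{cofibration-category-cofibrant}, but needs to be stated carefully. A cleaner alternative, which I would actually prefer to write up, is to avoid the explicit filtration: since $L_k(\uvar)$ is a finite colimit over the Reedy category $\partial([k]\slice\Simp_\flat)$ and $X$, $Y$ are Reedy cofibrant bisimplicial sets, the diagrams $j \mapsto X_j$, $j \mapsto Y_j$ restricted to $\partial([k]\slice\Simp_\flat)$ are Reedy cofibrant diagrams in $\sSet_\cof$, the map between them is a pointwise \whe{}, and so the induced map on colimits is a \whe{} by the standard fact that colimits of Reedy cofibrant diagrams preserve pointwise \whe{}s in a cofibration category (\cite{rb}); the only input specific to our constructive setting is \cref{whe-product}, needed to know that the latching maps themselves (which involve products with boundary simplices) stay within the class of \whe{}s, and this is exactly where \cref{cofibration-category-cofibrant} does the work.
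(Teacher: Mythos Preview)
Your ``cleaner alternative'' is exactly the paper's proof: write $L_k X$ as a colimit over the direct category $\bd([k]\slice\Simp_\flat)^\op$, observe that the diagram $([k]\sto[j]) \mapsto X_j$ is Reedy cofibrant, and apply \cite{rb}*{Theorem~9.3.5~(1c)}. The one point you gloss over is \emph{why} the diagram is Reedy cofibrant: the paper notes that the latching categories of $\bd([k]\slice\Simp_\flat)$ are isomorphic to those of $\Simp_\flat$ itself, so the latching object of the restricted diagram at $[k]\sto[j]$ is just $L_j X$, and the latching map $L_j X \to X_j$ is a cofibration because $X$ is Reedy cofibrant. No appeal to \cref{whe-product} is needed anywhere in this argument --- nothing here involves products with simplices; that ingredient only enters later, in the proof of \cref{bisimplicial-diagonal-whe}.

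Your explicit filtration, by contrast, is not set up correctly. The objects $\simp{j}\etimes X_j$ and $\bdsimp{j}\etimes X_j \cup \simp{j}\etimes L_j X$ are \emph{bisimplicial} sets (external products), whereas $L_k X$ and the $P^{(j)}$ must be simplicial sets; the pushout square as written does not typecheck. You have copied the shape of \cref{bisimplicial-skeleton-to-skeleton}, which filters the bisimplicial skeleton $\Sk^k X$, not the simplicial latching object. The correct Reedy filtration of $L_k X$ attaches at stage $j$ via
\begin{tikzeq*}
\matrix[diagram,column sep={8em,between origins}]
{
  |(b)| \displaystyle\bigcoprod_{[k]\sto[j]} L_j X & |(k1)| P^{(j-1)} \\
  |(p)| \displaystyle\bigcoprod_{[k]\sto[j]} X_j   & |(k)|  P^{(j)}   \\
};
\draw[->] (b)  to (p);
\draw[->] (k1) to (k);
\draw[->] (b)  to (k1);
\draw[->] (p)  to (k);
\end{tikzeq*}
(coproducts indexed by non-identity degeneracy operators, with the latching map on the left). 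With that correction your inductive gluing argument does go through, but it is then nothing more than the explicit unwinding of the theorem from \cite{rb} that the paper cites directly.
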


\begin{proof}
  The latching object $L_k X$ can be written as a colimit over $\bd([k] \slice \Simp_\flat)^\op$ (the opposite of the latching category).
  That category is direct and the diagram (sending $[k] \sto [l]$ to $X_l$) is Reedy cofibrant since $X$ is
  (as follows from the fact that the latching categories of $\bd([k] \slice \Simp_\flat)$ are isomorphic to the latching categories of $\Simp_\flat$).
  Thus the conclusion follows from \cite{rb}*{Theorem~9.3.5~(1c)}.
\end{proof}

\begin{proposition}\label{bisimplicial-diagonal-whe}
  If $X \to Y$ is a map between cofibrant bisimiplicial sets \st{}
  $X_k \to Y_k$ is a \whe{} for all $k$, then
  the induced map $\diag X \to \diag Y$ is also a \whe{}.
\end{proposition}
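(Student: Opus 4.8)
The plan is to prove this by skeletal induction, working throughout inside the cocomplete cofibration category of cofibrant simplicial sets of \cref{cofibration-category-cofibrant}. First I would record that $\diag$, being restriction along the diagonal $\Simp \to \Simp \times \Simp$ (which has adjoints on both sides), preserves all colimits, and that it carries external products to products: $\diag(A \etimes B) = A \times B$. Consequently $\diag$ preserves the skeletal filtration of \cref{bisimplicial-skeletal-filtration}, so that $\diag X = \colim_k \diag \Sk^k X$ (and likewise for $Y$), and it sends each pushout square of \cref{bisimplicial-skeleton-to-skeleton} to a pushout square of simplicial sets.

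Next I would deal with the cofibrancy bookkeeping. Applying $\diag$ to the square of \cref{bisimplicial-skeleton-to-skeleton}, the map $\diag \Sk^{k-1} X \to \diag \Sk^k X$ is a pushout of $\diag$ applied to $\bdsimp{k} \etimes X_k \union \simp{k} \etimes L_k X \to \simp{k} \etimes X_k$, which is exactly the pushout product $(\bdsimp{k} \to \simp{k}) \hatbin{\times} (L_k X \to X_k)$ in simplicial sets. Since $X$ is cofibrant, i.e.\ Reedy cofibrant over $\Simp$ with respect to cofibrations, the map $L_k X \to X_k$ is a cofibration, so this pushout product is a cofibration by \cref{cofibration-pushout-product} of \cref{first-pushout-product}; hence so is $\diag \Sk^{k-1} X \to \diag \Sk^k X$. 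Starting from $\Sk^{-1} X = \emptyset$, it follows by induction that every $\diag \Sk^k X$ is cofibrant. Moreover $L_k X$ is cofibrant by \cref{cofibration-cancellation} (it is a monomorphic image of $X_k$), and $\simp{k}$, $\bdsimp{k}$ are cofibrant by \cref{simplex-cofibrant}, so all products such as $\simp{k} \times X_k$ and $\bdsimp{k} \times L_k X$ appearing below are cofibrant by \cref{cofibrant-finite-limits}, and the unions of such are cofibrant as pushouts along cofibrations.

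The core claim is that $\diag \Sk^k X \to \diag \Sk^k Y$ is a weak homotopy equivalence for all $k \ge -1$, proved by induction on $k$; the case $k = -1$ is trivial. For the inductive step I would compare the two pushout squares obtained by applying $\diag$ to \cref{bisimplicial-skeleton-to-skeleton} for $X$ and for $Y$. On the corner $\simp{k} \times X_k \to \simp{k} \times Y_k$ this is a weak equivalence by \cref{whe-product}, since $X_k \to Y_k$ is one and $\simp{k}$ is cofibrant; on the corner $\diag \Sk^{k-1} X \to \diag \Sk^{k-1} Y$ it is one by the inductive hypothesis. The remaining corner $\bdsimp{k} \times X_k \union \simp{k} \times L_k X \to \bdsimp{k} \times Y_k \union \simp{k} \times L_k Y$ is the map induced on the pushout of the span $\bdsimp{k} \times X_k \hookleftarrow \bdsimp{k} \times L_k X \hookrightarrow \simp{k} \times L_k X$; the three maps of spans are weak equivalences by \cref{whe-product}, using $X_k \to Y_k$ and, via \cref{bisimplicial-latching-whe}, the map $L_k X \to L_k Y$, and one leg of the span is a cofibration, so by the gluing lemma \cite{rb} this corner maps by a weak equivalence as well. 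Since the leg $\bdsimp{k} \times X_k \union \simp{k} \times L_k X \to \simp{k} \times X_k$ of the main square is a cofibration (again the pushout product above), a second application of the gluing lemma \cite{rb} shows $\diag \Sk^k X \to \diag \Sk^k Y$ is a weak equivalence, completing the induction.

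Finally, $\diag X \to \diag Y$ is the map induced on sequential colimits of the two towers of cofibrations $\diag \Sk^{-1}(\uvar) \to \diag \Sk^0(\uvar) \to \cdots$ between cofibrant objects, compatible and a weak equivalence at each stage; hence it is a weak equivalence by the corresponding statement on sequential colimits in \cite{rb}. The main obstacle is the cofibrancy bookkeeping: one must check that $\diag$ turns the external products and pushout products occurring in the skeletal decomposition into ordinary products and pushout products of \emph{cofibrant} simplicial sets, so that \cref{whe-product}, \cref{cofibration-pushout-product} of \cref{first-pushout-product}, and the gluing and sequential-colimit lemmas of \cite{rb} all apply, together with the correct use of \cref{bisimplicial-latching-whe} to control the latching corners.
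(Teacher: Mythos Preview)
Your proposal is correct and follows essentially the same approach as the paper: skeletal induction using the pushout squares of \cref{bisimplicial-skeleton-to-skeleton}, two applications of the Gluing Lemma (once for the ``union'' corner via the span, once for the main pushout), \cref{bisimplicial-latching-whe} and \cref{whe-product} to handle the individual factors, and the sequential-colimit result from \cite{rb} to conclude. Your version is in fact slightly more explicit than the paper's about the cofibrancy bookkeeping (identifying the relevant map as the pushout product $(\bdsimp{k} \to \simp{k}) \hatbin{\times} (L_k X \to X_k)$ and noting that $L_k X$ is cofibrant via \cref{cofibration-cancellation}), but the structure of the argument is the same.
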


\begin{proof}
  First, we will prove by induction \wrt{} $k$ that
  the induced map $\diag \Sk^k X \to \diag \Sk^k Y$ is a \whe{}.

  For $k = -1$, both $\diag \Sk^k X$ and $\diag \Sk^k Y$ are empty, so
  the statement holds.
  For $k \ge 0$, consider the cube
  \begin{tikzeq*}
  \matrix[diagram,column sep={8em,between origins}]
  {
      |(LX)| L_k X \times \simp{k} \union X_k \times \bdsimp{k}
    & & |(Xk1)| \diag \Sk^{k-1} X & \\
    & |(X)| X_k \times \simp{k} & & |(Xk)| \diag \Sk^k X \\
      |(LY)| L_k Y \times \simp{k} \union Y_k \times \bdsimp{k}
    & & |(Yk1)| \diag \Sk^{k-1} Y & \\
    & |(Y)| Y_k \times \simp{k} & & |(Yk)| \diag \Sk^k Y \\
  };

  \draw[->] (LX)  to (X);
  \draw[->] (Xk1) to (Xk);
  \draw[->] (Xk1) to (Yk1);
  \draw[->] (Xk)  to (Yk);

  \draw[->] (LX) to (Xk1);
  \draw[->,over] (X) to (Xk);

  \draw[->] (LY)  to (Y);
  \draw[->] (Yk1) to (Yk);

  \draw[->] (LY) to (Yk1);
  \draw[->] (Y)  to (Yk);

  \draw[->] (LX) to (LY);
  \draw[->,over] (X) to (Y);
  \end{tikzeq*}
  of which the top and bottom squares arise by applying $\diag$ to
  the squares of \cref{bisimplicial-skeleton-to-skeleton} since $\diag$ carries external products to products.
  These squares are pushouts since $\diag$ preserves pushouts.

  The map $L_k X \to L_k Y$ is a \whe{} by \cref{bisimplicial-latching-whe} and
  therefore so are
  $L_k X \times \simp{k} \to L_k Y \times \simp{k}$ and
  $L_k X \times \bdsimp{k} \to L_k Y \times \bdsimp{k}$ as well as
  $X_k \times \bdsimp{k} \to Y_k \times \bdsimp{k}$ by \cref{whe-product}.
  Thus the left vertical map in the back of the cube
  is a \whe{} by the Gluing Lemma \cite{rb}*{Lemma~1.4.1~(1b)}
  (using the fact that $X$ and $Y$ are cofibrant as well as
  \cref{ano-cof-pushout-product} of~\cref{second-pushout-product}).
  The map $X_k \times \simp{k} \to Y_k \times \simp{k}$ is a \whe{}
  by \cref{whe-product} and so is $\diag \Sk^{k - 1} X \to \diag \Sk^{k - 1} Y$
  by the inductive hypothesis.
  Moreover, the diagonal maps on the left of the cube are cofibrations by
  cofibrancy of $X$ and $Y$ and \cref{cofibration-pushout-product} of~\cref{first-pushout-product}.
  Thus the Gluing Lemma \cite{rb}*{Lemma~1.4.1~(1b)} implies that
  $\diag \Sk^k X \to \diag \Sk^k Y$ is a \whe{}.

  It also follows that the diagonal maps on the right are cofibrations.
  Hence, \cref{bisimplicial-skeletal-filtration} and \cite{rb}*{Theorem~9.3.5~(1c)} imply that
  $\diag X \to \diag Y$ is a \whe{}.
\end{proof}

\subsection{The $\Ex^\infty$ functor}
\label{sec:subdivision-and-ex}

We turn to the constructive treatment of the $\Ex^\infty$ functor.
Classically, it is a fibrant replacement functor with some convenient properties,
most notably preservation of finite limits and Kan fibrations.
In the constructive setting, we are only able to show that
it is a fibrant replacement functor
in the subcategory of cofibrant simplicial sets.
Some of the material below is treated also in~\cite{Henry-qms}*{Section~3}.
However, we do not need to establish some of the more intricate results on
trivial cofibrations proved therein.

If $P$ is a poset, then let $\sd P$ denote
the poset of finite, non-empty, totally ordered subsets of $P$
ordered by inclusion.
(This defines a functor $\sd \from \Pos \to \Pos$.)
Let $\max_P \from \sd P \to P$ denote the (natural, order-preserving) map
sending a finite, non-empty, totally ordered subset of $P$ to
its maximal element.
Let $\Sd \from \sSet \to \sSet$ be
the essentially unique
colimit-preserving functor \st{}
$\Sd \simp{m} = N \sd [m]$ (as functors $\Simp \to \sSet$)
and
$\mu \from \Sd \to \id_\sSet$ be the natural transformation \st{} $\mu_{\simp{m}} = N \max_{[m]}$
(there is a unique such up to isomorphism since $\Sd$ is a left Kan extensions of a functor $\Simp \to \sSet$).
For a poset $P$, we have a natural isomorphism $\Sd \nerve P \iso \nerve \sd P$.
Moreover, if $K$ is a simplicial subset of $\nerve P$, then $\Sd K$ is a simplicial subset of $\nerve \sd P$;
it consists of those simplices whose vertices lie in $K$ (when seen as non-degenerate simplices of $\nerve P$).

Before discussing homotopy theoretic properties of the subdivision, we establish two preliminary lemmas needed for the verification of cofibrancy of various
simplicial and bisimimplicial sets constructed later in this section.

\begin{lemma}\label{Sd-degeneracy-collapse}
  If $\dgn \from [m] \sto [n]$ is a degeneracy operator, then the induced map $\Sd \simp{m} \to \Sd \simp{n}$ is a generalised degeneracy.
\end{lemma}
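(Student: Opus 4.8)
The plan is to reduce to \cref{shrinkable-collapse}, which says that a shrinkable map between simplicial sets that are finite colimits of representables is a generalised degeneracy. First I would record that, since $\Sd\simp{\bullet} = \nerve\sd[\bullet]$ as functors $\Simp \to \sSet$, the map $\Sd\dgn \from \Sd\simp{m} \to \Sd\simp{n}$ is, up to the canonical isomorphisms, the nerve of the order-preserving map $\sd\dgn \from \sd[m] \to \sd[n]$ sending a non-empty subset $S \subseteq [m]$ to its image $\dgn(S) = \{\dgn(s) \mid s \in S\}$; here I use that, since $[m]$ is totally ordered, $\sd[m]$ is simply the poset of non-empty subsets of $[m]$ under inclusion. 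Both $\sd[m]$ and $\sd[n]$ are finite posets, so $\Sd\simp{m}$ and $\Sd\simp{n}$ are finite simplicial sets and in particular finite colimits of representables.

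It then remains to show that $\Sd\dgn$ is shrinkable, i.e.\ to exhibit a deformation section. I would take $\face \from [n] \to [m]$ to be the minimal section of $\dgn$, so that $\dgn\face = \id_{[n]}$, $\dgn\face\dgn = \dgn$, and $\face\dgn(i) \le i$ for all $i \in [m]$. Then $\sd\dgn \circ \sd\face = \sd(\dgn\face) = \id_{\sd[n]}$, so $\nerve(\sd\face)$ is an honest section of $\Sd\dgn$. To connect $\sd\face \circ \sd\dgn = \sd(\face\dgn)$ back to $\id_{\sd[m]}$, I would introduce the order-preserving map $g \from \sd[m] \to \sd[m]$ given by $g(S) = S \cup \face(\dgn(S))$; this is well-defined and monotone because images and unions respect inclusions. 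The pointwise inclusions $S \subseteq g(S)$ and $\face(\dgn(S)) \subseteq g(S)$ are natural transformations $\id_{\sd[m]} \le g$ and $\sd(\face\dgn) \le g$, and applying $\nerve$ to the associated functors $\sd[m] \times [1] \to \sd[m]$ produces a zig-zag of homotopies from $\nerve(\sd(\face\dgn)) = \nerve(\sd\face)\,\nerve(\sd\dgn)$ to $\id_{\Sd\simp{m}}$.

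Finally I would check that this zig-zag is fiberwise over $\Sd\simp{n}$, with $\Sd\simp{m}$ regarded over $\Sd\simp{n}$ via $\Sd\dgn$: composing each of the three maps $\id_{\sd[m]}$, $g$, and $\sd(\face\dgn)$ with $\sd\dgn$ yields one and the same map $S \mapsto \dgn(S)$ (using $\dgn\face\dgn = \dgn$), so each of the two homotopies becomes constant after postcomposition with the structure map $\Sd\dgn$. Hence $\nerve(\sd\face)$ is a deformation section, $\Sd\dgn$ is shrinkable, and \cref{shrinkable-collapse} finishes the proof. The main obstacle is the construction of this zig-zag: a direct homotopy does not exist, since in general neither $\face(\dgn(S)) \subseteq S$ nor $S \subseteq \face(\dgn(S))$ holds, so one is forced to interpolate through the auxiliary map $g$, and the real content is that inserting $g$ does not destroy fiberwiseness over $\Sd\simp{n}$.
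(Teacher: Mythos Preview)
Your proposal is correct and follows essentially the same approach as the paper: both produce the section $\Sd\face$ from a section $\face$ of $\dgn$, interpolate via the auxiliary map $S \mapsto S \cup \face\dgn(S)$ to obtain a zig-zag of homotopies, and conclude by \cref{shrinkable-collapse}. You supply a bit more detail than the paper (choosing the minimal section, and explicitly verifying fiberwiseness over $\Sd\simp{n}$ via $\dgn\face\dgn = \dgn$), but the argument is the same.
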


\begin{proof}
  Let $\face \from [n] \to [m]$ be a section of $\dgn$.
  Then $\Sd \face$ is a section of $\Sd \dgn$ and we will show that it is a deformation section.
  Let $f \from \sd [m] \to \sd [n]$ be given by $A \mapsto A \union \face \dgn A$ for every finite non-empty subset $A \subseteq [m]$.
  Then we have $A \subseteq f A \supseteq (\sd \face) (\sd \dgn) A$, which induces
  a zig-zag of fiberwise homotopies $\id_{\Sd\simp{m}} \to \nerve f \leftarrow (\Sd \face) (\Sd \dgn)$.
  Thus, $\Sd \dgn$ is shrinkable and hence a generalised degeneracy by \cref{shrinkable-collapse}.
\end{proof}

\begin{lemma}\label{Sd-simplex-he}
  For every $m$, the map $\mu_{\simp{m}} \from \Sd \simp{m} \to \simp{m}$ is shrinkable.
  In particular, it is a \he{} and a generalised degeneracy.
\end{lemma}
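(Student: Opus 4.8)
The plan is to write down an explicit deformation section of $\mu_{\simp{m}}$ that is the nerve of a poset map, so that the needed zig-zag of fiberwise homotopies is just the nerve of a single natural transformation, and then to read off the two ``in particular'' claims from \cref{shrinkable-collapse} together with the fact that a shrinkable map is automatically a homotopy equivalence.

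Concretely, first I would define the monotone map $\sigma \from [m] \to \sd[m]$ by $\sigma(j) = \{0, 1, \dots, j\}$, the initial segment ending at $j$; this is order-preserving for the inclusion order on $\sd[m]$ and satisfies $\max_{[m]} \sigma(j) = j$, so $\max_{[m]} \circ \sigma = \id_{[m]}$. Setting $s = \nerve\sigma \from \simp{m} \to \Sd\simp{m}$ thus gives $\mu_{\simp{m}} s = \id_{\simp{m}}$. For the homotopy I would observe that every $A \in \sd[m]$ satisfies $A \subseteq \sigma(\max A)$, since each element of $A$ is at most $\max A$; this pointwise inclusion is precisely a natural transformation $\id_{\sd[m]} \Rightarrow \sigma \circ \max_{[m]}$ of monotone maps $\sd[m] \to \sd[m]$. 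Applying the nerve produces a homotopy between $\id_{\Sd\simp{m}}$ and $s \mu_{\simp{m}} = \nerve(\sigma \circ \max_{[m]})$, hence (as a length-one zig-zag) a zig-zag of homotopies connecting $s\mu_{\simp{m}}$ and $\id_{\Sd\simp{m}}$. To see that it is fiberwise over $\simp{m}$, I would whisker the natural transformation with $\max_{[m]} \from \sd[m] \to [m]$: its component at $A$ becomes the inequality $\max A \le \max\sigma(\max A) = \max A$, \ie an identity, so $\mu_{\simp{m}}$ carries the homotopy to the constant one. Thus $s$ is a deformation section and $\mu_{\simp{m}}$ is shrinkable.

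For the remaining assertions: forgetting fiberwiseness, the deformation section $s$ is an ordinary homotopy inverse of $\mu_{\simp{m}}$, so $\mu_{\simp{m}}$ is a \he{}; and it is a generalised degeneracy by \cref{shrinkable-collapse}, whose hypotheses hold since $\simp{m}$ is representable and $\Sd\simp{m} = \nerve\sd[m]$ is a finite simplicial set ($\sd[m]$ being a finite poset), hence a finite colimit of representables. I do not expect a genuine obstacle here; the one step that deserves a little care is the fiberwiseness of the homotopy, which reduces to the identity $\max\sigma(\max A) = \max A$ and which fails for more naive candidate sections, so the specific choice $\sigma(j) = \{0,\dots,j\}$ (rather than, say, $j \mapsto \{j\}$, which is not even monotone) is essential.
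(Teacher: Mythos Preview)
Your proof is correct and essentially identical to the paper's: the paper defines the same section $\iota_{[m]}(i) = \{0,\ldots,i\}$, notes $\max_{[m]}\iota_{[m]} = \id_{[m]}$ and $\iota_{[m]}\max_{[m]} \supseteq \id_{\sd[m]}$, and then invokes \cref{shrinkable-collapse}. You merely spell out more explicitly the fiberwiseness of the resulting homotopy and the finiteness hypothesis needed for \cref{shrinkable-collapse}.
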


\begin{proof}
  Define an order-preserving map $\iota_{[m]} \from [m] \to \sd [m]$
  by $\iota_{[m]}(i) = \braces{0, \ldots, i}$.
  Then $\max_{[m]} \iota_{[m]} = \id_{[m]}$ and
  $\iota_{[m]} \max_{[m]} \supseteq \id_{\sd [m]}$ so that
  $N \iota_{[m]}$ is a deformation section of $\mu_{\simp{m}}$.
  Thus $\mu_{\simp{m}}$ is shrinkable and a \he{}.
  It is a generalised degeneracy by \cref{shrinkable-collapse}.
\end{proof}


The subdivision functor $\Sd \from \sSet \to \sSet$ has
a right adjoint denoted by $\Ex$, which
can be constructed as $(\Ex X)_m = \sSet(\Sd \simp{m}, X)$.
Under this adjunction, $\mu \from \Sd \to \id_\sSet$ corresponds to
a natural transformation $\id_\sSet \to \Ex$, which will be denoted by $\nu$.

\begin{proposition}\label{Ex-properties}
  The functor $\Ex$ satisfies the following conditions.
  \begin{enumerate}
  \item \label{Ex-limits}
  It preserves limits.
  \item \label{Ex-fibration}
  It preserves Kan fibrations.
  \item \label{Ex-triv-fibration}
  It preserves trivial fibrations.
  \item \label{Ex-pointing-cofibration}
  If $X$ is cofibrant, then $\nu_X$ is a cofibration. In particular, $\Ex X$ is cofibrant.
  \end{enumerate}
\end{proposition}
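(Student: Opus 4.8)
The plan is to dispatch parts~(i) and~(iv) by soft arguments and to reduce parts~(ii) and~(iii), via the adjunction $\Sd \adj \Ex$, to statements about $\Sd$: that $\Sd$ carries boundary inclusions to cofibrations and horn inclusions to trivial cofibrations. Part~(i) is then immediate, since $\Ex$ is a right adjoint.

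For part~(iv), recall that an $m$-simplex of $\Ex X$ is a map $\Sd \simp{m} \to X$, and that for a degeneracy operator $\dgn \from [m] \sto [n]$ the operator $(\Ex X)(\dgn) \from (\Ex X)_n \to (\Ex X)_m$ is precomposition with the induced map $\Sd \simp{m} \to \Sd \simp{n}$, which is a generalised degeneracy by \cref{Sd-degeneracy-collapse}; since $X$ is cofibrant, it follows that $(\Ex X)_n \to (\Ex X)_m$ is a decidable inclusion, so $\Ex X$ is cofibrant by \cref{cofibrant-degeneracy}. Similarly, $\nu_X \from X \to \Ex X$ acts on $m$-simplices by precomposition with $\mu_{\simp{m}} \from \Sd \simp{m} \to \simp{m}$, a generalised degeneracy by \cref{Sd-simplex-he}, so $\nu_X$ is a levelwise decidable inclusion; being such a map into a cofibrant object, it is a cofibration by \cref{dec-to-cof} of \cref{cofibration-levelwise-decidable} (and this reproves that $\Ex X$ is cofibrant).

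For part~(iii), by the adjunction $\Sd \adj \Ex$ the map $\Ex p$ has the \rlp{} \wrt{} $\bdsimp{m} \ito \simp{m}$ \iff{} $p$ has the \rlp{} \wrt{} $\Sd \bdsimp{m} \to \Sd \simp{m}$; since $p$ is a trivial fibration and cofibrations are defined by the \llp{} \wrt{} trivial fibrations, it suffices to show that $\Sd \bdsimp{m} \to \Sd \simp{m}$ is a cofibration. Now $\Sd \simp{m} = \nerve \sd[m]$ is the nerve of the poset $\sd[m]$, which has decidable equality — its elements are finite subsets of the finite decidable set $[m]$ — hence decidable identities, so $\nerve \sd[m]$ is cofibrant by \cref{nerve-cof}. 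Moreover $\Sd \bdsimp{m}$ is, as recalled above, the subcomplex of $\nerve \sd[m]$ consisting of the chains $A_0 \subseteq \dots \subseteq A_k$ all of whose vertices lie in $\bdsimp{m}$, equivalently those with $A_k \ne [m]$, a decidable condition; so $\Sd \bdsimp{m} \to \Sd \simp{m}$ is a levelwise decidable inclusion with cofibrant codomain, hence a cofibration by \cref{cofibration-levelwise-decidable}.

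Part~(ii) is the heart of the matter. As above, $\Ex p$ has the \rlp{} \wrt{} $\horn{m,i} \ito \simp{m}$ \iff{} $p$ has the \rlp{} \wrt{} $\Sd \horn{m,i} \to \Sd \simp{m}$, so since $p$ is a Kan fibration it suffices to prove $\Sd \horn{m,i} \to \Sd \simp{m}$ is a trivial cofibration, for all $m > 0$ and $0 \le i \le m$. Concretely this is the inclusion $\nerve Q \ito \nerve \sd[m]$, where $Q \subseteq \sd[m]$ is the downward closed, decidable subposet of those $A$ with $A \not\supseteq [m] \setminus \{ i \}$; in particular $\nerve Q$ is a cofibrant subcomplex, as in part~(iii). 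The plan is to exhibit $\nerve \sd[m]$ as obtained from $\nerve Q$ by finitely many pushouts of coproducts of horn inclusions — one attaches the non-degenerate simplices of $\nerve \sd[m]$ lying outside $\nerve Q$ (those whose top vertex is $[m]$ or $[m] \setminus \{ i \}$) in a carefully chosen order, so that each attaching map factors through a horn rather than merely through a boundary — whence the composite is a $J$-cell complex and hence a trivial cofibration by the saturation properties used in \cref{thm:wfs-via-soa}. The main obstacle, and the only genuinely delicate point, is the combinatorial bookkeeping for this filtration in the spirit of Kan's original treatment: verifying that the attaching maps really land in horns, that the stages exhaust $\nerve \sd[m]$, and that the index sets of the coproducts of horns are finite and decidable, so that the whole construction goes through constructively. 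Everything else is formal.
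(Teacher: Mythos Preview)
Parts~(i), (iii), and~(iv) are correct and essentially match the paper; for~(iii) the paper invokes \cref{nerve-cof} directly (as the nerve of a decidable inclusion between posets with decidable identities), but your route via \cref{cofibration-levelwise-decidable} is equivalent.

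For part~(ii), your reduction to showing that $\Sd \horn{m,i} \to \Sd \simp{m}$ is a trivial cofibration is correct, but then you have not actually proved this claim: you describe a plan (attach the missing simplices via horn pushouts in a suitable order, in the style of Kan), explicitly flag the ``combinatorial bookkeeping'' as the only delicate point, and stop. That bookkeeping \emph{is} the content of the proposition; as written this is a gap, not a proof. In a constructive setting one would additionally need to confirm that the index sets and case distinctions involved are decidable, which you acknowledge but do not carry out.

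The paper's argument for~(ii) sidesteps this inductive filtration entirely. After reducing by symmetry to $i = 0$, it factors the inclusion as
\[
  \Sd \horn{m,0} \ito \Sd \horn{m,0} \cup X \ito \Sd \simp{m},
\]
where $X = \nerve\{S \subseteq [m] : 0 \in S\}$. Each step is then exhibited as a pushout of an explicit trivial cofibration: the first is a pushout of the $m$-fold pushout product of $\horn{1,0} \ito \simp{1}$, and the second (via an auxiliary $Y = \nerve\{S : S \neq \{0\}\}$) is a pushout of $(\horn{1,1} \ito \simp{1}) \hatbin{\times} (\Sd \bdsimp{m-1} \ito \Sd \simp{m-1})$. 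Both are trivial cofibrations by \cref{second-pushout-product}. This two-step pushout-product decomposition replaces your open-ended cell filtration with two concrete identifications and needs no inductive bookkeeping.
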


\begin{proof} \Cref{Ex-limits} holds since $\Ex$ is a right adjoint.

For \cref{Ex-fibration}, it suffices to show that
    $\Sd$ carries horn inclusions to trivial cofibrations.
    Note that any permutation of $[m]$ induces
    a simplicial automorphism of $\Sd \simp{m}$ that carries horns to horns.
    Thus, it is enough to check that
    $\Sd \horn{m,0} \ito \Sd \simp{m}$ is a trivial cofibration.
    $\Sd \horn{m,0}$ is the nerve of the subposet of $\sd[m]$ spanned by
    all elements except $[m]$ and $[m] \setminus \{ 0 \}$.
    Thus, the inclusion in question factors as
    $\Sd \horn{m,0} \ito \Sd \horn{m,0} \union X \ito \Sd \simp{m}$ where
    $X$ is the nerve of the subposet of $\sd[m]$ spanned by
    all elements $S \subseteq [m]$ \st{} $0 \in S$.
    Moreover, let $Y$ be the nerve of the subposet of $\sd[m]$ spanned by
    all elements except $\{ 0 \}$.
    Then there are pushout squares
    \begin{tikzeq*}
    \matrix[diagram,column sep={10em,between origins}]
    {
      |(i)|  \Sd \horn{m,0} \inter X            &
      |(h)|  \Sd \horn{m,0}                     &[-1em]
      |(j)|  (\Sd \horn{m,0} \union X) \inter Y &
      |(hX)| \Sd \horn{m,0} \union X            \\
      |(X)|  X                                  &
      |(u)|  \Sd \horn{m,0} \union X            &
      |(Y)|  Y                                  &
      |(s)|  \Sd \simp{m} \rlap{\text{.}}       \\
    };

    \draw[inj] (i) to (h);
    \draw[inj] (X) to (u);
    \draw[inj] (i) to (X);
    \draw[inj] (h) to (u);

    \draw[inj] (j)  to (hX);
    \draw[inj] (Y)  to (s);
    \draw[inj] (j)  to (Y);
    \draw[inj] (hX) to (s);
    \end{tikzeq*}

    In the left one, $\Sd \horn{m,0} \inter X$ is the nerve of
    the subposet of $\sd[m]$ spanned by
    all elements $S \subseteq [m]$ \st{} $0 \in S$ except $[m]$.
    Thus $\Sd \horn{m,0} \inter X \ito X$ can be identified with
    the $m$-fold pushout product of $\horn{1,0} \to \simp{1}$ and so
    it is a trivial cofibration
  by \cref{ano-cof-pushout-product} of \cref{second-pushout-product}.
    Hence $\Sd \horn{m,0} \ito \Sd \horn{m,0} \union X$ is also a trivial cofibration.
    Similarly, in the right square,  $(\Sd \horn{m,0} \union X) \inter Y \ito Y$ can be identified with the pushout product of
    $\horn{1,1} \ito \simp{1}$ and $\Sd \bdsimp{m-1} \ito \Sd \simp{m-1}$ and so
    it is a trivial cofibration
  by \cref{ano-cof-pushout-product} of \cref{second-pushout-product}.
    (Under this identification, $\simp{m-1}$ corresponds to the face
    $\face_0 \from \simp{m-1} \ito \simp{m}$.)
    Hence, $\Sd \horn{m,0} \union X \ito \Sd \simp{m}$ is also a trivial cofibration and thus
    so is $\Sd \horn{m,0} \ito \Sd \simp{m}$.


For \cref{Ex-triv-fibration}, we need to verify for all $m$ that
    the map $\Sd \bdsimp{m} \to \Sd \simp{m}$ is a cofibration.
    This follows by \cref{nerve-cof} since it is the nerve of
    a decidable inclusion between categories with decidable identities.

For \cref{Ex-pointing-cofibration}, we first check that $\Ex X$ is cofibrant.
    Indeed, by \cref{cofibrant-degeneracy}
    it is enough to check that each degeneracy operator $[m] \sto [n]$ induces
    a decidable inclusion $(\Ex X)_n \to (\Ex X)_m$.
    This inclusion is induced by $\Sd \simp{m} \to \Sd \simp{n}$, so the conclusion follows from \cref{Sd-degeneracy-collapse}.
    Next, by \cref{cofibration-levelwise-decidable} it suffices to verify that $\nu_X \from X \to \Ex X$ is a levelwise decidable inclusion.
    This is a consequence of \cref{Sd-simplex-he} since $X_m \to (\Ex X)_m$
    is induced by $\mu_{\simp{m}} \from \Sd \simp{m} \to \simp{m}$. \qedhere
\end{proof}

For a simplicial set $X$, we define $\Ex^\infty X$ to be the colimit of the sequence
\begin{tikzeq*}
\matrix[diagram,column sep=4em]
{
  |(E0)| X & |(E1)| \Ex X & |(E2)| \Ex^2 X & |(ld)| \ldots \rlap{\text{.}} \\
};

\draw[->] (E0) to node[above] {$\nu_X$}       (E1);
\draw[->] (E1) to node[above] {$\nu_{\Ex X}$} (E2);

\draw[->] (E2) to (ld);
\end{tikzeq*}
We write $\nu^\infty_X \from X \to \Ex^\infty X$ for the resulting natural map.

\Cref{Ex-infty-properties} below extends some properties of the $\Ex$ functor to the $\Ex^\infty$ functor.

\begin{proposition} \label{Ex-infty-properties}
  The functor $\Ex^\infty$ satisfies the following conditions.
  \begin{enumerate}
  \item \label{Ex-infty-finite-limits} It preserves finite limits.
  \item \label{Ex-infty-fibration} It preserves Kan fibrations between cofibrant objects.
  \item \label{Ex-infty-triv-fibration} It preserves trivial fibrations between cofibrant objects.
  \item \label{Ex-infty-fibrant-replacement} If $X$ is cofibrant, then $\nu^\infty_X$ is a cofibration. In particular $\Ex^\infty X$ is cofibrant.
    Moreover, $\Ex^\infty X$ is a Kan complex.
  \end{enumerate}
\end{proposition}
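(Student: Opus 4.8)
The plan is to reduce each clause to the corresponding property of $\Ex$ recorded in \cref{Ex-properties}, using throughout that $\Ex^\infty X = \colim_n \Ex^n X$ is a sequential, hence filtered, colimit of the tower $X \xrightarrow{\nu_X} \Ex X \xrightarrow{\nu_{\Ex X}} \Ex^2 X \to \dotsb$. For \cref{Ex-infty-finite-limits}: since $\Ex$ is a right adjoint (to $\Sd$), each $\Ex^n$ preserves all limits, in particular finite ones, and filtered colimits commute with finite limits in the presheaf category $\sSet$ (a fact that holds constructively, being computed pointwise in $\Set$); hence $\Ex^\infty$ preserves finite limits.

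For \cref{Ex-infty-fibration,Ex-infty-triv-fibration} I would take a Kan fibration (resp.\ a trivial fibration) $p \from X \fto Y$ with $X$ and $Y$ cofibrant and iterate \cref{Ex-properties}: \cref{Ex-fibration} (resp.\ \cref{Ex-triv-fibration}) makes every $\Ex^n p$ a Kan fibration (resp.\ trivial fibration), while \cref{Ex-pointing-cofibration} makes every $\Ex^n X$ and $\Ex^n Y$ cofibrant and every transition map $\nu_{\Ex^n X}$, $\nu_{\Ex^n Y}$ a cofibration, hence a levelwise decidable inclusion by \cref{cof-to-dec} of \cref{cofibration-levelwise-decidable}. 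The naturality squares commute since $\nu$ is a natural transformation, so \cref{omega-colim-of-fibrations} applies directly to this tower and yields that $\Ex^\infty p$ is a Kan fibration (resp.\ trivial fibration).

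For \cref{Ex-infty-fibrant-replacement} the cofibrancy part is immediate: by induction via \cref{Ex-pointing-cofibration} of \cref{Ex-properties} every $\Ex^n X$ is cofibrant and every $\nu_{\Ex^n X}$ a cofibration, so $\nu^\infty_X$, being a composite of the tower of cofibrations $\nu_{\Ex^n X}$, is itself a cofibration (cofibrations, as the left class of a \wfs{}, being closed under transfinite composition), and composing with $\emptyset \cto X$ shows $\Ex^\infty X$ is cofibrant. For the Kan property I would run Kan's classical argument: since $\horn{m,i}$ and $\simp m$ are finite colimits of representables, hence finitely presented, a horn $\horn{m,i} \to \Ex^\infty X$ factors through some $\Ex^n X$; transposing the resulting extension problem across the adjunction $\Sd \adj \Ex$ (iterated $r$ times) turns it into the problem of extending a power of the last-vertex map $\mu$ along the inclusion $\Sd^r \horn{m,i} \ito \Sd^r \simp m$, which Kan's combinatorial lemma on iterated barycentric subdivisions of simplices and horns solves for a suitable $r$, producing an extension $\simp m \to \Ex^{n+r} X$ of the composite $\horn{m,i} \to \Ex^n X \to \Ex^{n+r} X$. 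To turn these into an actual choice of horn fillers for $\Ex^\infty X \to \simp 0$, as opposed to a mere existence statement, I would assemble them coherently exactly as in \cref{aligning-structured-fibrations} and then invoke \cref{small-object-argument-lemma} with base $\simp 0$.

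The genuinely delicate point is the Kan property in \cref{Ex-infty-fibrant-replacement}. One must reinspect Kan's combinatorial lemma on subdivisions of horns to confirm that it involves only finite posets and decidable subsets — it does, so it carries over unchanged — and one must produce the horn fillers as an honest function rather than assert their existence, which is precisely what the constructive small object machinery of \cref{sec:ssets} is designed to provide. The remaining cofibrancy bookkeeping in clauses (ii)--(iv) is routine once \cref{Ex-properties} is in hand.
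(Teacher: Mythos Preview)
Your proof is correct and follows the same approach as the paper. The only differences are presentational: the paper writes out explicitly the retraction $\phi \from \Sd^2 \simp{m} \to \Sd \horn{m,i}$ that you invoke as ``Kan's combinatorial lemma'' (this is your $r=2$, and like you the paper must shift the sequence slightly --- it restricts to $k \ge 1$ --- to fit \cref{small-object-argument-lemma}), and your separate appeal to \cref{aligning-structured-fibrations} is unnecessary since that alignment is already built into the proof of \cref{small-object-argument-lemma}.
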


\begin{proof} For \cref{Ex-infty-finite-limits}, observe that $\Ex^\infty$ is
    a filtered colimit of functors and each of these functors
    preserve limits by \cref{Ex-limits} of~\cref{Ex-properties}.
    Hence it preserves finite limits itself.

    \Cref{Ex-infty-fibration,Ex-infty-triv-fibration} follow
    by \cref{omega-colim-of-fibrations} from the corresponding
    \cref{Ex-fibration,Ex-triv-fibration} of \cref{Ex-properties}
    using \cref{Ex-pointing-cofibration} of \cref{Ex-properties}
    to satisfy the requirement that the step maps in the colimit
    are cofibrations.

    For \cref{Ex-infty-fibrant-replacement}, $\nu^\infty_X$ is a cofibration by \cref{Ex-pointing-cofibration} of \cref{Ex-properties}
    and thus $\Ex^\infty X$ is cofibrant.
    To show that it is a Kan complex, we appeal to \cref{small-object-argument-lemma}.
    The step maps $\Ex^k X \to \Ex^{k+1} X$ are cofibrations by \cref{Ex-pointing-cofibration}, hence levelwise decidable.
    It remains to construct the indicated lift in any lifting problem
    \begin{tikzeq*}
    \matrix[diagram,column sep={10em,between origins}]
    {
      |(h)| \horn{m,i} & |(Es)| \Ex^k X & |(Es1)| \Ex^{k + 1} X \rlap{\text{.}} \\
      |(s)| \simp{m}   &                &                                       \\
    };

    \draw[inj] (h) to (s);

    \draw[->] (h)  to node[above] {$x$}            (Es);
    \draw[->] (Es) to node[above] {$\nu_{\Ex^k X}$} (Es1);
    \draw[->,dashed] (s) to (Es1);
    \end{tikzeq*}
    It suffices to have this only for $k \geq 1$.
    Then by adjointness this problem rewrites as
    \begin{tikzeq*}
    \matrix[diagram,column sep={10em,between origins}]
    {
      |(S2h)| \Sd^2 \horn{m,i} & |(Sh)| \Sd \horn{m,i} & |(Es1)| \Ex^{k - 1} X \rlap{\text{.}} \\
      |(S2s)| \Sd^2 \simp{m}   &                       &                                       \\
    };

    \draw[inj] (S2h) to (S2s);

    \draw[->] (S2h) to node[above] {$\Sd \mu_{\horn{m,i}}$}  (Sh);
    \draw[->] (Sh)  to node[above] {$\tilde x$}             (Es1);
    \draw[->,dashed] (S2s) to (Es1);
    \draw[->,dashed] (S2s) to node[above] {$\phi$} (Sh);
    \end{tikzeq*}
    For this, it will suffice to construct the dashed map $\phi$.
    We do so by first defining a map $\phi \from \Sd^2 \simp{m} \to \Sd \simp{m}$,
  showing that its image is in  $\Sd \horn{m,i}$, and finally checking the required commutativity.
    All simplicial sets in the left triangle are nerves of finite posets, so
    it will be enough to define $\phi$ on the underlying posets.
  First, for a finite non-empty subset $A \subseteq [m]$, define
    \begin{equation*}
      \mu' A =
      \begin{cases}
        i      & \text{if } A \in \{ [m], [m] \setminus \{ i \} \}, \\
        \max A & \text{otherwise.}
      \end{cases}
    \end{equation*}
    We then let
    \begin{equation}
    \label{equ:def-of-phi}
    \phi(A_0 \subset \ldots \subset A_p) = \{ \mu' A_0, \ldots, \mu' A_p \},
    \end{equation}
     so that $\phi$ is an order-preserving map.
  We verify that the image of $\phi$  lies in $\Sd \horn{m,i}$. Since
    $\Sd \horn{m,i}$ is the nerve of the poset of the faces of~$\horn{m,i}$ and $\horn{m,i} \ito \simp{m}$ is a levelwise decidable inclusion by \cref{triv-cof-cof,cofibration-levelwise-decidable},
    we can do so by ruling out two cases.
    The first is that $\phi(A_0 \subset \ldots \subset A_p) = [m] \setminus \{ i \}$.
    In this case, we have $i \not\in \phi(A_0 \subset \ldots \subset A_p)$ so that $\phi(A_0 \subset \ldots \subset A_p) = \braces{\max A_0, \ldots, \max A_p}$ and $[m] \setminus \braces{i} \subseteq A_p$.
    This implies that $i \in \phi(A_0 \subset \ldots \subset A_p)$ and that is not possible.
      The second case is that $\phi(A_0 \subset \ldots \subset A_p) = [m]$. But then we would
 have $p = m$ and
    $\phi(A_0 \subset \ldots \subset A_{m - 1}) = [m] \setminus \{ i \}$, which is also
    not possible, as above.
    Finally, by the very definition in~\eqref{equ:def-of-phi}, $\phi$
restricts to $\Sd \mu_{\horn{m,i}}$, as required.
\end{proof}

\begin{proposition}\label{Ex-whe}
  For a cofibrant simplicial set $X$,
  the map $\nu_X \from X \to \Ex X$ is a \whe{}.
\end{proposition}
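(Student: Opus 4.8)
The plan is to prove this by induction over the skeletal filtration of $X$, working inside the cocomplete cofibration category of cofibrant simplicial sets from \cref{cofibration-category-cofibrant}; this makes sense because $\nu_X$ is a cofibration between cofibrant objects by \cref{Ex-pointing-cofibration} of \cref{Ex-properties}. The base case is the claim for simplices, the inductive step attaches cells along boundary inclusions, so what is needed is that $\nu_{\simp{m}}$ and $\nu_{\bdsimp{m}}$ are \whe{}s and that $\Ex$ is compatible enough with the colimits involved.

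For the base case $X = \simp{m}$, I would show that $\Ex\simp{m}$ is contractible and deduce that $\nu_{\simp{m}}$ is a \whe{} by two-out-of-three against $\simp{m} \to \simp{0}$ (a homotopy equivalence, hence a \whe{} by \cref{whe-cof-he} of \cref{whe-cof-properties}) and $\Ex\simp{m} \to \simp{0}$. To produce the contraction, take the order-preserving ``collapse onto the top vertex'' map, a homotopy $h \from \simp{m} \times \simp{1} \to \simp{m}$ from $\id$ to a constant; since $\Ex$ preserves products by \cref{Ex-limits} of \cref{Ex-properties}, $\Ex h$ is a map $\Ex\simp{m} \times \Ex\simp{1} \to \Ex\simp{m}$, and precomposing with $\Ex\simp{m} \times \nu_{\simp{1}}$ gives a homotopy $\Ex\simp{m} \times \simp{1} \to \Ex\simp{m}$ from $\id$ to a constant, so $\Ex\simp{m} \to \simp{0}$ is a homotopy equivalence and thus a \whe{}. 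The case $\nu_{\bdsimp{m}}$ then comes for free from the induction, since $\bdsimp{m}$ is a cell complex of lower-dimensional simplices by \cref{simplex-cofibrant}.

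For the inductive machinery the relevant facts about $\Ex$ are: it preserves filtered colimits (as $\Sd\simp{m}$ is finitely presented), it preserves coproducts (as $\Sd\simp{m}$ is connected), and it sends cofibrations between cofibrant objects to cofibrations --- because it sends levelwise decidable inclusions to levelwise decidable inclusions by \cref{decidable-limit} of \cref{decidable-closure} and cofibrant objects to cofibrant objects, so that \cref{dec-to-cof} of \cref{cofibration-levelwise-decidable} applies. Following the classical treatment I would also record along the way that the last-vertex map $\mu_X \from \Sd X \to X$ is a \whe{} for every cofibrant $X$ --- again by cell induction, with base case the homotopy equivalence $\mu_{\simp{m}}$ of \cref{Sd-simplex-he}, using that $\Sd$ preserves colimits and sends boundary inclusions to cofibrations --- and, since $\Sd$ moreover preserves cofibrations and sends horn inclusions to trivial cofibrations, that $\Sd$ is an exact endofunctor of this cofibration category; together with the naturality square $\nu_X \circ \mu_X = \mu_{\Ex X} \circ \Sd(\nu_X)$ and the counit identity $\mu_X = \epsilon_X \circ \Sd(\nu_X)$ this lets one pass freely between $\nu$, $\mu$ and the counit $\epsilon$ of $\Sd \adj \Ex$.

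I expect the main obstacle to be the cell-attachment step: given a pushout exhibiting the $k$-skeleton $\Sk^{k} X$ over $\Sk^{k-1} X$ as a pushout of a coproduct of boundary inclusions $\bdsimp{k} \ito \simp{k}$, one wants $\nu_{\Sk^{k} X}$ to be a \whe{} knowing that $\nu$ is a \whe{} on $\Sk^{k-1} X$ and on the coproducts of $\bdsimp{k}$'s and $\simp{k}$'s being attached, but $\Ex$ does not preserve pushouts, so the Gluing Lemma is not directly available. The content of this step is to show that $\Ex$ nevertheless sends such a pushout --- along a boundary inclusion, between cofibrant objects --- to a homotopy pushout; I would attack this by resolving $\Ex$ of the pushout by a cofibrant bisimplicial set assembled from $\Ex$ applied to the constituent simplices and their boundaries, whose rows are levelwise \whe{}s by the simplex case, and then invoking the diagonal lemma \cref{bisimplicial-diagonal-whe} (here the exactness of $\Sd$ and the natural \whe{} $\mu$ help in comparing $\Ex$ of the pushout with the corresponding $\Sd$-construction, which \emph{does} commute with colimits). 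Once this is settled, the induction over the finite skeleta --- each cofibrant by \cref{cofibration-cancellation} --- closes, and the passage to an arbitrary cofibrant $X$ follows from closure of \whe{}s under sequential colimits of cofibrations, using that $\Ex$ preserves the relevant filtered colimit.
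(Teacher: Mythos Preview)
Your base case and the cell-induction proof that $\mu_X \from \Sd X \to X$ is a \whe{} for cofibrant $X$ are both fine, but the inductive step for $\nu$ has a genuine gap that you correctly identify and do not close. The functor $\Ex$ is a right adjoint and does not preserve the pushouts in the skeletal filtration, so the Gluing Lemma is unavailable; your proposed ``bisimplicial resolution of $\Ex$ of the pushout'' is not a concrete construction, and neither the naturality square for $\mu$ nor the counit identity lets you transport the cell induction from $\Sd$ (which preserves colimits) to $\Ex$ (which does not). Knowing that $\mu$ is a natural \whe{} on cofibrant objects tells you $\Sd$ is homotopically the identity, but the adjunction alone does not let you conclude the same for $\Ex$ on cofibrant, non-fibrant objects: passing from a derived equivalence for $\Sd$ to one for $\Ex$ would require $\Ex$ to already be right-derivable on such objects, which is essentially what you are trying to prove.

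The paper avoids cell induction entirely via the global Latch--Thomason--Wilson argument. For fixed cofibrant $X$, one forms the square of bisimplicial sets with corners $\sSet(\simp{m} \times \simp{0}, X)$, $\sSet(\simp{m} \times \simp{n}, X)$, $\sSet(\Sd\simp{m} \times \simp{0}, X)$, $\sSet(\Sd\simp{m} \times \simp{n}, X)$, checks they are all cofibrant (this is where cofibrancy of $X$ enters, via \cref{exponential-finite-cofibrant} and \cref{Ex-pointing-cofibration} of \cref{Ex-properties}), and observes that in each simplicial direction one of the maps is a \he{}: fixing $m$, the map $X^{\simp{m}} \to X^{\Sd\simp{m}}$ is a \he{} by \cref{Sd-simplex-he} and \cref{he-hom}; fixing $n$, the horizontal maps $X \to X^{\simp{n}}$ and $\Ex X \to \Ex(X^{\simp{n}})$ are \he{}s (the latter since $\Ex$ preserves homotopies, exactly as in your base case). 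A single application of \cref{bisimplicial-diagonal-whe} in each direction, together with 2-out-of-3, then shows that the diagonal of the left column --- which is precisely $\nu_X$ --- is a \whe{}. No induction over cells of $X$ is needed, and the difficulty you flagged never arises.
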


\begin{proof}
  We use the argument of \cite{ltw}*{Theorem~4.1} enhanced with cofibrancy checks necessary to make it constructive.
  We begin by noticing that $\Ex$ preserves homotopies.
  Indeed, a homotopy $X \times \simp{1} \to Y$ gives a map
  $\Ex X \times \simp{1} \to \Ex X \times \Ex \simp{1} \to \Ex Y$.
  Thus, $\Ex$ also preserves \he{}s.

  Consider the commutative square
  \begin{tikzeq*}
  \matrix[diagram,column sep={15em,between origins}]
  {
    |(i0)| \sSet(    \simp{m} \times \simp{0}, X) &
    |(in)| \sSet(    \simp{m} \times \simp{n}, X) \\
    |(D0)| \sSet(\Sd \simp{m} \times \simp{0}, X) &
    |(Dn)| \sSet(\Sd \simp{m} \times \simp{n}, X) \\
  };

  \draw[->] (i0) to (in);
  \draw[->] (D0) to (Dn);

  \draw[->] (i0) to (D0);
  \draw[->] (in) to (Dn);
  \end{tikzeq*}
  in the category of sets, which becomes a square of bisimplicial sets when $m$ and $n$ vary.
  We will verify that all these are cofibrant.
  By \cref{bisimplicial-cofibrant-degeneracy}, it is enough to verify that all degeneracy operators act on them via decidable inclusions.
  For the left objects, this amounts to cofibrancy of $X$ and $\Ex X$ (\cref{Ex-pointing-cofibration} of \cref{Ex-properties}).
  For the right objects, this reduces to cofibrancy of $X^{\simp{m}}$, $X^{\Sd \simp{m}}$, $X^{\simp{n}}$ and $\Ex (X^{\simp{n}})$,
  which follows from \cref{exponential-finite-cofibrant} and \cref{Ex-pointing-cofibration} of \cref{Ex-properties}.
  Since bisimplicial Reedy cofibrancy coincides with iterated simplicial Reedy cofibrancy,
  all these bisimplicial sets are Reedy cofibrant simplicial objects in $\sSet$ in both directions.

  By fixing either $m$ or $n$ we obtain two squares of simplicial sets
  \begin{tikzeq*}
  \matrix[diagram,column sep={7em,between origins}]
  {
    |(b0)| \bullet           & |(Xi)| X^{\simp{m}}      &
    |(i0)|     X^{\simp{0}}  & |(in)|     X^{\simp{n}}  \\
    |(b1)| \bullet           & |(XD)| X^{\Sd \simp{m}}  &
    |(E0)| \Ex(X^{\simp{0}}) & |(En)| \Ex(X^{\simp{n}}) \\
  };

  \draw[->] (b0) to (b1);

  \draw[->] (Xi) to node[right] {$\weq$} (XD);

  \draw[->] (b0) to (Xi);
  \draw[->] (b1) to (XD);

  \draw[->] (i0) to node[above] {$\weq$} (in);
  \draw[->] (E0) to node[below] {$\weq$} (En);

  \draw[->] (i0) to (E0);
  \draw[->] (in) to (En);
  \end{tikzeq*}
  where the right map in the left square is a \he{} as the image of
  the \he{} $\Sd \simp{m} \to \simp{m}$ of \cref{Sd-simplex-he}
  under $X^{(\uvar)}$,
  the top map in the right square is a \he{} as the image of
  the \he{} $\simp{n} \to \simp{0}$ under $X^{(\uvar)}$
  and then the bottom map in the right square is a \he{} since
  $\Ex$ preserves \he{}s as noted above.
  In the first two cases we use \cref{he-hom} to show that
  $X^{(\uvar)}$ preserves \he{}s.

  \He{}s are \whe{}s by \cref{whe-cof-he} of \cref{whe-cof-properties} and, consequently,
  taking the diagonal simplicial sets in the original square
  (\ie, setting $m = n$) yields
  \begin{tikzeq*}
  \matrix[diagram]
  {
    |(i)| X     & |(b0)| \bullet \\
    |(E)| \Ex X & |(b1)| \bullet \\
  };

  \draw[->] (i) to (E);

  \draw[->] (b0) to node[right] {$\weq$} (b1);

  \draw[->] (i) to node[above] {$\weq$} (b0);
  \draw[->] (E) to node[below] {$\weq$} (b1);
  \end{tikzeq*}
  in which both horizontal and the right vertical map are \whe{}s
  by \cref{bisimplicial-diagonal-whe}.
  Thus, $X \to \Ex X$ is also a \whe{} by the 2-out-of-3 property.
\end{proof}

\begin{proposition}\label{Ex-infty-whe}
  For a cofibrant simplicial set $X$
  the map $\nu^\infty_X \from X \to \Ex^\infty X$ is a \whe{}.
\end{proposition}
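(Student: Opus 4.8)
The plan is to bootstrap from the single-step result, \cref{Ex-whe}, that $\nu_X \from X \to \Ex X$ is a \whe{} for cofibrant $X$, together with the fact that $\Ex$ preserves cofibrations of cofibrant objects (via \cref{Ex-pointing-cofibration} of \cref{Ex-properties}) and the fact that \whe{}s between cofibrant simplicial sets are closed under sequential colimits along cofibrations. First I would observe that for each $k$ the simplicial set $\Ex^k X$ is cofibrant by iterating \cref{Ex-pointing-cofibration} of \cref{Ex-properties}, and that each step map $\nu_{\Ex^k X} \from \Ex^k X \to \Ex^{k+1} X$ is both a cofibration (same reference) and a \whe{} by applying \cref{Ex-whe} to the cofibrant object $\Ex^k X$. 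Thus $\nu^\infty_X$ is the transfinite composite of the countable sequence of acyclic cofibrations $X \to \Ex X \to \Ex^2 X \to \ldots$, all living in the cofibration category of cofibrant simplicial sets established in \cref{cofibration-category-cofibrant}.

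The key remaining step is to conclude that this transfinite composite is itself a \whe{}. In a cocomplete cofibration category, a sequential colimit of acyclic cofibrations is an acyclic cofibration: this is a standard consequence of the cofibration category axioms (see the treatment of sequential colimits in \cite{rb}, e.g.\ the results surrounding \cite{rb}*{Theorem~9.3.5}). Concretely, one considers the diagram $k \mapsto \Ex^k X$ over the poset $\omega$, which is a direct Reedy category; this diagram is Reedy cofibrant because each latching map is the step map $\Ex^{k-1} X \to \Ex^k X$, a cofibration; and it is a pointwise \whe{} onto the constant diagram at $\Ex^\infty X$ — or more directly, one compares it with the diagram that is constantly $\Ex^\infty X$ from stage $0$ — so the induced map on colimits, which is exactly $\nu^\infty_X$, is a \whe{}. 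Alternatively, and perhaps more cleanly, I would cite the gluing/colimit lemma for sequential colimits of acyclic cofibrations in a cofibration category directly, exactly as was done at the end of the proof of \cref{bisimplicial-diagonal-whe} where \cite{rb}*{Theorem~9.3.5~(1c)} was invoked to pass from a skeletal filtration to the colimit.

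I expect the only subtlety — and it is mild — to be making sure the hypotheses of the colimit lemma from \cite{rb} are literally met: that the objects along the sequence are genuinely objects of the cofibration category (cofibrant), which we have, and that the maps are acyclic cofibrations therein, which we have from \cref{Ex-whe} and \cref{Ex-pointing-cofibration} of \cref{Ex-properties}. No new combinatorics on anodyne maps is needed, and no non-constructive principle enters, since everything is phrased inside the cofibration category of cofibrant simplicial sets. In short: $\nu^\infty_X$ is a countable composite of acyclic cofibrations between cofibrant objects, hence a \whe{} by the cofibration category structure of \cref{cofibration-category-cofibrant}.
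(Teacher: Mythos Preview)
Your proposal is correct and follows essentially the same approach as the paper, which simply cites \cref{Ex-pointing-cofibration} of \cref{Ex-properties}, \cref{Ex-whe}, and \cite{rb}*{Theorem~9.3.5~(1c)} in a single line. The only wobble is the aside about comparing with the constant diagram at $\Ex^\infty X$, which would be circular; but you immediately correct course to the right argument (compare the constant diagram at $X$ with the sequence $\Ex^k X$, or invoke closure of acyclic cofibrations under sequential composition directly), so no gap remains.
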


\begin{proof}
  This follows from \cref{Ex-pointing-cofibration} of \cref{Ex-properties}, \cref{Ex-whe} and
  \cite{rb}*{Theorem~9.3.5~(1c)}.
\end{proof}

\subsection{An explicit cofibrant replacement functor}
\label{sec:explicit-cofibrant-replacement}

Up to this point,
we have developed a fair amount of homotopy theory of cofibrant simplicial sets.
To move beyond cofibrant objects,
we need a sufficiently well-behaved cofibrant replacement functor.
(Specifically, we need it to preserve pushouts and cofibrations.)
There are a few functors that are suitable.
We use a functor $T$ where $T X$ is defined as
the nerve of the category of simplices of $X$.
However, even to prove all necessary facts about $T$,
we implicitly use another cofibrant replacement functor which
is the variation of $T$ using
the subcategory of face operators of the category of simplices.
Yet another cofibrant replacement functor, denoted $L U$ and
obtained from an adjunction between simplicial and semisimplicial sets,
will be discussed in \cref{sec:properness}.

To deal with homotopy theory of nerves of categories,
we employ the classical Theorem A of Quillen.
As usual, the standard proof technique \cite{q}*{p.~93} is applicable
but only for cofibrant objects.

A simplicial set is \emph{contractible} if the map $X \to \simp{0}$ is a homotopy equivalence.
It is \emph{weakly contractible} if $X \to \simp{0}$ is a weak homotopy equivalence.

\begin{theorem}[Quillen's Theorem A]\label{theorem-A}
  Let $f \from I \to J$ be
  a functor between categories with decidable identities.
  If for every $y \in J$ the nerve $\nerve (f \slice y)$ is weakly contractible,
  then the induced map $\nerve I \to \nerve J$ is a \whe{}.
\end{theorem}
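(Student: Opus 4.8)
The plan is to run Quillen's classical bisimplicial-set argument (\cite{q}*{p.~93}), inserting the cofibrancy bookkeeping that makes it constructive and lets us apply the diagonal lemma \cref{bisimplicial-diagonal-whe}. First I would introduce the bisimplicial set $E$ whose $(p,q)$-bisimplices are the pairs consisting of a $p$-chain $i_0 \to \cdots \to i_p$ in $I$ together with a chain $f(i_p) \to j_0 \to \cdots \to j_q$ in $J$. Reading $E$ in the first simplicial direction, $E_{p, \bullet}$ is the coproduct over $\sigma \in (\nerve I)_p$ of the nerves $\nerve(f(i_p) \slice J)$ of the coslice categories, each of which has an initial object; unwinding the definitions the other way identifies $E_{\bullet, q}$ with the coproduct over $\tau \in (\nerve J)_q$ of the nerves $\nerve(f \slice j_0)$, where $j_0$ is the leading vertex of $\tau$. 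Forgetting the $J$-data gives a map of bisimplicial sets $\pi_1 \from E \to c_1$, where $c_1$ has value $(\nerve I)_p$ in bidegree $(p,q)$ (constant in $q$); forgetting the $I$-data together with the connecting map $f(i_p) \to j_0$ gives $\pi_2 \from E \to c_2$, with value $(\nerve J)_q$ in bidegree $(p,q)$ (constant in $p$). The diagonals of $c_1$ and $c_2$ are $\nerve I$ and $\nerve J$, and $\diag \pi_1$, $\diag \pi_2$ are the evident projections.

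Second, I would verify the levelwise hypotheses of \cref{bisimplicial-diagonal-whe}, which I apply to $\pi_1$ as it stands and to $\pi_2$ after transposing the two simplicial directions (the diagonal being unchanged). For each $p$, the map $E_{p, \bullet} \to (\nerve I)_p$ is a coproduct of maps $\nerve(f(i_p) \slice J) \to \simp{0}$, each a \he{} because the coslice has an initial object, hence a \whe{}. For each $q$, the map $E_{\bullet, q} \to (\nerve J)_q$ is a coproduct of the maps $\nerve(f \slice j_0) \to \simp{0}$, each a \whe{} by hypothesis, hence a \whe{} as well, using that \whe{}s are closed under coproducts in the cocomplete cofibration category of \cref{cofibration-category-cofibrant} (see \cite{rb}). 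The comma categories $f \slice j_0$ and $f(i_p) \slice J$ have decidable identities since $I$ and $J$ do (a morphism of such a comma category is an identity exactly when its components are; for $f \slice j_0$ one may also cite \cref{discrete-fibration-decidable-identities}), so their nerves are cofibrant by \cref{nerve-cof}. Finally $E$ is itself a cofibrant bisimplicial set, which I would check by \cref{bisimplicial-cofibrant-degeneracy}: a degeneracy in the second direction acts, over the fixed index $(\nerve I)_p$, fibrewise as a degeneracy of the cofibrant nerve $\nerve(f(i_p) \slice J)$, hence by a decidable inclusion; a degeneracy in the first direction leaves the leading vertex $i_p$, and hence the fibre, unchanged, so it merely reindexes along the decidable inclusion $(\nerve I)_p \ito (\nerve I)_{p+1}$. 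Since $c_1$ and $c_2$ are cofibrant too (reducing to cofibrancy of $\nerve I$ and $\nerve J$ by \cref{nerve-cof}), \cref{bisimplicial-diagonal-whe} yields that $\diag \pi_1 \from \diag E \to \nerve I$ and $\diag \pi_2 \from \diag E \to \nerve J$ are \whe{}s.

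Finally, I would produce a homotopy $\nerve f \circ \diag \pi_1 \htp \diag \pi_2$. Given an $n$-simplex $(i_0 \to \cdots \to i_n,\ f(i_n) \to j_0 \to \cdots \to j_n)$ of $\diag E$, the composites $f(i_k) \to f(i_n) \to j_0 \to j_k$ assemble into a natural transformation from the functor $k \mapsto f(i_k)$ to the functor $k \mapsto j_k$ (the naturality squares commute by a direct diagram chase), equivalently a functor $[n] \times [1] \to J$; applying $\nerve$ and letting $n$ vary gives a simplicial map $\diag E \times \simp{1} \to \nerve J$ with endpoints $\nerve f \circ \diag \pi_1$ and $\diag \pi_2$. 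As a map homotopic to a \whe{} is again a \whe{} (by the path-object argument from the proof of \cref{whe-he} of \cref{whe-arbitrary-properties}), $\nerve f \circ \diag \pi_1$ is a \whe{}; since $\diag \pi_1$ is one as well, the 2-out-of-3 property for \whe{}s (\cref{whe-2-out-of-6} of \cref{whe-arbitrary-properties}) shows that $\nerve f$ is a \whe{}. The main obstacle is the cofibrancy bookkeeping — verifying that $E$ is a cofibrant bisimplicial set and that the relevant (in general infinite) coproducts of \whe{}s are \whe{}s — since this is precisely where the classical argument does not carry over verbatim; constructing the comparison homotopy, while fiddly, is routine.
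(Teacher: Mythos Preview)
Your argument is correct and follows the same bisimplicial strategy as the paper: your $E$ is exactly the paper's $Sf$, and the levelwise analyses in the two simplicial directions are identical. The two proofs diverge only in minor implementation details.

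For cofibrancy of the bisimplicial set, the paper writes $(Sf)_{m,n}$ as the finite pullback $(\nerve I)_m \times_{\ob J} \mor J \times_{\ob J} (\nerve J)_n$ and invokes closure of decidable inclusions under finite limits (\cref{decidable-closure}); you instead use the coproduct decomposition $E_{p,\bullet} \iso \coprod_\sigma \nerve(f(i_p)\slice J)$ and argue degeneracies act either fibrewise or by reindexing along a decidable inclusion. Both work; the paper's presentation is slightly quicker.

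For the final step, the paper avoids constructing your homotopy $\nerve f \circ \diag\pi_1 \htp \diag\pi_2$ by instead comparing to $S\id_J$: there is an evident map $Sf \to S\id_J$ (send the $I$-chain through $f$), and applying the whole argument a second time to $\id_J$ gives a commutative ladder
\begin{tikzeq*}
\matrix[diagram,column sep={5em,between origins}]
{
  |(I)|  \nerve I & |(S)|  \diag S f     & |(J)|  \nerve J \\
  |(Ii)| \nerve J & |(Si)| \diag S \id_J & |(Ji)| \nerve J \\
};
\draw[->] (S)  to node[above] {$\weq$} (I);
\draw[->] (S)  to node[above] {$\weq$} (J);
\draw[->] (Si) to node[below] {$\weq$} (Ii);
\draw[->] (Si) to node[below] {$\weq$} (Ji);
\draw[->] (I) to node[left] {$\nerve f$} (Ii);
\draw[->] (S) to (Si);
\draw[->] (J) to node[right] {$\id$} (Ji);
\end{tikzeq*}
whence $\nerve f$ is a \whe{} by two applications of 2-out-of-3. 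Your explicit homotopy accomplishes the same thing without the second pass; the paper's route trades that computation for a repeat of the diagonal argument. Neither buys anything substantial over the other.
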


\begin{proof}
  Let $S f$ be a bisimplicial set whose $(m, n)$-bisimplices are
  triples $(x, y, \phi)$ where $x \from [m] \to I$, $y \from [n] \to J$ and
  $\phi \from f x_m \to y_0$.
  It comes with two bisimplicial maps
  \begin{tikzeq*}
  \matrix[diagram,column sep={5em,between origins}]
  {
    |(I)| \nerve I \etimes \simp{0}                 &
    |(S)| S f                                       &
    |(J)| \simp{0} \etimes \nerve J \rlap{\text{.}} \\
  };

  \draw[->] (S) to (I);
  \draw[->] (S) to (J);
  \end{tikzeq*}
  All the bisimplicial sets in this diagram are cofibrant. For the codomains,
  recall that $\nerve I$ and $\nerve J$ are cofibrant by \cref{nerve-cofibrant} of \cref{nerve-cof}.
  \Cref{decidable-limit} of \cref{decidable-closure}, \cref{cofibrant-degeneracy}
  and \cref{bisimplicial-cofibrant-degeneracy}
  imply that the external product (defined in~\cref{sec:diagonals-of-ssets}) of cofibrant simplicial sets is cofibrant.
  For $S f$,
  the set of $(m, n)$-bisimplices can be written as
  the pullback $(\nerve I)_m \pull_{\ob J} \mor J \pull_{\ob_J} (\nerve I)_n$, and
  so the cofibrancy of $S f$ follows from cofibrancy of $\nerve I$ and $\nerve J$
  and \cref{decidable-limit} of \cref{decidable-closure}.

  For a fixed $m$, the left map becomes
  \begin{tikzeq*}
  \matrix[diagram,column sep={9em,between origins}]
  {
    |(I)| \displaystyle\bigcoprod_{[m] \to I} \simp{0}                                        &
    |(S)| \displaystyle\bigcoprod_{x \from [m] \to I} \nerve (f x_m \slice J) \rlap{\text{,}} \\
  };

  \draw[->,shorten <=-2mm] (S) to (I);
  \end{tikzeq*}
  which is a \whe{} since each $\nerve (f x_m \slice J)$ is contractible
  (as $f x_m \slice J$ has an initial object).
  For a fixed $n$, the left map becomes
  \begin{tikzeq*}
  \matrix[diagram,column sep={9em,between origins}]
  {
    |(S)| \displaystyle\bigcoprod_{y \from [n] \to J} \nerve (f \slice y_0)                 &
    |(J)| \displaystyle\bigcoprod_{[n] \to J} \simp{0}                      \rlap{\text{,}} \\
  };

  \draw[->] (S) to (J);
  \end{tikzeq*}
  which is a \whe{} since each $\nerve (f \slice y_0)$ is contractible by assumption.
  (Here, we use the fact that
  \whe{}s between cofibrant objects are closed under coproducts,
  see \cref{cofibration-category-cofibrant}.)

  Thus, by taking diagonals we obtain the diagram
  \begin{tikzeq*}
  \matrix[diagram,column sep={5em,between origins}]
  {
    |(I)|  \nerve I & |(S)|  \diag S f     & |(J)|  \nerve J \\
    |(Ii)| \nerve J & |(Si)| \diag S \id_J & |(Ji)| \nerve J \\
  };

  \draw[->] (S)  to node[above] {$\weq$} (I);
  \draw[->] (S)  to node[above] {$\weq$} (J);
  \draw[->] (Si) to node[below] {$\weq$} (Ii);
  \draw[->] (Si) to node[below] {$\weq$} (Ji);

  \draw[->] (I) to (Ii);
  \draw[->] (S) to (Si);
  \draw[->] (J) to (Ji);
  \end{tikzeq*}
  where all horizontal maps are \whe{}s by \cref{bisimplicial-diagonal-whe}
  (using the fact that
  $\id_{\nerve J}$ satisfies the hypotheses of the theorem).
  The left map is $\nerve f$ and the right one is $\id_{\nerve J}$.
  It follows by 2-out-of-3 that $\nerve f$ is a \whe{}.
\end{proof}

For a simplicial set $X$, set $T X = \nerve (\Simp \slice X)$.
There is a natural map $\tau_X \from T X \to X$ given as follows.
An $m$-simplex of $T X$ is a functor $[m] \to \Simp \slice X$, \ie,
a sequence of simplices
$\simp{k_0} \to \simp{k_1} \to \ldots \simp{k_m} \to X$.
Write $\phi_i$ for the map $[k_i] \to [k_m]$ of this sequence and
let $\bar \phi \from [m] \to [k_m]$ be given by $\bar \phi i = \phi_i k_i$.
Then $\tau_X$ sends the simplex above to
$\simp{m} \to \simp{k_m} \to X$ induced by $\bar \phi$.

\begin{lemma}\label{cofibrant-replacement-he}
  The functor $T$ carries \he{}s to \whe{}s.
\end{lemma}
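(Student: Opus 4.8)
The plan is to reduce the statement to the homotopy-invariance properties of the $\Ex^\infty$/Quillen's Theorem A machinery already established, via the observation that $T$ is a \emph{cofibrant replacement functor} together with a natural \whe{} $\tau_X \from TX \to X$. First I would record that $TX = \nerve(\Simp \slice X)$ is cofibrant for every $X$: the category $\Simp \slice X$ is the category of elements of $X$ over the category $\Simp$, which has decidable identities, so $\Simp \slice X$ has decidable identities by \cref{discrete-fibration-decidable-identities}, and hence $\nerve(\Simp \slice X)$ is cofibrant by \cref{nerve-cofibrant} of \cref{nerve-cof}. Next I would check functoriality and naturality: a simplicial map $f \from X \to Y$ induces a functor $\Simp \slice X \to \Simp \slice Y$ (postcompose the structure maps $\simp{k} \to X$ with $f$), hence a map $Tf \from TX \to TY$, and the square formed by $\tau_X$, $\tau_Y$, $Tf$, $f$ commutes by direct inspection of the formula defining $\tau$.

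The key homotopy-theoretic input I would isolate is that $T$ carries \emph{homotopies} to homotopies between cofibrant simplicial sets, up to a \whe{}. The cleanest route: show directly that $\tau_X \from TX \to X$ is a \whe{} for every $X$, and more specifically that $\tau_X$ is a \whe{} whose source is cofibrant; then given a \he{} $f \from X \to Y$ with homotopy inverse $g$ and homotopies $gf \htp \id_X$, $fg \htp \id_Y$, apply $T$ to get a square relating $Tf$ to $f$ via $\tau$, and use $2$-out-of-$3$ for \whe{}s (\cref{whe-arbitrary-properties}). For the first point I would invoke Quillen's Theorem A (\cref{theorem-A}): one shows that for each simplex $x \from \simp{m} \to X$ (an object of $\Simp \slice X$), the relevant slice category has weakly contractible nerve — in fact the functor realizing $\tau_X$ at the level of categories of elements has slices with terminal or initial objects, exactly as in the classical argument that $|NX| \simeq |X|$. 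Alternatively, since $T$ is built from the face-operator variant $T^\sharp$ with $T^\sharp X \to X$ a trivial fibration (this is the point the authors flag: the face-operator version gives a \emph{trivial} fibration, as its fibers are nerves of categories with terminal objects), one gets $T^\sharp X$ cofibrant with a trivial fibration to $X$, hence a \whe{} by \cref{whe-trivial-fibration} of \cref{whe-arbitrary-properties}, and then compares $T$ with $T^\sharp$.

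Concretely, the proof I would write: given a \he{} $f \from X \to Y$, pick a homotopy inverse $g$ and note $Tf$ and $Tg$ are defined. Applying $T$ to a homotopy $X \times \simp{1} \to X$ does \emph{not} directly give a homotopy since $T$ does not preserve products; instead I would use the naturality square $\tau_Y \circ Tf = f \circ \tau_X$ and the analogous square for $g$, together with the fact (to be proved, via Theorem A) that each $\tau_X$ is a \whe{}. Since $f$ is a \he{} it is a \whe{} by \cref{whe-he} of \cref{whe-arbitrary-properties}; then in the commuting square $\tau_Y \circ Tf = f \circ \tau_X$, three of the four maps ($f$, $\tau_X$, $\tau_Y$) are \whe{}s, so $Tf$ is a \whe{} by $2$-out-of-$3$. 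The main obstacle will be establishing that $\tau_X \from TX \to X$ is a \whe{} constructively: this requires running Quillen's Theorem A with the correct slice categories and checking the relevant nerves are \emph{weakly} contractible (not contractible), which is why the authors needed the weak-contractibility hypothesis in \cref{theorem-A}; in practice I expect one identifies the fiber of $\tau_X$ over a simplex, or the comma categories $(\tau_X \slice y)$, as having an initial or terminal object — the bookkeeping of which operator $\bar\phi$ does and verifying the comma category is, say, $\Simp_\sharp \slice(\text{something})$ with a terminal object — and this is the step where one must be careful that all the categories in play have decidable identities so that Theorem A applies.
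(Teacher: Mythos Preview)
Your proposal contains a genuine circularity. You want to deduce that $Tf$ is a \whe{} from the naturality square $\tau_Y \circ Tf = f \circ \tau_X$ together with the fact that $\tau_X$ and $\tau_Y$ are \whe{}s. But in the paper's logical development, the statement that $\tau_X$ is a \whe{} for \emph{arbitrary} $X$ (\cref{cofibrant-replacement-whe} of \cref{cofibrant-replacement}) is proved only after, and using, the present lemma: the extension from cofibrant $X$ to general $X$ goes through \cref{cofibrant-replacement-trivial-fibration}, whose proof explicitly invokes \cref{cofibrant-replacement-he}. Since a \he{} $f \from X \to Y$ need not have cofibrant source or target, you cannot restrict to the cofibrant case of $\tau$.

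Your suggested direct route to $\tau_X$ being a \whe{} via \cref{theorem-A} does not type-check: Theorem~A applies to a functor between categories with decidable identities, yielding a \whe{} between their nerves, but $\tau_X \from TX \to X$ is not the nerve of a functor because $X$ is an arbitrary simplicial set, not a nerve. Your alternative, that $\nerve(\Simp_\sharp \slice X) \to X$ is a trivial fibration, is neither established in the paper nor evidently true; the last-vertex map is a weak equivalence classically but not a fibration in general.

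The paper instead applies Theorem~A to a map that \emph{is} the nerve of a functor: the cylinder projection $p \from X \times \simp{1} \to X$ induces $p_* \from \Simp \slice (X \times \simp{1}) \to \Simp \slice X$, and $Tp = \nerve p_*$. The slice $p_* \slice x$ over $x \from \simp{m} \to X$ is computed to be $\Simp \slice ([m] \times [1])$, and an explicit zig-zag of natural transformations shows its nerve is contractible. Theorem~A then gives that $Tp$ is a \whe{}, whence the cylinder inclusions $T\iota_k$ are \whe{}s by 2-out-of-3, and thus $T$ carries homotopic maps to maps related by \whe{}s. The key idea you are missing is to aim Theorem~A at $Tp$ rather than at $\tau_X$.
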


\begin{proof}
  For a simplicial set $X$, consider the projection $p \from X \times \simp{1} \to X$ and the induced functor
  $p_* \from \Simp \slice (X \times \simp{1}) \to \Simp \slice X$.
  We will begin by verifying that it satisfies the hypothesis of \cref{theorem-A}.
  Note that all categories involved have decidable identities by \cref{discrete-fibration-decidable-identities}.
  Moreover, $p$ is a pullback of $\simp{1} \to \simp{0}$ and thus a slice of $p_*$ over $x \from \simp{m} \to X$ coincides with
  the slice of the induced functor $\Simp \slice [1] \to \Simp$ over $[m]$.
  The latter slice is isomorphic to $\Simp \slice ([m] \times [1])$ so it suffices to show that the nerve of this category is contractible.
  Let $s \from \Simp \slice ([m] \times [1]) \to \Simp \slice ([m] \times [1])$ be a functor that sends $(\phi, \psi) \from [k] \to [m] \times [1]$ to
  $(\phi', \psi') \from [k + 1] \to [m] \times [1]$ where $\phi' i  = \phi i$ and $\psi' i  = \psi i$ for $i \in [k]$
  while $\phi'(k + 1) = m$ and $\psi'(k + 1) = 1$.
  This functor admits natural transformations $c_{m, 1} \to s \leftarrow \id_{\Simp \slice ([m] \times [1])}$
  where $c_{m, 1}$ is the constant functor at $(m, 1) \from [0] \to [m] \times [1]$ which proves the claim.

  \Cref{theorem-A} implies that $\nerve p_* = T p$ is a \whe{}.
  Consequently, $T$ also carries the cylinder inclusions $X \to X \times \simp{1}$ to \whe{}s, and thus it carries all \he{}s to \whe{}s.
\end{proof}

\begin{lemma}\label{category-of-simplices-whe}
  For every simplicial set $X$,
  the inclusion functor $j_X \from \Simp_\sharp \slice X \to \Simp \slice X$
  induces a \whe{} on the nerves.
\end{lemma}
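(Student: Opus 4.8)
The statement to prove is \cref{category-of-simplices-whe}: for every simplicial set $X$, the inclusion functor $j_X \from \Simp_\sharp \slice X \to \Simp \slice X$ induces a \whe{} on nerves. I would prove this by an application of Quillen's Theorem A (\cref{theorem-A}), just as was done for \cref{cofibrant-replacement-he}. First I would check that both categories $\Simp_\sharp \slice X$ and $\Simp \slice X$ have decidable identities; this follows from \cref{discrete-fibration-decidable-identities}, since $\Simp_\sharp$ and $\Simp$ both have decidable identities. Then it suffices to show that for every object $(y \from \simp{n} \to X)$ of $\Simp \slice X$, the nerve of the slice category $j_X \slice y$ is weakly contractible.

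**Key step.** The slice $j_X \slice y$ is the category whose objects are pairs consisting of a face operator $\simp{k} \cto X$ (an object of $\Simp_\sharp \slice X$) together with a map in $\Simp \slice X$ from its image under $j_X$ to $y$, i.e.\ a simplicial operator $\simp{k} \to \simp{n}$ over $X$. Since the data ``over $X$'' is determined by the simplicial operator $\simp{k} \to \simp{n}$ (the composite must equal $y$), this category is isomorphic to the comma category $\Simp_\sharp \slice [n]$ of face operators into $[n]$, which is the poset of nonempty subsets of $[n]$ ordered by inclusion — equivalently $\sd[n]$ in the notation of \cref{sec:subdivision-and-ex}. (More precisely: an object is a monomorphism $[k] \cto [n]$, i.e.\ a nonempty subset of $[n]$ of cardinality $k+1$, and a morphism is an inclusion of such subsets compatible with the structure maps; the compatibility forces the morphisms to be inclusions.) So I must show $\nerve(\Simp_\sharp \slice [n]) = \nerve \sd[n]$ is weakly contractible. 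Since this poset has a maximal element $[n] \subseteq [n]$ (the identity face operator), it has a terminal object, hence its nerve is contractible: the constant functor at the terminal object admits a natural transformation from the identity. A contractible simplicial set is weakly contractible (a homotopy equivalence to $\simp{0}$ is in particular a \whe{}, by \cref{whe-cof-he} of \cref{whe-cof-properties} applied to a cofibrant replacement, or more directly since $\nerve \sd[n]$ is cofibrant by \cref{nerve-cofibrant} of \cref{nerve-cof} and homotopy equivalences between cofibrant objects are \whe{}s). Thus the hypotheses of \cref{theorem-A} are met and $\nerve j_X$ is a \whe{}.

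**Expected obstacle.** The only slightly delicate point is the identification of the slice category $j_X \slice y$ with the poset of nonempty subsets of $[n]$; one must be careful that the inclusion functor $j_X$ is not full on the nose in a way that would change the slice — but because a face operator $[k] \to [m]$ composed with another face operator still lands in a fixed $[n]$ only via monomorphisms, and because all the comparison maps in $\Simp \slice X$ between face-operator objects are themselves simplicial operators constrained to commute over $X$, the resulting category is exactly the poset $\sd[n]$. Everything else is routine: the decidability of identities is handled by \cref{discrete-fibration-decidable-identities}, and the contractibility of a poset with a top element is immediate. No new constructive subtleties arise since $\sd[n]$ is a finite poset with decidable order, hence its nerve is a cofibrant simplicial set to which \cref{theorem-A} applies without difficulty.
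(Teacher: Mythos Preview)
Your identification of the slice category $j_X \slice y$ is incorrect, and this is a genuine gap. An object of $\Simp_\sharp \slice X$ is not a ``face operator $\simp{k} \cto X$'': it is simply a simplex $\simp{k} \to X$, with no injectivity condition. The subscript $\sharp$ restricts the \emph{morphisms} to face operators, not the objects. Consequently, an object of $j_X \slice y$ (for $y \in X_n$) is an arbitrary simplicial operator $\psi \from [k] \to [n]$, not just a monomorphism; the morphisms from $\psi$ to $\psi'$ are face operators $\phi$ with $\psi' \phi = \psi$. This category is $j \slice [n]$ for $j \from \Simp_\sharp \to \Simp$, exactly as the paper states, and it is \emph{not} isomorphic to $\sd[n]$.

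Moreover, $j \slice [n]$ has neither a terminal nor an initial object: for any candidate terminal object $\psi_0 \from [k_0] \to [n]$, the degeneracies $[k_0+1] \to [n]$ factoring through $\psi_0$ admit no face operator to $[k_0]$; and the obvious candidate initial object $0 \from [0] \to [n]$ has multiple maps to any $\psi$ with $|\psi^{-1}(0)| > 1$. So your terminal-object argument cannot be repaired directly. The paper instead constructs an explicit endofunctor $s$ of $j \slice [m]$ (prepending $0$ to each operator) together with natural transformations $c_0 \to s \leftarrow \id$, yielding a zig-zag contracting the nerve. Something of this kind is genuinely needed here.
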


\begin{proof}
  As a preliminary step, we verify that the nerve of $j \slice [m]$ is contractible where $j$ is the inclusion $\Simp_\sharp \to \Simp$.
  Indeed, let $s$ be an endofunctor of that category given by $s \phi = \phi'$
  where $\phi' 0 = 0$ and $\phi' i = \phi(i + 1)$
  (assuming $\phi \from [k] \to [m]$ so that $\phi' \from [1 + k] \to [m]$).
  Then the diagram
  \begin{tikzeq*}
  \matrix[diagram,column sep={5em,between origins}]
  {
    |(0)| [0] & |(1k)| [1 + k] & |(k)| [k] \\
              & |(m)|  [m]     &           \\
  };

  \draw[->] (0)  to node[below left]  {$0$}     (m);
  \draw[->] (1k) to node[left]        {$\phi'$} (m);
  \draw[->] (k)  to node[below right] {$\phi$}  (m);

  \draw[->] (0) to node[above] {$0$}       (1k);
  \draw[->] (k) to node[above] {$\face_0$} (1k);
  \end{tikzeq*}
  exhibits natural transformations $c_0 \to s \leftarrow \id_{j \slice [m]}$ where $c_0$ is the constant functor at $0 \from [0] \to [m]$.
  Thus, $\nerve(j \slice [m])$ is contractible as claimed.

  Finally, note that for each $x \in X_m$, seen as an object of $\Simp \slice X$, the slice category $j_X \slice x$ is isomorphic to $j \slice [m]$
  (since a morphism $j_X y \to x$ is uniquely determined by a simplicial operator $[n] \to [m]$ for any $y \in X_n$).
  Hence the conclusion follows from \cref{theorem-A}
  (note that $\Simp_\sharp \slice X$ has decidable identities by \cref{discrete-fibration-decidable-identities} since the projection to $\Simp_\sharp$ is a discrete Grothendieck fibration).
\end{proof}

\begin{lemma}\label{cofibrant-replacement-trivial-fibration}
  The functor $T$ carries trivial fibrations to \whe{}s.
\end{lemma}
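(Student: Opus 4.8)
The plan is to derive this from Quillen's Theorem~A (\cref{theorem-A}) together with \cref{cofibrant-replacement-he}. I would first prove the statement in the special case that the domain of the trivial fibration is cofibrant, and then bootstrap to the general case by strong cofibrant replacement.

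So suppose $p \from X \to Y$ is a trivial fibration with $X$ cofibrant, and consider the induced functor $p_* \from \Simp \slice X \to \Simp \slice Y$, whose nerve is $T p$. Both of these categories of simplices have decidable identities by \cref{discrete-fibration-decidable-identities}, so by \cref{theorem-A} it is enough to check that $\nerve(p_* \slice y)$ is weakly contractible for every object $y \from \simp{m} \to Y$ of $\Simp \slice Y$. The point is that $p_* \slice y$ is isomorphic to $\Simp \slice(y^* X)$, where $y^* X = X \pull_Y \simp{m}$ is the pullback of $p$ along $y$: in either category an object is a simplex $x$ of $X$ together with a simplicial operator $\phi$ into $[m]$ satisfying $p x = y \phi$, and this correspondence respects morphisms. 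Hence $\nerve(p_* \slice y) \iso T(y^* X)$. Now $y^* X$ is a subobject of $X \times \simp{m}$, which is cofibrant by \cref{cofibrant-finite-limits} and \cref{simplex-cofibrant}, so $y^* X$ is cofibrant by \cref{cofibration-cancellation}; moreover the projection $y^* X \to \simp{m}$ is a trivial fibration, being a pullback of $p$, hence a homotopy equivalence by \cref{trivial-fibration-shrinkable} of \cref{trivial-to-she}. It then follows from \cref{cofibrant-replacement-he} that $T(y^* X) \to T(\simp{m})$ is a \whe{}, and since $\Simp \slice \simp{m}$ has a terminal object, $T(\simp{m})$ is contractible; therefore $T(y^* X) \iso \nerve(p_* \slice y)$ is weakly contractible, as \cref{theorem-A} requires.

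For an arbitrary trivial fibration $p \from X \to Y$, I would reduce to the previous case as follows. Choose a strong cofibrant replacement $\tilde Y \tfto Y$, form the pullback $P = \tilde Y \pull_Y X$, and choose a strong cofibrant replacement $\tilde X \tfto P$. The two projections out of $P$ are pullbacks of the trivial fibrations $p$ and $\tilde Y \to Y$, so the composites $\tilde X \to X$ and $\tilde X \to \tilde Y$, as well as $\tilde Y \to Y$, are all trivial fibrations with cofibrant domain. By the case already established, $T$ carries each of them to a \whe{}; since they sit in a commuting square with $p$, it follows by 2-out-of-3 that $T p$ is a \whe{} as well.

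The crux is the cofibrancy of the pullback $y^* X$, and this is exactly why the argument must be split in two: one cannot apply \cref{theorem-A} directly to an arbitrary trivial fibration, since a simplicial set admitting a trivial fibration to $\simp{0}$ need not be cofibrant (for instance, the $0$-coskeleton of a set with undecidable equality), so $y^* X$ cannot be shown to be cofibrant without first assuming $X$ cofibrant. The only other point requiring attention is the routine identification of the comma category $p_* \slice y$ with $\Simp \slice(y^* X)$.
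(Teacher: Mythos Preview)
Your proof is correct and takes a genuinely different route from the paper's.

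The paper does not split into cofibrant and general cases. Instead, it passes (via \cref{category-of-simplices-whe}) to the functor $p_* \from \Simp_\sharp \slice X \to \Simp_\sharp \slice Y$ on categories of \emph{non-degenerate} simplices, and then exploits that $\Simp_\sharp \slice Y$ is a direct category: an explicit section $s$ of $p_*$ is built by Reedy-style induction, solving at each degree the lifting problem of $\bdsimp{m} \cto \simp{m}$ against the trivial fibration $p$, and similarly a ``homotopy'' $H \from \Simp_\sharp \slice X \to \Simp_\sharp \slice (X^{\simp{1}})$ from $s p_*$ to the identity is constructed. This works for arbitrary $X$ and $Y$ with no cofibrancy hypothesis, so no bootstrap is needed.

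Your approach trades that inductive construction for a direct application of \cref{theorem-A}: you identify $p_* \slice y$ with $\Simp \slice (y^* X)$ and reduce weak contractibility of the slice to the fact that $y^* X \to \simp{m}$ is a trivial fibration between cofibrant objects, hence a \he{}, so that \cref{cofibrant-replacement-he} applies. This is more conceptual and avoids both the passage to $\Simp_\sharp$ and the explicit inductive construction, but you pay with the cofibrancy hypothesis on $X$ and hence the bootstrap step. Your diagnosis of why the split is forced is exactly right: without $X$ cofibrant there is no reason for the fibres $y^* X$ to be cofibrant, so \cref{trivial-to-she} and \cref{cofibrant-replacement-he} are not available.
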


\begin{proof}
  Let $p \from X \to Y$ be a trivial fibration.
  By \cref{category-of-simplices-whe}, it will be enough to show that
  $p_* \from \Simp_\sharp \slice X \to \Simp_\sharp \slice Y$
  induces a \whe{} on the nerves.

  First, we construct a section $s$ of $p_*$ as follows.
  Given $y \from \simp{m} \to Y$, assume inductively that
  $s$ has been already defined on all object of degree less than $m$
  (and morphisms between them).
  We define $s y$ as the solution to the lifting problem
  \begin{tikzeq*}
  \matrix[diagram,column sep={10em,between origins}]
  {
    |(b)| \bdsimp{m} & |(X)| X \\
    |(s)|   \simp{m} & |(Y)| Y \\
  };

  \draw[cof] (b) to (s);

  \draw[tfib] (X) to node[right] {$p$}                                          (Y);
  \draw[->]   (b) to node[above] {$\left[ s(y \face_i) \mid i \in [m] \right]$} (X);
  \draw[->]   (s) to node[below] {$y$}                                          (Y);

  \draw[->,dashed,shorten >=1mm] (s) to node[above left] {$s y$} (X.225);
  \end{tikzeq*}
  where functoriality is guaranteed by the commutativity of the upper triangle.

  Similarly, we define a ``homotopy''
  $H \from \Simp_\sharp \slice X \to \Simp_\sharp \slice (X^{\simp{1}})$
  from $s p_*$ to $\id_{\Simp_\sharp \slice X}$ that
  is fiberwise in the sense that it becomes the ``constant homotopy'' at $p_*$
  when composed with the functor
  $\Simp_\sharp \slice (X^{\simp{1}}) \to \Simp_\sharp \slice (Y^{\simp{1}})$
  induced by $p$.
  Assuming that $H$ was defined at objects of degree less than $m$ and
  given $x \from [m] \to X$,
  we set $H x$ to be the solution to the lifting problem
  \begin{tikzeq*}
  \matrix[diagram,column sep={20em,between origins}]
  {
    |(b)| \bdsimp{m} \times \simp{1} \union \simp{m} \times \bdsimp{1} & |(X)| X \\
    |(s)|   \simp{m} \times \simp{1}                                   & |(Y)| Y \\
  };

  \draw[cof] (b) to (s);

  \draw[tfib] (X) to node[right] {$p$}                                                                   (Y);
  \draw[->]   (b) to node[above] {$\left[ \left[ H(x \face_i) \mid i \in [m] \right], s p x, x \right]$} (X);
  \draw[->]   (s) to node[below] {$p x \pi$}                                                             (Y);

  \draw[->,dashed,shorten >=0.5mm] (s) to node[above left] {$H x$} (X.225);
  \end{tikzeq*}
  (which exists by \cref{cofibration-pushout-product} of~\cref{first-pushout-product}).

  To conclude the proof,
  we note that \cref{cofibrant-replacement-he,category-of-simplices-whe} imply
  that both projections
  $\Simp_\sharp \slice (X^{\simp{1}}) \to \Simp_\sharp \slice X$
  induce \whe{}s on the nerves.
  Therefore, by an argument analogous to \cref{whe-Kan-he} of \cref{whe-Kan-properties}, $\nerve p_*$ is also a \whe{}.
\end{proof}

All properties of the functor $T$ that will be needed later in the paper
are summarised in the following proposition.
(Although we will not use it, the functor $N(\Simp_\sharp \slice \uvar)$ appearing above satisfies the same properties.)

\begin{proposition}\label{cofibrant-replacement}
  The functor $T$ satisfies the following conditions.
  \begin{enumerate}
  \item\label{cofibrant-replacement-colimit}
    It preserves colimits.
  \item\label{cofibrant-replacement-cofibrant}
    It takes values in cofibrant simplicial sets.
  \item\label{cofibrant-replacement-cofibration}
    It preserves cofibrations.
  \item\label{cofibrant-replacement-whe}
    For every simplicial set $X$, $\tau_X \from T X \to X$ is a \whe{}.
  \end{enumerate}
\end{proposition}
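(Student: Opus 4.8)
The plan is to handle the four conditions separately, the first three being quick and the last carrying all the weight.

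For \cref{cofibrant-replacement-colimit} I would unwind the definition: an $m$-simplex of $T X = \nerve(\Simp \slice X)$ is a functor $[m] \to \Simp \slice X$, equivalently a chain of simplicial operators $[k_0] \to \dots \to [k_m]$ together with a single simplex in $X_{k_m}$ (the lower ones being forced by the right action). Thus, naturally in $X$, the set $(T X)_m$ is a coproduct — over an index set independent of $X$ — of the values $X_{k_m}$ of evaluation functors, each of which preserves colimits; since colimits of simplicial sets are computed degreewise, $T$ preserves colimits. For \cref{cofibrant-replacement-cofibrant} I would note that $\Simp$ has decidable identities, hence so does $\Simp \slice X$ by \cref{discrete-fibration-decidable-identities}, and then $T X$ is cofibrant by \cref{nerve-cofibrant} of \cref{nerve-cof}. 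For \cref{cofibrant-replacement-cofibration}, given a cofibration $f \from X \to B$ — necessarily a levelwise decidable inclusion by \cref{cof-to-dec} of \cref{cofibration-levelwise-decidable} — I would observe that the induced functor $\Simp \slice X \to \Simp \slice B$ is a decidable inclusion of categories, since its actions on objects and on morphisms are coproducts of the decidable inclusions $X_k \to B_k$ (using the coproduct descriptions above); as both categories have decidable identities, $T f$ is a cofibration by \cref{nerve-cofibration} of \cref{nerve-cof}.

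The substance is \cref{cofibrant-replacement-whe}. First I would reduce to cofibrant $X$: given a strong cofibrant replacement $q \from \tilde X \tfto X$ (which exists by \cref{thm:wfs-via-soa}), naturality of $\tau$ yields a commuting square with sides $\tau_{\tilde X}$, $\tau_X$, $T q$ and $q$; here $q$ is a \whe{} by \cref{whe-trivial-fibration} of \cref{whe-arbitrary-properties} and $T q$ is a \whe{} by \cref{cofibrant-replacement-trivial-fibration}, so 2-out-of-3 reduces the claim to $\tau_{\tilde X}$. For cofibrant $X$ I would argue by induction on dimension along the skeletal filtration $\emptyset = \Sk^{-1} X \to \Sk^0 X \to \dots$, whose colimit is $X$, whose stages are cofibrant by \cref{cofibration-cancellation}, and in which $\Sk^{n-1} X \to \Sk^n X$ is a pushout of a coproduct of boundary inclusions $\bdsimp{n} \to \simp{n}$ indexed by the non-degenerate $n$-simplices of $X$ — a decidable subset of $X_n$ since $X$ is cofibrant (the latching object $L_n X \subseteq X_n$ is decidable by \cref{delta-elegancy-latching-obj} of \cref{delta-elegancy} together with \cref{cofibrant-degeneracy}). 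As $T$ preserves colimits and cofibrations (by \cref{cofibrant-replacement-colimit,cofibrant-replacement-cofibration}), applying it gives an analogous filtration of $T X$ by cofibrations, with $\tau$ a map of filtrations. Working inside the cocomplete cofibration category of cofibrant simplicial sets (\cref{cofibration-category-cofibrant}), I would then show by induction on $n$ that $\tau_Y$ is a \whe{} for every $n$-dimensional cofibrant $Y$: the inductive step is the Gluing Lemma \cite{rb}*{Lemma~1.4.1~(1b)} applied to the pushout square for $\Sk^n Y$ and its image under $T$, once one knows $\tau$ is a \whe{} on $\Sk^{n-1} Y$ (inductive hypothesis), on a coproduct of copies of $\bdsimp{n}$ (inductive hypothesis for $\bdsimp{n}$, which is $(n-1)$-dimensional and cofibrant by \cref{simplex-cofibrant}, together with closure of \whe{}s under coproducts), and on a coproduct of copies of $\simp{n}$. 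Passing to the colimit over $n$ and invoking \cite{rb}*{Theorem~9.3.5~(1c)} then gives $\tau_X$ for all cofibrant $X$.

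The remaining ingredient, and the only spot where I expect any genuine (if mild) subtlety, is that $\tau_{\simp{m}} \from T \simp{m} \to \simp{m}$ is a \whe{} for every $m$: both $\simp{m} = \nerve [m]$ and $T \simp{m} = \nerve(\Simp \slice [m])$ are contractible (the posets $[m]$ and $\Simp \slice [m]$ have terminal objects) and cofibrant, and in a contractible simplicial set the identity is homotopic to a constant, so any two maps into it are homotopic; hence $\tau_{\simp{m}}$ and any constant map the other way are mutually inverse homotopy equivalences, so $\tau_{\simp{m}}$ is a \whe{} by \cref{whe-cof-he} of \cref{whe-cof-properties}. The main obstacle overall is organising the dimension induction in \cref{cofibrant-replacement-whe} so that the skeletal filtrations of $X$ and $T X$ line up and the Gluing Lemma and \cite{rb}*{Theorem~9.3.5~(1c)} apply cleanly; the decidability bookkeeping needed to run the skeletal filtration constructively is the other point requiring (routine) care.
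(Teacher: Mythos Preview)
Your proof is correct and follows essentially the same strategy as the paper's. The only differences are organizational: for \cref{cofibrant-replacement-cofibration} you verify preservation directly for an arbitrary cofibration (via its levelwise-decidable characterisation) where the paper checks only the generators $\bdsimp{m} \to \simp{m}$, and for \cref{cofibrant-replacement-whe} you exhibit the skeletal filtration of a cofibrant $X$ as an explicit $I$-cell structure, whereas the paper instead passes through arbitrary $I$-cell complexes and then invokes closure of \whe{}s under retracts (via \cref{thm:wfs-via-soa} and \cref{whe-cof-retract} of \cref{whe-cof-properties}).
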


\begin{proof}
  \Cref{cofibrant-replacement-colimit} follows from the fact that we can write
  \begin{equation*}
    (T X)_m = \bigcoprod_{[k_0] \to \ldots \to [k_m]} X_{k_m} \text{.}
  \end{equation*}

  \Cref{cofibrant-replacement-cofibrant} is a consequence of
  \cref{nerve-cofibrant} of \cref{nerve-cof}
  and \cref{discrete-fibration-decidable-identities}.

  For \cref{cofibrant-replacement-cofibration}, it is enough to check that
  $T \bdsimp{m} \to T \simp{m}$ is a cofibration.
  By the previous part and \cref{nerve-cofibration} of \cref{nerve-cof},
  it is sufficient to verify that
  $\Simp \slice \bdsimp{m} \to \Simp \slice \simp{m}$ is
  a decidable inclusion.
  That in turn follows from the fact that $\bdsimp{m} \to \simp{m}$ is
  a levelwise decidable inclusion.

  We now prove \cref{cofibrant-replacement-whe}.
  First, observe that $T \simp{m}$ is
  the nerve of a category with a terminal object and hence is contractible.
  Therefore, $\tau_{\simp{m}}$ is a \whe{}.
  Next, we show that $\tau_{\bdsimp{m}}$ is a \whe{} by induction.
  By \cref{simplex-cofibrant}, $\bdsimp{m}$ is
  a cell complex \wrt{} $\set{\bdsimp{k} \to \simp{k}}{k < m}$.
  By the inductive hypothesis and the preceding observation,
  $\tau$ is a \whe{} on the domains and codomains of maps in this set.
  Since  $T$ preserves colimits and cofibrations,
  \cref{cofibration-category-cofibrant} and
  the Gluing Lemma \cite{rb}*{Lemma~1.4.1~(1b)} imply that
  $\tau_{\bdsimp{m}}$ is a \whe{} as well.
  The same argument shows that $\tau_X$ is a \whe{}
  for all $I$-cell complexes $X$ (where $I = \set{ \bdsimp{m} \ito \simp{m} }{ m \ge 0 }$).
  Since \whe{}s are closed under retracts
  by \cref{whe-cof-retract} of \cref{whe-cof-properties},
  the same holds for all cofibrant $X$.
  Finally, the conclusion follows for arbitrary $X$
  by \cref{cofibrant-replacement-trivial-fibration}.
\end{proof}

\subsection{Conclusion of the first proof}
\label{sec:the-model-structure-first-proof}

We are now ready to establish the Kan--Quillen model structure.
We need to show that acyclic fibrations coincide with trivial fibrations.
In \cref{acyclic-fibration-trivial-Kan}, we have already verified this
for maps between Kan complexes.
We follow the argument in \cite{may-ponto}*{Section~17.6},
attributed by the authors to Bousfield,
to extend this result to maps between arbitrary simplicial sets.
This uses the $\Ex^\infty$ functor, but our situation is more subtle due to
the fact that it is a fibrant replacement on cofibrant objects only.

We also need to verify that acyclic cofibrations coincide
with  trivial cofibrations, which will follow by a general retract argument
as soon as we know that trivial cofibrations are \whe{}s.
This is, however, non-trivial for maps between non-cofibrant objects and
relies on good properties of the cofibrant replacement functor $T$.

\begin{lemma}\label{acyclic-fibration-contractible-fibers}
  If $X$ and $Y$ are cofibrant and
  $p \from X \fto Y$ is an acyclic Kan fibration, then
  all fibers of $p$ are contractible.
\end{lemma}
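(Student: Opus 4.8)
The plan is to transport the situation along the $\Ex^\infty$ functor and then invoke \cref{acyclic-fibration-trivial-Kan}. Fix a vertex $y \from \simp{0} \to Y$ and let $F = p^{-1}(y)$, so that $F$ is the pullback $\simp{0} \pull_Y X$. Since $\simp{0}$, $Y$ and $X$ are cofibrant, \cref{cofibrant-finite-limits} shows that $F$ is cofibrant, and since $F \to \simp{0}$ is a pullback of the Kan fibration $p$, the simplicial set $F$ is in fact a cofibrant Kan complex. By \ref{whe-cof-Kan-def} and the mutual compatibility of the notions of \whe{} recorded in \cref{whe-compatible}, it therefore suffices to prove that $F \to \simp{0}$ is a \whe{}.

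First I would apply $\Ex^\infty$ to $p$. By \cref{Ex-infty-fibration,Ex-infty-fibrant-replacement} of \cref{Ex-infty-properties}, the map $\Ex^\infty p \from \Ex^\infty X \to \Ex^\infty Y$ is a Kan fibration between cofibrant Kan complexes. In the naturality square relating $p$ and $\Ex^\infty p$, the horizontal maps $\nu^\infty_X$ and $\nu^\infty_Y$ are \whe{}s by \cref{Ex-infty-whe}, and $p$ is a \whe{} by hypothesis, so $2$-out-of-$3$ (a consequence of \cref{whe-cof-properties}) shows that $\Ex^\infty p$ is a \whe{}. Thus $\Ex^\infty p$ is an acyclic Kan fibration between Kan complexes, hence a trivial fibration by \cref{acyclic-fibration-trivial-Kan}.

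Next I would identify the fiber after subdivision. Since $\Ex^\infty$ preserves finite limits by \cref{Ex-infty-finite-limits} of \cref{Ex-infty-properties} and $\Ex^\infty \simp{0} \iso \simp{0}$, the simplicial set $\Ex^\infty F$ is the pullback of $\Ex^\infty p$ along $\nu^\infty_Y y \from \simp{0} \to \Ex^\infty Y$; being a pullback of a trivial fibration, $\Ex^\infty F \to \simp{0}$ is a trivial fibration. As $\Ex^\infty F$ is cofibrant by \cref{Ex-infty-fibrant-replacement} of \cref{Ex-infty-properties}, it is shrinkable by \cref{trivial-fibration-shrinkable} of \cref{trivial-to-she}, so in particular the map $\Ex^\infty F \to \simp{0}$ is a \he{}, hence a \whe{} by \cref{whe-cof-he} of \cref{whe-cof-properties}. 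Finally, $\nu^\infty_F \from F \to \Ex^\infty F$ is a \whe{} by \cref{Ex-infty-whe}, so $F \to \simp{0}$ is a \whe{} by $2$-out-of-$3$, and therefore $F$ is contractible.

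I do not expect a real obstruction here; the delicate part is purely the cofibrancy bookkeeping needed to apply the $\Ex^\infty$ results — most notably that the fiber $F$ is cofibrant, via closure of cofibrant objects under finite limits — together with the verification that $\Ex^\infty$ genuinely commutes with the pullback square defining $F$, which relies only on preservation of finite limits and the identification $\Ex^\infty \simp{0} \iso \simp{0}$.
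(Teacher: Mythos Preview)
Your proof is correct and follows essentially the same route as the paper: transport the pullback square along $\Ex^\infty$, use that $\Ex^\infty p$ is an acyclic Kan fibration between cofibrant Kan complexes, deduce that the $\Ex^\infty$-fiber is weakly contractible, and descend via $\nu^\infty_F$. The only cosmetic difference is that the paper invokes \cref{fibration-category-Kan} (pullback stability of acyclic fibrations in the fibration category of Kan complexes) to conclude directly that $\Ex^\infty F \to \Ex^\infty \simp{0}$ is a \whe{}, whereas you first upgrade $\Ex^\infty p$ to a trivial fibration via \cref{acyclic-fibration-trivial-Kan} and then pull back; your detour through shrinkability is unnecessary since a trivial fibration is already a \whe{} by \cref{whe-trivial-fibration} of \cref{whe-arbitrary-properties}, but it is not wrong.
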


\begin{proof}
  For each $y \in Y_0$, form the diagram
  \begin{tikzeq*}
  \matrix[diagram]
  {
      |(F)|  F_y                 & & |(X)|  X            & \\
    & |(EF)| \Ex^\infty F_y      & & |(EX)| \Ex^\infty X   \\
      |(0)|  \simp{0}            & & |(Y)|  Y            & \\
    & |(E0)| \Ex^\infty \simp{0} & & |(EY)| \Ex^\infty Y   \\
  };

  \draw[->] (F) to (0);
  \draw[->] (X) to node[above right,yshift=8pt] {$p$} (Y);
  \draw[->] (F) to (X);
  \draw[->] (0) to (Y);

  \draw[->,over] (EF) to (E0);
  \draw[->]      (EX) to node[right] {$\Ex^\infty p$} (EY);
  \draw[->,over] (EF) to (EX);
  \draw[->]      (E0) to (EY);

  \draw[->] (F) to node[above right] {$\weq$} (EF);
  \draw[->] (X) to node[above right] {$\weq$} (EX);
  \draw[->] (0) to node[above right] {$\weq$} (E0);
  \draw[->] (Y) to node[above right] {$\weq$} (EY);
  \end{tikzeq*}
  where the back square is a pullback that exhibits
  $F_y$ as the fiber of $p$ over $y$.
  The front square is also a pullback by \cref{Ex-infty-finite-limits} of \cref{Ex-infty-properties} and
  all back-to-front maps are \whe{}s by \cref{Ex-infty-whe}.
  Since $p$ is acyclic, $\Ex^\infty p$ is
  an acyclic Kan fibration by \cref{Ex-infty-fibration} of \cref{Ex-infty-properties} and \cref{whe-cof-2-out-of-6} of \cref{whe-cof-properties}.
  Moreover, all objects in the front face are Kan complexes
  by \cref{Ex-infty-fibrant-replacement} of \cref{Ex-infty-properties},
  and hence $\Ex^\infty F_y \to \Ex^\infty \simp{0}$ is a \whe{}
  by \cref{fibration-category-Kan}.
  Thus, so is $F_y \to \simp{0}$, \ie, $F_y$ is contractible.
\end{proof}

\begin{lemma}\label{contractible-fibers-trivial-fibration}
  A Kan fibration with contractible fibers is trivial.
\end{lemma}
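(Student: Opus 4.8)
The plan is to show that a Kan fibration $p \from X \to Y$ with contractible fibers has the \rlp{} against boundary inclusions, hence is a trivial fibration. The idea is to reduce to the case where the fibers are not merely contractible but the fibration is genuinely trivial, using the earlier result \cref{acyclic-fibration-trivial-Kan}, combined with the fact that being a trivial fibration is a pointwise condition over $Y$: it suffices to solve lifting problems against $\bdsimp{m} \ito \simp{m}$, and such a lifting problem factors through the pullback of $p$ along a single simplex $\simp{m} \to Y$.

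First I would reduce to a base $B$ which is a simplex, or more generally a cofibrant simplicial set. Given a lifting problem of $\bdsimp{m} \ito \simp{m}$ against $p$, with bottom map $v \from \simp{m} \to Y$, pull $p$ back along $v$ to get $p' \from X' \to \simp{m}$, which is again a Kan fibration (stable under pullback) with contractible fibers (fibers are preserved by pullback), and now the base $\simp{m}$ is cofibrant by \cref{simplex-cofibrant}. A lift in the original problem is the same as a lift of $\bdsimp{m} \ito \simp{m}$ against $p'$. Moreover $X'$ is cofibrant: it is the pullback $\simp{m} \pull_Y X$, but more directly, once I know $p'$ is a trivial fibration this will be automatic — so I should be careful about the order. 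In fact the cleaner route: since $\simp{m}$ is cofibrant, once I show $p'$ is a Kan fibration whose total space is cofibrant and whose fibers are contractible, \cref{acyclic-fibration-contractible-fibers} does not apply directly (that went the other way), so instead I want to show $p'$ is a \whe{}, i.e.\ acyclic, and then apply \cref{acyclic-fibration-trivial-Kan}. But $X'$ cofibrant is not obvious a priori since $X$ need not be cofibrant.

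So here is the corrected approach. It suffices to show: for every Kan fibration $p \from X \to \simp{m}$ with contractible fibers (base a simplex, hence cofibrant), $p$ is a trivial fibration; then the general case follows by the pullback reduction above, noting that a lift against $\bdsimp{m} \ito \simp{m}$ for $p$ is the same as a lift for the pullback $p'$ over $\simp{m}$, and $p'$ has contractible fibers because fibers are stable under base change. For the base-a-simplex case, I still need cofibrancy of the total space to invoke the cofibrant theory. The key observation is that any Kan fibration over $\simp{m}$ is fiberwise homotopy equivalent over $\simp{m}$ to the product fibration $F \times \simp{m} \to \simp{m}$, where $F$ is any fiber: indeed $\iota_0 \from \simp{0} \ito \simp{m}$ (or rather the unique $\simp{m} \to \simp{0}$) has a section, and using that $\simp{m}$ deformation retracts onto a vertex together with the homotopy lifting property of $p$, one builds the fiberwise equivalence. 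Since $F$ is contractible (a Kan complex, being a fiber of a Kan fibration), $F \times \simp{m} \to \simp{m}$ is a Kan fibration and a projection from a contractible Kan complex; one checks it is acyclic. Then $p$, being fiberwise homotopy equivalent to it, is a \he{} over $\simp{m}$; combined with $p$ being a Kan fibration, \cref{acyclic-fibration-trivial-Kan} would finish — but that lemma requires the total spaces to be Kan complexes, which over $\simp{m}$ need not hold unless the fibers are Kan (they are) and $\simp{m}$ is Kan (it is not).

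Given these complications, I expect the cleanest argument actually mirrors \cite{may-ponto}*{Section~17.6} more closely and proceeds as in the companion lemma: reduce to showing $\Ex^\infty p$ is a trivial fibration and transfer back. Concretely: pull back along a simplex $v \from \simp{m} \to Y$ to reduce to base $\simp{m}$; replace $p' \from X' \to \simp{m}$ by applying a cofibrant replacement — or better, use \cref{contractible-fibers-trivial-fibration} only in the situation where it is applied in the paper, where one can arrange the total space cofibrant. Thus \textbf{the main obstacle} is ensuring cofibrancy of the total space so that the cofibrant homotopy theory (\cref{fibration-category-Kan}, \cref{acyclic-fibration-trivial-Kan}, \cref{acyclic-fibration-contractible-fibers}) applies. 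I would handle it by first applying $\Ex^\infty$ to the pullback $p'$ over $\simp{m}$: since $\simp{m}$ and its pullbacks along $\Ex^\infty$ need care, instead note $\simp{m}$ is cofibrant and apply a strong cofibrant replacement $\tilde{X'} \tfto X'$ together with the induced $\tilde{X'} \to \simp{m}$; this composite is a Kan fibration only after factoring, so one factors $\tilde{X'} \to \simp{m}$ as a trivial cofibration followed by a Kan fibration $q \from Z \to \simp{m}$ with $Z$ cofibrant, checks $q$ still has contractible fibers (using that the trivial cofibration is a \whe{} and $2$-out-of-$3$ in the fibration category of Kan complexes applied fiberwise via \cref{Ex-infty-properties}), invokes \cref{acyclic-fibration-contractible-fibers} in reverse — i.e.\ that $q$ with contractible fibers is acyclic, hence trivial by \cref{acyclic-fibration-trivial-Kan} after applying $\Ex^\infty$, hence $q$ lifts against $\bdsimp{m} \ito \simp{m}$ — and finally transports the lift back along the chain $Z \leftarrow \tilde{X'} \to X' \to X$ using that $\tilde{X'} \to X'$ is a trivial fibration (so lifts against the cofibration $\bdsimp{m} \ito \simp{m}$) and $Z \leftarrow \tilde{X'}$ is a trivial cofibration with $q$ on its right. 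I would write this out carefully, as the bookkeeping of which maps are trivial fibrations versus trivial cofibrations versus mere \whe{}s is exactly where a constructive argument can go wrong.
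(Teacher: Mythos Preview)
Your proposal circles around the problem without landing on the key idea, and none of the sketched approaches is actually completed. The paper's proof is far more elementary and avoids all the cofibrancy bookkeeping you are worried about: given a lifting problem of $\bdsimp{m} \ito \simp{m}$ against $p$ with bottom map $v$, one uses the contraction $H \from \simp{m} \times \simp{1} \to \simp{m}$ of the simplex onto its $0$-th vertex and lifts $v H|\bdsimp{m}\times\simp{1}$ along $p$ starting from $u$ (this is a lifting problem of a trivial cofibration against a Kan fibration, no cofibrancy of $X$ needed). The resulting homotopy $\tilde G$ transports $u$ to a map $\tilde u \from \bdsimp{m} \to X$ landing entirely in the fiber $F_{v_0}$. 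Now $F_{v_0}$ is a Kan complex (as a fiber of a Kan fibration), $\simp{0}$ is a Kan complex, and $F_{v_0} \to \simp{0}$ is an acyclic Kan fibration by contractibility, so \cref{acyclic-fibration-trivial-Kan} applies \emph{to the fiber itself}, giving an extension $\tilde w \from \simp{m} \to F_{v_0} \to X$. A second homotopy-lifting (again of a trivial cofibration against $p$) transports $\tilde w$ back to a genuine solution $w$ of the original problem.

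The main gap in your thinking is that you kept trying to apply global results (\cref{acyclic-fibration-trivial-Kan}, \cref{acyclic-fibration-contractible-fibers}, $\Ex^\infty$) to the whole map $p$ or its pullback $p'$, which forces you into cofibrancy and fibrancy hypotheses on the total space that are not available. The correct move is to push the lifting problem into a \emph{single fiber}, where those hypotheses hold automatically. Your remark about ``$\simp{m}$ deformation retracts onto a vertex together with the homotopy lifting property'' was close to the right idea, but you used it to try to compare $p'$ globally to a product fibration rather than to move the boundary datum into one fiber.
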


\begin{proof}
  Let $p \from X \to Y$ be a Kan fibration with contractible fibers.
  Consider a lifting problem
  \begin{tikzeq*}
  \matrix[diagram,column sep={between origins,6em}]
  {
    |(b)| \bdsimp{m} & |(X)| X                 \\
    |(s)|   \simp{m} & |(Y)| Y \rlap{\text{.}} \\
  };

  \draw[cof] (b) to (s);

  \draw[->] (X) to node[right] {$p$} (Y);
  \draw[->] (b) to node[above] {$u$} (X);
  \draw[->] (s) to node[below] {$v$} (Y);
  \end{tikzeq*}
  Let $H \from \simp{m} \times \simp{1} \to \simp{m}$ be
  a homotopy from the constant map at $0$ to the identity map.
  It yields another lifting problem
  \begin{tikzeq*}
  \matrix[diagram,column sep={between origins,12em}]
  {
    |(0)| \bdsimp{m} \times \{ 1 \}  & |(X)| X                 \\
    |(1)| \bdsimp{m} \times \simp{1} & |(Y)| Y \rlap{\text{,}} \\
  };

  \draw[ano] (0) to (1);

  \draw[fib] (X) to node[right] {$p$} (Y);
  \draw[->]  (0) to node[above] {$u$} (X);

  \draw[->] (1) to node[below] {$v H | \bdsimp{m} \times \simp{1}$} (Y);
  \end{tikzeq*}
  which has a solution $\tilde G \from \bdsimp{m} \times \simp{1} \to X$
  by \cref{ano-cof-pushout-product} of \cref{second-pushout-product}.
  Such $\tilde G$ is a homotopy from $\tilde u$ to $u$ and
  the former factors through the fiber $F_{v_0}$ of $p$ over $v_0$ to
  yield a diagram
  \begin{tikzeq*}
  \matrix[diagram,column sep={between origins,6em}]
  {
    |(b)| \bdsimp{m} & |(F)| F_{v_0}  & |(X)| X                 \\
    |(s)|   \simp{m} & |(0)| \simp{0} & |(Y)| Y \rlap{\text{.}} \\
  };

  \draw[cof]  (b) to (s);
  \draw[tfib] (F) to (0);

  \draw[->] (b) to (F);
  \draw[->] (F) to (X);
  \draw[->] (s) to (0);

  \draw[->] (X) to node[right] {$p$}   (Y);
  \draw[->] (0) to node[below] {$v_0$} (Y);
  \end{tikzeq*}
  The left square of this diagram has a lift since $F_{v_0}$ is contractible.
  (Since $\simp{0}$ and $F_{v_0}$ are Kan complexes,
  the acyclic fibration $F_{v_0} \to \simp{0}$ is trivial
  by \cref{acyclic-fibration-trivial-Kan}.)
  Let $\tilde w \from \simp{m} \to X$ denote the resulting composite, which
  leads to yet another lifting problem
  \begin{tikzeq*}
  \matrix[diagram,column sep={11em,between origins}]
  {
    |(b)| \simp{m} \times \{ 0 \} \union \bdsimp{m} \times \simp{1} &
    |(X)| X                 \\
    |(s)| \simp{m} \times \simp{1} &
    |(Y)| Y \rlap{\text{.}} \\
  };

  \draw[ano] (b) to (s);

  \draw[fib] (X) to node[right] {$p$}                    (Y);
  \draw[->]  (b) to node[above] {$[\tilde w, \tilde G]$} (X);
  \draw[->]  (s) to node[below] {$v H$}                  (Y);
  \end{tikzeq*}
  Again, the left map is a trivial cofibration
  by \cref{ano-cof-pushout-product} of \cref{second-pushout-product}, so
  there is a solution $G \from \simp{m} \times \simp{1} \to X$, which is
  a homotopy from $\tilde w$ to $w$.
  That $w$ is a solution to the original lifting problem.
  Indeed, $p w = p G \iota_1 = v H \iota_1 = v$ and
  $w | \bdsimp{m} = G \iota_1 | \bdsimp{m} = \tilde G \iota_1 = u$.
\end{proof}

\begin{proposition}\label{acyclic-fibration-trivial}
  A simplicial map is an acyclic Kan fibration \iff{}
  it is a trivial fibration.
\end{proposition}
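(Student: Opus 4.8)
The plan is to prove both implications, the nontrivial one being that an acyclic Kan fibration is a trivial fibration. The reverse direction is immediate: a trivial fibration is a Kan fibration by \cref{triv-cof-cof}, and it is a \whe{} by \cref{whe-trivial-fibration} of \cref{whe-arbitrary-properties}.

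For the forward direction, let $p \from X \to Y$ be an acyclic Kan fibration between arbitrary simplicial sets. The idea is to reduce to the cofibrant case, where we can invoke the already-established \cref{acyclic-fibration-contractible-fibers} and \cref{contractible-fibers-trivial-fibration}. First I would apply the cofibrant replacement functor $T$ of \cref{cofibrant-replacement} to get $T p \from T X \to T Y$ with both $T X$ and $T Y$ cofibrant. Since $\tau_X$ and $\tau_Y$ are \whe{}s by \cref{cofibrant-replacement-whe} of \cref{cofibrant-replacement} and $p$ is a \whe{}, two-out-of-three (\cref{whe-2-out-of-6} of \cref{whe-arbitrary-properties}) shows $T p$ is a \whe{}. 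The obstruction is that $T p$ need not be a Kan fibration, so I cannot directly apply the cofibrant-case results to $T p$ itself.

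To circumvent this, I would instead work directly with $p$ and a chosen strong cofibrant replacement. Form a pullback of $p$ along a trivial fibration $\tilde Y \tfto Y$ with $\tilde Y$ cofibrant, obtaining a Kan fibration $\tilde Y \pull_Y X \to \tilde Y$; then take a strong cofibrant replacement $\tilde X \tfto \tilde Y \pull_Y X$ of its domain, so the composite $\tilde X \to \tilde Y$ is a Kan fibration between cofibrant objects. By \cref{trivial-fibration-cancellation} applied twice (using that $\tilde X \to \tilde Y \pull_Y X$ is a trivial fibration and that pullbacks of trivial fibrations are trivial fibrations), it suffices to show $\tilde X \to \tilde Y$ is a trivial fibration. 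Now $\tilde X \to \tilde Y$ is a Kan fibration between cofibrant objects; I would check it is a \whe{} using that the maps $\tilde X \to X$ and $\tilde Y \to Y$ are \whe{}s (the first by two-out-of-three from the trivial fibrations $\tilde X \to \tilde Y \pull_Y X \to X$, the second by assumption on $\tilde Y \tfto Y$), together with $p$ being a \whe{} and two-out-of-three. Hence $\tilde X \to \tilde Y$ is an acyclic Kan fibration between cofibrant objects.

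Finally, \cref{acyclic-fibration-contractible-fibers} shows that $\tilde X \to \tilde Y$ has contractible fibers, and \cref{contractible-fibers-trivial-fibration} then gives that it is a trivial fibration. Tracing back through the two applications of \cref{trivial-fibration-cancellation} (and the fact that trivial fibrations are stable under pullback, since they are defined by a right lifting property) yields that $p$ itself is a trivial fibration. The main obstacle is the bookkeeping in the reduction: one must be careful that each cancellation step has the correct shape, i.e.\ that the map being cancelled is genuinely a trivial fibration, which is where the stability of trivial fibrations under pullback and the cancellation lemma \cref{trivial-fibration-cancellation} do the real work.
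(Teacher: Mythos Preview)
Your proposal is correct and follows essentially the same route as the paper: take a strong cofibrant replacement of the base, pull back, replace the domain, verify the resulting map between cofibrant objects is an acyclic Kan fibration, apply \cref{acyclic-fibration-contractible-fibers} and \cref{contractible-fibers-trivial-fibration}, and cancel using \cref{trivial-fibration-cancellation}. The opening detour through $T$ is unnecessary (you rightly abandon it), and a single application of \cref{trivial-fibration-cancellation} suffices---with $f = (\tilde X \to X)$ and $gf = (\tilde X \to \tilde Y \to Y)$---rather than two, but this does not affect correctness.
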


\begin{proof}
  This follows by the same argument as \cref{acyclic-fibration-trivial-Kan}
  except it uses \cref{acyclic-fibration-contractible-fibers,contractible-fibers-trivial-fibration}
  in the place of \cref{fiberwise-acyclic-fibration-trivial-Kan}
  and \cref{whe-arbitrary-properties} in the place of \cref{whe-Kan-properties}.
\end{proof}

\begin{proposition}\label{acyclic-cofibration-anodyne}
  A simplicial map is an acyclic cofibration \iff{} it is a trivial cofibration.
\end{proposition}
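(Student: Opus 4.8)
The plan is to derive both implications from the single assertion that \emph{every trivial cofibration is a \whe{}}. Granting this, the forward implication is immediate: a trivial cofibration is a cofibration by \cref{triv-cof-cof}, hence an acyclic cofibration. For the converse, given an acyclic cofibration $i \from A \to B$, I would factor it by \cref{thm:wfs-via-soa} as $A \ato C \fto B$ with $j \from A \to C$ a trivial cofibration and $p \from C \to B$ a Kan fibration; then $j$ is a \whe{} by the assertion, so $p$ is a \whe{} by 2-out-of-3, hence an acyclic Kan fibration, hence a trivial fibration by \cref{acyclic-fibration-trivial}. Lifting $i$ against $p$ in the commuting square with top edge $j$, left edge $i$, right edge $p$ and bottom edge $\id_B$ yields $r \from B \to C$ with $r i = j$ and $p r = \id_B$, exhibiting $i$ as a retract of $j$; since trivial cofibrations are closed under retracts, $i$ is a trivial cofibration.

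It remains to prove the assertion, and I would split this into two steps. First, for a trivial cofibration $i \from A \ato B$ between \emph{cofibrant} simplicial sets and any Kan complex $K$, the map $K^i \from K^B \to K^A$ is a trivial fibration by \cref{ano-fib-pullback-hom} of \cref{second-pushout-product}, and $K^A$, $K^B$ are Kan complexes by \cref{cof-fib-pullback-hom} of \cref{second-pushout-product}; hence $K^i$ is a \whe{} by \cref{whe-Kan-trivial-fibration} of \cref{whe-Kan-properties}, and therefore $i$ is a \whe{} in the sense of \ref{whe-cof-def}. In particular every horn inclusion $\horn{m,k} \to \simp{m}$ is a \whe{}, since it is a trivial cofibration between cofibrant objects by \cref{horn-cofibrant,simplex-cofibrant}.

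Second, for a general trivial cofibration $i \from A \to B$, by \cref{thm:wfs-via-soa} it is a codomain retract of a relative $J$-cell complex of height $\omega$, so by \cref{whe-retract} of \cref{whe-arbitrary-properties} it suffices to treat the case where $i$ itself is such a cell complex $A = X_0 \to X_1 \to \cdots \to X_\omega = B$. Here I would apply the cofibrant replacement functor $T$ of \cref{cofibrant-replacement}. Since $T$ preserves colimits by \cref{cofibrant-replacement-colimit} of \cref{cofibrant-replacement}, the map $T i \from T A \to T B$ is obtained from $T A$ by pushouts of coproducts of the maps $T\horn{m,k} \to T\simp{m}$, followed by a sequential colimit. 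Each $T\horn{m,k} \to T\simp{m}$ is a cofibration between cofibrant objects by \cref{cofibrant-replacement-cofibration,cofibrant-replacement-cofibrant} of \cref{cofibrant-replacement}, and it is a \whe{}: the horn inclusion $\horn{m,k} \to \simp{m}$ is a \whe{} by the first step, while $\tau$ is a \whe{} on its domain and codomain by \cref{cofibrant-replacement-whe} of \cref{cofibrant-replacement}, so 2-out-of-3 applies. Thus each $T\horn{m,k} \to T\simp{m}$ is an acyclic cofibration between cofibrant simplicial sets; since acyclic cofibrations in the cocomplete cofibration category of \cref{cofibration-category-cofibrant} are closed under coproducts, pushout and sequential colimits, $T i$ is an acyclic cofibration, in particular a \whe{}. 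As $\tau_A$ and $\tau_B$ are \whe{}s by \cref{cofibrant-replacement-whe} of \cref{cofibrant-replacement}, 2-out-of-3 gives that $i$ is a \whe{}.

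I expect this second step to be the main obstacle. Constructively one cannot argue directly in the category of all simplicial sets---not every object is cofibrant, and left properness, which would allow pushing acyclic cofibrations along arbitrary maps, is not yet available---so one is forced to transport the cell-complex presentation of a trivial cofibration into the cofibration category of cofibrant objects along a colimit- and cofibration-preserving cofibrant replacement, where the required closure properties of acyclic cofibrations do hold.
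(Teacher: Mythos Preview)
Your proof is correct and follows essentially the same route as the paper's: first handle trivial cofibrations between cofibrant objects via the pullback-hom argument, then treat the general case by transporting the cell-complex presentation through the colimit- and cofibration-preserving cofibrant replacement~$T$ into the cofibration category of cofibrant objects, and finally derive the converse via the retract argument using \cref{acyclic-fibration-trivial}. The only organisational difference is that you peel off the codomain-retract case first using \cref{whe-retract} and then argue about cell complexes, whereas the paper phrases the saturation argument uniformly as ``the class of cofibrations $i$ with $T i$ a \whe{} is closed under coproducts, pushouts, sequential colimits and retracts''; both are equivalent.
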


\begin{proof}
  An acyclic cofibration is a cofibration by \cref{triv-cof-cof}.

  First, we check the acyclicity in the subcategory of cofibrant objects.
  Let $f \from X \ato Y$ be a trivial cofibration between cofibrant simplicial sets.
  For any Kan complex $K$, the map $f^* \from K^Y \to K^X$ is a trivial fibration between Kan complexes
  by \cref{ano-fib-pullback-hom,cof-fib-pullback-hom}
  of \cref{{second-pushout-product}}.
  Thus, $f^*$ is a \whe{} in the sense of \ref{whe-Kan-def}
  by \cref{whe-Kan-trivial-fibration} of \cref{whe-Kan-properties},
  and it follows that $f$ is a \whe{} in the sense of \ref{whe-cof-def}.

  In the general case, note that
  the class of cofibrations $i$ \st{} $T i$ is a \whe{} is closed under
  coproducts, pushouts, sequential colimits and retracts
  by \cref{cofibration-category-cofibrant} and
  \cref{whe-retract} of \cref{whe-arbitrary-properties} and since
  $T$ preserves colimits and cofibrations and takes values in cofibrant objects
  (\cref{cofibrant-replacement}).
  By the preceding argument, this class also contains the horn inclusions and
  hence all trivial cofibrations by \cref{thm:wfs-via-soa:fib} of \cref{thm:wfs-via-soa}.

  Conversely, let $f$ be an acyclic cofibration and
  pick a factorisation
  \begin{tikzeq*}
  \matrix[diagram]
  {
    |(X)| X &               & |(Y)| Y \\
            & |(hY)| \hat Y &         \\
  };

  \draw[cof] (X) to node[above] {$f$} node[below] {$\weq$} (Y);

  \draw[ano] (X)  to node[below left]  {$i$} (hY);
  \draw[fib] (hY) to node[below right] {$p$} (Y);
  \end{tikzeq*}
  where $i$ is a trivial cofibration and $p$ is a Kan fibration.
  Then $i$ is a \whe{} by the argument above and thus so is $p$ by 2-out-of-3.
  Hence, $p$ is a trivial fibration by \cref{acyclic-fibration-trivial}.
  This implies that the lifting problem
  \begin{tikzeq*}
  \matrix[diagram]
  {
    |(X)|  X & |(hY)| \hat Y \\
    |(Yl)| Y & |(Yr)|      Y \\
  };

  \draw[cof]  (X)  to node[left]  {$f$}     (Yl);
  \draw[tfib] (hY) to node[right] {$p$}     (Yr);
  \draw[->]   (X)  to node[above] {$i$}     (hY);
  \draw[->]   (Yl) to node[below] {$\id_Y$} (Yr);
  \end{tikzeq*}
  has a solution, which exhibits $f$ as a retract of $i$.
  The former is thus a trivial cofibration.
\end{proof}

\begin{theorem} \label{thm:main-thm-first-proof}
  The cofibrations, Kan fibrations, and \whe{}s form
  a model structure on simplicial sets.
\end{theorem}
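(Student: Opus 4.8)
The strategy is to verify the three defining properties of a model structure: the two weak factorisation systems, the 2-out-of-3 property for weak equivalences, and the coincidence of acyclic (co)fibrations with trivial (co)fibrations. The bulk of the work has already been assembled in the preceding subsections, so this final theorem is essentially a matter of citing those results in the right order.

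First I would recall from \cref{thm:wfs-via-soa} that (cofibrations, trivial fibrations) and (trivial cofibrations, Kan fibrations) are weak factorisation systems on $\sSet$. Next, I would invoke \cref{acyclic-fibration-trivial} to identify trivial fibrations with acyclic Kan fibrations, and \cref{acyclic-cofibration-anodyne} to identify trivial cofibrations with acyclic cofibrations. Combining these, the two weak factorisation systems become exactly (cofibrations, acyclic Kan fibrations) and (acyclic cofibrations, Kan fibrations), which are the required factorisation and lifting data for a model structure. The 2-out-of-3 property for \whe{}s (indeed 2-out-of-6, which is stronger) is \cref{whe-2-out-of-6} of \cref{whe-arbitrary-properties}, and closure under retracts is \cref{whe-retract} of the same proposition; closure of cofibrations and Kan fibrations under retracts is automatic since each is defined by a lifting property. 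Finally, since \whe{}s are closed under retracts and satisfy 2-out-of-3, and since the class of weak equivalences together with the two weak factorisation systems above satisfy the standard axioms, one concludes that cofibrations, Kan fibrations, and \whe{}s form a model structure.

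The main subtlety — already resolved in the earlier subsections — is that ``\whe{}'' is not given by a single definition but pieced together through \ref{whe-cof-Kan-def}--\ref{whe-arbitrary-def}, and that \cref{whe-compatible} guarantees these agree on overlaps; so here one simply uses the notion in the sense of \ref{whe-arbitrary-def} throughout. No step in this final theorem is a genuine obstacle: everything reduces to \cref{thm:wfs-via-soa,acyclic-fibration-trivial,acyclic-cofibration-anodyne,whe-arbitrary-properties}. The proof is therefore short.

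\begin{proof}
  By \cref{thm:wfs-via-soa}, cofibrations and trivial fibrations form a \wfs{}, and trivial cofibrations and Kan fibrations form a \wfs{}.
  By \cref{acyclic-fibration-trivial}, the trivial fibrations are exactly the acyclic Kan fibrations, and by \cref{acyclic-cofibration-anodyne}, the trivial cofibrations are exactly the acyclic cofibrations.
  Hence cofibrations and acyclic Kan fibrations form a \wfs{}, and acyclic cofibrations and Kan fibrations form a \wfs{}.
  By \cref{whe-2-out-of-6} of \cref{whe-arbitrary-properties}, \whe{}s satisfy the 2-out-of-6 property, in particular the 2-out-of-3 property; by \cref{whe-retract} of the same proposition, they are closed under retracts.
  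Cofibrations and Kan fibrations are closed under retracts since they are defined by lifting properties.
  Therefore cofibrations, Kan fibrations, and \whe{}s form a model structure on simplicial sets.
\end{proof}
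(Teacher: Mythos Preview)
Your proof is correct and follows essentially the same approach as the paper: both cite \cref{thm:wfs-via-soa} for the two weak factorisation systems, \cref{acyclic-fibration-trivial} and \cref{acyclic-cofibration-anodyne} for the identification of acyclic and trivial (co)fibrations, and \cref{whe-2-out-of-6} of \cref{whe-arbitrary-properties} for the 2-out-of-6 property. You additionally spell out the retract closure, which the paper omits (it being implicit in the formulation of a model structure via two weak factorisation systems plus 2-out-of-6), but this is harmless.
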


\begin{proof}
  \Whe{}s satisfy 2-out-of-6
  by \cref{whe-2-out-of-6} of \cref{whe-arbitrary-properties}.
  By \cref{thm:wfs-via-soa}, we have weak factorisation systems of cofibrations and trivial fibrations and of
  trivial cofibrations and Kan fibrations.
  By \cref{acyclic-fibration-trivial},
  acyclic fibrations coincide with trivial fibrations.
  By \cref{acyclic-cofibration-anodyne}, trivial cofibrations coincide with acyclic cofibrations.
\end{proof}

\begin{remark}\label{thm:first-diff-henry}
  Let us briefly highlight the differences between our first proof and
  Simon Henry's proof in~\cite{Henry-qms}.
  First of all, the definitions of the weak equivalences are different.
  Indeed, Henry first introduces weak equivalences only between objects that
  are either fibrant or cofibrant using the homotopy category,
  as in his work on weak model categories~\cite{Henry-wms}, and then
  extends this definition to all objects
  using the adjunction to the category of semisimplicial sets and
  a weak model structure on it.
  Secondly, the two proofs are organised in very different ways.
  In particular, Henry's proof first establishes the existence of
  a model structure with the required weak equivalences and cofibrations
  (\cite{Henry-qms}*{Theorem~2.2.8}) and then exploits $\Ex^\infty$ to show that
  the fibrations of the model structure are the Kan fibrations.
  This second step is particularly complex since it relies on an intricate auxiliary notion of a P-structure introduced in \cite{Moss}.
  Moreover, it depends on the theory of degeneracy quotients (called collapses by Joyal~\cite{joyal-collapses}), which we replace by the closely related, but less technical, theory of generalised degeneracies.
  (Every degeneracy quotient between finite colimits of simplices is a generalised degeneracy.)
\end{remark}

  \section{The model structure via the equivalence extension property}
  \label{sec:second-proof}

In this section, we present our second proof of the existence of the constructive Kan--Quillen model structure.
It is based on the development in~\cites{Gambino-Sattler,Sattler}.
The strategy of the approach is as follows.
We first build the restriction of the model structure to cofibrant objects.
For this, we follow the approach of~\cite{Sattler} in the setting of cofibrant simplicial sets.
We then obtain the extension to simplicial sets using formal reasoning.

The development of the model structure in cofibrant simplicial sets occupies most of this section.
The cornerstones are the Frobenius property (\cref{frobenius}) in \cref{sec:frobenius-property} and the trivial fibration extension (\cref{trivial-fibration-extension}) and fibration extension (\cref{fibration-extension}) properties in \cref{sec:equivalence-extension-property}.
The latter two statements are consequences of the equivalence extension property (\cref{equivalence-extension}), which is the namesake of \cref{sec:equivalence-extension-property}.
(Although we refer to all these statements as properties, we recall that constructively we ought to think of them as operations.)
In \cref{sec:model-structure-second-proof}, we define notions of weak homotopy equivalences and verify the model structure properties, first in cofibrant objects and then in all of simplicial sets.
This subsection is entirely formal and uses from the earlier two subsections only the three statements referred to above.

\subsection{The Frobenius property}
\label{sec:frobenius-property}

The first step is to prove a restricted version of the Frobenius property for the \wfs{} of trivial cofibrations and fibrations, asserting that pullback along a fibration with cofibrant domain preserves trivial cofibrations.
For this, we follow~\cite{Gambino-Sattler}, but make explicit the cofibrancy assumption on the domain of the fibration (\cf the proof of \cref{she-pullback-square}).

We begin by recalling from~\cite{Gambino-Sattler} the notion of a strong homotopy equivalence in simplicial sets.
Let $f \from A \to B$ be a homotopy equivalence with homotopy inverse $g \from B \to A$.
We say that $f$ is \emph{$0$-oriented} if $g f \htp \id_A$ and $f g \htp \id_B$ are witnessed by a homotopy $u$ from $g f$ to $\id_A$ and a homotopy $v$ from $f g$ to $\id_B$, respectively.
If the homotopies go in the reverse direction, we say that $f$ is \emph{$1$-oriented}.
In either case, we say that $f$ is a \emph{strong homotopy equivalence} if $f u = v f$.
Note that if $u$ is the constant homotopy, $f$ specialises to a map with a strong deformation retraction as in \cref{sec:htpy-and-whe}.
Dually, if $v$ is the constant homotopy, $f$ specialises to a shrinkable map in the sense of \cref{sec:cofibrations} (\cf \cref{she-for-cylinders}).

A key property of the notion of strong homotopy equivalence is that it admits a characterisation in terms of retracts.
For this, we consider the following square in simplicial sets:
\begin{tikzeq*}
\matrix[diagram]
{
  |(e)| \emptyset  & |(0)| \braces{0}               \\
  |(1)| \braces{1} & |(s)| \simp{1} \rlap{\text{.}} \\
};

\draw[->] (e) to node[above] {$!$}        (0);
\draw[->] (e) to node[left]  {$!$}        (1);
\draw[->] (0) to node[right] {$\iota_0$} (s);
\draw[->] (1) to node[below] {$\iota_1$} (s);
\end{tikzeq*}
In the arrow category $\sSet^{[1]}$, this square induces a map $\theta_0 \from ! \to \iota_0$ when read horizontally and a map $\theta_1 \from ! \to \iota_1$ when read vertically.
Note that $!$ is the unit for the pushout product.
We then have the following characterisation.

\begin{lemma}[{\cite{Gambino-Sattler}*{Lemma~4.3}}] \label{she-as-retract}
The following are equivalent for a map $f$ and $k \in \braces{0, 1}$:
\begin{conditions}
\item \label{she-as-retract:def}
$f$ is a $k$-oriented strong homotopy equivalence,
\item \label{she-as-retract:retraction}
$f \hatbin{\times} \theta_k \from f \to f \hatbin{\times} \iota_k$ has a retraction,
\item \label{she-as-retract:section}
$\hat{\hom}(\theta_k, f) \from \hat{\hom}(\iota_k, f) \to f$ has a section.
\end{conditions}
\end{lemma}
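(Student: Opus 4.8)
The plan is to prove $\ref{she-as-retract:retraction}\Leftrightarrow\ref{she-as-retract:section}$ by a formal adjunction argument and $\ref{she-as-retract:def}\Leftrightarrow\ref{she-as-retract:retraction}$ by directly unwinding the Leibniz (pushout) product. Throughout it suffices to treat the case $k = 0$; the case $k = 1$ then follows by reversing the direction of all homotopies (equivalently, by swapping the two endpoints of $\simp{1}$).

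For $\ref{she-as-retract:retraction}\Leftrightarrow\ref{she-as-retract:section}$ I would use that the arrow category $\sSet^{[1]}$ is closed symmetric monoidal with tensor $\hatbin{\times}$, internal hom $\hat{\hom}$, and monoidal unit ${!}$ (the Leibniz construction applied to the cartesian structure on $\sSet$, whose unit is ${!}$ as recalled above). In particular there is a natural bijection $\sSet^{[1]}(\iota_0\hatbin{\times}f, g)\cong\sSet^{[1]}(f,\hat{\hom}(\iota_0, g))$, which I specialise to $g = f$ and use to transpose a morphism $r\from\iota_0\hatbin{\times}f\to f$ to $\tilde r\from f\to\hat{\hom}(\iota_0,f)$. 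Using naturality of this bijection in the first argument along $\theta_0\from{!}\to\iota_0$, together with the canonical identifications ${!}\hatbin{\times}f\iso f$ and $\hat{\hom}({!},f)\iso f$, one checks that $r\circ(\theta_0\hatbin{\times}f)=\id_f$ holds if and only if $\hat{\hom}(\theta_0,f)\circ\tilde r=\id_f$; hence retractions of $\theta_0\hatbin{\times}f$ correspond bijectively to sections of $\hat{\hom}(\theta_0,f)$.

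For $\ref{she-as-retract:def}\Leftrightarrow\ref{she-as-retract:retraction}$ I would compute the relevant Leibniz products for $f\from A\to B$. The map $\iota_0\hatbin{\times}f$ has domain the pushout $(B\times\{0\})\push_{A\times\{0\}}(A\times\simp{1})$ and codomain $B\times\simp{1}$, while ${!}\hatbin{\times}f$ is canonically $f$, and $\theta_0\hatbin{\times}f\from f\to\iota_0\hatbin{\times}f$ is the inclusion at the $1$-endpoint, namely $A\iso A\times\{1\}\ito(B\times\{0\})\push_{A\times\{0\}}(A\times\simp{1})$ on domains and $B\iso B\times\{1\}\ito B\times\simp{1}$ on codomains. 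By the universal property of the pushout, a morphism $r=(r_0,r_1)\from\iota_0\hatbin{\times}f\to f$ in $\sSet^{[1]}$ amounts to a map $g:=r_0|_{B\times\{0\}}\from B\to A$, a homotopy $u:=r_0|_{A\times\simp{1}}$ whose $0$-end is $gf$, and a homotopy $v:=r_1\from B\times\simp{1}\to B$; the commuting-square condition on $r$ forces the $0$-end of $v$ to be $fg$ and forces $vf=fu$ (as maps $A\times\simp{1}\to B$, using that $\iota_0\hatbin{\times}f$ restricted to $A\times\simp{1}$ is $f\times\id_{\simp{1}}$), while the retraction condition $r\circ(\theta_0\hatbin{\times}f)=\id_f$ says exactly that the $1$-ends of $u$ and $v$ are the respective identities. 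This is precisely the data of a $0$-oriented strong homotopy equivalence structure on $f$, and reading the correspondence in both directions gives the equivalence.

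The argument is essentially bookkeeping about the Leibniz product, so the main obstacle is simply to be careful with two points: matching the monoidal-unit identifications so that the transposition in the first step genuinely yields a section of $\hat{\hom}(\theta_0,f)$ rather than of a differently-indexed map, and checking that the commuting-square condition on $r$ reproduces exactly the ``strong'' equation $fu=vf$ together with the correct orientations of $u$ and $v$ (so that, for $k=1$, swapping endpoints indeed flips all homotopy directions). No constructivity concerns arise, since every bijection used is given by an explicit formula with no appeal to choice.
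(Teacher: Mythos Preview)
Your proposal is correct and follows essentially the same approach as the paper: the paper dispatches \ref{she-as-retract:retraction}$\Leftrightarrow$\ref{she-as-retract:section} with the single phrase ``by adjointness'' and proves \ref{she-as-retract:def}$\Leftrightarrow$\ref{she-as-retract:retraction} by drawing the retract diagram in $\sSet^{[1]}$ with top map $[g,u]$ and bottom map $v$, noting that commutativity of that diagram is exactly the data $(g,u,v)$ with $fu=vf$. Your write-up simply unpacks both steps in more detail; the only cosmetic difference is that you phrase the adjointness as the closed symmetric monoidal structure of $(\sSet^{[1]},\hatbin{\times},\hat{\hom})$, whereas the paper leaves the two-variable adjunction implicit.
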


\begin{proof}
The equivalence of~\ref{she-as-retract:retraction} and~\ref{she-as-retract:section} follows by adjointness.
For the equivalence of~\ref{she-as-retract:def} and~\ref{she-as-retract:retraction}, we only consider the case $k = 0$ (the case $k = 1$ is dual).
The data of a retraction of $f \hatbin{\times} \theta_0 \from f \to f \hatbin{\times} \iota_0$ is shown in the following diagram:
  \begin{tikzeq*}
  \matrix[diagram,column sep={10em,between origins}]
  {
    |(Al)| A & |(pp)| A \times \simp{1} \coprod_{A \times \braces{0}} B \times \braces{0} & |(Ar)| A \\
    |(Bl)| B & |(p)|  B \times \simp{1}                                                   & |(Br)| B \\
  };

  \draw[->] (Al) to node[left]  {$f$}                         (Bl);
  \draw[->] (pp) to node[left]  {$f \hatbin{\times} \iota_0$} (p);
  \draw[->] (Ar) to node[right] {$f$}                         (Br);

  \draw[->] (Al) to (pp);

  \draw[->] (Bl) to node[below] {$B \times \iota_1$} (p);

  \draw[->,dashed] (pp) to node[above] {$[g, u]$}           (Ar);
  \draw[->,dashed] (p) to node[below] {$v$} (Br);
  \end{tikzeq*}
(we have omitted the identity arrows for $A$ and $B$; the top left map is induced by $A \times \iota_1$).
Commutativity of the diagram precisely amounts to $g$, $u$, $v$ making $f$ into a $0$-oriented strong homotopy equivalence.
\end{proof}

\begin{remark} \label{she-for-cylinders}
The general setting for the theory of strong homotopy equivalences is that of an (adjoint) functorial cylinder.
One may then take the $0$-oriented notion as default and obtain the $1$-oriented notion from the cylinder with mirrored endpoints.
Indeed, this viewpoint allows us to support zig-zags of homotopies (as occur in our notions of homotopy equivalence and shrinkable map).
We merely have to consider the cylinder functor induced by a walking zig-zag $J$, seen as an interval object.
\end{remark}

\begin{lemma} \label{horn-inclusion-is-she}
The horn inclusion $\horn{n,k} \to \simp{n}$ is a 0-oriented strong homotopy equivalence for $k < n$ and a 1-oriented strong homotopy equivalence for $k > 0$.
In particular, every horn inclusion is a strong homotopy equivalence.
\end{lemma}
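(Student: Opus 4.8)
The plan is to write down the required homotopy-equivalence data explicitly: everything involved is a finite combinatorial gadget assembled from order-preserving maps, so there is no constructive subtlety once the right maps are chosen. It suffices to treat the $0$-oriented case $k < n$ in detail; the $1$-oriented case $k > 0$ follows from the mirror-image construction, and the final assertion is then immediate since $n \ge 1$ forces $k < n$ or $k > 0$ (or both).

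For $k < n$, I would consider the order-preserving map $\mu \from [n] \times [1] \to [n]$ with $\mu(a, 1) = a$ for all $a$, $\mu(a, 0) = a$ for $a \ne k + 1$, and $\mu(k+1, 0) = k$; monotonicity is a short case distinction using $k + 1 \le n$. Let $v = \nerve \mu \from \simp{n} \times \simp{1} \to \simp{n}$ be the induced homotopy. Then $v \iota_1 = \id_{\simp{n}}$, while $v \iota_0 = \nerve \nu$ for $\nu = \mu(\uvar, 0) \from [n] \to [n]$; since $\im \nu$ omits $k + 1$ and $k + 1 \ne k$, the map $v \iota_0$ factors through $\horn{n,k}$, and as $f \from \horn{n,k} \ito \simp{n}$ is a monomorphism it factors uniquely as $f g$ for some $g \from \simp{n} \to \horn{n,k}$. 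The key observation is that $v$ restricts to a map $\horn{n,k} \times \simp{1} \to \horn{n,k}$: given a simplex $x \from [p] \to [n]$ of $\horn{n,k}$ and $\alpha \from [p] \to [1]$, the image of $x$ omits some $i \ne k$, and since $\mu(a, b)$ equals $a$ except when $(a, b) = (k+1, 0)$, where it equals $k$, the image of $v(x, \alpha)$ lies in $(\im x) \cup \{ k \}$, which still omits $i$; hence $v(x, \alpha)$ lies in $\horn{n,k}$. Write $u \from \horn{n,k} \times \simp{1} \to \horn{n,k}$ for the corestriction of $v \circ (f \times \id_{\simp{1}})$. Using again that $f$ is monic one checks $u \iota_1 = \id_{\horn{n,k}}$ and $u \iota_0 = g f$, so $u$ is a homotopy from $g f$ to $\id_{\horn{n,k}}$ and $v$ is a homotopy from $f g$ to $\id_{\simp{n}}$; thus $g$ is a homotopy inverse of $f$ with both homotopies in the $0$-oriented direction, and $f u = v \circ (f \times \id_{\simp{1}}) = v f$ by construction, so $f$ is a strong homotopy equivalence. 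The $1$-oriented case $k > 0$ is identical after replacing $\mu$ by the order-preserving map that is the identity for $b = 0$ and sends $(k - 1, 1)$ to $k$ while fixing all other pairs.

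I expect the only genuine choice — as opposed to routine verification of simplicial identities — to be the selection of $\mu$. Collapsing the $(k+1)$-st vertex onto the $k$-th is precisely what makes $v$ preserve the horn: the only vertex label it can introduce is $k$ itself, which is never the index one needs to omit; a cruder homotopy, such as one contracting $\simp{n}$ onto a vertex, would instead push faces out of $\horn{n,k}$. Dually, in the $1$-oriented case one collapses the $(k-1)$-st vertex onto the $k$-th, again creating only the label $k$.
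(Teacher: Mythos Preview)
Your proof is correct. The explicit map $\mu$ collapsing vertex $k+1$ onto $k$ (respectively $k-1$ onto $k$) is exactly the right choice: it guarantees that the only new vertex label that can appear is $k$, so every face of $\horn{n,k}$ is sent back into $\horn{n,k}$, and the coherence $f u = v (f \times \id_{\simp{1}})$ holds by construction since $u$ is defined as the corestriction. All objects are finite posets and all checks are decidable, so there is no constructive issue.

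The paper does not give its own argument; it cites Gabriel--Zisman IV.2.1.3, which exhibits the horn inclusion directly as a retract of $\iota_e \hatbin{\times} (\horn{n,k} \to \simp{n})$ for the appropriate endpoint $e$, i.e., verifies \cref{she-as-retract:retraction} of \cref{she-as-retract}. Your argument instead produces the data $(g, u, v)$ of \cref{she-as-retract:def}. By \cref{she-as-retract} these are equivalent, and in fact unwinding the retraction in Gabriel--Zisman yields precisely your $(g, u, v)$: the retraction on codomains is your homotopy $v$, and its pullback to domains is your $[g, u]$. So the two approaches are really the same construction viewed through the two halves of \cref{she-as-retract}; your version has the advantage of being self-contained rather than delegated to an external reference.
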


This is \cite{gz}*{IV.2.1.3} (describing strong homotopy equivalences via \cref{she-as-retract:retraction} of \cref{she-as-retract}).
For self-containedness, we include a proof.

\begin{proof}
We let $k < n$ and show that $h \from \horn{n,k} \to \simp{n}$ is a 0-oriented strong homotopy equivalence (the other case is dual).
The initial segment $s \from [k] \to [n]$ has a unique retraction $r \from [n] \to [k]$.
Because $k < n$, the map $s$ induces a monomorphism $i \from \simp{k} \to \horn{n,k}$.
We define the homotopy inverse of $h$ as $i r \from \simp{n} \to \horn{n,k}$.
The homotopy from $h (i r)$ to $\id_{\simp{n}}$ is given by the nerve of the map $[1] \times [n] \to [n]$ corresponding to the unique map $s r \to \id_{[n]}$.
This restricts on $h$ to a homotopy from $\id_{\horn{n,k}}$ to $(i r) h$.
\end{proof}

\begin{corollary} \label{triv-cof-generated-by-she}
The \wfs{} of trivial cofibrations and fibrations is cofibrantly generated by cofibrations between cofibrant objects that are strong homotopy equivalences.
\end{corollary}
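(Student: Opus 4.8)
The plan is to sandwich the class $\mathcal{S}$ of cofibrations between cofibrant objects that are strong homotopy equivalences between the set $J$ of horn inclusions and the class of trivial cofibrations. Once we know $J \subseteq \mathcal{S}$ and that every map in $\mathcal{S}$ is a trivial cofibration, the conclusion is formal: since fibrations are precisely the maps with the \rlp{} \wrt{} $J$, the first inclusion shows that every map with the \rlp{} \wrt{} $\mathcal{S}$ is a fibration, while the second shows that every fibration has the \rlp{} \wrt{} $\mathcal{S}$; hence the fibrations coincide with the maps having the \rlp{} \wrt{} $\mathcal{S}$, and together with the factorisations already supplied by \cref{thm:wfs-via-soa} this exhibits the \wfs{} as cofibrantly generated by $\mathcal{S}$.

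First I would check $J \subseteq \mathcal{S}$. A horn inclusion $\horn{n,k} \to \simp{n}$ is a cofibration by \cref{triv-cof-cof}; its codomain $\simp{n}$ is cofibrant by \cref{simplex-cofibrant} and its domain $\horn{n,k}$ is cofibrant by \cref{horn-cofibrant}; and it is a strong homotopy equivalence by \cref{horn-inclusion-is-she}, noting that since $n \ge 1$ every admissible $k$ satisfies $k < n$ or $k > 0$, so one of the two orientations always applies.

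Next I would verify that every $f \in \mathcal{S}$, say $k$-oriented, is a trivial cofibration. By \cref{she-as-retract}, the map $\theta_k \hatbin{\times} f \from f \to \iota_k \hatbin{\times} f$ admits a retraction, so $f$ is a retract of $\iota_k \hatbin{\times} f$ in the arrow category. The endpoint inclusion $\iota_k \from \braces{k} \to \simp{1}$ is a trivial cofibration, so $\iota_k \hatbin{\times} f$ is a trivial cofibration by \cref{ano-cof-pushout-product} of \cref{second-pushout-product} (using that $f$ is a cofibration), and trivial cofibrations, being the left class of a \wfs{}, are closed under retracts; hence $f$ is a trivial cofibration.

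The argument is thus a direct assembly of \cref{triv-cof-cof,simplex-cofibrant,horn-cofibrant,horn-inclusion-is-she,she-as-retract,second-pushout-product}, so I do not expect a substantial obstacle. The only point demanding attention is bookkeeping: tracking the orientation in \cref{horn-inclusion-is-she}, and using the \emph{retraction} rather than a section in \cref{she-as-retract}, so that $f$ appears as a retract of the trivial cofibration $\iota_k \hatbin{\times} f$ and not the reverse, which is what makes the pushout-product stability of trivial cofibrations applicable in the correct variance.
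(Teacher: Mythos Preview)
Your proof is correct and follows essentially the same approach as the paper: both verify that horn inclusions lie in $\mathcal{S}$ via \cref{triv-cof-cof}, \cref{horn-cofibrant}, and \cref{horn-inclusion-is-she}, and that every map in $\mathcal{S}$ is a trivial cofibration by exhibiting it as a retract of $\iota_k \hatbin{\times} f$ using \cref{she-as-retract} and \cref{second-pushout-product}. Your version is slightly more explicit about the formal sandwich argument and the retract closure of trivial cofibrations, but the substance is identical.
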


\begin{proof}
Every horn inclusion is a cofibration between cofibrant objects by \cref{triv-cof-cof,horn-cofibrant} and a strong homotopy equivalence by \cref{horn-inclusion-is-she}.
In the other direction, every cofibration $i$ that is a $k$-oriented strong homotopy equivalence is a retract of $\iota_k \hatbin{\times} f$ by \cref{she-as-retract}, which is a trivial cofibration by \cref{ano-cof-pushout-product} of \cref{second-pushout-product}.
\end{proof}

\begin{lemma}[cf.~\cite{Gambino-Sattler}*{Lemma~4.7}] \label{she-pullback-square}
In any pullback square
\begin{tikzeq*}
\matrix[diagram]
{
  |(A)| A & |(B)| B \\
  |(X)| X & |(Y)| Y \\
};

\draw[->] (A) to node[left]  {$f$} (X);
\draw[->] (B) to node[right] {$g$} (Y);

\draw[->]  (A) to (B);
\draw[fib] (X) to (Y);

\pb{A}{Y};
\end{tikzeq*}
with $X$ cofibrant, if $g$ is a $k$-oriented strong homotopy equivalence, then so is $f$.
\end{lemma}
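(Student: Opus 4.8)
\emph{Proof plan.} The plan is to prove the case $k = 0$ directly; the case $k = 1$ is entirely dual, obtained by running the interval $\simp{1}$ backwards (\cf \cref{she-for-cylinders}). Recall that a $0$-oriented strong homotopy equivalence is a map $f$ equipped with a homotopy inverse $\bar f$, a homotopy $u$ from $\bar f f$ to $\id$, and a homotopy $v$ from $f \bar f$ to $\id$, subject to $f u = v (f \times \simp{1})$; the task is to manufacture such a package for $g$ out of the one for $f$. Write $\pi_A \from B \to A$ for the top edge of the square, so that $g = \pi_X$ is the other projection and $f \pi_A = p \pi_X$, where I abbreviate the fibration by $p \from X \fto Z$. (Equivalently, one could phrase this via the retraction characterisation of \cref{she-as-retract}, but the explicit data is the most transparent route.)

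The single non-formal step is one lift against the fibration $p$. I would take the homotopy $v (p \times \simp{1}) \from X \times \simp{1} \to Z$, which runs from $f \bar f p$ to $p$, and lift it along $p$ to a homotopy $w \from X \times \simp{1} \to X$ equal to $\id_X$ at the endpoint $1$. This lift exists because $X \times \{1\} \ito X \times \simp{1}$ is the pushout product of the trivial cofibration $\iota_1 \from \{1\} \ito \simp{1}$ with the cofibration $\emptyset \to X$, hence is itself a trivial cofibration by \cref{ano-cof-pushout-product} of \cref{second-pushout-product} — and this is the \emph{only} use of the hypothesis that $X$ is cofibrant. Then $p w = v (p \times \simp{1})$ and $w \iota_1 = \id_X$, so $w_0 = w \iota_0$ satisfies $p w_0 = f \bar f p$. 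By the pullback property, the pair $(\bar f p, w_0)$ defines a map $\bar g \from X \to B$; it satisfies $g \bar g = w_0$, and $w$ is a homotopy from $g \bar g$ to $\id_X$. This is the ``$v$''-homotopy for $g$.

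For the ``$u$''-homotopy I would take the pair $(u (\pi_A \times \simp{1}), w (\pi_X \times \simp{1}))$, which defines a map $H \from B \times \simp{1} \to B$: its two components agree after composing with $f$, respectively $p$, since $f u (\pi_A \times \simp{1}) = v (f \pi_A \times \simp{1}) = v (p \pi_X \times \simp{1}) = p w (\pi_X \times \simp{1})$, using the strong compatibility for $f$ and then $f \pi_A = p \pi_X$. A direct check gives $H \iota_0 = \bar g g$, $H \iota_1 = \id_B$, and $g H = w (g \times \simp{1})$, the last being precisely the strong compatibility for $g$. The main obstacle is purely bookkeeping: there is no hard lemma beyond the one lift, but one must be careful that the evident candidate homotopies genuinely land in the pullback $B$, and it is exactly here that the hypothesis $f u = v (f \times \simp{1})$ on $f$ — rather than merely that $f$ is a homotopy equivalence — is used.
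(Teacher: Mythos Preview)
Your proof is correct and, at its core, takes the same approach as the paper: both arguments hinge on a single lift of $\iota_1 \times X \from X \ato X \times \simp{1}$ against the fibration $p \from X \fto Z$, which is where cofibrancy of $X$ is spent. The paper packages this lift via the retract characterisation of \cref{she-as-retract} (so that the pullback property of $B$ handles all the bookkeeping in one stroke), whereas you unpack the data $(\bar g, H, w)$ by hand; as you yourself note, these are equivalent presentations, and your lift $w$ is exactly the diagonal the paper produces.
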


\begin{proof}
We only consider the case $k = 0$ (the case $k = 1$ is dual).
We work with $0$-oriented strong homotopy equivalences as characterised by \cref{she-as-retract:retraction} of \cref{she-as-retract}.
To make $f$ into one, we will construct the dotted map in the following (vertical) map of retracts in the arrow category:
\begin{tikzeq*}
\matrix[diagram,column sep={6em,between origins}]
{
  |(fl)| f & |(df)| f \hatbin{\times} \iota_0 & |(fr)| f                 \\
  |(gl)| g & |(dg)| g \hatbin{\times} \iota_0 & |(gr)| g \rlap{\text{.}} \\
};

\draw[->] (fl) to (gl);
\draw[->] (df) to (dg);
\draw[->] (fr) to (gr);

\draw[->] (fl) to node[above] {$f \hatbin{\times} \theta_0$} (df);
\draw[->] (gl) to node[below] {$g \hatbin{\times} \theta_0$} (dg);

\draw[->,dashed] (df) to (fr);
\draw[->] (dg) to (gr);
\end{tikzeq*}
Since $f \to g$ is a pullback square, it suffices to produce this map on codomains:
\begin{tikzeq*}
\matrix[diagram,column sep={6em,between origins}]
{
  |(Xl)| X & |(sX)| X \times \simp{1} & |(Xr)| X                 \\
  |(Yl)| Y & |(sY)| Y \times \simp{1} & |(Yr)| Y \rlap{\text{.}} \\
};

\draw[->] (Xl) to (Yl);
\draw[->] (sX) to (sY);
\draw[->] (Xr) to (Yr);

\draw[->] (Xl) to node[above] {$X \times \iota_1$} (sX);
\draw[->] (Yl) to node[below] {$Y \times \iota_1$} (sY);

\draw[->,dashed] (sX) to (Xr);

\draw[->] (sY) to (Yr);
\end{tikzeq*}
This is a lifting problem
\begin{tikzeq*}
\matrix[diagram,column sep={6em,between origins}]
{
  |(Xl)| X                 & |(Xr)| X                \\
  |(sX)| X \times \simp{1} & |(Y)| Y \rlap{\text{.}} \\
};

\draw[->]  (Xl) to node[above] {$\id_X$}            (Xr);
\draw[ano] (Xl) to node[left]  {$X \times \iota_1$} (sX);

\draw[->]  (sX) to (Y);
\draw[fib] (Xr) to (Y);

\draw[->,dashed] (sX) to (Xr);
\end{tikzeq*}
Here, the left map is the pushout product of $\emptyset \to X$ with $\braces{1} \to \simp{1}$, hence is a trivial cofibration by \cref{ano-cof-pushout-product} of \cref{second-pushout-product} since $X$ is cofibrant.
\end{proof}

\begin{proposition}[restricted Frobenius property, cf.~\cite{Gambino-Sattler}*{Theorem~3.8}] \label{frobenius}
Let $X \fto Y$ be a fibration with $X$ cofibrant.
Pullback along $X \fto Y$ preserves trivial cofibrations.
\end{proposition}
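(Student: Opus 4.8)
The plan is to run the standard argument for the Frobenius property through strong homotopy equivalences, keeping careful track of cofibrancy. Write $p^* \from \sSet \slice Y \to \sSet \slice X$ for pullback along $p \from X \fto Y$. Since $p^*$ is a left adjoint (its right adjoint is the dependent product $\Pi_p$), it preserves colimits, so the class of maps in $\sSet \slice Y$ that $p^*$ sends to trivial cofibrations is closed under pushout, transfinite composition, coproduct, and retract. Hence, by \cref{triv-cof-generated-by-she}, it suffices to treat the case where $i \from A \cto B$ is a cofibration between cofibrant simplicial sets that is a $k$-oriented strong homotopy equivalence (for some $k \in \braces{0, 1}$), equipped with a structure map $b \from B \to Y$; put $a = b i$. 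Set $P_A = X \pull_Y A$, $P_B = X \pull_Y B$, and let $j \from P_A \to P_B$ be $p^* i$.

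The first substantive observation — and the point at which the hypothesis that $X$ is cofibrant enters — is that $P_A$ and $P_B$ are cofibrant. This follows from \cref{cofibrant-finite-limits} and \cref{cofibration-cancellation}: the canonical comparison maps $P_A \ito X \times A$ and $P_B \ito X \times B$ are monomorphisms, while $X \times A$ and $X \times B$ are cofibrant as finite limits of cofibrant objects.

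Next I would establish two things about $j$. That $j$ is a cofibration: it is the pullback of the cofibration $i$ along $P_B \to B$, hence a levelwise decidable inclusion by \cref{decidable-pullback} of \cref{decidable-properties} together with \cref{cof-to-dec} of \cref{cofibration-levelwise-decidable}, and since its codomain $P_B$ is cofibrant, \cref{dec-to-cof} of \cref{cofibration-levelwise-decidable} upgrades this to a cofibration. That $j$ is a $k$-oriented strong homotopy equivalence: the square with top edge $P_A \to A$, bottom edge $P_B \to B$, left edge $j$ and right edge $i$ is a pullback (since $P_A \iso P_B \pull_B A$); its bottom map $P_B \to B$ is a fibration, being a pullback of $p$, and its domain $P_B$ is cofibrant, so \cref{she-pullback-square} applies. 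Finally, with $j$ a cofibration that is a $k$-oriented strong homotopy equivalence, \cref{she-as-retract} exhibits $j$ as a retract, in the arrow category, of $\iota_k \hatbin{\times} j$; the endpoint inclusion $\iota_k \from \braces{k} \to \simp{1}$ is a trivial cofibration, so $\iota_k \hatbin{\times} j$ is a trivial cofibration by \cref{ano-cof-pushout-product} of \cref{second-pushout-product}, and therefore so is its retract $j = p^* i$.

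The delicate part — essentially the only one, the rest being formal bookkeeping — is the cofibrancy management around \cref{she-pullback-square}: that lemma transfers a strong homotopy equivalence across a fibration only when the fibration's domain is cofibrant, so the argument genuinely needs $P_B = X \pull_Y B$ to be cofibrant. This is precisely why one first reduces to generators whose codomain $B$ is cofibrant, and precisely where the hypothesis that the total space $X$ is cofibrant is indispensable.
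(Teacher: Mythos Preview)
Your proof is correct and follows essentially the same route as the paper: reduce via left-adjointness to the generating cofibrations of \cref{triv-cof-generated-by-she}, verify that the pullback of such a generator is again a cofibration between cofibrant objects and a strong homotopy equivalence (via \cref{she-pullback-square}), and conclude. The only difference is cosmetic: where you establish cofibrancy of $P_A$, $P_B$ and that $j$ is a cofibration by hand via \cref{cofibrant-finite-limits}, \cref{cofibration-cancellation} and \cref{cofibration-levelwise-decidable}, the paper invokes \cref{pullback-of-cofibrations} once (pullback along a map with cofibrant domain preserves cofibrations), which handles all of this in a single stroke.
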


\begin{proof}
Since pullback along $X \to Y$ is a left adjoint, it suffices to show that it preserves the generators of \cref{triv-cof-generated-by-she}, \ie, cofibrations between cofibrant objects that are strong homotopy equivalences.
For this, we note the following.
\begin{itemize}
\item
Pullback along $X \to Y$ preserves cofibrations (and hence cofibrant objects) by \cref{pullback-of-cofibrations}.
\item
Let $A \to B$ be a map over $Y$ with $B$ cofibrant such that $A \to B$ is a strong homotopy equivalence.
Then its pullback $X \pull_Y A \to X \pull_Y B$ is a strong homotopy equivalence.
This follows from \cref{she-pullback-square} since $X \pull_Y B \to B$ is a fibration (as a pullback of $X \to Y$) with cofibrant domain.
\qedhere
\end{itemize}
\end{proof}

\begin{remark}
Let us explain the name of \cref{frobenius}.
The \emph{Frobenius property}~\cite{van-den-Berg-Garner} of a \wfs{} in a category with finite limits refers to the condition that left maps are closed under pullback along right maps.
In our setting, we do not quite obtain the Frobenius property of the \wfs{} of trivial cofibrations and fibrations, but only a restricted version where the source of the map we pull back along is cofibrant.
This is because cofibrations are not generally closed under pullback.

\Cref{frobenius} in particular encompassed the Frobenius property in cofibrant simplicial sets.
This is what we will use to verify the model structure properties in cofibrant simplicial sets.
However, \Cref{frobenius} is more general: the target of the fibration does not need to be cofibrant.
This will be used to extend the model structure to the entirety of simplicial sets.
\end{remark}

\subsection{The equivalence extension property}
\label{sec:equivalence-extension-property}

In this subsection, we prove the equivalence extension property and derive its corollaries, the (trivial) fibration extension properties.
All of this happens entirely in the cofibrant fragment of simplicial sets.
Before we delve into the proof, we record some basic facts about mapping path space factorisations.

The context of the following definition and lemmas is the slice of cofibrant simplicial sets over an object $M$.
The \emph{mapping path space factorisation} of a map $X \to Y$ is
\begin{tikzeq}{mapping-path-space}
  \matrix[diagram,column sep={4em}]
  {
    |(X)| X & |(M)| X \pull_{Y} \simp{1} \cotensor Y & |(Y)| Y \rlap{\text{.}} \\
  };

  \draw[->] (X) to (M);
  \draw[->] (M) to (Y);
\end{tikzeq}
We adopt the convention that the position of the pullback symbol with respect to $\simp{1} \cotensor Y$ indicates whether the pullback is taken with respect to the left or right endpoint projection.
The first factor is induced by the constant map $Y \to \simp{1} \cotensor Y$ and the second factor is induced by the right endpoint projection.
The middle object is cofibrant by \cref{cofibrant-finite-limits,exponential-finite-cofibrant}.

The following lemma already features implicitly in the proof of \cref{fibration-category-slice-cofibrant}.

\begin{lemma} \label{mapping-path-space-fibration}
  If $X$ and $Y$ are fibrant in~\eqref{mapping-path-space}, then the second factor in~\eqref{mapping-path-space} is a fibration.
\end{lemma}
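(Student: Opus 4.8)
The plan is to exhibit the second factor in~\eqref{mapping-path-space} as a composite of two Kan fibrations, using only the stability of Kan fibrations under pullback and composition together with \cref{cof-tensor-cotensor}.

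First I would recall that whenever $Y$ is fibrant, the endpoint-evaluation map $\simp{1} \cotensor Y \to Y \pull_M Y$ is a Kan fibration. Indeed, $\bdsimp{1} \cotensor Y \iso Y \pull_M Y$ since the cotensor carries the coproduct $\bdsimp{1} \iso \braces{0} \coprod \braces{1}$ to a product in $\sSet \slice M$, and under this identification the map in question is the pullback cotensor of the cofibration $\bdsimp{1} \cto \simp{1}$ with the Kan fibration $Y \to M$ (using that $\bdsimp{1} \cotensor M \iso \simp{1} \cotensor M \iso M$, as $M$ is terminal in $\sSet \slice M$). Thus it is a Kan fibration by \cref{cof-fib-pullback-cotensor} of \cref{cof-tensor-cotensor}; this is the observation already made implicitly in the proof of \cref{fibration-category-slice-cofibrant}.

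Next, writing $f \from X \to Y$ for the map being factored, I would pull back this path-space fibration along the map $X \pull_M Y \to Y \pull_M Y$ induced by $f$ on one factor and the identity on the other. Unwinding the definitions identifies the resulting pullback with the mapping path object $X \pull_Y \simp{1} \cotensor Y$ of~\eqref{mapping-path-space}, compatibly with the projection to $X \pull_M Y$, which records a point $x$ of $X$ together with the endpoint of the path in $Y$ not bound to $f(x)$. Hence $X \pull_Y \simp{1} \cotensor Y \to X \pull_M Y$ is a Kan fibration, being a pullback of one. Finally, the projection $X \pull_M Y \to Y$ is a pullback of the structure map $X \to M$, which is a Kan fibration since $X$ is fibrant. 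Composing, $X \pull_Y \simp{1} \cotensor Y \to X \pull_M Y \to Y$ is a Kan fibration, and this composite is exactly the second factor in~\eqref{mapping-path-space}.

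I do not expect a genuine obstacle here. The only step needing care is the bookkeeping in the middle paragraph: verifying that the pullback of $\simp{1} \cotensor Y \to Y \pull_M Y$ along the map induced by $f$ really is the mapping path object of~\eqref{mapping-path-space}, and that the composite to $Y$ is the intended endpoint projection rather than some other map. Everything else is formal.
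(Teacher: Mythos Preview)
Your proof is correct and follows essentially the same approach as the paper: both decompose the second factor as the composite $X \pull_Y \simp{1} \cotensor Y \to X \pull_M Y \to Y$, identifying the first map as a pullback of the pullback cotensor $\simp{1} \cotensor Y \to \bdsimp{1} \cotensor Y$ of the fibration $Y \to M$ with $\bdsimp{1} \cto \simp{1}$, and the second as a pullback of $X \to M$. The paper compresses this into two sentences; your version spells out the identifications, and your citation of \cref{cof-fib-pullback-cotensor} of \cref{cof-tensor-cotensor} is arguably more precise for the slice context than the paper's reference to \cref{second-pushout-product}.
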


\begin{proof}
  The second factor decomposes as a pullback of the pullback cotensor of $Y \fto 1$ with $\bd \simp{1} \cto \simp{1}$ followed by a pullback of $X \fto 1$.
  So it is a fibration by \cref{cof-fib-pullback-hom} of \cref{second-pushout-product}.
\end{proof}

\begin{lemma} \label{mapping-path-space-equivalence}
  Assume that $Y$ is fibrant in~\eqref{mapping-path-space}.
  Then the following are equivalent:
  \begin{conditions}
    \item \label{mapping-path-space-equivalence:fib-h-equiv}
    $X$ is fibrant and $X \to Y$ is a fiberwise homotopy equivalence,
    \item \label{mapping-path-space-equivalence:triv-fib}
    the second factor in~\eqref{mapping-path-space} is a trivial fibration.
  \end{conditions}
\end{lemma}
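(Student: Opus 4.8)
The plan is to analyse the two factors of~\eqref{mapping-path-space}. Write $P = X \pull_Y (\simp{1} \cotensor Y)$, and let $r \from X \to P$ be the first factor (induced by the constant path), $q \from P \to Y$ the second factor (the right endpoint projection) and $p \from P \to X$ the canonical projection; then $p r = \id_X$, the composite $q r$ is the given map $f \from X \to Y$, and $P$ is cofibrant as noted above. The linchpin is the following observation, valid whenever $Y$ is fibrant: \emph{$r$ is a \fhe{} over $M$}. Indeed, the left endpoint projection $\simp{1} \cotensor Y \to Y$ is the pullback cotensor of the trivial cofibration $\iota_0 \from \{0\} \ato \simp{1}$ with the structure map $Y \fto M$, hence a trivial fibration by \cref{ano-fib-pullback-cotensor} of \cref{cof-tensor-cotensor}; its pullback along $f$ is the projection $p \from P \to X$, which is therefore a trivial fibration between cofibrant simplicial sets, so by \cref{trivial-fibration-shrinkable} of \cref{trivial-to-she} it is a \fhe{} over $X$, and hence over $M$. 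Since $r$ is a section of $p$, it too is a \fhe{} over $M$.

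Granting this, I would argue as follows. For \ref{mapping-path-space-equivalence:fib-h-equiv} $\Rightarrow$ \ref{mapping-path-space-equivalence:triv-fib}: assuming $X$ and $Y$ fibrant, \cref{mapping-path-space-fibration} shows that $q$ is a fibration, so $P$ is fibrant as well; since $f = q r$ with $f$ and $r$ both \fhe{}s over $M$, the 2-out-of-3 property gives that $q$ is a \fhe{} over $M$, hence an acyclic fibration in $\sSet_\cof \fslice M$, hence a trivial fibration by \cref{fiberwise-acyclic-fibration-trivial-Kan}. For \ref{mapping-path-space-equivalence:triv-fib} $\Rightarrow$ \ref{mapping-path-space-equivalence:fib-h-equiv}: $q$ is a fibration by \cref{triv-cof-cof}, so $P$ is fibrant, and since $p r = \id_X$ exhibits the structure map of $X$ as a retract of that of $P$ and fibrations are closed under retracts, $X$ is fibrant too; moreover $q$, being a trivial fibration between cofibrant simplicial sets, is a \fhe{} over $Y$ and hence over $M$ by \cref{trivial-fibration-shrinkable} of \cref{trivial-to-she}, so $f = q r$ is a composite of \fhe{}s over $M$, hence a \fhe{} over $M$.

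The only genuinely delicate step is the linchpin observation that $r$ is a \fhe{}, where one must keep track of which base the homotopies are fiberwise over and remain within cofibrant simplicial sets so that \cref{trivial-to-she} applies; the remainder is bookkeeping with 2-out-of-3, closure of fibrations under retracts, and the previously established coincidence of acyclic fibrations and trivial fibrations in the cofibrant fragment (\cref{fiberwise-acyclic-fibration-trivial-Kan}).
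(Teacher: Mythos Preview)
Your proof is correct and follows essentially the same approach as the paper: both arguments hinge on showing that the first factor $r$ is a fiberwise homotopy equivalence (via the pullback of the left endpoint projection being a trivial fibration), then use 2-out-of-3 together with \cref{fiberwise-acyclic-fibration-trivial-Kan}, and obtain fibrancy of $X$ from~\ref{mapping-path-space-equivalence:triv-fib} by the retract of $X$ in $P$. The only differences are organizational: the paper first extracts the fibrancy of $X$ and then treats both directions simultaneously inside the fibration category of \cref{fibration-category-slice-cofibrant}, whereas you handle the two implications separately and cite the underlying lemmas (\cref{trivial-to-she}, \cref{triv-cof-cof}) more explicitly.
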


\begin{proof}
  First note that the first factor in~\eqref{mapping-path-space} admits a retraction (given by the first projection).
  Thus, $X$ is a retract of the middle object.
  If the second factor is a fibration, then the middle object is fibrant, and hence so is $X$.
  This shows that~\cref{mapping-path-space-equivalence:triv-fib} makes $X$ fibrant.

  Now assume that $X$ is fibrant.
  Using the fibration category of \cref{fibration-category-slice-cofibrant}, it remains to show that $X \to Y$ is a fiberwise homotopy equivalence if and only if the second factor in~\eqref{mapping-path-space} is a fiberwise homotopy equivalence.
  This holds by 2-out-of-3 after we check that the first factor in~\eqref{mapping-path-space} is a fiberwise homotopy equivalence.
  Indeed, that map is a retraction of a pullback of the first endpoint projection $\simp{1} \cotensor Y \to Y$, a trivial fibration by \cref{ano-fib-pullback-cotensor} of \cref{cof-tensor-cotensor}.
\end{proof}

We are now ready to prove the equivalence extension property.
While in~\cites{Kapulkin,Gambino-Henry} this property is proved in context of a model structure to establish univalence of a classifying fibration, it was observed in~\cite{Sattler} that it can reversely be used to establish that very model structure.
Indeed, the statement of the equivalence extension property does not refer to the weak equivalences of a model structure, but the more elementary notion of fiberwise homotopy equivalence.
Thus, the natural setting for its direct proof is the fibration category of Kan fibrations over a base in cofibrant simplicial sets established in \cref{fibration-category-slice-cofibrant}.
The idea of a proof using elementary means, not making use of an ambient model structure, goes back to~\cite{CCHM}.

\begin{proposition}[Equivalence extension, cf.~\cite{Sattler}*{Proposition~5.1}] \label{equivalence-extension}
  In cofibrant simplicial sets, consider the solid part of the diagram
  \begin{tikzeq}{equivalence-extension:0}
    \matrix[diagram]
    {
      |(X0)| X_0 &            & |(Y0)| Y_0 &            \\
                 & |(X1)| X_1 &            & |(Y1)| Y_1 \\
      |(A)|  A   &            & |(B)|  B   &            \\
    };

    \draw[->]        (X0) to node[above right] {$\heq$} (X1);
    \draw[->,dashed] (Y0) to node[above right] {$\heq$} (Y1);
    \draw[cof]       (A)  to node[below]       {$i$}    (B);

    \draw[fib] (X0) to (A);
    \draw[fib] (X1) to (A);

    \draw[fib,dashed] (Y0) to (B);
    \draw[fib]        (Y1) to (B);

    \draw[->,dashed] (X0) to (Y0);
    \draw[->,over]   (X1) to (Y1);
  \end{tikzeq}
  where $A \to B$ is a cofibration, $X_0 \to A$ and $Y_1 \to B$ are fibrations,
  the lower square is a pullback, and
  the map $X_0 \to X_1$ is a \fhe{} over $A$.
  Then there is $Y_0$ fitting into the diagram as indicated \st{}
  the back square is a pullback, the map $Y_0 \to B$ is a fibration and
  the map $Y_0 \to Y_1$ is a \fhe{} over $B$.
\end{proposition}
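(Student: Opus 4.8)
The plan is to follow the strategy of \cite{Sattler}*{Proposition~5.1}, carried out inside the fibration category $\sSet_\cof \fslice M$ of \cref{fibration-category-slice-cofibrant} (first with $M = A$ and then with $M = B$), while keeping careful track of cofibrancy. The first move is to reduce to two special cases for the shape of the equivalence $X_0 \to X_1$. Using the factorisation of \cref{thm:wfs-via-soa} in $\sSet_\cof \fslice A$, write $X_0 \to X_1$ over $A$ as a trivial cofibration $k \from X_0 \ato \hat X$ followed by a fibration $p \from \hat X \fto X_1$; since $k$ is a \fhe{} by the fiberwise version of \cref{trivial-cofibration-sdr} of \cref{trivial-to-she} (same proof, carried out over $A$) and $X_0 \to X_1$ is a \fhe{} by hypothesis, $p$ is an acyclic fibration, hence a trivial fibration by \cref{fiberwise-acyclic-fibration-trivial-Kan}. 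All objects are cofibrant, $\hat X$ because it is built from the cofibrant $X_0$ by attaching horns. If we can solve the extension problem in the two cases (i) $X_0 \to X_1$ is a trivial fibration and (ii) $X_0 \to X_1$ is a trivial cofibration between fibrant objects over $A$, then the general case follows by composition: apply case (i) to $p \from \hat X \to X_1 = i^* Y_1$ to obtain $W \to Y_1$ over $B$, then apply case (ii) to $k \from X_0 \to \hat X = i^* W$ to obtain $Y_0 \to W$ over $B$, and take the composite $Y_0 \to W \to Y_1$. This is a fibration (a composite of fibrations) and a \fhe{} over $B$, since $W \to Y_1$ is a trivial fibration between cofibrant objects, hence a \fhe{} over $Y_1$ and so over $B$ by \cref{trivial-fibration-shrinkable} of \cref{trivial-to-she}; and restriction along $i$ commutes with both constructions (see below), giving $i^* Y_0 \iso X_0$.

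Case~(ii) I would handle by a gluing along $i$. As just noted, the trivial cofibration $X_0 \ato \hat X = i^* W$ over $A$ has a fiberwise strong deformation retraction $r \from \hat X \to X_0$ with $r k = \id_{X_0}$ and a fiberwise homotopy $H \from k r \htp \id_{\hat X}$ over $A$ that is constant on $X_0$. The inclusion $\hat X \cto W$ is a pullback of $i$, hence a cofibration, being a levelwise decidable inclusion with cofibrant codomain by \cref{cofibration-levelwise-decidable}. Set $Y_0$ to be the pushout $X_0 \push_{\hat X} W$ of $r$ along this cofibration; then $X_0 \cto Y_0$ is a cofibration, and by extensivity (the pushout is along a decidable inclusion) pulling back along $Y_1 \to B$ and restricting to $A$ returns $X_0 \push_{\hat X} \hat X = X_0$, so $i^* Y_0 \iso X_0$ over $A$. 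To produce a structure map $Y_0 \to W$ I would first rectify the failure of $k r = \id_{\hat X}$: viewing $H$ as a homotopy into $W$ that is fiberwise over $B$, extend it along the trivial cofibration $W \times \{1\} \union \hat X \times \simp{1} \cto W \times \simp{1}$ — a trivial cofibration as the pushout product of $\{1\} \cto \simp{1}$ with $\hat X \cto W$ by \cref{ano-cof-pushout-product} of \cref{second-pushout-product} — against the fibration $W \to B$, obtaining a homotopy from some $g \from W \to W$ with $g|_{\hat X} = k r|_{\hat X}$ to $\id_W$; the pair $(X_0 \xrightarrow{k} \hat X \cto W,\ g)$ then glues to a map $Y_0 \to W$. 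One then checks, using $H$, $r$ and the fibration category structure of \cref{fibration-category-slice-cofibrant}, that $Y_0 \to W$ is a fibration and a \fhe{} over $B$, the middle objects remaining cofibrant by \cref{cofibrant-finite-limits} and \cref{exponential-finite-cofibrant}.

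Case~(i) is the heart of the argument: given a trivial fibration $p \from \hat X \to X_1$ over $A$ and the cofibration $X_1 \cto Y_1$ (a pullback of $i$), I must build a trivial fibration $W \to Y_1$ over $B$ whose restriction along $X_1 \cto Y_1$ is $p$. Since trivial fibrations are exactly the maps with the \rlp{} against boundary inclusions and cofibrations are the Reedy decidable inclusions (\cref{cofibration-wfs}), this \emph{realignment of trivial fibrations along a cofibration} is carried out by a constructive small-object-style construction over the complement of $X_1$ in $Y_1$, which is decidable by \cref{cof-characterisation}: one builds the total space dimension by dimension, freely choosing diagonal fillers against boundary inclusions for the new simplices, the decidability of the relative latching maps guaranteeing that these choices cohere. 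This is precisely the step that is immediate classically (one splits the inclusion $X_1 \cto Y_1$) but requires genuine work constructively, and it is the main obstacle; I would follow the corresponding argument of \cite{Sattler}, inserting decidability hypotheses on latching data wherever the classical argument uses a chosen splitting.

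Assembling cases~(i) and~(ii) by the composition above then yields $Y_0$ together with the maps $X_0 \to Y_0$ and $Y_0 \to Y_1$ making the back square of~\eqref{equivalence-extension:0} a pullback, with $Y_0 \to B$ a fibration and $Y_0 \to Y_1$ a \fhe{} over $B$, as required. The recurring secondary difficulty, orthogonal to the combinatorics of case~(i), is the bookkeeping of cofibrancy: each mapping path space, exponential and pushout above must be verified to land among cofibrant simplicial sets, which is where \cref{exponential-finite-cofibrant}, \cref{cofibrant-finite-limits} and the pushout-product properties of \cref{first-pushout-product,second-pushout-product} are invoked repeatedly.
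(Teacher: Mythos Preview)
Your two-case split is not how the paper proceeds, and both cases contain real gaps.

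In Case~(i) you need to extend a trivial fibration $\hat X \tfto X_1$ along a cofibration $j \from X_1 \cto Y_1$. Your ``small-object-style construction'' that ``freely chooses diagonal fillers for the new simplices'' is not an argument: the lifting problems for $W \to Y_1$ involve arbitrary maps $\bdsimp{m} \to W$, not just boundaries lying over the complement, so nothing is gained by attaching free cells there. The mechanism that actually works is the dependent product: $W = \Pi_j \hat X$ is a trivial fibration over $Y_1$ because $\Pi_j$ preserves trivial fibrations (adjoint to $j^*$ preserving cofibrations, \cref{pullback-of-cofibrations}) and preserves cofibrant objects (\cref{cofibrant-pushforward}), with $j^* \Pi_j \simeq \id$ since $j$ is mono. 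But this is already the core of the paper's proof, so the reduction to Case~(i) does not simplify anything. Indeed the paper derives trivial fibration extension (\cref{trivial-fibration-extension}) \emph{from} equivalence extension rather than conversely.

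In Case~(ii) your pushout $Y_0 = X_0 \sqcup_{\hat X} W$ need not be fibrant over $B$, and your glued map $Y_0 \to W$ is certainly not a fibration in general. Already in the degenerate case $A = B$ (so $W = \hat X$) your construction returns $Y_0 = X_0$ and $Y_0 \to W$ equal to the trivial cofibration $k \from X_0 \to \hat X$, which is not a fibration. The fibration category of \cref{fibration-category-slice-cofibrant} provides no closure of fibrations or fibrant objects under pushouts, so ``one then checks'' is not a proof.

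The paper instead gives a single construction with no case split: in $\sSet \slice B$ set
\[
  Y_0 \;=\; Y_1 \times_{\Pi_i i^* Y_1} \Pi_i X_0,
\]
using that $\Pi_i$ preserves cofibrant objects (\cref{cofibrant-pushforward}) and trivial fibrations. Since $i$ is mono, $i^* \Pi_i \simeq \id$, giving $i^* Y_0 \cong X_0$. One then shows that the second factor in the mapping path space factorisation of $Y_0 \to Y_1$ is a trivial fibration by decomposing it, via $\Pi_i i^* \simeq \exp_B(A,-)$, into (a pullback of) the pullback exponential of $A \cto B$ with an endpoint projection of $Y_1$, and (a pullback of) the image under $\Pi_i$ of the second mapping-path-space factor for $X_0 \to X_1$ over $A$. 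The first is a trivial fibration by \cref{first-pushout-product}; the second because $\Pi_i$ preserves trivial fibrations and the forward direction of \cref{mapping-path-space-equivalence} applies to $X_0 \to X_1$. The reverse direction of \cref{mapping-path-space-equivalence} then yields fibrancy of $Y_0$ over $B$ and the \fhe{} $Y_0 \to Y_1$ simultaneously.
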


\begin{proof}
  In the following, we will make use of the adjunction $i^* \adj \Pi_i$.
  We identify $X_1$ with $i^* Y_1$ over $A$.
  Since $i$ is a monomorphism, the unit of the adjunction $\Sigma_i \adj i^*$ is invertible.
  By adjointness, the counit of the adjunction $i^* \adj \Pi_i$ is invertible.
  That is, $\Pi_i$ is a reflective embedding with reflector $i^*$.
  Note that $\Sigma_i i^*$ is isomorphic to the functor of product with $A$ over $B$.
  By adjointness, this means $\Pi_i i^* \simeq \exp_B(A, -)$.
  By \cref{pullback-of-cofibrations}, the functor $i^*$ preserves cofibrations.
  By adjointness, the functor $\Pi_i$ preserves trivial fibrations.
  In the following, we use freely that $\Pi_i$ preserves cofibrant objects by \cref{cofibrant-pushforward} and that cofibrant objects are closed under finite limits (\cref{cofibrant-finite-limits}).
  With this, all our constructions remain within cofibrant objects.

  In the slice over $B$, we define $Y_0$ and its map to $Y_1$ via the pullback
  \begin{tikzeq}{equivalence-extension:def_Y0}
    \matrix[diagram,column sep={6em,between origins}]
    {
      |(Y0)| Y_0 & |(X0)| \Pi_i X_0     \\
      |(Y1)| Y_1 & |(i)|  \Pi_i i^* Y_1 \\
    };

    \draw[->] (Y0) to (X0);
    \draw[->] (Y1) to (i);
    \draw[->] (Y0) to (Y1);
    \draw[->] (X0) to (i);
    \pb{Y0}{i};
  \end{tikzeq}
  where the right map is the image of $X_0 \to X_1$ under $\Pi_i$ (identifying $X_1$ with $i^* Y_1$ over $A$) and the bottom map is a unit map of $i^* \adj \Pi_i$.
  Since $\Pi_i$ is a reflective embedding, the bottom map in~\eqref{equivalence-extension:def_Y0} becomes invertible after pulling back along $i \from A \to B$:
  \begin{tikzeq}{equivalence-extension:Y0_fits}
    \matrix[diagram,column sep={6em,between origins}]
    {
      |(Y0)| i^* Y_0 & |(X0)| i^* \Pi_i X_0                            \\
      |(Y1)| i^* Y_1 & |(i)|  i^* \Pi_i i^* Y_1 \rlap{\text{.}} \\
    };

    \draw[->] (Y0) to node[above] {$\simeq$} (X0);
    \draw[->] (Y1) to node[above] {$\simeq$} (i);
    \draw[->] (Y0) to (Y1);
    \draw[->] (X0) to (i);
    \pb{Y0}{i};
  \end{tikzeq}
  Furthermore, the right map above is isomorphic to $X_0 \to i^* Y_1$, \ie $X_0 \to X_1$.
  This shows that $Y_0$ over $Y_1$ pulls back along $i$ to $X_0$ over $X_1$ in~\eqref{equivalence-extension:0} as required.

  For the rest of the proof, we work in the slice over $B$ unless stated otherwise.
  It remains to show that $Y_0$ is fibrant and $Y_0 \to Y_1$ is a fiberwise homotopy equivalence.
  For this, we consider the mapping path space factorisation of $Y_0 \to Y_1$:
  \begin{tikzeq}{equivalence-extension:mapping-path-space}
    \matrix[diagram,column sep={4em}]
    {
      |(Y0)| Y_0 & |(M)| Y_0 \pull_{Y_1} \simp{1} \cotensor Y_1 & |(Y1)| Y_1 \rlap{\text{.}} \\
    };

    \draw[->] (Y0) to (M);
    \draw[->] (M) to (Y1);
  \end{tikzeq}
  Using the reverse direction of \cref{mapping-path-space-equivalence}, it remains to show that its second factor is a trivial fibration.

  By pullback pasting with~\eqref{equivalence-extension:def_Y0}, the middle object in~\eqref{equivalence-extension:mapping-path-space} is isomorphic to $\Pi_i X_0 \pull_{\Pi_i i^* Y_1} \simp{1} \cotensor Y_1$.
  Under this isomorphism, the right map in~\eqref{equivalence-extension:mapping-path-space} corresponds to the top map in the below diagram, induced by the right endpoint projection:
  \begin{tikzeq}{equivalence-extension:factorisation}
    \matrix[diagram,column sep={8em,between origins}]
    {
      |(M)| \Pi_i X_0 \pull_{\Pi_i i^* Y_1} \simp{1} \cotensor Y_1
       &  &
      |(Y1)| Y_1 \rlap{\text{.}}
      \\&
      |(N)| \Pi_i X_0 \pull_{\Pi_i i^* Y_1} \Pi_i i^* (\simp{1} \cotensor Y_1) \pull_{\Pi_i i^* Y_1} Y_1
      \rlap{\text{.}} \\
    };

    \draw[->] (M) to (Y1);
    \draw[->] (M) to (N);
    \draw[->] (N) to (Y1);
  \end{tikzeq}
  To show that it is a trivial fibration, we produce a factorisation into two trivial fibrations as indicated above.

  Associating the pullbacks in the middle object of~\eqref{equivalence-extension:factorisation} to the right, the first factor in~\eqref{equivalence-extension:factorisation} is the pullback along $\Pi_i X_0 \to \Pi_i i^* Y_1$ of the map
  \begin{tikzeq*}{equivalence-extension:first-factor}
    \matrix[diagram,column sep=4em]
    {
      |(M')| \simp{1} \cotensor Y_1 & |(N')| \Pi_i i^* (\simp{1} \cotensor Y_1) \pull_{\Pi_i i^* Y_1} Y_1 \rlap{\text{.}} \\
    };

    \draw[->] (M') to (N');
  \end{tikzeq*}
  It suffices to show this is a trivial fibration.
  Rewriting using $\Pi_i i^* \simeq \exp_B(A, -)$, this is the pullback exponential (over $B$) of the right endpoint projection $\simp{1} \cotensor Y_1 \to Y_1$ with $A \to B$.
  Since $Y_1$ is fibrant, the right endpoint projection $\simp{1} \cotensor Y_1 \to Y_1$ is a trivial fibration by \cref{ano-fib-pullback-hom} of \cref{second-pushout-product}.
  Since $A \to B$ is a cofibration, the pullback exponential is a trivial fibration by \cref{cof-tcof-pullback-hom} of \cref{first-pushout-product}.

  Associating the pullbacks in the middle object of~\eqref{equivalence-extension:factorisation} to the left, the second factor in~\eqref{equivalence-extension:factorisation} is the pullback along $Y_1 \to \Pi_i i^* Y_1$ of the following map, again induced by the right endpoint projection:
  \begin{tikzeq*}{equivalence-extension:second-factor}
    \matrix[diagram,column sep=4em]
    {
      |(N'')| \Pi_i X_0 \pull_{\Pi_i i^* Y_1} \Pi_i i^* (\simp{1} \cotensor Y_1) & |(Y1'')| \Pi_i i^* Y_1 \rlap{\text{.}} \\
    };

    \draw[->] (N'') to (Y1'');
  \end{tikzeq*}
  Since the right adjoint $\Pi_i$ preserves pullbacks, this is the image of the map $X_0 \pull_{i^* Y_1} i^* (\simp{1} \cotensor Y_1) \to i^* Y_1$ under $\Pi_i$.
  And since $\Pi_i$ preserves trivial fibrations, it suffices to show this is a trivial fibration.
  Since $i^*$ preserves cotensors and using the identification $X_1 \simeq i^* Y_1$, this is $X_0 \pull_{X_1} \simp{1} \cotensor X_1 \to X_1$, the second factor in the mapping path space factorisation of $X_0 \to X_1$.
  It is a trivial fibration using the forward direction of \cref{mapping-path-space-equivalence}.
\end{proof}

An important example of a fiberwise homotopy equivalence arises in cofibrant simplicial sets from a fibration with target $A \times \simp{1}$.
Note that $A \times \simp{1}$ is cofibrant if $A$ is cofibrant by \cref{simplex-cofibrant,cofibrant-finite-limits}.
For the below statement, recall that $(A \times \iota_k)^*$ denotes pullback along the inclusion $\iota_k \from A \to A \times \simp{1}$ for $k = 0, 1$.

\begin{lemma} \label{weak-equivalence-from-fibration-over-line}
  Let $p \from X \fto A \times \simp{1}$ be a fibration with $A$ and $X$ cofibrant.
  There is a fiberwise homotopy equivalence over $A$ (in either direction) between $(A \times \iota_0)^* X$ and $(A \times \iota_1)^* X$.
\end{lemma}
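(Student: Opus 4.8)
The plan is to produce mutually inverse fiberwise homotopy equivalences over $A$ between $X_0 := X|(\braces{0}\times A)$ and $X_1 := X|(\braces{1}\times A)$ by routing each through $X$ itself: I will show that the two endpoint inclusions $j_k \from X_k \ito X$ are fiberwise homotopy equivalences over $A$ (where $X$ is regarded as an object over $A$ via $X \fto \simp{1}\times A \to A$) and then compose. First note that $\simp{1}\times A$ is cofibrant by \cref{simplex-cofibrant,cofibrant-finite-limits}, hence so are $X_0$ and $X_1$ as pullbacks of cofibrant objects (again \cref{cofibrant-finite-limits}); moreover each $X_k \to A$ is a Kan fibration, being a pullback of $p$ followed by an isomorphism, so $X_0$ and $X_1$ genuinely lie in $\sSet_\cof \fslice A$.

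To treat $j_0$, let $\bar D \from (\simp{1}\times A)\times\simp{1}\to\simp{1}\times A$ be the simplicial map $((s,a),t)\mapsto(\min(s,t),a)$, which restricts to the identity of $\simp{1}\times A$ at $t = 1$ and to the collapse $c_0$ onto $\braces{0}\times A$ at $t = 0$. The inclusion $X\times\braces{1}\ito X\times\simp{1}$ is the pushout product of $\emptyset \cto X$ with $\braces{1}\cto\simp{1}$, hence a trivial cofibration by \cref{ano-cof-pushout-product} of \cref{second-pushout-product} (using that $X$ is cofibrant). Lifting $\id_X$ against $p$ over the composite $X\times\simp{1}\xrightarrow{p\times\simp{1}}(\simp{1}\times A)\times\simp{1}\xrightarrow{\bar D}\simp{1}\times A$ yields $\Gamma\from X\times\simp{1}\to X$ with $\Gamma\iota_1 = \id_X$ and $p\Gamma = \bar D\circ(p\times\simp{1})$. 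Projecting $p\Gamma$ to $A$ is independent of the $\simp{1}$-coordinate, so $\Gamma$ is a homotopy over $A$. Since $p\Gamma\iota_0 = c_0 p$ factors through $\braces{0}\times A$, the map $\Gamma\iota_0$ factors as $j_0 g_0$ for a unique $g_0\from X\to X_0$, which is automatically over $A$; thus $\Gamma$ exhibits $j_0 g_0 \htp \id_X$ fiberwise over $A$. Finally, if $x$ is a simplex of $X_0$ then the first coordinate of $p(x)$ is constant $0$ and $\min(0,t) = 0$, so $\Gamma$ restricted along $j_0\times\simp{1}$ has image in $X_0$; the resulting map $X_0\times\simp{1}\to X_0$ is a fiberwise homotopy over $A$ from $g_0 j_0$ to $\id_{X_0}$. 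Hence $j_0$ is a fiberwise homotopy equivalence over $A$ with inverse $g_0$, and the symmetric argument, with $\max$ in place of $\min$ and the two endpoints swapped, shows that $j_1$ is a fiberwise homotopy equivalence over $A$ with inverse $g_1\from X\to X_1$.

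To conclude, recall that fiberwise homotopy over $A$ is a congruence on maps over $A$, so the fiberwise homotopy equivalences over $A$ are precisely the isomorphisms of the associated quotient category and, in particular, are closed under composition (this is the argument of \cref{he-closure}). Therefore $g_1 j_0 \from X_0 \to X_1$ and $g_0 j_1 \from X_1 \to X_0$ are mutually inverse fiberwise homotopy equivalences over $A$, which gives the statement in either direction. I do not expect a genuine obstacle here: the only points requiring care are bookkeeping, namely checking that the constructed homotopies are fiberwise over $A$ (which reduces to $\bar D$ being constant in the interval coordinate after projection to $A$) and that $\Gamma$ preserves the subobject $X_0$ (which reduces to the identities $\min(0,t) = 0$ and $\max(1,t) = 1$ in $\simp{1}$), together with recognising the relevant step inclusions as trivial cofibrations via \cref{second-pushout-product}.
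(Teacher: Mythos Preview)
Your argument is correct and takes a genuinely different route from the paper's. The paper builds an auxiliary object $P$ as the pullback of $\exp(\simp{1},X) \to \exp(\simp{1},\simp{1}\times A)$ along the unit $A \to \exp(\simp{1},\simp{1}\times A)$, then produces trivial fibrations $P \to X_0$ and $P \to X_1$ over $A$ via pullback exponentials with the endpoint inclusions and invokes the fibration category of \cref{fibration-category-slice-cofibrant} to turn these into a span of \fhe{}s and compose. Your approach is the more elementary ``connection'' argument: you lift the contractions $\min,\max\from\simp{1}\times\simp{1}\to\simp{1}$ along $p$ to deform $X$ fiberwise onto each endpoint fiber, obtaining explicit homotopy inverses $g_k$ to the inclusions $j_k$ directly, without touching exponentials or trivial fibrations. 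Your method is shorter and yields single-step fiberwise homotopies (not merely zig-zags); the paper's method, by contrast, stays closer to the ambient machinery of \cref{sec:equivalence-extension-property} (exponentials, pullback cotensors, the fibration category), which is convenient there since these same tools feed into the proof of \cref{equivalence-extension}. One small stylistic point: you briefly route through $X$ as an object over $A$, which is not fibrant over $A$ (since $\simp{1}$ is not Kan), so $X$ does not live in $\sSet_\cof\fslice A$; but you never use fibrancy of $X$ over $A$, only the congruence property of fiberwise homotopy, so this causes no trouble.
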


\begin{proof}
  We take the pullback
  \begin{tikzeq*}{fibration-extension-square}
    \matrix[diagram,column sep={8em,between origins}]
    {
      |(P)| P & |(X')| \exp(\simp{1}, X) \\
      |(A)| A & |(A')| \exp(\simp{1}, A \times \simp{1}) \\
    };

    \draw[->]  (P) to (X');
    \draw[->]  (A) to node[above] {$\eta_A$} (A');
    \draw[fib] (P) to (A);
    \draw[fib] (X') to node[right] {$\exp(\simp{1}, p)$} (A');
    \pb{P}{A'};
  \end{tikzeq*}
  where the bottom map is a unit map of the evident adjunction.
  We record that $P$ is cofibrant by \cref{cofibrant-finite-limits,exponential-finite-cofibrant}, and that $P \fto A$ is a fibration by \cref{second-pushout-product}.

  We will argue that there are trivial fibrations from $P$ to $(A \times \iota_0)^* X$ and $(A \times \iota_1)^* X$ over $A$.
  These trivial fibrations are fiberwise homotopy equivalences over $A$ by \cref{fiberwise-acyclic-fibration-trivial-Kan}.
  Reversing and composing them in the fibration category of \cref{fibration-category-slice-cofibrant} as needed gives the claim.

  We only construct the trivial fibration from $P$ to $(A \times \iota_0)^* X$ (the other case is dual).
  By pullback pasting, we can see $(A \times \iota_0)^* X$ as being obtained by two successive pullbacks:
  \begin{tikzeq*}{fibration-extension-square}
    \matrix[diagram,column sep={7em,between origins}]
    {
      |(X0)| (A \times \iota_0)^* X &          & |(X0')| X \times_{A \times \simp{1}} \exp(\simp{1}, A \times \simp{1}) &           & |(X)| X                                                          \\
      |(A)| A                       & |(X0op)| & |(A')| \exp(\simp{1}, A \times \simp{1})                               & |(X0'op)| & |(A'')| A \times \simp{1} \rlap{\text{.}} \\
    };

    \draw[->]  (X0) to (X0');
    \draw[->]  (A) to node[above] {$\eta_A$} (A');
    \draw[->]  (X0') to (X);
    \draw[->]  (A') to node[above] {$\exp(\iota_0, A \times \simp{1})$} (A'');
    \draw[fib] (X0) to (A);
    \draw[fib] (X0') to (A');
    \draw[fib] (X) to node[right] {$p$} (A'');
    \pb{X0}{X0op};
    \pb{X0'}{X0'op};
  \end{tikzeq*}
  Taking the pullback exponential of $p$ with $\iota_0 \from \braces{0} \to \simp{1}$, we obtain
  \[
    \exp(\simp{1}, X) \to X \times_{A \times \simp{1}} \exp(\simp{1}, A \times \simp{1})
  \]
  over $\exp(\simp{1}, X)$.
  This is a trivial fibration by \cref{second-pushout-product}.
  Pulling back along $\eta_A$ yields the desired trivial fibration.
\end{proof}

In cofibrant simplicial sets, we say that a (trivial) fibration $X \to A$ \emph{extends} along a cofibration $A \to B$ if we can find a (trivial) fibration $Y \to B$ that restricts to $X \to A$ along the given cofibration:
\begin{tikzeq}{fibration-extension-square}
  \matrix[diagram]
  {
    |(X)| X & |(Y)| Y                         \\
    |(A)| A & |(B)| B \rlap{\text{.}} \\
  };

  \draw[->,dashed]  (X) to (Y);
  \draw[cof]        (A) to (B);
  \draw[fib]        (X) to (A);
  \draw[fib,dashed] (Y) to (B);
  \pb{X}{B};
\end{tikzeq}
Our ultimate goal in this subsection is to show that fibrations extend along trivial cofibrations.

\begin{corollary}[Trivial fibration extension] \label{trivial-fibration-extension}
  In cofibrant simplicial sets, trivial fibrations extend along cofibrations.
\end{corollary}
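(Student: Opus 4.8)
The plan is to obtain this as a degenerate instance of the equivalence extension property (\cref{equivalence-extension}), namely the instance in which the two fibrations that are \emph{not} to be extended are identity maps. Suppose given a trivial fibration $p \from X \tfto A$ and a cofibration $i \from A \cto B$, all objects cofibrant. First I would record the only facts about $p$ that the argument needs: it is a Kan fibration by \cref{triv-cof-cof}, and, being a trivial fibration between cofibrant simplicial sets, it is shrinkable by \cref{trivial-fibration-shrinkable} of \cref{trivial-to-she}; in particular it is a fiberwise homotopy equivalence over its codomain~$A$.

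Next I would feed \cref{equivalence-extension} the following data: $X_0 = X$ with structure map $p$; $X_1 = A$ with structure map $\id_A$; $Y_1 = B$ with structure map $\id_B$; the map $X_1 \to Y_1$ taken to be $i$; the map $X_0 \to X_1$ taken to be $p \from X \to A$; and the lower square the canonical pullback $A \iso i^* B$. All hypotheses are satisfied: $i$ is a cofibration, $X_0 \to A$ and $Y_1 \to B$ are fibrations (the latter an identity), the lower square is a pullback, the map $X_0 \to X_1$ is a fiberwise homotopy equivalence over $A$ by the observation above, and the objects $X$, $A$, $B$ are cofibrant by assumption (and $X_1 = A$, $Y_1 = B$ are then cofibrant too). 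The proposition outputs a cofibrant $Y_0$, a fibration $Y_0 \fto B$, a fiberwise homotopy equivalence $Y_0 \to Y_1 = B$ over $B$, and a pullback square exhibiting $i^* Y_0$ over $i^* Y_1 = B$ as $X_0$ over $X_1 = A$ — which is precisely the assertion that $Y_0 \to B$ restricts along $i$ to $p \from X \to A$. Finally, to see that $Y_0 \fto B$ is in fact \emph{trivial}: it is a Kan fibration which is a fiberwise homotopy equivalence over $B$, i.e.\ an acyclic Kan fibration in $\sSet_\cof \fslice B$, hence a trivial fibration by \cref{fiberwise-acyclic-fibration-trivial-Kan}.

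Since \cref{equivalence-extension} does the real work, there is no genuine obstacle here; the argument is essentially formal bookkeeping. The only points requiring a little care are matching the output data of the equivalence extension property — the back square being a pullback — with the definition of one fibration extending another, and confirming that the degenerate instantiation with identity structure maps does meet the hypotheses, in particular that $p$ qualifies as a fiberwise homotopy equivalence over $A$ via its shrinkability.
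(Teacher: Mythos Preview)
Your proof is correct and takes essentially the same approach as the paper: instantiate \cref{equivalence-extension} with $X_1 \to A$ and $Y_1 \to B$ identities, and use the fibration category of \cref{fibration-category-slice-cofibrant} (specifically \cref{fiberwise-acyclic-fibration-trivial-Kan}) to pass between trivial fibrations and fibrations that are fiberwise homotopy equivalences over their target. You spell out the bookkeeping in more detail than the paper does, but the argument is the same.
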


\begin{proof}
  This is the special case of \cref{equivalence-extension} where the maps $X_1 \to A$ and $Y_1 \to B$ are identities.
  To see this, we recall that a map $X \to Y$ is a trivial fibration exactly if it is a fibration and fiberwise homotopy equivalence over $Y$.
  This is recorded in \cref{fibration-category-slice-cofibrant} (with $M$ instantiated to $Y$).
\end{proof}

\begin{lemma} \label{fibration-generating-extension}
In cofibrant simplicial sets, fibrations extend along cofibrations with a retraction up to homotopy.
In particular, they extend along cofibrations that are strong homotopy equivalences.
\end{lemma}

\begin{proof}
Let $i \from A \to B$ be a cofibration with $r \from B \to A$ and a homotopy relating $r i$ and $\id_A$ (in either direction).
We will solve an extension problem
\begin{tikzeq}{fibration-generating-extension:0}
\matrix[diagram,column sep={6em,between origins}]
{
  |(X)| X & |(Y)| Y                 \\
  |(A)| A & |(B)| B \rlap{\text{.}} \\
};

\draw[->,dashed]  (X) to                  (Y);
\draw[cof]        (A) to node[below]{$i$} (B);
\draw[fib]        (X) to                  (A);
\draw[fib,dashed] (Y) to                  (B);
\pb{X}{B};
\end{tikzeq}
We have the solid part of the diagram
\begin{tikzeq*}
\matrix[diagram]
{
  |(X0)| X        &                  & |(Y0)| Y &              \\
                  & |(X1)| (r i)^* X &          & |(Y1)| r^* X \\
  |(A)|  A        &                  & |(B)|  B &              \\
};

\draw[->]        (X0) to node[above right] {$\heq$} (X1);
\draw[->,dashed] (Y0) to node[above right] {$\heq$} (Y1);
\draw[->]        (A)  to node[below]       {$i$}    (B);

\draw[fib] (X0) to (A);
\draw[fib] (X1) to (A);

\draw[fib,dashed]        (Y0) to (B);
\draw[fib]               (Y1) to (B);

\draw[->,dashed] (X0) to (Y0);
\draw[->,over]   (X1) to (Y1);
\end{tikzeq*}
where the squares going from left to right are pullbacks.
For the fiberwise homotopy equivalence on the left, we apply \cref{weak-equivalence-from-fibration-over-line} to the pullback of $X$ along the homotopy $\simp{1} \times A \to A$ relating $r i$ and $\id_A$.
We then complete the diagram using equivalence extension, \cref{equivalence-extension}.
\end{proof}

\begin{corollary}[Fibration extension] \label{fibration-extension}
  In cofibrant simplicial sets, fibrations extend along trivial cofibrations.
\end{corollary}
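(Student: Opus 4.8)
The plan is to introduce the class $\mathcal{E}$ of cofibrations $A \cto B$ between cofibrant simplicial sets along which fibrations extend, and to show that $\mathcal{E}$ contains every trivial cofibration. By \cref{triv-cof-cof}, \cref{horn-cofibrant} and \cref{horn-inclusion-is-she} the horn inclusions are cofibrations between cofibrant objects that are strong \he{}s, so $\mathcal{E}$ contains them by \cref{fibration-generating-extension}. By \cref{thm:wfs-via-soa:fib} of \cref{thm:wfs-via-soa} every trivial cofibration is a codomain retract of a relative $J$-cell complex of height $\omega$ --- a countable sequential composite of pushouts of coproducts of horn inclusions --- and, since the complex starts at a cofibrant object and cofibrations are closed under composition, every object occurring in it is again cofibrant. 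Hence it suffices to show that $\mathcal{E}$ is closed under (i) coproducts, (ii) cobase change, (iii) sequential composition of height $\omega$, and (iv) codomain retracts; these closures place the cell complex in $\mathcal{E}$, and (iv) finishes.

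For each of the colimit closures I would proceed the same way: restrict the given fibration over the colimit to the pieces, extend each piece using the hypothesis, and glue the results to a map $\tilde W \to \tilde B$ over the colimit $\tilde B$. One then checks two things. That $\tilde W \to \tilde B$ restricts along the given cofibration to the original fibration is, in each case, an instance of the relevant van Kampen property --- extensivity for coproducts (also using \cref{pullback-coproduct}), adhesivity for cobase change along a monomorphism, exhaustivity for towers of monomorphisms --- applied to the cartesian natural transformation built from the pullback squares expressing that each extended piece restricts to the original piece. That $\tilde W \to \tilde B$ is again a fibration is, for coproducts, \cref{van-kampen-colim-of-structured-fibrations} of \cref{colim-of-structured-fibrations} (its structure-morphism hypothesis is vacuous for a discrete diagram), and for towers it is \cref{omega-colim-of-fibrations}, whose hypotheses hold because the step maps are cofibrations, hence levelwise decidable inclusions, so that \cref{aligning-structured-fibrations} may be used to cohere the chosen fillers. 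For cobase change I would argue directly: writing the pushout as $\tilde B = B \push_A B'$ with $A \cto B$ the relevant coproduct of horn inclusions, $B$ is a decidable subobject of $\tilde B$ whose complement consists of the new cells, and the glued total space decidably decomposes over the glued pieces in the same way; since $\simp{m}$ and $\horn{m,i}$ are connected, any lifting problem for $\tilde W \to \tilde B$ has its bottom $\simp{m}$, and hence its horn, landing entirely inside one of the two complementary pieces, where it is solved by one of the two fibrations being glued, compatibly over $A \cto B$. Closure under codomain retracts (iv) is a routine pullback manipulation: pull the extension over $B$ back along the section $B' \to B$ and apply pullback pasting.

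The step I expect to be the main obstacle is cobase change. Unlike for coproducts and towers, the span computing the pushout has the attaching map as one of its legs, which is in general not a monomorphism; the corresponding naturality square upstairs then has a non-decidable horizontal map, so \cref{aligning-structured-fibrations} does not apply and \cref{colim-of-structured-fibrations} cannot be invoked --- the direct lifting argument using connectivity of the cells and decidability of the complementary subobjects is what takes its place. A pervasive secondary issue in all four cases is bookkeeping of cofibrancy: one repeatedly needs closure of cofibrant objects under finite limits (\cref{cofibrant-finite-limits}), closure of cofibrations under pushout and composition, preservation of cofibrations by pullback along maps with cofibrant domain (\cref{pullback-of-cofibrations}), and that a levelwise decidable inclusion between cofibrant objects has decidable complements in each degree (\cref{cofibration-levelwise-decidable}).
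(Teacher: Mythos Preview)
Your overall strategy matches the paper's: define $\mathcal{E}$, check it contains horn inclusions via \cref{fibration-generating-extension}, and verify closure under coproducts, cobase change, sequential composition, and codomain retracts using \cref{thm:wfs-via-soa}. Coproducts, sequential colimits, and retracts are handled essentially as in the paper (for towers you use \cref{omega-colim-of-fibrations} where the paper uses \cref{van-kampen-colim-of-structured-fibrations} of \cref{colim-of-structured-fibrations}; both are fine).

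The gap is in cobase change. First a slip: in $\tilde B = B \push_A B'$ with $A \cto B$ the coproduct of horn inclusions, it is $B'$, not $B$, that embeds as a decidable sub-simplicial-set; the map $B \to \tilde B$ need not be a monomorphism. More substantively, connectivity of $\simp{n}$ and $\horn{n,k}$ is a red herring. When the bottom $n$-simplex lies in the levelwise complement of $B'$, its faces can still lie in $B'$ --- the complement is not a simplicial subset --- so neither the bottom map nor the horn map ``lands entirely'' in one piece. What does hold is that the bottom map factors uniquely through $B \to \tilde B$ (by Yoneda and the unique preimage in $B \setminus A$), and the horn map then factors through $W_B$ precisely because the square from $W_B \to B$ to $\tilde W \to \tilde B$ is a pullback --- adhesivity again, the same van Kampen property you already invoke for the restriction check. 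So your direct argument can be repaired, but it is van Kampen doing the work in both places, not connectivity. The paper avoids the case split entirely: it first upgrades extension to \emph{structured} extension by applying \cref{aligning-structured-fibrations} to the extension square (whose top map $W_A \cto W_B$ is a cofibration by \cref{pullback-of-cofibrations}, hence levelwise decidable, so the lemma applies); then both legs of the span over $B \leftarrow A \to B'$ are structure morphisms --- the pullback leg canonically, the extension leg by construction --- and \cref{van-kampen-colim-of-structured-fibrations} of \cref{colim-of-structured-fibrations} handles the pushout directly.
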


\begin{proof}
  We work exclusively in cofibrant simplicial sets.
  Consider the class of cofibrations that fibrations extend along.
  By \cref{fibration-generating-extension}, this includes the horn inclusions (they are cofibrations between cofibrant objects by \cref{triv-cof-cof,horn-cofibrant} and strong homotopy equivalences by \cref{horn-inclusion-is-she}).
  Using the presentation of trivial cofibrations given by \cref{thm:wfs-via-soa:fib} of \cref{thm:wfs-via-soa}, it remains to show this class is closed under coproducts, pushouts, sequential colimits, and codomain retracts.
  In the case of a codomain retract $A \to B'$ of $A \to B$, we simply extend along $A \to B$ and then pull back along $B' \to B$ (cofibrancy is assured by \cref{cofibrant-finite-limits}); by pullback pasting, this gives the required extension along $A \to B'$.
  In the other cases, all involved colimits in simplicial sets are van Kampen by \cref{sSet-van-Kampen} (since cofibrations are monomorphisms).

  For a coproduct of cofibrations $A_s \to B_s$, consider a fibration $X \to \bigcoprod_s A_s$.
  We pull it back to each $A_s$, extend separately for each $s$ along $A_s \to B_s$, and then take the coproduct of the resulting fibrations to get a map $Y \to \bigcoprod_s B_s$.
  Since $\bigcoprod_s B_s$ is van Kampen, this map pulls back to the individual fibrations over $B_s$ for each $s$.
  So it is a fibration by \cref{van-kampen-colim-of-structured-fibrations} of \cref{colim-of-structured-fibrations}.
  Using that $\bigcoprod_s A_s$ is van Kampen, the map also pulls back to $X \to \bigcoprod_s A_s$.

  In the remaining two cases, it is convenient to improve fibration extension to \emph{structured fibration extension}.
  Recall that a fibration is a map together with a choice of lifts against horn inclusions; recall also the notion of structure morphism of fibrations from \cref{sec:ssets}.
  In any fibration extension square~\eqref{fibration-extension-square}, also the upper horizontal map is a cofibration by \cref{pullback-of-cofibrations}, hence the horizontal maps are levelwise decidable.
  Using \cref{aligning-structured-fibrations}, we can choose lifts for the right map such that the square becomes a structure morphism of fibrations.
  We note finally that the colimits in question are all van Kampen (since cofibrations are monomorphisms).

  For a pushout $A' \cto B'$ of a cofibration $A \cto B$, consider a fibration $X' \fto A'$.
  We first pull it back to a fibration $X \fto A$ (using \cref{cofibrant-finite-limits} for cofibrancy of $X$).
  Note that the canonically induced choice of lifts for $X \fto A$ makes this pullback square into a structure morphism of fibrations.
  We then use structured fibration extension along $A \to B$ to produce a fibration $Y \fto B$.
  Finally, we take the pushout of all three fibrations.
  Since the given pushout is van Kampen, this map pulls back to the individual fibrations.
  So it is a fibration by \cref{van-kampen-colim-of-structured-fibrations} of \cref{colim-of-structured-fibrations}.

  For a sequential colimit of cofibrations $A_0 \cto A_1 \cto \ldots$, consider a fibration with target $A_0$.
  We recursively use structured fibration extension to produce a fibration with target $A_k$ for each $k$ and then take the sequential colimit of the resulting fibrations.
  Since the given sequential colimit is van Kampen, this map pulls back to the fibration over $A_k$ for each $k$.
  So it is a fibration by \cref{van-kampen-colim-of-structured-fibrations} of \cref{colim-of-structured-fibrations}.

  Note that the domain of the extended fibrations in these three cases is cofibrant by construction.
  This uses that the top map in any fibration extension square~\eqref{fibration-extension-square} is a cofibration.
\end{proof}

\subsection{Conclusion of the second proof}
\label{sec:model-structure-second-proof}

This subsection is entirely formal.
It is divided into two parts.
In the first part, ending with \cref{comparison-sattler-model-structure}, we develop the properties of the model structure on \emph{cofibrant simplicial sets}.
We define its weak equivalences from homotopy equivalences via fibrant replacement.
That trivial (co)fibrations and acyclic (co)fibrations coincide is deduced from the Frobenius and (trivial) fibration extension properties, all in cofibrant simplicial sets.

In the second part, we extend to the desired model structure on \emph{simplicial sets} (\cref{model-structure}).
We define its weak equivalences from the above ones via cofibrant replacement.
That trivial (co)fibrations and acyclic (co)fibrations coincide reduces to the cofibrant case using the restricted Frobenius property and a cancellation property of trivial fibrations.

\medskip

We define the notion of \emph{strong fibrant replacement} dual to the notion of strong cofibrant replacement of \cref{whes}.
A strong fibrant replacement of a simplicial set $X$ is a Kan complex $\bar X$ equipped with a trivial cofibration $X \to \bar X$.
Note that $\bar X$ is cofibrant if $X$ is cofibrant.
A strong fibrant replacement of a map $f \from X \to Y$ is a map $\bar f \from \bar X \to \bar Y$ equipped with a square
\begin{tikzeq*}
\matrix[diagram]
{
  |(X)| X & |(bX)| \bar X \\
  |(Y)| Y & |(bY)| \bar Y \\
};

\draw[->] (X)  to node[left]  {$f$}      (Y);
\draw[->] (bX) to node[right] {$\bar f$} (bY);

\draw[ano] (X) to (bX);
\draw[ano] (Y) to (bY);
\end{tikzeq*}
where $\bar X$ and $\bar Y$ are Kan complexes and both horizontal maps are trivial cofibrations.
Strong fibrant replacements can be constructed using \cref{thm:wfs-via-soa}.

Using strong fibrant replacement, we may lift the notion of homotopy equivalence for cofibrant Kan complexes to a notion of weak equivalence in cofibrant simplicial sets.
We say that a map in cofibrant simplicial sets is a \emph{weak homotopy equivalence} if it has a strong fibrant replacement that is a homotopy equivalence.

\begin{lemma} \label{whe-basics-cof}
In cofibrant simplicial sets:
\begin{enumerate}
\item \label{whe-basics-cof:independent}
the notion of weak homotopy equivalence does not depend on the choice of strong fibrant replacements,
\item \label{whe-basics-cof:compatible}
a map between Kan complexes is a weak homotopy equivalence if and only if it is a homotopy equivalence,
\item \label{whe-basics-cof:2-out-of-6}
weak homotopy equivalences satisfy 2-out-of-6.
\end{enumerate}
\end{lemma}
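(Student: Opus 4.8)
The plan is to mirror, in the cofibrant fragment and with strong fibrant replacements in place of strong cofibrant replacements, the pattern of \cref{whe-Kan-independent} and \cref{whe-Kan-properties}. Only \cref{whe-basics-cof:independent} carries real content; \cref{whe-basics-cof:compatible} and \cref{whe-basics-cof:2-out-of-6} will follow formally.

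For \cref{whe-basics-cof:independent}, let $f \from X \to Y$ be a map of cofibrant simplicial sets with two strong fibrant replacements $\bar f \from \bar X \to \bar Y$ and $\bar f' \from \bar X' \to \bar Y'$. First I would form the pushouts $P_X = \bar X \push_X \bar X'$ and $P_Y = \bar Y \push_Y \bar Y'$ together with the induced map $P_X \to P_Y$, and then pick a strong fibrant replacement $\bar g \from Q_X \to Q_Y$ of the latter using \cref{thm:wfs-via-soa}. Everything here stays cofibrant: the maps $\bar X \to P_X$ and $\bar X' \to P_X$ are pushouts of trivial cofibrations (which are cofibrations by \cref{triv-cof-cof}), so $P_X$ is cofibrant by \cref{cofibration-category-cofibrant}, and $Q_X$ is cofibrant since strong fibrant replacement preserves cofibrancy; likewise over $Y$. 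The composites $\bar X \to P_X \to Q_X$ and $\bar X' \to P_X \to Q_X$ are composites of pushouts of trivial cofibrations with trivial cofibrations, hence trivial cofibrations between cofibrant Kan complexes, so they are homotopy equivalences by \cref{trivial-cofibration-sdr} of \cref{trivial-to-she}; the same holds over $Y$. By functoriality of pushouts these maps fit into commuting squares relating $\bar f$ to $\bar g$ and $\bar f'$ to $\bar g$, and two applications of 2-out-of-3 for homotopy equivalences (a consequence of \cref{he-closure}) show that $\bar f$ is a homotopy equivalence if and only if $\bar g$ is, if and only if $\bar f'$ is.

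Then \cref{whe-basics-cof:compatible} is immediate: when $X$ and $Y$ are Kan complexes, $\id_X$ and $\id_Y$ exhibit $f$ as a strong fibrant replacement of itself, so a homotopy equivalence is a weak homotopy equivalence, and the converse is \cref{whe-basics-cof:independent} applied to this identity replacement. For \cref{whe-basics-cof:2-out-of-6}, \cref{thm:wfs-via-soa} yields a functorial strong fibrant replacement $R$ (factor the map to $\simp{0}$ as a trivial cofibration followed by a fibration), and by \cref{whe-basics-cof:independent} a map $f$ is a weak homotopy equivalence if and only if $Rf$ is a homotopy equivalence. Thus $R$ creates weak homotopy equivalences from homotopy equivalences between cofibrant Kan complexes, which satisfy 2-out-of-6 by \cref{he-closure:2-out-of-6} of \cref{he-closure}; hence so do weak homotopy equivalences. (As in the remark following \cref{whe-Kan-properties}, one could instead invoke Reedy fibrant replacements to avoid functoriality.)

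The main obstacle is the gluing argument for \cref{whe-basics-cof:independent}: one must check carefully that all the auxiliary objects are cofibrant and that the connecting maps are trivial cofibrations with fibrant domain and codomain, so that \cref{trivial-cofibration-sdr} of \cref{trivial-to-she} applies; once this is in place, the rest is routine bookkeeping with 2-out-of-3.
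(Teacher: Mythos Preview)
Your proof is correct and is exactly the dualization the paper has in mind: its own proof simply points to the dual of \cref{whe-Kan-independent} (pushouts in place of pullbacks, \cref{trivial-cofibration-sdr} in place of \cref{trivial-fibration-shrinkable}) and to the dual of \cref{whe-Kan-2-out-of-6} of \cref{whe-Kan-properties} via functorial fibrant replacement. One minor point: the appeal to \cref{cofibration-category-cofibrant} is both unnecessary (closure of cofibrations under pushout already gives cofibrancy of $P_X$) and out of place, since that result lives in \cref{sec:first-proof}, which \cref{sec:second-proof} is meant to be independent of.
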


\begin{proof}
\Cref{whe-basics-cof:independent} holds by the dual of the argument of \cref{whe-Kan-independent} (considering strong fibrant replacement instead of strong cofibrant replacement).
This uses that trivial cofibrations between cofibrant Kan complexes are homotopy equivalences by \cref{trivial-cofibration-sdr} of \cref{trivial-fibration-whe-cof}.
With this, we prove the remaining parts.
For \cref{whe-basics-cof:compatible}, we take the map itself as its strong cofibrant replacement.
For \cref{whe-basics-cof:2-out-of-6}, functorial fibrant replacement (via \cref{thm:wfs-via-soa}) creates weak homotopy equivalences from homotopy equivalences between cofibrant Kan complexes.
The latter satisfy 2-out-of-6 by \cref{he-closure:2-out-of-6} of \cref{he-closure}, hence so do the former.
\end{proof}

\begin{lemma} \label{trivial-fibration-whe-cof}
Let $f \from X \to Y$ be a fibration in cofibrant simplicial sets.
Then $f$ is a trivial fibration if and only if it is a weak homotopy equivalence.
\end{lemma}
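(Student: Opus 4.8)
The plan is to prove both implications by replacing $f$ with a carefully chosen strong fibrant replacement whose total map is itself either a fibration or a homotopy equivalence; the tools for this are the two extension properties of \cref{sec:equivalence-extension-property} together with the restricted Frobenius property. In both directions I would first fix a strong fibrant replacement $b \from Y \ato \bar Y$ of the codomain (possible by \cref{thm:wfs-via-soa}), so that $\bar Y$ is a cofibrant Kan complex and $b$ is a trivial cofibration, in particular a cofibration.

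For the forward implication, assume $f$ is a trivial fibration. Applying \cref{trivial-fibration-extension} along $b$ yields a trivial fibration $\bar f \from \bar X \to \bar Y$ with $\bar X$ cofibrant, together with a pullback square identifying $f \from X \to Y$ with the pullback of $\bar f$ along $b$. Then $\bar X$ is a Kan complex, as $\bar f$ is a Kan fibration into the Kan complex $\bar Y$, and the induced map $a \from X \to \bar X$ is the pullback of the trivial cofibration $b$ along the fibration $\bar f$, whose domain is cofibrant; hence $a$ is a trivial cofibration by \cref{frobenius}. Thus $(\bar f; a, b)$ is a strong fibrant replacement of $f$. Since $\bar f$ is a trivial fibration between cofibrant objects, it is shrinkable, in particular a homotopy equivalence, by \cref{trivial-fibration-shrinkable} of \cref{trivial-to-she}; so $f$ is a weak homotopy equivalence by definition.

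For the reverse implication, assume $f$ is a fibration and a weak homotopy equivalence. Since $b$ is a trivial cofibration, \cref{fibration-extension} yields a Kan fibration $\bar f \from \bar X \to \bar Y$ with $\bar X$ cofibrant, hence a Kan complex, together with a pullback square identifying $f$ with the pullback of $\bar f$ along $b$. As before, \cref{frobenius} makes the induced map $a \from X \to \bar X$ a trivial cofibration, so $(\bar f; a, b)$ is a strong fibrant replacement of $f$; by \cref{whe-basics-cof:independent} of \cref{whe-basics-cof}, the map $\bar f$ is then a homotopy equivalence. Being a Kan fibration between cofibrant Kan complexes that is a homotopy equivalence, $\bar f$ is shrinkable by \cref{acyclic-fibration-shrinkable} (using the observation recorded after that lemma that its proof needs only a homotopy equivalence rather than a fiberwise one) and hence a trivial fibration by \cref{shrinkable-fibration-trivial}. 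Finally $f$, being a pullback of $\bar f$ along $b$, is a trivial fibration, since trivial fibrations are stable under pullback.

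I expect the main obstacle to be the realization that an arbitrary strong fibrant replacement of $f$ does not suffice: the forward direction needs the replacement's total map to remain a homotopy equivalence, while the reverse direction needs it to remain a fibration, so that the already-established Kan-complex case can be invoked. Producing replacements with these properties is exactly the purpose of the trivial fibration extension property (\cref{trivial-fibration-extension}) and the fibration extension property (\cref{fibration-extension}), with the restricted Frobenius property (\cref{frobenius}) supplying the compatibility needed to see that the comparison maps of the replacement square are still trivial cofibrations, even though the domain of $\bar f$ need not be fibrant.
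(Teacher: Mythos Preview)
Your proof is correct and follows essentially the same approach as the paper: in each direction you take a strong fibrant replacement of $Y$, use the appropriate extension property (\cref{trivial-fibration-extension} or \cref{fibration-extension}) to produce a pullback square, apply the restricted Frobenius property (\cref{frobenius}) to see that $X \to \bar X$ is a trivial cofibration, and then reduce to the already-known case of cofibrant Kan complexes. The only cosmetic difference is that in the reverse direction the paper cites \cref{fiberwise-acyclic-fibration-trivial-Kan} directly (with $M = \simp{0}$), whereas you unpack it into \cref{acyclic-fibration-shrinkable} plus \cref{shrinkable-fibration-trivial}; these are equivalent. One small inaccuracy in your closing paragraph: the domain $\bar X$ of $\bar f$ \emph{is} fibrant in both cases (it fibres over the Kan complex $\bar Y$); the relevant hypothesis for \cref{frobenius} is that $\bar X$ be \emph{cofibrant}, which you correctly noted.
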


\begin{proof}
Assume first that $f \from X \fto Y$ is a trivial fibration.
We take a strong fibrant replacement $Y \ato \bar{Y}$ and extend the given trivial fibration along it using \cref{trivial-fibration-extension}, obtaining a pullback square
\begin{tikzeq*}
\matrix[diagram]
{
  |(X)| X & |(bX)| \bar X                 \\
  |(Y)| Y & |(bY)| \bar Y \rlap{\text{.}} \\
};

\draw[->]   (X)  to (bX);
\draw[ano]  (Y)  to (bY);
\draw[tfib] (X)  to (Y);
\draw[tfib]   (bX) to (bY);
\pb{X}{bY};
\end{tikzeq*}
By the Frobenius property in cofibrant simplicial sets (\cref{frobenius}), $X \to \bar{X}$ is a trivial cofibration, so $X \to \bar{X}$ is a strong fibrant replacement.
As a trivial fibration between cofibrant objects, $\bar{X} \to \bar{Y}$ is a homotopy equivalence by \cref{trivial-fibration-shrinkable} of \cref{trivial-to-she}.
Thus, $X \to Y$ is a weak homotopy equivalence.

Conversely, assume that $f \from X \to Y$ is a weak homotopy equivalence.
We extend $f$ along $Y \to \bar{Y}$ using \cref{fibration-extension}, obtaining a pullback square
\begin{tikzeq*}
\matrix[diagram]
{
  |(X)| X & |(bX)| \bar X                  \\
  |(Y)| Y & |(bY)| \bar{Y} \rlap{\text{.}} \\
};

\draw[->,dashed]  (X)  to (bX);
\draw[ano]        (Y)  to (bY);
\draw[fib]        (X)  to (Y);
\draw[fib,dashed] (bX) to (bY);
\pb{X}{bY};
\end{tikzeq*}
By \cref{frobenius}, $X \to \bar{X}$ is a trivial cofibration.
This makes the above square a strong fibrant replacement.
So $\bar{X} \to \bar{Y}$ is a homotopy equivalence by \cref{whe-basics-cof:independent} of \cref{whe-basics-cof}.
Since it is also a fibration between cofibrant Kan complexes, it is a trivial fibration by \cref{fiberwise-acyclic-fibration-trivial-Kan}.
As its pullback, so is $f$.
\end{proof}

\begin{lemma} \label{trivial-cofibration-whe-cof}
Let $f \from A \cto B$ be a cofibration in cofibrant simplicial sets.
Then $f$ is a trivial cofibration if and only if it is a weak homotopy equivalence.
\end{lemma}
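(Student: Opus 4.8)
The plan is to prove the two implications separately, using \cref{trivial-fibration-whe-cof} as a black box for the substantive direction. Throughout I will use that trivial cofibrations are closed under composition and retracts (being defined by a left lifting property), and that a cofibration out of a cofibrant object has cofibrant codomain, so that all intermediate objects stay within cofibrant simplicial sets.

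For the implication ``trivial cofibration $\Rightarrow$ weak homotopy equivalence'', I would choose a strong fibrant replacement $\beta \from B \ato \bar B$, with $\bar B$ a (cofibrant) Kan complex. Then $\beta f \from A \ato \bar B$ is again a trivial cofibration, and the evident commuting square with horizontal maps $\beta f$ and $\beta$, left-hand vertical map $f$, and right-hand vertical map $\id_{\bar B}$ is a strong fibrant replacement of $f$. Since $\id_{\bar B}$ is a homotopy equivalence, this directly witnesses $f$ as a weak homotopy equivalence; note that no appeal to independence of the choice (\cref{whe-basics-cof:independent} of \cref{whe-basics-cof}) is needed here, as we have exhibited one good replacement.

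For the converse, given a cofibration $f \from A \cto B$ between cofibrant objects that is a weak homotopy equivalence, I would first factor it, via \cref{thm:wfs-via-soa:fib} of \cref{thm:wfs-via-soa}, as a trivial cofibration $j \from A \ato C$ followed by a fibration $p \from C \fto B$, with $C$ cofibrant. By the first implication $j$ is a weak homotopy equivalence, hence so is $p$ by 2-out-of-6 for weak homotopy equivalences (\cref{whe-basics-cof:2-out-of-6} of \cref{whe-basics-cof}, which, together with the fact that identities are weak homotopy equivalences, supplies the required instance of 2-out-of-3). Thus $p$ is a fibration between cofibrant objects that is a weak homotopy equivalence, so it is a trivial fibration by \cref{trivial-fibration-whe-cof}. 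A diagonal filler in the lifting problem of the cofibration $f$ against the trivial fibration $p$ determined by $j$ on top and $\id_B$ on the bottom then yields $r \from B \to C$ with $r f = j$ and $p r = \id_B$, exhibiting $f$ as a retract of $j$; hence $f$ is a trivial cofibration by closure under retracts.

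I do not anticipate a genuine obstacle: the argument is entirely formal, essentially the dual packaging of the proof of \cref{trivial-fibration-whe-cof}. The only point requiring care is keeping the intermediate objects $\bar B$ and $C$ cofibrant, which is what allows \cref{trivial-fibration-whe-cof} and the fibration category of \cref{fibration-category-slice-cofibrant} to be invoked; this is automatic since cofibrations preserve cofibrancy of codomains when the domain is cofibrant.
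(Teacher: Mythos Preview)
Your proof is correct and follows essentially the same route as the paper: the forward direction uses that $\id_{\bar B}$ is a strong fibrant replacement of $f$, and the converse is the retract argument via the factorisation $f = p j$ together with \cref{trivial-fibration-whe-cof}. Your write-up is slightly more explicit about cofibrancy of intermediate objects and about deriving 2-out-of-3 from 2-out-of-6, but the argument is the same.
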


\begin{proof}
Assume that $f$ is a trivial cofibration.
Take a strong fibrant replacement $B \ato \bar{B}$.
The identity on $\bar{B}$ is a strong fibrant replacement of $f$, which is thus a weak homotopy equivalence.

The converse direction is a formal consequence of the forward direction and \cref{trivial-fibration-whe-cof}.
In detail, factor $f$ as a trivial cofibration $j$ followed by a fibration $p$.
Assuming $f$ is a weak homotopy equivalence, so is $p$ by 2-out-of-3.
By \cref{trivial-fibration-whe-cof}, $p$ is a trivial fibration.
Since $f$ lifts against its second factor $p$, it is a retract of its first factor $j$, a trivial cofibration.
\end{proof}

The above statements make weak homotopy equivalences and the two \wfs{}s under consideration a model structure on cofibrant simplicial sets.

\begin{remark} \label{comparison-sattler-model-structure}
The approach of~\cite{Sattler} (in the setting of cofibrant simplicial sets) uses a different definition of weak homotopy equivalences: those maps factoring as a trivial cofibration followed by a trivial fibration.
Then acylic (co)fibrations coincide with trivial (co)fibrations for formal reasons (proved here as \cref{trivial-cofibration-whe-cof,trivial-fibration-whe-cof}) and the goal shifts to showing that weak homotopy equivalences satisfy 2-out-of-3 (holding here essentially by construction).
However, this is only a superficial difference in setup: the overall work ends up being the same.
We have chosen our current definitions to achieve greater similarity with the below extension to arbitrary simplicial sets (via strong cofibrant replacement) and the setup in \cref{whes}.
\end{remark}

\medskip

We now extend our notion of weak homotopy equivalence to arbitrary simplicial sets, not necessarily cofibrant.
Recall the notion of \emph{strong cofibrant replacement} of \cref{whes}, dual to that of strong fibrant replacement.
We say that a map in simplicial sets is a \emph{weak homotopy equivalence} if it has a strong cofibrant replacement that is a weak homotopy equivalence in cofibrant simplicial sets.

\begin{lemma} \label{whe-basics}
In simplicial sets:
\begin{enumerate}
\item \label{whe-basics:independent}
the notion of weak homotopy equivalence does not depend on the choice of strong cofibrant replacements,
\item \label{whe-basics:compatible}
a map between cofibrant simplicial sets is a weak homotopy equivalence (in the above sense) if and only if it is a weak homotopy equivalence in cofibrant simplicial sets,
\item \label{whe-basics:2-out-of-6}
weak homotopy equivalences satisfy 2-out-of-6.
\end{enumerate}
\end{lemma}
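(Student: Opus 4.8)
The plan is to mimic the proof of \cref{whe-basics-cof} almost verbatim, replacing strong fibrant replacements by strong cofibrant replacements and the role of ``cofibrant Kan complexes'' by ``cofibrant simplicial sets''. The essential point is that weak homotopy equivalences between cofibrant simplicial sets (in the sense of \cref{sec:model-structure-second-proof}) form a class satisfying 2-out-of-6 by \cref{whe-basics-cof:2-out-of-6} of \cref{whe-basics-cof}, and strong cofibrant replacements exist functorially, or at least behave well under the relevant diagram shapes, by \cref{thm:wfs-via-soa}.

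First, for \cref{whe-basics:independent}, I would run the dual of the argument of \cref{whe-Kan-independent}: given two strong cofibrant replacements $\tilde f$ and $\tilde f'$ of $f \from X \to Y$, form a common refinement by taking pullbacks of the trivial fibrations $\tilde X \tfto X$ and $\tilde X' \tfto X$ over $X$ (and likewise for $Y$), and then a further strong cofibrant replacement of that pullback. All comparison maps between the various cofibrant replacements of $X$ (resp.\ $Y$) are then trivial fibrations between cofibrant objects, hence weak homotopy equivalences in cofibrant simplicial sets by \cref{trivial-fibration-whe-cof} (or directly by \cref{trivial-fibration-shrinkable} of \cref{trivial-to-she} together with \cref{whe-basics-cof:compatible} of \cref{whe-basics-cof}). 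Applying 2-out-of-6 for weak homotopy equivalences in cofibrant simplicial sets (\cref{whe-basics-cof:2-out-of-6}) to the resulting commuting diagram shows that $\tilde f$ is a weak homotopy equivalence iff $\tilde f'$ is.

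For \cref{whe-basics:compatible}, if $X$ and $Y$ are already cofibrant, then $f$ itself — equipped with identity trivial fibrations $X \to X$ and $Y \to Y$ — is a strong cofibrant replacement of $f$, so by \cref{whe-basics:independent} the new notion of weak homotopy equivalence for $f$ agrees with the cofibrant one. For \cref{whe-basics:2-out-of-6}, I would use that, by functorial cofibrant replacement from \cref{thm:wfs-via-soa} (or Reedy cofibrant replacements over $[3]$, as in the \cref{rem:acyclic-vs-trivial}-style remark following \cref{whe-Kan-properties}), weak homotopy equivalences in all simplicial sets are created by a functor landing in cofibrant simplicial sets; since the target class satisfies 2-out-of-6 by \cref{whe-basics-cof:2-out-of-6}, so does the source class. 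The only mild obstacle is bookkeeping: one must check that the comparison maps in the refinement diagram for \cref{whe-basics:independent} are genuinely trivial fibrations between cofibrant objects (which follows since pullbacks of trivial fibrations are trivial fibrations, and cofibrant objects are closed under the relevant finite limits by \cref{cofibrant-finite-limits}), and that the functorial cofibrant replacement used for \cref{whe-basics:2-out-of-6} sends the diagram shapes in question to diagrams of cofibrant objects — both routine given the machinery already developed.
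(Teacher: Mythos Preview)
Your proposal is correct and matches the paper's approach essentially verbatim: the paper simply says the proof is dual to that of \cref{whe-basics-cof}, invoking the argument of \cref{whe-Kan-independent} for part~\ref{whe-basics:independent} (with \cref{trivial-fibration-whe-cof} supplying that trivial fibrations between cofibrant objects are weak equivalences), which is exactly the pullback-and-refine construction you describe. Two small quibbles: first, what you call ``the dual of the argument of \cref{whe-Kan-independent}'' is in fact the argument itself (pullbacks of trivial fibrations followed by a further cofibrant replacement), not its dual---it was \cref{whe-basics-cof} that used the dual; second, your appeal to \cref{cofibrant-finite-limits} is misplaced, since the intermediate pullback $\tilde X \times_X \tilde X'$ is over the possibly non-cofibrant $X$ and need not be cofibrant---but this is harmless, as you correctly take a further strong cofibrant replacement anyway, and the composite maps $\tilde X'' \to \tilde X$, $\tilde X'' \to \tilde X'$ are still trivial fibrations between cofibrant objects.
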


\begin{proof}
This is dual to \cref{whe-basics-cof}.
\Cref{whe-basics-cof:independent} holds by the argument of \cref{whe-Kan-independent} (replace Kan complexes by arbitrary simplicial sets, \ref{whe-cof-Kan-def} by weak homotopy equivalence in cofibrant simplicial sets, and \ref{whe-Kan-def} by weak homotopy equivalences).
This uses that trivial fibrations in cofibrant simplicial sets are weak homotopy equivalences by \cref{trivial-fibration-whe-cof}.
With this, \cref{whe-basics:compatible,whe-basics:2-out-of-6} reduce to the respective parts of \cref{whe-basics-cof}.
\end{proof}

In light of \cref{whe-basics:compatible} of \cref{whe-basics}, the meanings of the term ``weak homotopy equivalence'' in cofibrant simplicial sets and simplicial sets are compatible.
We may thus use the term without qualification.

\begin{lemma} \label{trivial-fibration-whe}
Let $f \from X \to Y$ be a fibration in simplicial sets.
Then $f$ is a trivial fibration if and only if it is a weak homotopy equivalence.
\end{lemma}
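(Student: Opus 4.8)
The plan is to reduce to the cofibrant case, \cref{trivial-fibration-whe-cof}, by exhibiting a strong cofibrant replacement of $f$ that is again a fibration. First I would choose a strong cofibrant replacement $\tilde Y \tfto Y$ and form the pullback $P = \tilde Y \pull_Y X$; then the projection $P \to \tilde Y$ is a fibration, being a pullback of $f$, and $P \to X$ is a trivial fibration, being a pullback of $\tilde Y \tfto Y$. Next I would choose a strong cofibrant replacement $\tilde X \tfto P$ and set $\tilde f \from \tilde X \to \tilde Y$ to be the composite $\tilde X \tfto P \to \tilde Y$. Since trivial fibrations are fibrations (\cref{triv-cof-cof}) and fibrations are closed under composition, $\tilde f$ is a fibration between cofibrant simplicial sets; moreover $\tilde X \to X$ is the trivial fibration $\tilde X \tfto P \tfto X$, and the resulting square is a strong cofibrant replacement of $f$.

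By \cref{whe-basics:independent} of \cref{whe-basics}, $f$ is a weak homotopy equivalence \iff{} $\tilde f$ is a weak homotopy equivalence in cofibrant simplicial sets, and since $\tilde f$ is a fibration there, \cref{trivial-fibration-whe-cof} identifies this with $\tilde f$ being a trivial fibration. It then remains to prove that $f$ is a trivial fibration \iff{} $\tilde f$ is. If $f$ is a trivial fibration, then so is its pullback $P \to \tilde Y$, hence so is the composite $\tilde f = \tilde X \tfto P \to \tilde Y$. Conversely, suppose $\tilde f$ is a trivial fibration. Then $\tilde X \to Y$, being the composite $\tilde X \to \tilde Y \tfto Y$ of trivial fibrations, is a trivial fibration, while $\tilde X \tfto X$ is a trivial fibration by construction; since $\tilde X \to Y$ equals $f$ precomposed with $\tilde X \tfto X$, \cref{trivial-fibration-cancellation} gives that $f$ is a trivial fibration.

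The argument is essentially formal once \cref{trivial-fibration-whe-cof} and the cancellation lemma \cref{trivial-fibration-cancellation} are available. The only genuinely delicate point --- and the reason a naive strong cofibrant replacement of $f$ does not suffice --- is that one must cofibrantly replace the pullback $P$ rather than $X$ itself, so that the comparison map $\tilde f$ is guaranteed to be a fibration and the cofibrant case becomes applicable.
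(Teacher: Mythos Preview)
Your proof is correct and follows essentially the same approach as the paper: construct a strong cofibrant replacement $\tilde f$ of $f$ that is again a fibration (via pullback along a cofibrant replacement of $Y$ followed by cofibrant replacement of the pullback), apply \cref{trivial-fibration-whe-cof}, and use \cref{trivial-fibration-cancellation} to transfer triviality back to $f$. The only cosmetic difference is that the paper handles the forward direction with a slightly simpler replacement (taking $\tilde X \tfto X$ and using the identity on $\tilde X$ as the replacement of $f$), whereas you use the same pullback replacement for both directions; your unified treatment is arguably cleaner.
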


\begin{proof}
Assume that $f$ is a trivial fibration.
Take a strong cofibrant replacement $\tilde{X} \tfto X$.
The identity on $\bar{X}$ is a strong cofibrant replacement of $f$, which is thus a weak homotopy equivalence.

Conversely, let $f$ be an acyclic fibration.
Take a strong cofibrant replacement $\tilde{Y} \tfto Y$ followed by a strong cofibrant replacement of the pullback of $X$ along this map:
\begin{tikzeq*}
\matrix[diagram]
{
  |(tX)| \tilde X & |(Xp)| X'       & |(X)| X                 \\
                  & |(tY)| \tilde Y & |(Y)| Y \rlap{\text{.}} \\
};

\draw[tfib] (tX) to (Xp);
\draw[fib]  (Xp) to (tY);

\draw[fib] (X) to node[right] {$\weq$} (Y);

\draw[tfib] (Xp) to (X);
\draw[tfib] (tY) to (Y);

\pb{Xp}{Y};
\end{tikzeq*}
Since $X \to Y$ is a weak homotopy equivalence, its strong cofibrant replacement $\tilde{X} \to \tilde{Y}$ is a weak homotopy equivalence between cofibrant objects.
By \cref{trivial-fibration-whe-cof}, it is a trivial fibration.
Applying \cref{trivial-fibration-cancellation} in the above diagram, we obtain that $X \to Y$ is a trivial fibration.
\end{proof}

\begin{lemma} \label{trivial-cofibration-whe}
Let $f \from A \to B$ be a cofibration in simplicial sets.
Then $f$ is a trivial cofibration if and only if it is a weak homotopy equivalence.
\end{lemma}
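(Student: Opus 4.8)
The plan is to follow the same two-step pattern as the proof of \cref{trivial-cofibration-whe-cof}, with the forward implication carried out by manufacturing a good strong cofibrant replacement of $f$ by means of the restricted Frobenius property, and the converse being the usual factorisation-and-retract argument.

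For the forward direction, assume $f \from A \to B$ is a trivial cofibration. I would choose a strong cofibrant replacement $q \from \tilde B \tfto B$ and form the pullback square with corners $\tilde A \coloneqq A \times_B \tilde B$, $A$, $\tilde B$, $B$, writing $g \from \tilde A \to \tilde B$ for the pullback of $f$ and $\tilde A \to A$ for the pullback of $q$. The map $\tilde A \to A$ is a trivial fibration, being a pullback of $q$. Since $\tilde B$ is cofibrant, \cref{pullback-of-cofibrations} shows that pulling $f$ back along $q$ produces a cofibration, so $g$ is a cofibration; then $\tilde A$ is cofibrant by \cref{cofibration-cancellation}, and hence the pullback square is a strong cofibrant replacement of $f$. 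The key point is that this very same presentation of $g$ as the pullback of the trivial cofibration $f$ along the fibration $q \from \tilde B \fto B$ with cofibrant domain $\tilde B$ identifies $g$, via the restricted Frobenius property \cref{frobenius}, as a trivial cofibration. Therefore $g$ is a weak homotopy equivalence in cofibrant simplicial sets by \cref{trivial-cofibration-whe-cof}, so $f$ has a strong cofibrant replacement that is a weak homotopy equivalence, \ie $f$ is a weak homotopy equivalence.

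For the converse, assume $f \from A \to B$ is a cofibration and a weak homotopy equivalence. I would factor $f$ as a trivial cofibration $j \from A \ato C$ followed by a fibration $p \from C \fto B$ using \cref{thm:wfs-via-soa:fib} of \cref{thm:wfs-via-soa}. By the forward direction, $j$ is a weak homotopy equivalence; since weak homotopy equivalences satisfy 2-out-of-6 (\cref{whe-basics:2-out-of-6} of \cref{whe-basics}) and contain all identities (these are trivial fibrations, hence weak homotopy equivalences by \cref{trivial-fibration-whe}), they satisfy 2-out-of-3, so $p$ is a weak homotopy equivalence, and thus a trivial fibration by \cref{trivial-fibration-whe}. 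Lifting $j$ against $p$ in the square whose left edge is $f$ and whose bottom edge is $\id_B$ exhibits $f$ as a retract of $j$ in the arrow category, so $f$ is a trivial cofibration by closure of trivial cofibrations under retracts.

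The only nonformal step, and the expected main obstacle, is the forward direction: one must produce a strong cofibrant replacement of $f$ that is itself a trivial cofibration, and this is exactly the situation that requires \cref{frobenius} in the strength where the target of the fibration along which one pulls back is \emph{not} assumed cofibrant — precisely the extra generality flagged in the remark following \cref{frobenius}.
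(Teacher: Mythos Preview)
Your proposal is correct and follows essentially the same approach as the paper: for the forward direction you construct a strong cofibrant replacement via pullback along $\tilde B \tfto B$, invoke \cref{pullback-of-cofibrations} (together with \cref{cofibration-cancellation}) for cofibrancy of $\tilde A$, and apply the restricted Frobenius property \cref{frobenius} in exactly the generality highlighted after its statement; for the converse you run the standard factorisation-and-retract argument using \cref{trivial-fibration-whe}. The paper's proof is the same argument, only slightly more terse.
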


\begin{proof}
Let $A \ato B$ be a trivial cofibration.
Take a strong cofibrant replacement $\tilde{B} \tfto B$.
Consider the pullback square
\begin{tikzeq*}
\matrix[diagram]
{
  |(tA)| \tilde A & |(A)|         A                 \\
  |(tB)| \tilde B & |(B)|         B \rlap{\text{.}} \\
};

\draw[tfib] (tA) to (A);
\draw[tfib] (tB) to (B);

\draw[->] (tA) to (tB);
\draw[ano] (A)  to (B);

\pb{tA}{B};
\end{tikzeq*}
Note that $\tilde A$ is cofibrant by \cref{pullback-of-cofibrations}.
In particular $\tilde A \to \tilde B$ is a strong cofibrant replacement of $A \to B$.
Since $\tilde B$ is cofibrant, the restricted Frobenius property (\cref{frobenius}) makes $\tilde A \to \tilde B$ a trivial cofibration.
By \cref{trivial-cofibration-whe-cof}, it is a weak homotopy equivalence in cofibrant simplicial sets.
This makes $A \to B$ a weak homotopy equivalence.

The converse direction follows from the forward direction together with \cref{trivial-fibration-whe} using 2-out-of-3 by the retract argument as in \cref{trivial-cofibration-whe-cof}.
\end{proof}

\begin{theorem} \label{model-structure}
The cofibrations, Kan fibrations, and weak homotopy equivalences form a model structure on simplicial sets.
\end{theorem}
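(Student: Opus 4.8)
The plan is to assemble the pieces established in the preceding two subsections; no new ideas are required. Recall that to give a model structure on simplicial sets it suffices to exhibit a class of weak equivalences satisfying 2-out-of-3, together with two weak factorisation systems — one whose right class is the fibrations and one whose left class is the cofibrations — in such a way that the left maps of the first system are the cofibrations, the right maps of the second are the fibrations, the right maps of the first are exactly the fibrations that are weak equivalences, and the left maps of the second are exactly the cofibrations that are weak equivalences.

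First I would recall from \cref{thm:wfs-via-soa} that on simplicial sets the cofibrations and trivial fibrations form a weak factorisation system, as do the trivial cofibrations and the Kan fibrations. Second, weak homotopy equivalences satisfy 2-out-of-6 by \cref{whe-basics:2-out-of-6} of \cref{whe-basics}, hence in particular 2-out-of-3. Third, by \cref{trivial-fibration-whe} a Kan fibration is a weak homotopy equivalence if and only if it is a trivial fibration, so the acyclic fibrations coincide with the trivial fibrations. Fourth, by \cref{trivial-cofibration-whe} a cofibration is a weak homotopy equivalence if and only if it is a trivial cofibration, so the acyclic cofibrations coincide with the trivial cofibrations. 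These four observations together supply all the axioms of a model structure, and assembling them concludes the proof.

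The genuinely hard work is already behind us, so I do not expect a real obstacle in this final step. The substance lies in the restricted Frobenius property (\cref{frobenius}) and the (trivial) fibration extension properties (\cref{trivial-fibration-extension}, \cref{fibration-extension}), whose combination in \cref{trivial-fibration-whe-cof} and \cref{trivial-cofibration-whe-cof} yields the model structure on cofibrant objects; and then the cancellation property of trivial fibrations (\cref{trivial-fibration-cancellation}) together with the restricted Frobenius property, which together let \cref{trivial-fibration-whe} and \cref{trivial-cofibration-whe} bootstrap from the cofibrant fragment to all simplicial sets. Once those are in hand, the present statement is purely a matter of bookkeeping, entirely parallel to the proof of \cref{thm:main-thm-first-proof}.
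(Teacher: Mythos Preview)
Your proposal is correct and matches the paper's own proof essentially line for line: the paper also cites \cref{whe-basics:2-out-of-6} of \cref{whe-basics} for 2-out-of-6, \cref{thm:wfs-via-soa} for the two weak factorisation systems, and \cref{trivial-fibration-whe} and \cref{trivial-cofibration-whe} to identify acyclic (co)fibrations with trivial (co)fibrations. Your additional commentary on where the real work lies is accurate and agrees with the paper's framing.
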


\begin{proof}
Weak homotopy equivalences satisfy 2-out-of-6 by \cref{whe-basics:2-out-of-6} of \cref{whe-basics}.
We have established the two \wfs{}s in \cref{thm:wfs-via-soa}.
By \cref{trivial-fibration-whe}, acyclic fibrations coincide with trivial fibrations.
By \cref{trivial-cofibration-whe}, trivial cofibrations coincide with acyclic cofibrations.
\end{proof}

\begin{remark}
Any model structure is determined by its classes of cofibrations and fibrations.
So the model structure of \cref{model-structure} coincides with the one of \cref{thm:main-thm-first-proof}.
It follows, after the fact, that all considered notions of weak homotopy equivalence are equivalent in their respective subcategories of cofibrant and/or fibrant simplicial sets.
\end{remark}

\begin{remark} \label{thm:second-diff-henry}
Following up on \cref{thm:first-diff-henry}, our second construction of the constructive Kan-Quillen model structure differs from~\cite{Henry-qms} in that it avoids the~$\Ex^\infty$ functor and semisimplicial homotopy theory.
Instead, following~\cite{Sattler}, it is based on a restricted version of the Frobenius property~\cite{Gambino-Sattler} and the equivalence extension property in cofibrant simplicial sets.
\end{remark}

  \section{Right and left properness}
  \label{sec:properness}

\begin{proposition-s}\label{Kan--Quillen-right-proper}
  The Kan--Quillen model structure on simplicial sets is right proper.
\end{proposition-s}

\begin{proof}[Proof ($\Ex^\infty$ argument).]
  Consider a fibration $p \from X \fto Y$ and a \whe{} $f \from B \to Y$.
  Form the diagram
  \begin{tikzeq*}
  \matrix[diagram]
  {
        |(A)|  A                   & & & |(X)|  X                   & & & \\
    &   |(tA)| \tilde A            & & & |(tX)| \tilde X            & &   \\
    & & |(EA)| \Ex^\infty \tilde A & & & |(EX)| \Ex^\infty \tilde X &     \\
        |(B)|  B                   & & & |(Y)|  Y                   & & & \\
    &   |(tB)| \tilde B            & & & |(tY)| \tilde Y            & &   \\
    & & |(EB)| \Ex^\infty \tilde B & & & |(EY)| \Ex^\infty \tilde Y &     \\
  };

  \draw[->] (A) to (B);
  \draw[->] (X) to (Y);
  \draw[->] (A) to (X);
  \draw[->] (B) to (Y);

  \draw[->,over] (tA) to (tB);
  \draw[->]      (tX) to (tY);
  \draw[->,over] (tA) to (tX);
  \draw[->]      (tB) to (tY);

  \draw[->,over] (EA) to (EB);
  \draw[->]      (EX) to (EY);
  \draw[->,over] (EA) to (EX);
  \draw[->]      (EB) to (EY);

  \draw[->] (tA) to node[above right] {$\weq$} (A);
  \draw[->] (tB) to node[above right] {$\weq$} (B);
  \draw[->] (tX) to node[above right] {$\weq$} (X);
  \draw[->] (tY) to node[above right] {$\weq$} (Y);

  \draw[->] (tA) to node[above right] {$\weq$} (EA);
  \draw[->] (tX) to node[above right] {$\weq$} (EX);
  \draw[->] (tB) to node[above right] {$\weq$} (EB);
  \draw[->] (tY) to node[above right] {$\weq$} (EY);
  \end{tikzeq*}
  as follows.
  \begin{enumerate}
  \item The back square is a pullback.
  \item $\tilde Y \weto Y$ is a strong cofibrant replacement.
  \item $\tilde B \weto B$ and $\tilde X \weto X$ are strong cofibrant replacements,
    the latter chosen so that $\tilde X \to X \pull_{Y} \tilde Y$ is a trivial fibration.
  \item The middle square is a pullback.
  \item The front square is obtained by applying $\Ex^\infty$ to the middle one and the transformation between them
    is $\nu^\infty$ of \cref{sec:subdivision-and-ex}.
  \end{enumerate}
  Denote the maps $\tilde X \to \tilde Y$ and $\tilde B \to \tilde Y$ by $\tilde p$ and $\tilde f$, respectively.
  Note that $\tilde A \to A$ is a trivial fibration as the composite of pullbacks of $\tilde B \to B$ and $\tilde X \to X \pull_{Y} \tilde Y$.
  Thus all middle to back maps are \whe{}s
  by \cref{whe-trivial-fibration} of \cref{whe-arbitrary-properties}.
  Moreover, $\tilde A$ is cofibrant by \cref{cofibrant-finite-limits} so
  \cref{Ex-infty-whe} implies that all middle to front maps are also \whe{}s.
  It follows that $\Ex^\infty \tilde f$ is a \whe{} since $f$ is.
  Moreover, the front square is a pullback by \cref{Ex-infty-finite-limits} of \cref{Ex-infty-properties},
  $\Ex^\infty \tilde p$ is a Kan fibration by \cref{Ex-infty-fibration} of \cref{Ex-infty-properties} and
  all objects in the front face are Kan complexes by \cref{Ex-infty-fibrant-replacement} of \cref{Ex-infty-properties}.
  It follows that $\Ex^\infty A \to \Ex^\infty X$ is a \whe{}
  by \cref{fibration-category-Kan} and \cite{rb}*{Lemma~1.4.2~(2b)}
  and thus so is $A \to X$.
\end{proof}

\begin{proof}[Proof (Frobenius property argument).]
  It suffices to show that the pullback of a trivial cofibration $B \to Y$ along a fibration $X \to Y$ is a weak equivalence.
  Consider a further pullback along a cofibrant replacement $\tilde{X} \to X$:
  \begin{tikzeq*}
  \matrix[diagram]
  {
    |(Ap)| A'       & |(A)| A & |(B)| B                 \\
    |(tX)| \tilde X & |(X)| X & |(Y)| Y \rlap{\text{.}} \\
  };

  \draw[->] (A)  to (X);
  \draw[->] (Ap) to (tX);

  \draw[cof] (B) to node[right] {$\weq$} (Y);

  \draw[->]  (A) to (B);
  \draw[fib] (X) to (Y);

  \draw[fib] (Ap) to node[above] {$\weq$} (A);
  \draw[fib] (tX) to node[below] {$\weq$} (X);
  \end{tikzeq*}
  Since $\tilde{X}$ is cofibrant, $A' \to \tilde{X}$ is a trivial cofibration by the restricted Frobenius property (\cref{frobenius}).
  Then $A \to X$ is a weak equivalence by 2-out-of-3.
\end{proof}

Semisimplicial sets are presheaves over $\Simp_\sharp$,
the subcategory of face operators in $\Simp$.
Let $U$ be the forgetful functor from simplicial to semisimplicial sets and
let $L$ be its left adjoint, with unit $\eta$ and counit $\epsilon$.

Let $\ssimp{m}$ denote the semisimplicial set represented by $[m]$;
its boundary $\bdssimp{m}$ is obtained by removing the top simplex $\id_{[m]}$.
The category of semisimplicial sets carries a \wfs{} generated by
the boundary inclusions $\bdssimp{m} \ito \ssimp{m}$.
A \emph{cofibration} is a morphism of the left class of this \wfs{}.

\begin{lemma-s}\label{semisimplicial-cofibration-levelwise-decidable}
  A semisimplicial map is a cofibration \iff{}
  it is a levelwise decidable inclusion.
\end{lemma-s}

\begin{proof}
  Cofibrations coincide with Reedy decidable inclusions by the same argument as
  in \cref{cofibration-wfs}.
  Since $\Simp_\sharp$ is a direct category,
  the latter are the same as levelwise decidable inclusions.
\end{proof}

\begin{corollary-s} \label{L-cofibration}
The functor $L$ preserves cofibrations.
\end{corollary-s}

\begin{proof}
It suffices to check that $L$ preserves boundary inclusions.
Indeed, we have $\bdssimp{m} = \colim_{[k] \overset{\neq}{\ito} [m]} \ssimp{k}$ and thus $L \bdssimp{m} = \colim L \ssimp{k} = \colim \simp{k} = \bdsimp{m}$.
\end{proof}

\begin{corollary-s} \label{U-cofibration}
The functor $U$ sends levelwise decidable inclusions to cofibrations.
In particular, it preserves cofibrations.
\end{corollary-s}

\begin{proof}
The first claim follows from \cref{semisimplicial-cofibration-levelwise-decidable}.
With this, the second claim follows from \cref{cofibration-levelwise-decidable}.
\end{proof}

\begin{proposition-s}\label{LU-properties}
  The functor $L U$ satisfies the following conditions.
  \begin{enumerate}
  \item\label{LU-colimit}
    It preserves colimits.
  \item\label{LU-cofibrant}
    It takes values in cofibrant simplicial sets.
  \item\label{LU-cofibration}
    It preserves cofibrations.
  \item\label{LU-whe}
    For every simplicial set $X$,
    the counit $\epsilon_X \from L U X \to X$ is a \whe{}.
  \end{enumerate}
\end{proposition-s}

\begin{proof}
  \Cref{LU-colimit} holds since both $L$ and $U$ are left adjoints.
  For \cref{LU-cofibrant}, note that every semisimplicial set is cofibrant
  by \cref{semisimplicial-cofibration-levelwise-decidable} while
  $L$ preserves cofibrant objects by \cref{L-cofibration}.
  \Cref{LU-cofibration} follows directly from \cref{L-cofibration,U-cofibration}.
  We postpone the proof of \cref{LU-whe} until we show an auxiliary lemma.
\end{proof}

\begin{remark-s} \label{cofibrant-replacement-variant}
In \cref{sec:explicit-cofibrant-replacement}, the verification of properties of the cofibrant replacement functor $T$ makes use of another cofibrant replacement functor, sending a simplicial set $X$ to $N (\Delta_\sharp \slice U X)$, the nerve of the category of elements of the semisimplicial set underlying $X$.
This functor factors as $L U$ followed by subdivision.
\end{remark-s}

Semisimplicial sets carry a closed symmetric monoidal product called the \emph{geometric product}.
It is uniquely determined by setting the geometric product of $\ssimp{m}$ and $\ssimp{n}$ to the semisimplicial set consisting of the non-degenerate simplices of $\simp{m} \times \simp{n}$.
More precisely, if we denote that semisimplicial set by $P_{m,n}$, then the geometric product of semisimplicial sets $X$ and $Y$ is
the coend $\coend^{m,n} X_m \times Y_n \times P_{m,n}$.
It follows that the geometric product preserves colimits in each variable.
Since $L$ also preserves colimits and satisfies $L P_{m,n} = \simp{m} \times \simp{n}$, it follows that it is monoidal, \ie,
it carries the geometric product to the cartesian product.

\begin{lemma-s}\label{LU-trivial-fibration}
  The functor $L U$ sends trivial fibrations to weak equivalences.
\end{lemma-s}

\begin{proof}
  Given a trivial fibration $p \from X \to Y$, we will show that
  $L U p \from L U X \to L U Y$ is a homotopy equivalence.
  It is then a weak equivalence \eg by \cref{whe-he} of \cref{whe-arbitrary-properties}.

  Note that $L U X$ and $L U Y$ are cofibrant
  by \cref{LU-cofibrant} of \cref{LU-properties}.
  We take a lift
  \begin{tikzeq*}
  \matrix[diagram]
  {
    |(e)|   \emptyset & |(X)| X                 \\
    |(LUY)| L U Y     & |(Y)| Y \rlap{\text{.}} \\
  };

  \draw[->]  (e) to (X);
  \draw[cof] (e) to (LUY);

  \draw[fib] (X) to node[right] {$p$} node[left] {$\weq$} (Y);
  \draw[->] (LUY) to node[below] {$\epsilon_Y$} (Y);

  \draw[->,dashed] (LUY) to (X);
  \end{tikzeq*}
  Transposing the square using the adjunction $L \adj U$ and
  applying $L$ gives a section $s \from L U Y \to L U X$ of $L U p$.

  Using \cref{ano-cof-pushout-product} of \cref{second-pushout-product}, we take a lift
  \begin{tikzeq*}
  \matrix[diagram,column sep={8em,between origins}]
  {
    |(bLUX)| L U X \times \bdsimp{1} &                                & |(LUX)| L U X & |(X)| X                 \\
    |(sLUX)| L U X \times   \simp{1} & |(sLUY)| L U Y \times \simp{1} & |(LUY)| L U Y & |(Y)| Y \rlap{\text{.}} \\
  };

  \draw[cof] (bLUX) to (sLUX);
  \draw[->] (sLUX) to (sLUY);
  \draw[->] (sLUY) to (LUY);

  \draw[->] (bLUX) to node[above] {$[s (L U p), \id]$}      (LUX);
  \draw[->] (LUX)  to node[above] {$\epsilon_X$}            (X);
  \draw[->] (LUY)  to node[below] {$\epsilon_Y$}            (Y);
  \draw[fib] (X)    to node[right] {$p$} node[left] {$\weq$} (Y);

  \draw[->,dashed,shorten >=0.5mm] (sLUX) to node[above left] {$u'$} (X.215);
  \end{tikzeq*}
  Note that the left arrow lies in the essential image of $L$
  (it is isomorphic to the image of
  the geometric product of $\bdsimp{1} \to \simp{1}$ with $U X$),
  and so does the top left arrow (by construction of $s$).
  Using terminality of $(U X, \epsilon_X)$ in $L \downarrow X$ and applying $L$,
  we obtain the dashed arrow in the commuting triangle
  \begin{tikzeq*}
  \matrix[diagram,column sep={12em,between origins}]
  {
    |(bLUX)| L U X \times \bdsimp{1} & |(LUX)| L U X \rlap{\text{.}} \\
    |(sLUX)| L U X \times   \simp{1} &                               \\
  };

  \draw[cof] (bLUX) to (sLUX);
  \draw[->] (bLUX) to node[above] {$[s (L U p), \id]$} (LUX);
  \draw[->,dashed] (sLUX) to node[below right] {$u$} (LUX);
  \end{tikzeq*}
  We thus have $s (L U p) \htp \id_{L U X}$ and
  $(L U p) s = \id_{L U Y}$ as desired.
\end{proof}

\begin{remark-s}
The proof of \cref{LU-trivial-fibration} is the combination and unfolding of the following elementary observations regarding semisimplicial sets, which carry notions of homotopy and \he{} based on the geometric product:
\begin{itemize}
\item $U$ preserves trivial fibrations (defined also in semisimplicial sets as lifting against boundary inclusions),
\item for a simplicial set $X$, the identity of $UX$ admits an endohomotopy,
\item in semisimplicial sets, every trivial fibration $Y \to X$ with an endohomotopy on the identity on $X$ extends to a homotopy equivalence,
\item $L$ preserves homotopy equivalences.
\end{itemize}
\end{remark-s}

\begin{proof}[Proof of \cref{LU-whe} of \cref{LU-properties}]
  First, note that $\epsilon_{\simp{m}} \from L U \simp{m} \to \simp{m}$ is
  a \whe{} for all $m$.
  Indeed, $L U \simp{m}$ is the nerve of a category $[m]'$ which
  is obtained from $[m]$ by
  adjoining an idempotent endomorphism to every object that
  acts trivially on morphisms of $[m]$.
  This category admits a natural transformation from
  the endofunctor constant at $0$ to the identity endofunctor.
  Thus, its nerve is contractible and the conclusion follows.
  Since we have already verified that $L U$ preserves colimits and cofibrations,
  the argument of \cref{cofibrant-replacement-whe} of \cref{cofibrant-replacement}
  shows that $\epsilon_X$ is a \whe{} for all cofibrant $X$.
  (That argument is an instance of the following general fact for a natural transformation $u$ between endofunctors of $\sSet$ that preserve colimits and cofibrations: if $u$ is a \whe{} on simplices, then $u$ is a \whe{} on all cofibrant objects.)
  The same follows for arbitrary $X$ by \cref{LU-trivial-fibration}.
\end{proof}

\begin{proposition-s}\label{Kan--Quillen-left-proper}
  The Kan-Quillen model structure on simplicial sets is left proper.
\end{proposition-s}

\begin{proof}
  The following argument uses
  the functor $T$ of \cref{sec:explicit-cofibrant-replacement} and
  its properties listed in \cref{cofibrant-replacement}.
  An alternative argument is obtained by substituting $L U$ for $T$ since
  the parallel properties have been verified in \cref{LU-properties}.

  Consider a cofibration $A \cto B$ and a \whe{} $A \to X$.
  Form the diagram
  \begin{tikzeq*}
  \matrix[diagram]
  {
      |(TA)| T A & & |(TX)| T X &\\
    & |(A)|  A   & & |(X)|  X    \\
      |(TB)| T B & & |(TY)| T Y &\\
    & |(B)|  B   & & |(Y)|  Y    \\
  };

  \draw[->,over] (TA) to (TB);
  \draw[->]      (TX) to (TY);
  \draw[->,over] (TA) to (TX);
  \draw[->]      (TB) to (TY);

  \draw[cof,over] (A) to (B);
  \draw[->]      (X) to (Y);
  \draw[->,over] (A) to node[above left,xshift=-8pt] {$\weq$} (X);
  \draw[->]      (B) to (Y);

  \draw[->] (TA) to node[above right] {$\weq$} (A);
  \draw[->] (TX) to node[above right] {$\weq$} (X);
  \draw[->] (TB) to node[above right] {$\weq$} (B);
  \draw[->] (TY) to node[above right] {$\weq$} (Y);
  \end{tikzeq*}
  where the front square is a pushout and the remainder of the cube is given by the copointed functor $T$.
  \Cref{cofibrant-replacement-whe} of \cref{cofibrant-replacement} implies that all back to front maps are \whe{}s.
  It follows that $TA \to TX$ is a \whe{} since $A \to X$ is.
  Moreover, the back square is a pushout by \cref{cofibrant-replacement-colimit} of \cref{cofibrant-replacement}, $TA \to TB$ is a cofibration by \cref{cofibrant-replacement-cofibration} of \cref{cofibrant-replacement} and all objects in the back face are cofibrant by \cref{cofibrant-replacement-cofibrant} of \cref{cofibrant-replacement}.
  It follows that $T B \to T Y$ is a \whe{} by \cref{cofibration-category-cofibrant} and \cite{rb}*{Lemma~1.4.2~(1b)}, and thus so is $B \to Y$.
\end{proof}

Using the cofibrant replacement functor $L U$, we obtain a slightly stronger statement.

\begin{proposition-s}
  In the category of simplicial sets, pushout along a levelwise decidable inclusion preserves weak homotopy equivalences.
  In particular, every pushout along a levelwise decidable inclusions is a homotopy pushout.
\end{proposition-s}

\begin{proof}
  By \cref{U-cofibration,L-cofibration}, the functor $L U$ carries levelwise decidable inclusions to cofibrations.
  With this, the conclusion follows exactly as in the proof of \cref{Kan--Quillen-left-proper}.
  (Here, we used that levelwise decidable inclusions are stable under pushout by \cref{decidable-wfs} to reduce to pushout squares as in that proof.)
\end{proof}

\begin{remark-s}\label{thm:properness-diff-henry}
  We conclude by describing the relationship between our proofs of properness
  and the one given in~\cite{Henry-qms}.
  For left properness, the proof in~\cite{Henry-qms} relies on the existence of
  a weak model structure (in the sense of~\cite{Henry-wms}) on
  the category of semisimplicial sets and uses its interaction with the adjunction $L \adj U$ with simplicial sets.

  Here, in one version of the argument, we use the comonad $L U$ of
  the adjunction $L \adj U$ to model cofibrant replacement,
  but using only the notion of cofibration in semisimplicial sets
  and not any further semisimplicial homotopy theory.
  In particular, we do not need to know that $U$ preserves cylinder objects.
  The other version also circumvents semisimplicial homotopy theory and uses
  a completely different cofibrant replacement functor $T$ in place of $L U$,
  although there is a relation as explained in \cref{cofibrant-replacement-variant}.

  For right properness, our first proof is very similar to
  the one in~\cite{Henry-qms}, but we included it for completeness,
  also because it follows naturally from our discussion of
  the $\Ex^\infty$ functor in \cref{sec:subdivision-and-ex}.
  Our second proof is entirely different and goes via the restricted Frobenius property proved directly in \cref{sec:frobenius-property}.
\end{remark-s}

\begin{bibdiv}
\begin{biblist}

\bib{Aczel}{article}{
  author={Aczel, Peter},
  title={The type theoretic interpretation of constructive set theory},
  conference={
    title={Logic Colloquium '77},
    address={Proc. Conf., Wroc\l aw},
    date={1977},
  },
  book={
    series={Stud. Logic Foundations Math.},
    volume={96},
    publisher={North-Holland, Amsterdam-New York},
  },
  date={1978},
  pages={55--66},
  label={Acz},
}

\bib{Aczel-Rathjen}{article}{
  author={Aczel, P.},
  author={Rathjen, M.},
  eprint={http://www1.maths.leeds.ac.uk/~rathjen/book.pdf},
  title={Notes on constructive set theory},
  date={2010},
}

\bib{awodey-ct}{book}{
  author={Awodey, Steve},
  title={Category theory},
  series={Oxford Logic Guides},
  volume={52},
  edition={2},
  publisher={Oxford University Press, Oxford},
  date={2010},
  pages={xvi+311},
  label={Aw},
}

\bib{van-den-Berg-Garner}{article}{
  author={van den Berg, Benno},
  author={Garner, Richard},
  title={Topological and Simplicial Models of Identity Types},
  journal={ACM Transactions on Computational Logic (TOCL)},
  volume={13},
  number={1},
  pages={1--44},
  date={2012},
  publisher={ACM New York, NY, USA}
}

\bib{bergner-rezk-elegant}{article}{
  author={Bergner, Julia E.},
  author={Rezk, Charles},
  title={Reedy categories and the $\varTheta$-construction},
  journal={Math. Z.},
  volume={274},
  date={2013},
  number={1-2},
  pages={499--514},
}

\bib{bcp}{article}{
   author={Bezem, Marc},
   author={Coquand, Thierry},
   author={Parmann, Erik},
   title={Non-constructivity in Kan simplicial sets},
   conference={
      title={13th International Conference on Typed Lambda Calculi and
      Applications},
   },
   book={
      series={LIPIcs. Leibniz Int. Proc. Inform.},
      volume={38},
      publisher={Schloss Dagstuhl. Leibniz-Zent. Inform., Wadern},
   },
   date={2015},
   pages={92--106},
   review={\MR{3448099}},
}

\bib{br}{article}{
  author={Brown, Kenneth S.},
  title={Abstract homotopy theory and generalized sheaf cohomology},
  journal={Trans. Amer. Math. Soc.},
  volume={186},
  date={1973},
  pages={419--458},
}

\bib{clw-extensive}{article}{
  author={Carboni, Aurelio},
  author={Lack, Stephen},
  author={Walters, R. F. C.},
  title={Introduction to extensive and distributive categories},
  journal={J. Pure Appl. Algebra},
  volume={84},
  date={1993},
  number={2},
  pages={145--158},
}

\bib{Cisinski}{article}{
  author={Cisinski, Denis-Charles},
  title={Les pr\'{e}faisceaux comme mod\`eles des types d'homotopie},
  language={French, with English and French summaries},
  journal={Ast\'{e}risque},
  number={308},
  date={2006},
  pages={xxiv+390},
}

\bib{CCHM}{article}{
  author={Cohen, Cyril},
  author={Coquand, Thierry},
  author={Huber, Simon},
  author={M\"{o}rtberg, Anders},
  title={Cubical type theory: a constructive interpretation of the
  univalence axiom},
  conference={
    title={21st International Conference on Types for Proofs and Programs},
  },
  book={
    series={LIPIcs. Leibniz Int. Proc. Inform.},
    volume={69},
    publisher={Schloss Dagstuhl. Leibniz-Zent. Inform., Wadern},
  },
  date={2018},
  pages={Art. No. 5, 34},
}

\bib{gz}{book}{
  author={Gabriel, P.},
  author={Zisman, M.},
  title={Calculus of fractions and homotopy theory},
  series={Ergebnisse der Mathematik und ihrer Grenzgebiete, Band 35},
  publisher={Springer-Verlag New York, Inc., New York},
  date={1967},
  pages={x+168},
}

\bib{Gambino-Henry}{article}{
  author={Gambino, N.},
  author={Henry, S.},
  eprint={https://arxiv.org/abs/1905.06281},
  title={Towards a constructive simplicial model of {U}nivalent {F}oundations},
  date={2019},
}

\bib{Gambino-Henry-Sattler-Szumilo}{article}{
  author={Gambino, N.},
  author={Henry, S.},
 author={Sattler, C.},
 author={Szumi{\l}o, K.},
  eprint={https://arxiv.org/abs/2102.06146},
  title={The effective model structure and $\infty$-groupoid objects},
  date={2021},
}

\bib{Gambino-Sattler}{article}{
  author={Gambino, N.},
  author={Sattler, C.},
  title={The {F}robenius property, right properness and uniform fibrations},
  journal={J. Pure Appl. Algebra},
  volume={221},
  date={2017},
  pages={No. 12, 3027--3068},
}

\bib{garner-lack-adhesive}{article}{
   author={Garner, Richard},
   author={Lack, Stephen},
   title={On the axioms for adhesive and quasiadhesive categories},
   journal={Theory Appl. Categ.},
   volume={27},
   date={2012},
   pages={No. 3, 27--46},
}

\bib{Goerss-Jardine}{book}{
  author={Goerss, Paul G.},
  author={Jardine, John F.},
  title={Simplicial homotopy theory},
  series={Progress in Mathematics},
  volume={174},
  publisher={Birkh\"{a}user Verlag, Basel},
  date={1999},
  pages={xvi+510},
}

\bib{Henry-wms}{article}{
  author={Henry, S.},
  eprint={https://arxiv.org/abs/1807.02650},
  title={Weak model categories in classical and constructive mathematics},
  date={2018},
}

\bib{Henry-qms}{article}{
  author={Henry, S.},
  eprint={https://arxiv.org/abs/1905.06160},
  title={A constructive account of the {K}an-{Q}uillen model structure and of {K}an's $\mathrm{Ex}^\infty$ functor},
  date={2019},
}

\bib{hovey}{book}{
  author={Hovey, M.},
  title={Model categories},
  series={Mathematical Surveys and Monographs},
  publisher={American Mathematical Society},
  date={2000},
}

\bib{Jardine}{article}{
  author={Jardine, J. F.},
  title={Boolean localization, in practice},
  journal={Doc. Math.},
  volume={1},
  date={1996},
  pages={No. 13, 245--275},
}

\bib{Johnstone}{book}{
  author={Johnstone, P. T.},
  title={Sketches of an Elephant. A {T}opos {T}heory {C}ompendium},
  publisher={Oxford University Press},
  date={2002}
}

\bib{Joyal}{article}{
  author={Joyal, A.},
  eprint={https://webusers.imj-prg.fr/~georges.maltsiniotis/ps/lettreJoyal.pdf},
  title={Letter to {A}. Grothendieck},
  date={1984},
}

\bib{joyal-collapses}{article}{
  author={Joyal, A.},
  title={Factorisation systems},
  eprint={https://ncatlab.org/joyalscatlab/show/Factorisation+systems},
  date={2020},
}

\bib{jt}{article}{ 
  author={Joyal, A.},
  author={Tierney, M.},
  title={Notes on simplicial homotopy theory},
  publisher={Centre de Recerca Mat\`ematica},
  journal={Centre de Recerca Mat\`ematica, Quadern},
  volume={47},
  date={2008},
  eprint={http://www.mat.uab.cat/~kock/crm/hocat/advanced-course/Quadern47.pdf},
}

\bib{Kapulkin}{article}{
  author={Kapulkin, C.},
  author={LeFanu Lumsdaine, P.},
  eprint={https://arxiv.org/abs/1211.2851v5},
  note={to appear in \emph{Journal of the European Mathematical Society}},
  title={The simplicial model of {U}nivalent {F}oundations (after {V}oevodsky)},
  date={2012},
  label={KL},
}

\bib{ltw}{article}{
  author={Latch, Dana May},
  author={Thomason, Robert W.},
  author={Wilson, W. Stephen},
  title={Simplicial sets from categories},
  journal={Math. Z.},
  volume={164},
  date={1979},
  number={3},
  pages={195--214},
}

\bib{Moss}{article}{
  title={Another approach to the {Kan}--{Quillen} model structure},
  author={Moss, S.},
  journal={Journal of Homotopy and Related Structures},
  volume={15},
  number={1},
  pages={143--165},
  year={2020},
}

\bib{oury-duality}{article}{
  author={Oury, David},
  title={On the duality between trees and disks},
  journal={Theory Appl. Categ.},
  volume={24},
  date={2010},
  pages={No. 16, 418--450},
}

\bib{may-ponto}{book}{
  author={May, J. P.},
  author={Ponto, K.},
  title={More concise algebraic topology -- Localization, completion, and model categories},
  series={Chicago Lectures in Mathematics},
  publisher={University of Chicago Press, Chicago, IL},
  date={2012},
  pages={xxviii+514},
}

\bib{Quillen}{book}{
  author={Quillen, Daniel G.},
  title={Homotopical algebra},
  series={Lecture Notes in Mathematics, No. 43},
  publisher={Springer-Verlag, Berlin-New York},
  date={1967},
  pages={iv+156 pp. (not consecutively paged)},
}

\bib{q}{article}{
   author={Quillen, Daniel},
   title={Higher algebraic $K$-theory. I},
   conference={
      title={Algebraic $K$-theory, I: Higher $K$-theories},
      address={Proc. Conf., Battelle Memorial Inst., Seattle, Wash.},
      date={1972},
   },
   book={
      publisher={Springer, Berlin},
   },
   date={1973},
   pages={85--147. Lecture Notes in Math., Vol. 341},
}

\bib{rb}{article}{
  author={R\u{a}dulescu-Banu, Andrei},
  title={Cofibrations in Homotopy Theory},
  date={2006},
  eprint={https://arxiv.org/abs/math/0610009v4},
}

\bib{riehl-verity-reedy}{article}{
   author={Riehl, Emily},
   author={Verity, Dominic},
   title={The theory and practice of Reedy categories},
   journal={Theory Appl. Categ.},
   volume={29},
   date={2014},
   pages={256--301},
}

\bib{Streicher}{article}{
  author={Streicher, T.},
  title={A model of type theory in simplicial sets: a brief introduction to
  Voevodsky's homotopy type theory},
  journal={J. Appl. Log.},
  volume={12},
  date={2014},
  number={1},
  pages={45--49},
  issn={1570-8683},
  label={Str},
}

\bib{Sattler}{article}{
  label={S1},
  author={Sattler, C.},
  eprint={https://arxiv.org/abs/1704.06911},
  title={The equivalence extension property and model categories},
  date={2017},
}

\bib{nlab-exhaustive}{article}{
  label={S2},
  author={Shulman, M.},
  author={others},
  title={Exhaustive category},
  eprint={https://ncatlab.org/nlab/revision/exhaustive+category/7},
  date={2019},
}

\bib{HoTT-book}{book}{
  author={The Univalent Foundations Program},
  title={Homotopy type theory---univalent foundations of mathematics},
  publisher={The Univalent Foundations Program, Princeton, NJ; Institute
  for Advanced Study (IAS), Princeton, NJ},
  date={2013},
  pages={xiv+589},
  label={HoTT},
}

\bib{Voevodsky-MSCS}{article}{
  author={Voevodsky, Vladimir},
  title={An experimental library of formalized mathematics based on the
  univalent foundations},
  journal={Math. Structures Comput. Sci.},
  volume={25},
  date={2015},
  number={5},
  pages={1278--1294},
}

\end{biblist}
\end{bibdiv}

\end{document}